\documentclass[12pt,twoside]{article}
\usepackage{amsmath,amsfonts,amssymb,amsthm,array,mathdots}
\usepackage{a4,enumitem,indentfirst}
\usepackage{etoolbox} 
\usepackage{url} 

\usepackage{stmaryrd} 

\usepackage{xr}

\usepackage{epsfig, color} 
\usepackage{tikz}
\usepackage{pgfplots}\pgfplotsset{compat=1.13}
\usetikzlibrary{arrows,positioning,calc,patterns}



\graphicspath{{figures/}}

\theoremstyle{plain}

\newtheorem{theo}{Theorem}
\newtheorem{prop}{Proposition}[section]
\newtheorem{conj}[prop]{Conjecture}
\newtheorem{coro}[prop]{Corollary}

\newtheorem{lemma}[prop]{Lemma}

\theoremstyle{definition}
\newtheorem{example}[prop]{Example}
\AtEndEnvironment{example}{\null\hfill$\diamond$}%

\newtheorem{rem}[prop]{Remark}
\AtEndEnvironment{rem}{\null\hfill$\diamond$}%




\newcommand\BL{\operatorname{BL}}
\newcommand\BLC{\operatorname{BLC}}
\newcommand\BLS{\operatorname{BLS}}
\newcommand\old{\operatorname{old}}
\newcommand\Flag{\operatorname{Flag}}
\newcommand\card{\operatorname{card}}
\newcommand\med{\operatorname{med}}
\newcommand\Frenet{\mathfrak{F}}

\newcommand{\Word}{{\mathbf W}}
\newcommand{\Ideal}{{\mathbf I}}
\newcommand{\Filter}{{\mathbf U}}

\newcommand{\SubMatrix}{\operatorname{SubMatrix}}
\newcommand{\SW}{\operatorname{SW}}
\newcommand{\conv}{\operatorname{convex}}
\newcommand{\nconv}{\operatorname{non-convex}}

\newcommand{\tok}{\preceq}

\newcommand{\Sing}{\operatorname{Sing}}
\newcommand{\sing}{\operatorname{sing}}
\newcommand{\PA}{\operatorname{PA}}
\newcommand{\PPre}{\operatorname{PP}}

\newcommand\SO{\operatorname{SO}}

\newcommand\GL{\operatorname{GL}}
\newcommand\Lo{\operatorname{Lo}}
\newcommand\Up{\operatorname{Up}}

\newcommand\so{\operatorname{\mathfrak{so}}}

\newcommand\Spin{\operatorname{Spin}}
\newcommand\spin{\mathfrak{spin}}

\newcommand\inv{\operatorname{inv}}

\newcommand\Diag{\operatorname{Diag}}
\newcommand\diag{\operatorname{diag}}
\newcommand\B{\operatorname{B}}
\newcommand\Quat{\operatorname{Quat}}

\newcommand{\resultant}{\operatorname{resultant}}
\newcommand{\discriminant}{\operatorname{discriminant}}
\newcommand{\longacute}{\operatorname{acute}}
\newcommand{\longgrave}{\operatorname{grave}}

\newcommand{\longhat}{\operatorname{hat}}

\newcommand{\iti}{\operatorname{iti}}

\newcommand{\bM}{{\mathbf{M}}}

\newcommand{\nmesmo}{\llbracket  n \rrbracket}
\newcommand{\nmaisum}{\llbracket n+1 \rrbracket}

\newcommand{\interior}{\operatorname{int}}

\newcommand{\mult}{\operatorname{mult}}
\newcommand{\bump}{\operatorname{bump}}

\newcommand{\NN}{{\mathbb{N}}}
\newcommand{\ZZ}{{\mathbb{Z}}}

\newcommand{\RR}{{\mathbb{R}}}

\newcommand{\Ss}{{\mathbb{S}}}
\newcommand{\BB}{{\mathbb{B}}}
\newcommand{\DD}{{\mathbb{D}}}

\newcommand{\HH}{{\mathbb{H}}}

\newcommand{\cL}{{\cal L}}

\newcommand{\cU}{{\cal U}}

\newcommand{\cA}{{\cal A}}

\newcommand{\cD}{{\cal D}}

\newcommand{\cM}{{\cal M}}
\newcommand{\cY}{{\cal Y}}
\newcommand{\cX}{{\cal X}}

\newcommand{\bQ}{{\mathbf{Q}}}

\newcommand{\fa}{{\mathfrak a}}

\newcommand{\Pos}{\operatorname{Pos}}
\newcommand{\Neg}{\operatorname{Neg}}
\newcommand{\Bru}{\operatorname{Bru}}

\parskip=4pt plus 8pt

\begin{document}

\title{
A CW complex homotopy equivalent to \\ spaces of locally convex curves}
\author{Victor Goulart
\and Nicolau C. Saldanha}
\date{\today}

\maketitle

\begin{abstract}
Locally convex (or nondegenerate) curves in the sphere $\Ss^n$
(or the projective space)
have been studied for several reasons,
including the study of linear ordinary differential equations
of order $n+1$.
Taking Frenet frames allows us to obtain
corresponding curves $\Gamma$ in the group $\Spin_{n+1}$;
recall that $\Pi: \Spin_{n+1} \to \Flag_{n+1}$
is the universal cover of the space of flags.
Determining the homotopy type of
spaces of such curves $\Gamma$ with prescribed
initial and final points appears to be a hard problem.
Due to known results, 
we may focus on $\cL_n$,
the space of (sufficiently smooth) locally convex curves
$\Gamma: [0,1] \to \Spin_{n+1}$
with $\Gamma(0) = 1$ and $\Pi(\Gamma(1)) = \Pi(1)$.
Convex curves form a contractible connected component of $\cL_n$;
there are $2^{n+1}$ other components,
corresponding to non convex curves, one for each endpoint.
The homotopy type of $\cL_n$ has so far been determined only for $n=2$
(the case $n = 1$ is trivial).
This paper is a step towards solving the problem
for larger values of $n$.

The \textit{itinerary} of a locally convex curve
$\Gamma: [0,1] \to \Spin_{n+1}$ belongs to $\Word_n$,
the set of finite words in the alphabet $S_{n+1} \smallsetminus \{e\}$.
The itinerary of a curve lists
the non open Bruhat cells crossed by the curve.
Itineraries yield a stratification of the space $\cL_n$.
We construct a CW complex $\cD_n$ which is a kind of dual
of $\cL_n$ under this stratification:
the construction is similar to Poincaré duality.
The CW complex $\cD_n$ is homotopy equivalent to $\cL_n$.
The cells of $\cD_n$ are naturally labeled by words in $\Word_n$
so that $\cD_n$ is infinite but locally finite.
Explicit glueing instructions are described for lower dimensions.

As an application, we describe an open subset $\cY_n \subset \cL_n$,
a union of strata of $\cL_n$.
In each non convex component of $\cL_n$,
the intersection with $\cY_n$ is connected and dense.
Most connected components of $\cL_n$ are contained in $\cY_n$.
For $n > 3$, in the other components
the complement of $\cY_n$ has codimension at least $2$.
We prove that $\cY_n$ is homotopically equivalent to
the disjoint union of $2^{n+1}$ copies of $\Omega\Spin_{n+1}$.
In particular, for all $n \ge 2$,
all connected components of $\cL_n$ are simply connected.
\end{abstract}

\medskip

\section{Introduction}
\label{sect:intro}

Let $J \subset \RR$ be an interval.
A sufficiently smooth curve $\gamma: J \to \Ss^n \subset \RR^{n+1}$
is \emph{(positive) locally convex} 
\cite{Alves, Alves-Saldanha, Saldanha3, Saldanha-Shapiro}
or \emph{(positive) nondegenerate} 
\cite{Goulart-Saldanha, Khesin-Ovsienko, Khesin-Shapiro2, Little} 
if it satisfies 
\[ \forall t \in J, \;
\det(\gamma(t),\gamma'(t),\ldots,\gamma^{(n)}(t))>0. \]
Such a curve $\gamma$ can be associated with
$\Gamma = \Frenet_{\gamma}: J \to \SO_{n+1}$ 
where the column-vectors of $\Gamma(t)$
are the result of applying the 
Gram-Schmidt algorithm to the ordered basis 
$(\gamma(t),\gamma'(t),\ldots,\gamma^{(n)}(t))$ of $\RR^{n+1}$. 
The curve $\Gamma$ can then be lifted 
to the double cover $\Spin_{n+1}$.
The curve $\Gamma: J \to \Spin_{n+1}$
is also called locally convex; 
a direct description is in order.

For $j\in\nmesmo=\{1,2, \ldots,n\}$, 
consider the skew-symmetric tridiagonal matrices 
$\fa_j=e_{j+1}e_j^\top-e_je_{j+1}^\top\in\so_{n+1}$ 
as elements of the Lie algebra $\spin_{n+1} = \so_{n+1}$.
The identification between the two Lie algebras
is induced by the covering map $\Pi: \Spin_{n+1} \to \SO_{n+1}$.
A sufficiently smooth curve $\Gamma:J\to\Spin_{n+1}$ 
is called \emph{locally convex} if
its logarithmic derivative is of the form 
\begin{equation}
\label{equation:locallyconvex}
(\Gamma(t))^{-1}\Gamma'(t)=
\sum_{j\in\nmesmo}\kappa_j(t)\fa_j, 
\end{equation} 
for positive functions $\kappa_1,\ldots,\kappa_n:J\to(0,+\infty)$. 
Recall that $\Pi_{\Flag}: \Spin_{n+1} \to \Flag_{n+1}$ 
is the universal cover of the flag space.
Let $\Quat_{n+1} = \Pi_{\Flag}^{-1}[\Pi_{\Flag}[\{1\}]] \subset \Spin_{n+1}$:
this is a subgroup of order $2^{n+1}$,
isomorphic to $\pi_1(\Flag_{n+1})$.

Given sufficiently large $r\in\NN^\ast = \{1, 2, 3, \ldots\}$
and $z_0,z_1\in\Spin_{n+1}$, 
let $\cL_n^{[C^r]}(z_0;z_1)$ denote the space of 
locally convex curves $\Gamma:[0,1]\to\Spin_{n+1}$ 
of differentiability class $C^r$ with endpoints 
$\Gamma(0)=z_0$ and $\Gamma(1)=z_1$. 
We endow this space with the usual  
$C^r$ topology and consider the problem 
of describing its homotopy type. 
It is easy to see that $\cL_n^{[C^r]}(z_0;z_1)$ 
is homeomorphic to $\cL_n^{[C^r]}(1;z_0^{-1}z_1)$
so we assume $z_0 = 1$.
As discussed in \cite{Goulart-Saldanha1} (among others),
the value of $r$ does not affect the homotopy type
so we drop it and write $\cL_n(1;z)$.
It is proved in \cite{Goulart-Saldanha0,Saldanha-Shapiro} that
for every $z \in \Spin_{n+1}$ there exists $q \in \Quat_{n+1}$
such that $\cL_n(1;z)$ and $\cL_n(1;q)$
are homotopy equivalent
(the map from $z$ to $q$ is explicitly described).
We aim to study the homotopy type of $\cL_n(1;q)$, 
$q \in \Quat_{n+1}$, or equivalently that of
the disconnected space
\[ \cL_n = \bigsqcup_{q \in \Quat_{n+1}} \cL_n(1;q). \]
In this paper we contruct and study a CW complex $\cD_n$
which is homotopy equivalent to $\cL_n$.
We shall see that $\cD_n$ is infinite but locally finite.
Cells are naturally labeled by finite words
in the alphabet $S_{n+1} \smallsetminus \{e\}$
($e \in S_{n+1}$ being the identity):   
such words correspond to \textit{itineraries}
of curves $\Gamma \in \cL_n$ (itineraries are discussed
at length in \cite{Goulart-Saldanha1}).
We proceed to discuss the symmetric group $S_{n+1}$
and itineraries of curves.
In the process we review the main results from \cite{Goulart-Saldanha1}. 

\bigbreak

We consider the symmetric group $S_{n+1}$ as a Coxeter-Weyl group,
that is, we use as generators the transpositions
$a_j = (j,j+1)$, $1 \le j \le n$.
The number of inversions $\inv(\sigma)$ of a permutation $\sigma$
is its length with these generators.
Notice that for $\sigma \in S_{n+1}$ and
$j \in \nmaisum = \{1, 2, \ldots, n, n+1\}$
we write $j^\sigma$ (instead of $\sigma(j)$);
composition is defined by
$(j^{\sigma_0})^{\sigma_1} = j^{(\sigma_0\sigma_1)}$.
Let $\eta\in S_{n+1}$ be the top permutation,
i.e., the permutation with the maximum number of inversions: 
$\inv(\eta)=n(n+1)/2$, $j^\eta = n+2-j$. 
The element $\eta$ is the longest element of $S_{n+1}$ 
with the above generators
and is often denoted in the literature by $w_0$.
Permutations are often written in the generators $a_j$;
for $n \le 4$ we also write $a = a_1$, $b = a_2$, $c = a_3$ and $d = a_4$.
Thus, for instance, for $n = 2$ we have $\eta = aba = bab$;
for $n = 3$ we have $\eta = abacba$.

Let $\B_{n+1}$ be the 
group of signed permutation matrices;
this is also Coxeter-Weyl group 
but we shall not use such generators.
Let $\B_{n+1}^{+} = \B_{n+1}\cap \SO_{n+1}$.
Let $\widetilde\B_{n+1}^{+} \subset \Spin_{n+1}$
be the preimage of $\B_{n+1}^{+}$ 
under the covering map $\Pi: \Spin_{n+1} \to \SO_{n+1}$.
The subgroup $\Quat_{n+1} \subset \widetilde\B_{n+1}^{+}$
is the preimage (under $\Pi$)
of the subgroup $\Diag_{n+1}^{+} \subset \B_{n+1}^{+}$
of diagonal matrices.
We have a short exact sequence
\begin{equation}
\label{equation:Quat}
1 \to \Quat_{n+1} \to \widetilde\B_{n+1}^{+} \to S_{n+1} \to 1. 
\end{equation}
The spin group is stratified by contractible Bruhat cells
$\Bru_z \subset \Spin_{n+1}$,
$z \in \widetilde\B_{n+1}^{+}$.
The unsigned (and more usual) Bruhat cells are 
\begin{equation}
\label{equation:Bruz}
\Bru_\sigma = \bigsqcup_{z \in \Pi^{-1}[\{\sigma\}]} \Bru_z,
\qquad \sigma \in S_{n+1}, 
\end{equation}
where $\Pi: \widetilde\B_{n+1}^{+} \to S_{n+1}$
is the homomorphism in the exact sequence above
(we shall say more about the subsets
$\Pi^{-1}[\{\sigma\}] \subset \widetilde\B_{n+1}^{+}$ below).
For $z \in \widetilde\B_{n+1}^{+}$ with $\Pi(z) = \sigma$,
$\Bru_z \subset \Bru_\sigma$
is the connected component of $\Bru_\sigma$ containing $z$.

The set $\Bru_\eta$ is a dense open subspace of the spin group
with $2^{n+1}$ contractible connected components. 
Let $\Sing_{n+1}=\Spin_{n+1}\smallsetminus\Bru_\eta$ 
be the union of the non open Bruhat cells.
We define the \emph{singular set} of a 
locally convex curve $\Gamma:[0,1]\to\Spin_{n+1}$ as 
$\sing(\Gamma)= \Gamma^{-1}[\Sing_{n+1}]\smallsetminus\{0,1\}$.
It turns out that $\sing(\Gamma)$ is finite
and a continuous function of $\Gamma \in \cL_n$
(with the Hausdorff metric in compact subsets of $(0,1)$):
this is Theorem 1 in \cite{Goulart-Saldanha1}. 
The elements of $\sing(\Gamma)$ are sometimes called
the \textit{moments of non-transversality} of $\Gamma$
(as in \cite{Saldanha-Shapiro-Shapiro}).
Given $\Gamma \in \cL_n$,
write $\sing(\Gamma) = \{\tau_1 < \cdots < \tau_\ell\}$.
Let $\sigma_1, \ldots, \sigma_\ell \in S_{n+1}$
be such that $\Gamma(\tau_j) \in \Bru_{\eta\sigma_j}$:
the itinerary of $\Gamma$ is the word
$\iti(\Gamma) = (\sigma_1, \ldots, \sigma_{\ell})$.
Figure \ref{fig:aba-cw} shows itineraries of a few
locally convex curves in $\Ss^2$.

\begin{figure}[ht]
\def\svgwidth{75mm}
\centerline{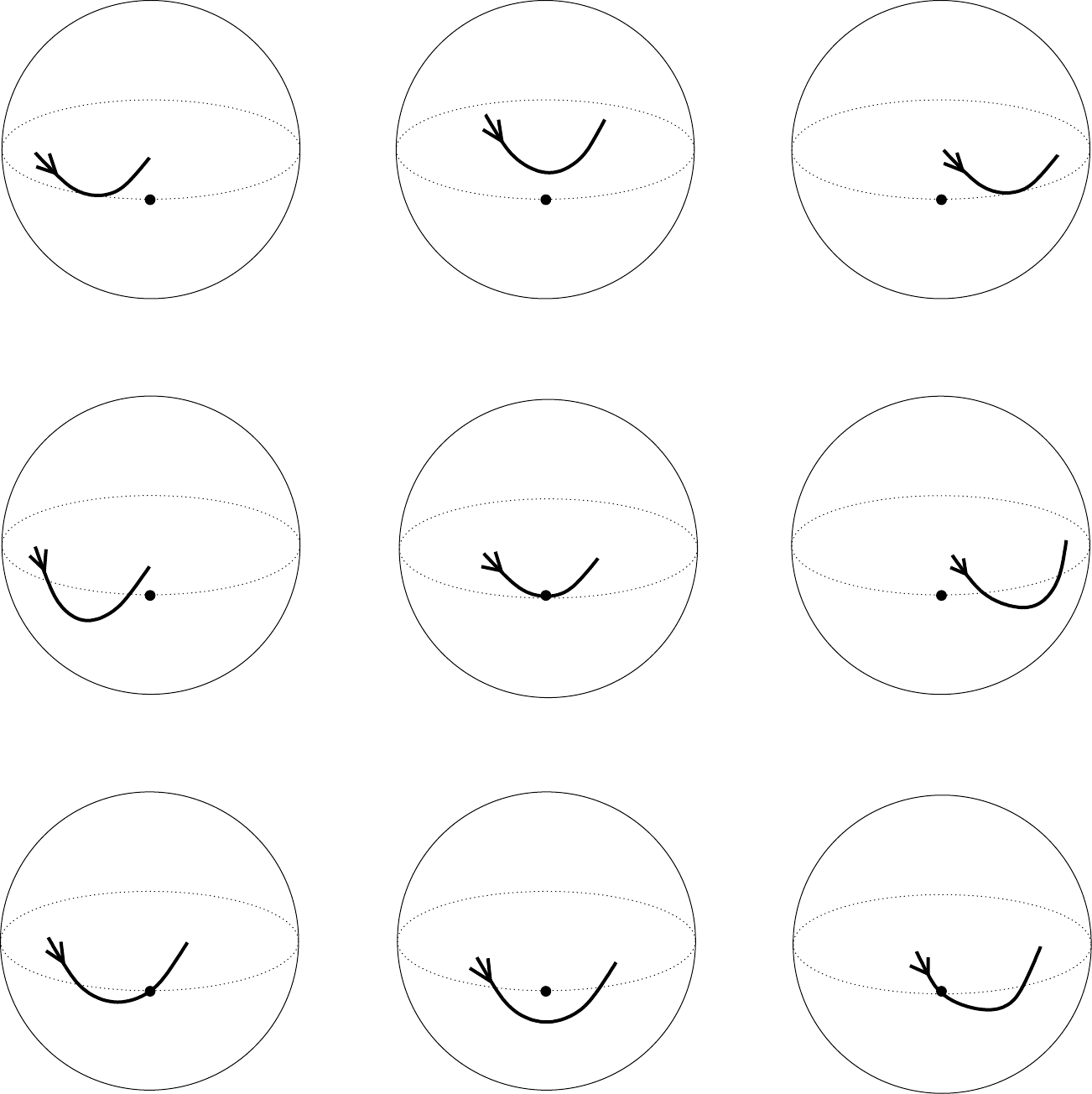}
\caption{A family of curves in $\cL_2$.
The equator is dashed and the fat dot indicates $e_1$.
The vector $e_2$ is at the right.
Here we use the simplified notation for words in $\Word_{2}$:
for instance, $abab$ denotes $(a,b,a,b)$ and $[aba]$ denotes $(aba)$.}
\label{fig:aba-cw}
\end{figure}


\smallskip

Let $\Word_{n}$ be the set of finite words in the 
alphabet $S_{n+1}\smallsetminus\{e\}$.
A simplified notation for
the word $w =(\sigma_1,\ldots,\sigma_\ell)\in\Word_n$
is often used, which we exemplify for a few words in $\Word_3$:
\[ acbac = (a,c,b,a,c), \quad
[ac]b[ac] = (ac,b,ac), \quad [acb] = (acb). \]
\goodbreak
For $w =(\sigma_1,\ldots,\sigma_\ell)\in\Word_n$,
define
\begin{equation}
\label{equation:dim}
\dim(w) = \dim(\sigma_1) + \cdots + \dim(\sigma_\ell), 
\qquad \dim(\sigma) = \inv(\sigma) - 1. 
\end{equation}
Let $\cL_n[w] \subset \cL_n$ be the set of curves $\Gamma$
with $\iti(\Gamma) = w$.
As in \cite{Goulart-Saldanha1},
we prefer to work with Hilbert spaces,
more precisely with the Sobolev space $H^r$.
For large $r \in \NN$,
let $\cL_n^{[H^r]}$ be the Hilbert manifold
of locally convex curves $\Gamma$ of class $H^r$
with the usual boundary conditions.
From now on, assume $\cL_n = \cL_n^{[H^r]}$;
recall that this does not affect the homotopy type of $\cL_n$.
Theorem 2 in  \cite{Goulart-Saldanha1} 
tells us that $\cL_n[w] \subset \cL_n$ 
is a (non empty) contractible embedded submanifold
of codimension $\dim(w)$ and of class $C^{r-1}$.
Notice that  \cite{Goulart-Saldanha1} 
discusses the case $H^1$ in considerable detail:
here we consistently prefer the case $H^r$, $r$ large.

\bigbreak

This stratification by itineraries of $\cL_n$
is not as well behaved as might be desired.
We do not, for instance, have the Whitney property.
Indeed, for $n = 3$ and
$w_0 = [ac]b[ac] = (a_1a_3,a_2,a_1a_3)$ and $w_1 = [acb] = (a_1a_3a_2)$
we have
\[ \cL_n[w_1] \cap \overline{\cL_n[w_0]} \ne \varnothing,
\qquad
\cL_n[w_1] \not\subseteq \overline{\cL_n[w_0]}. \]
Notice that $\cL_3[w_0], \cL_3[w_1] \subset \cL_3$
both have codimension 2:
this example is extensively discussed in Section 9
of \cite{Goulart-Saldanha1}. 
On the other hand,
the stratification of $\cL_n$ is well behaved enough for
the construction of a homotopically equivalent CW complex $\cD_n$.
In a sense, the CW complex is a dual of the stratification.

The maps
$\longacute, \longgrave: S_{n+1} \to \widetilde\B_{n+1}^{+}$
and $\longhat:  S_{n+1} \to \Quat_{n+1}$
are discussed in \cite{Goulart-Saldanha0}
and revised in Section \ref{sect:permutations} below.
For $w \in\Word_n$ define $\hat w \in \Quat_{n+1}$ by
\[ w = \sigma_1 \cdots \sigma_\ell =
(\sigma_1,\ldots,\sigma_\ell)\in\Word_n, \qquad
\hat w=\hat\sigma_1\cdots\hat\sigma_\ell\in\Quat_{n+1};  \]
here $\hat\sigma = \longhat(\sigma) \in \Quat_{n+1}$.
Theorem 2 in  \cite{Goulart-Saldanha1} 
tells us that
$\cL_n[w] \subset \cL_n(1;\acute\eta\hat w\acute\eta) \subset \cL_n$. 
Thus, words with different values of $\longhat$
do not interact directly and can be studied separately.

\bigbreak

There is a form of duality (a variant of Poincar{\'e} duality)
which allows us to relate the stratification of $\cL_n$
with a rather explicit CW complex $\cD_n$.
We now present a short statement;
a more detailed one is given in Theorem \ref{theo:newCW}.
The statement of Theorem \ref{theo:newCW}
assumes familiarity with the partial order $\sqsubseteq$ in $\Word_n$
(discussed in Section \ref{sect:partial}),
a variant of another partial order $\preceq$ in $\Word_n$
(introduced in \cite{Goulart-Saldanha1},
and based on previous work by Shapiro and Shapiro).

\begin{theo}
\label{theo:shortCW}
There exists a CW complex $\cD_n$
with one cell $c_w$ of dimension $\dim(w)$
for each word $w \in \Word_n$.
Furthermore, there exists a continuous map $i: \cD_n \to \cL_n$
which is a homotopy equivalence.
\end{theo}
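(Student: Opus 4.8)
The plan is to construct $\cD_n$ from the combinatorics of the partial order $\sqsubseteq$, realize it geometrically inside $\cL_n$ via a map $i$, and deduce that $i$ is a homotopy equivalence from a handle-like decomposition of $\cL_n$ along the itinerary stratification. Since $\cL_n[w] \subset \cL_n(1;\acute\eta\hat w\acute\eta)$, words with distinct values of $\hat w$ never interact, so it suffices to work in a single component $\cL_n(1;q)$, $q \in \Quat_{n+1}$, and to define $\cD_n$ as a disjoint union over the fibres of $w \mapsto \hat w$. Within one such component, $\cD_n$ is declared to have one cell $c_w$ of dimension $\dim(w)$ per word $w$, with $\overline{c_w}$ consisting of the cells $c_{w'}$ for $w' \sqsubseteq w$; because $\dim$ is the rank function of the locally finite poset $(\Word_n,\sqsubseteq)$, each $c_w$ has finitely many faces. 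To attach $c_w$ one needs the subcomplex $\partial c_w := \bigcup_{w' \sqsubset w} c_{w'}$ to be (homotopy equivalent to) a sphere $S^{\dim(w)-1}$ with a degree-one attaching map; for small $\dim(w)$ this is exactly the content of the explicit glueing instructions, and in general it should follow from the structure of the lower intervals of $\sqsubseteq$, which in turn reflects the local conical structure of the stratification established below.

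Next I would build $i$ together with geometric "dual cells" $\delta_w \subset \cL_n$, by induction on $d = \dim(w)$. For $d = 0$, pick one curve $p_w$ in each codimension-zero stratum $\cL_n[w]$, set $\delta_w = \{p_w\}$, and let this define $i$ on the $0$-skeleton. For the inductive step, fix $w$ with $\dim(w) = d$ and a point $p_w \in \cL_n[w]$; using that $\cL_n[w]$ is an embedded submanifold of finite codimension $d$ (Theorem 2 of \cite{Goulart-Saldanha1}), take a normal slice $\Sigma_w$ through $p_w$, i.e.\ a $d$-disk meeting $\cL_n[w]$ transversally only at $p_w$. The key geometric claim is that $\Sigma_w$ can be chosen so that $\Sigma_w \smallsetminus \{p_w\}$ lies in the union of the strata $\cL_n[w']$ with $w' \sqsubset w$, meeting each transversally along a set isotopic to the previously built $\delta_{w'}$; then $\partial\Sigma_w = \bigcup_{w'\sqsubset w}\delta_{w'}$ is a copy of $\partial c_w$, and one sets $\delta_w = \Sigma_w$ and lets $i$ carry $c_w$ onto $\delta_w$ by the cone on the already-defined $i|_{\partial c_w}$. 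The union $\bigcup_w \delta_w$ is the image of $i$, with $i$ an embedding on each open cell and $\delta_w$ transverse to $\cL_n[w]$.

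To prove $i$ is a homotopy equivalence I would use a handle filtration. Let $\cL_n^{[k]} \subset \cL_n$ be the subset of curves whose itinerary has dimension at most $k$ (open, by upper semicontinuity of $\Gamma\mapsto\dim(\iti(\Gamma))$, a consequence of Theorem 1 of \cite{Goulart-Saldanha1}), so that $\cL_n = \bigcup_k \cL_n^{[k]}$ and $\cL_n^{[k]} \smallsetminus \cL_n^{[k-1]} = \bigsqcup_{\dim(w)=k}\cL_n[w]$. The geometric claim above, applied uniformly along each codimension-$k$ stratum, should show that $\cL_n^{[k]}$ is obtained from $\cL_n^{[k-1]}$ by attaching, for each $w$ with $\dim(w)=k$, a tubular neighborhood $\cL_n[w] \times D^k$ along a map $\cL_n[w]\times S^{k-1}\to\cL_n^{[k-1]}$ which over $p_w$ is the attaching map of $c_w$. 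Since $\cL_n[w]$ is contractible (Theorem 2 of \cite{Goulart-Saldanha1}), $\cL_n[w] \times D^k$ collapses onto $\delta_w \cong c_w$ and $\cL_n[w]\times S^{k-1}$ onto $\partial\delta_w$, compatibly; the gluing lemma applied to these pushouts then identifies $\cL_n^{[k]}$, up to homotopy, with $\cL_n^{[k-1]}$ with one $k$-cell $c_w$ attached per $w$ — exactly the pushout defining $\cD_n^{(k)}$ from $\cD_n^{(k-1)}$. Induction on $k$ gives that $i$ restricts to a homotopy equivalence $\cD_n^{(k)}\to\cL_n^{[k]}$; since $\pi_\ast$ commutes with the increasing union of open sets $\cL_n^{[k]}$, the map $i$ is a weak equivalence, and as $\cL_n$ is a Hilbert manifold and hence has the homotopy type of a CW complex, Whitehead's theorem upgrades this to a homotopy equivalence $i: \cD_n \to \cL_n$.

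The main obstacle is the geometric claim underpinning the last two paragraphs. Because the stratification is not Whitney — the example $w_0 = [ac]b[ac]$, $w_1 = [acb]$ in $\cL_3$ shows that $\overline{\cL_n[w']}$ can meet $\cL_n[w]$ without containing it — the strata carry no compatible tubular-neighborhood data and no off-the-shelf stratified-space theory applies. The normal slices, the attaching maps, and their mutual compatibility must be produced by a delicate induction anchored to $\sqsubseteq$ rather than to naive closure, effectively re-deriving just enough local structure to make $\partial c_w$ a sphere and to make $\cL_n^{[k]} \smallsetminus \cL_n^{[k-1]}$ conelike relative to $\cL_n^{[k-1]}$. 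This is precisely where the explicit description of the strata and their adjacencies from \cite{Goulart-Saldanha1}, together with the careful definition of $\sqsubseteq$, carries the weight of the proof.
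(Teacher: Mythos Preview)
Your overall strategy---normal slices as dual cells, an inductive construction of $i$, and a filtration argument---is close in spirit to the paper's proof, but it rests on two structural claims about $\sqsubseteq$ that are false, and these are exactly what the paper's argument is designed to avoid.

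First, $\dim$ is \emph{not} the rank function of $(\Word_n,\sqsubseteq)$: we have $[ac]b[ac] \sqsubset [acb]$ in $\Word_3$ with $\dim([ac]b[ac]) = \dim([acb]) = 2$ (Remark~\ref{remark:preceqdim}). Consequently your proposed boundary $\partial c_w := \bigcup_{w' \sqsubset w} c_{w'}$ is not a $(\dim(w)-1)$-complex, let alone a sphere: for $w = [acb]$ it contains the $2$-cell $c_{[ac]b[ac]}$. The ``structure of lower intervals'' cannot supply the attaching maps, and your handle filtration $\cL_n^{[k]}$ by dimension does not add strata one layer at a time, since two strata of the same codimension can satisfy $\cL_n[w_1] \cap \overline{\cL_n[w_0]} \ne \varnothing$.

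The paper sidesteps this by never asking the poset to produce a sphere. The induction runs not over $\dim$ but over a well-ordering extending $\sqsubseteq$; at each step one adds a single word $w_0$ to a lower set $\tilde\Ideal$, and the attaching sphere is simply the boundary $S^{\dim(w_0)-1}$ of a small transversal disk $\phi:\DD^{\dim(w_0)}\to\cL_n$ through a point of $\cL_n[w_0]$. That boundary sits in $\cL_n[\Ideal^\ast(w_0)]$ by construction, and the inductively known weak equivalence $\cD_n[\Ideal^\ast(w_0)] \to \cL_n[\Ideal^\ast(w_0)]$ is what lets one replace $\phi|_{S^{\dim(w_0)-1}}$ by a cellular map $g_{w_0}$ (Lemma~\ref{lemma:goodstep}). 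The subcomplex $\cD_n[\Ideal^\ast(w_0)]$ may well contain cells of dimension $\ge \dim(w_0)$; the point is only that $g_{w_0}$ lands in it, not that it is a sphere. The proof that the resulting map is a weak equivalence then uses transversality to $\cL_n[w_0]$ and its contractibility to push maps $M\to\cL_n[\Ideal]$ off $\cL_n[w_0]$ and (inductively) into $\cD_n[\tilde\Ideal]$, much as in your handle argument---but one cell at a time, not one dimension at a time.
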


A similar but simpler example is presented in \cite{Alves-Saldanha2}:
there we have a finite dimensional manifold
$\BL_\sigma \subset \Lo^{1}_{n+1}$ ($\sigma \in S_{n+1}$)
which is stratified by finitely many contractible submanifolds
$\BLS_\varepsilon \subseteq \BL_\sigma$.
A finite homotopically equivalent CW complex $\BLC_\sigma$ is then constructed.
The result in \cite{Alves-Saldanha2} most relevant
for the above description is Theorem 2
(for the homotopy equivalence $i_\sigma: \BLC_\sigma \to \BL_\sigma$),
which in turn relies on Lemma 14.1
(a topological result which validates the inductive step).
The present paper does not presuppose these results.

Glueing maps for cells of dimensions $1$ and $2$ are described explicitly.
This allows us to construct 
the $1$ and $2$-skeletons of $\cD_n$
in Sections \ref{sect:Dn1} and \ref{sect:Dn2}.
The following result is then proved.

\begin{theo}
\label{theo:simplyconnected}
Let $z_0, z_1 \in \Spin_{n+1}$.
Every connected component of $\cL_n(z_0;z_1)$ is simply connected.
\end{theo}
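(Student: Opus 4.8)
The plan is to reduce everything to the combinatorics of the $2$-skeleton of $\cD_n$. By the homeomorphism $\cL_n(z_0;z_1)\cong\cL_n(1;z_0^{-1}z_1)$ and the homotopy equivalences $\cL_n(1;z)\simeq\cL_n(1;q)$ with $q\in\Quat_{n+1}$, recalled in the introduction, it suffices to prove that every connected component of $\cL_n$ is simply connected; by Theorem~\ref{theo:shortCW} this is equivalent to the corresponding statement for the CW complex $\cD_n$. The convex component is contractible (it is the stratum $\cL_n[\emptyword]$) and corresponds to the isolated $0$-cell $c_{\emptyword}$, so nothing needs to be done there. Fix one of the other components; equivalently, fix $q\in\Quat_{n+1}$ and restrict attention to the cells $c_w$ with $\hat w=q$ lying in that component. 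Since $\pi_1$ of a CW complex depends only on its $2$-skeleton, we need only the cells $c_w$ with $\dim(w)\le 2$, all of which — together with their attaching maps — are produced explicitly in Sections~\ref{sect:Dn1} and~\ref{sect:Dn2}. (The answer we aim for is the one expected, since the non-convex components are homotopy equivalent to $\Omega\Spin_{n+1}$ by the $\cY_n$ result discussed below, and $\pi_1(\Omega\Spin_{n+1})\cong\pi_2(\Spin_{n+1})=0$; the present argument does not use that, however.)

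Next I would write down a presentation of the fundamental group of the fixed component. Its $0$-cells are the words $w$ all of whose letters are simple transpositions $a_j$; a $1$-cell $c_u$ ($\dim u=1$) has exactly one letter $\sigma$ with $\inv(\sigma)=2$, and its two endpoints are the $0$-cells prescribed by the attaching rule of Section~\ref{sect:Dn1} — a commutation move $(\ldots,a_j,a_{j'},\ldots)\leftrightarrow(\ldots,a_{j'},a_j,\ldots)$ when $\sigma=a_ja_{j'}$ with $|j-j'|\ge 2$, and the corresponding rule when $\sigma$ is a length-two $3$-cycle $a_ja_{j\pm1}$; a $2$-cell $c_v$ ($\dim v=2$) is attached along the explicit loop of Section~\ref{sect:Dn2} coming from a braid relation $a_ja_{j+1}a_j=a_{j+1}a_ja_{j+1}$, from a pair of independent commutations, or from one of the non-Whitney configurations such as $[ac]b[ac]$ versus $[acb]$. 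Because the component is path-connected, its $1$-skeleton is connected; choose a spanning tree $T$ and a basepoint $0$-cell. Then $\pi_1$ is presented with one generator for each $1$-cell outside $T$ and one relator for each $2$-cell, the relator being the word read off from the attaching map of that $2$-cell.

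The core of the argument is to show that these relators trivialise the group, i.e. that every generator is a product of conjugates of relators. I would argue by induction on a complexity measure on $\Word_n$ — total length, refined by the partial order $\sqsubseteq$ — as follows: given a closed edge-path, locate an edge of maximal complexity, use a suitable $2$-cell to rewrite the corresponding sub-path in terms of strictly simpler edges and tree edges, and iterate until only tree edges remain. Equivalently, one may package this as an acyclic matching (a discrete Morse function) on the $2$-skeleton of the fixed component that cancels every cell except the basepoint $0$-cell. The step I expect to be the main obstacle is exactly this reduction: one must verify, uniformly in $n$ and over the infinitely many words in a fixed $\longhat$-class, that the $2$-cells listed in Section~\ref{sect:Dn2} really do supply every relator that is needed. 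The delicate points are the non-Whitney configurations of type $[ac]b[ac]$ / $[acb]$, where the itinerary stratification is irregular and one must check that the explicit glueing of the codimension-$2$ cells still kills the relevant loop, and the points where a braid move is invoked in the rewriting and one must confirm that the corresponding braid-type $2$-cell is present. Once this bookkeeping is carried out, every generator is a consequence of the relators, so $\pi_1=1$ and the theorem follows.
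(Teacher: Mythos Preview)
Your proposal correctly reduces to the $2$-skeleton of $\cD_n$, but the core step --- showing that the explicit $2$-cells of Section~\ref{sect:Dn2} kill every generator --- is only outlined, not carried out. You yourself flag this as ``the main obstacle,'' and it is a genuine one: there are infinitely many $1$-cells in each component, the $2$-cell attachments are heterogeneous (braid-type, commutation-type, and the irregular $[acb]$-type), and no concrete induction scheme or acyclic matching is actually produced. The paper's Remark~\ref{rem:baH} confirms that even filling the single trapezoid in Figure~\ref{fig:baH} explicitly with $2$-cells is nontrivial enough that the authors leave it as an open problem.

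The paper's proof sidesteps this combinatorial verification by a different mechanism. Rather than showing directly that every loop bounds in the $2$-skeleton, it proves (Proposition~\ref{prop:simplyconnected}) that every map $f_0:\Ss^1\to\cD_n$ is \emph{loose}, i.e.\ homotopic to $aaaaf_0$. This is achieved by constructing, for each $1$-cell $c_w$, an explicit filled square $w^H$ joining $w$ to $aaaaw$; most cases ($[a_ka_{k+1}]^H$, $[a_ka_l]^H$ for $l>k+1$, $[a_{k+1}a_k]^H$ for $k\ge2$) are handled by concrete $2$-cell diagrams (Figures~\ref{fig:acH}--\ref{fig:cbH}), but the residual case $[ba]^H$ is closed not by a combinatorial filling but by appealing to Proposition~\ref{prop:ideal0isloose} and Lemma~\ref{lemma:loose}. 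Once every loop is loose, Lemma~\ref{lemma:loose} says that a loose map null-homotopic in $\Omega\Spin_{n+1}$ is already null-homotopic in $\cL_n$; since $\pi_1(\Omega\Spin_{n+1})=\pi_2(\Spin_{n+1})=0$, the theorem follows. The key idea your plan lacks is this loose/add-loop comparison with $\Omega\Spin_{n+1}$, which replaces the hard infinite combinatorial check by a single known vanishing.

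A side remark: your parenthetical that the non-convex components are homotopy equivalent to $\Omega\Spin_{n+1}$ is not correct in general --- Theorem~\ref{theo:Y} asserts this only for the open subset $\cY_n$, and for $q\in Z(\Quat_{n+1})$ the full component is typically strictly richer (see Equation~\eqref{equation:L2} and Theorem~\ref{theo:L3}).
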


In order to state our third main result we need
a few algebraic definitions.
A permutation $\sigma \in S_{n+1}$ is \textit{parity alternating}
if $k^\sigma \not\equiv (k+1)^\sigma \pmod 2$
for all $k$, $1 \le k \le n$.
We shall see that $\sigma$ is parity alternating
if and only if $\hat\sigma \in Z(\Quat_{n+1})$
(the center of $\Quat_{n+1}$).
For $n > 3$, if $\sigma \ne e$ is parity alternating
then $\inv(\sigma) \ge 3$.
Let $\Ideal_Y \subset \Word_n$ be the set of words
containing at least one letter which is \textit{not} parity alternating.
It is not hard to verify that $\Ideal_Y \subset \Word_n$ is a lower set 
with respect to the partial orders
$\preceq$ and $\sqsubseteq$.
It then follows from \cite{Goulart-Saldanha1}
(or from Theorem \ref{theo:newCW})
that $\cY_n = \cL_n[\Ideal_Y] \subset \cL_n$ is an open subset.
For $z \in \Spin_{n+1}$,
let $\Omega\Spin_{n+1}(1;z)$ be the space of all continuous curves
$\Gamma: [0,1] \to \Spin_{n+1}$, $\Gamma(0) = 1$ and $\Gamma(1) = z$.
The space  $\Omega\Spin_{n+1}(1;z)$ is homeomorphic
to the loop space  $\Omega\Spin_{n+1} = \Omega\Spin_{n+1}(1;1)$ 
and we have a natural inclusion
$\cL_n(1;z) \subset \Omega\Spin_{n+1}(1;z)$.

\begin{theo}
\label{theo:Y}
Let $\cY_n \subset \cL_n$ be the open subset constructed above.
For $q \in \Quat_{n+1}$ let $\cY_n(1;q) = \cY_n \cap \cL_n(1;q)$.
For any $q \in \Quat_{n+1}$,
the open subset $\cY_n(1;q) \subset \cL_n(1;q)$
is dense in the connected component of non convex curves
and the inclusion $\cY_n(1;q) \subset \Omega\Spin_{n+1}$
is a homotopy equivalence.
\end{theo}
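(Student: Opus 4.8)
\emph{Overview and the density claim.}
The statement has two parts: density, which is comparatively soft, and the homotopy equivalence, which is the substantial one. For density, fix $q\in\Quat_{n+1}$ and work inside a non convex connected component of $\cL_n(1;q)$; there every curve has non empty itinerary, and the codimension $0$ strata present are exactly the $\cL_n[w]$ with $\dim(w)=0$, that is, those with every letter a transposition. For $n\ge 2$ no transposition $a_j$ is parity alternating — if $j\ge 2$ the indices $j-1$ and $j+1$ have equal parity, and $a_1$ fails at $k=2$ since $1\equiv 3\pmod 2$ — so each such $w$ lies in $\Ideal_Y$ and every codimension $0$ stratum of the component is contained in $\cY_n(1;q)$. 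By Theorem~1 of \cite{Goulart-Saldanha1} the complement of the union of the codimension $0$ strata is a locally finite union of embedded submanifolds of positive codimension, hence a countable union of locally closed nowhere dense subsets; since $\cL_n$ is a Hilbert manifold, Baire's theorem shows this union has empty interior, so $\cY_n(1;q)$ is dense in the component. (Nonemptiness of $\cY_n(1;q)$ for each $q$ is part of the assertion; it also follows from Theorem~2 of \cite{Goulart-Saldanha1}.) Connectedness of $\cY_n(1;q)$ will follow a posteriori from the homotopy equivalence.

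\emph{Reduction to a weak equivalence of CW complexes.}
Since $\Ideal_Y$ is a lower set for $\sqsubseteq$, the cells $c_w$ with $w\in\Ideal_Y$ and $\cL_n[w]\subset\cL_n(1;q)$ span a subcomplex $\cD_n^Y(q)\subseteq\cD_n$, and the homotopy equivalence $i$ of Theorem~\ref{theo:newCW} restricts to a homotopy equivalence $\cD_n^Y(q)\to\cY_n(1;q)$ (this compatibility with substratifications being part of the construction underlying Theorem~\ref{theo:newCW}). Composing with the inclusion and with the homeomorphism $\Omega\Spin_{n+1}(1;q)\cong\Omega\Spin_{n+1}$ yields a map $\iota:\cD_n^Y(q)\to\Omega\Spin_{n+1}$ out of a CW complex. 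Both $\cD_n^Y(q)$ and $\Omega\Spin_{n+1}$ are simply connected — the latter because $\pi_1\Omega\Spin_{n+1}=\pi_2\Spin_{n+1}=0$, the former by Theorem~\ref{theo:simplyconnected} — and both have the homotopy type of a CW complex, so by Whitehead's theorem it is enough to show that $\iota$ induces an isomorphism on $\pi_k$ for every $k$.

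\emph{Surjectivity on homotopy groups.}
A class in $\pi_k(\Omega\Spin_{n+1}(1;q))$ is represented by some $\phi:S^k\to\Omega\Spin_{n+1}(1;q)$. First deform $\phi$ into $\cL_n(1;q)$ by grafting small locally convex spirals into its members — a known flexibility property of locally convex curves (see \cite{Saldanha-Shapiro}). A nonzero class cannot be pushed into the contractible convex component, so the resulting family lies in a non convex component; then, using that $\sing(\Gamma)$ and its associated letters vary continuously with $\Gamma$ (Theorem~1 of \cite{Goulart-Saldanha1}), graft one fixed small ``transposition crossing'' gadget near a moment of non transversality, uniformly over the family. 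This pushes the family into $\cY_n(1;q)$ without changing its class in $\Omega\Spin_{n+1}$, a small grafted gadget being homotopic to none. The zero class is realized by a constant map. Hence $\iota_\ast$ is surjective.

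\emph{Injectivity on homotopy groups, the main obstacle.}
It remains to prove that a map $\psi:S^k\to\cY_n(1;q)$ which becomes null homotopic in $\Omega\Spin_{n+1}$ is null homotopic in $\cY_n(1;q)$; equivalently, together with surjectivity, that the homotopy fibre of $\iota$ is weakly contractible. Every curve in the image of $\psi$ carries a transposition crossing, hence a loop, and \textbf{the heart of the matter} is a loop-flexibility statement — the analogue for general $n$ of the loop-adding lemmas of \cite{Saldanha3} for $n=2$ and of Lemma~14.1 of \cite{Alves-Saldanha2} — asserting that a locally convex curve carrying a loop has enough room that any homotopy of it in $\Omega\Spin_{n+1}$ can be followed by a homotopy through locally convex curves still carrying a loop (hence through $\cY_n$), the required corrections being produced by convex integration localized inside the loop. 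One applies this in families by induction over the locally finite cell structure of $\cD_n^Y(q)$: a null homotopy $\Phi:D^{k+1}\to\Omega\Spin_{n+1}(1;q)$ of $\iota\circ\psi$ is lifted into $\cY_n(1;q)$ one cell at a time, the obstruction to extending over a $(j{+}1)$-cell being a class in the $j$-th homotopy group of the homotopy fibre of ``curves carrying a loop'' $\hookrightarrow\Omega\Spin_{n+1}$, which vanishes by the flexibility statement. This gives injectivity, so $\iota$ is a homotopy equivalence; the disjoint union over $q\in\Quat_{n+1}$ produces the equivalence with $2^{n+1}$ copies of $\Omega\Spin_{n+1}$, and in particular each $\cY_n(1;q)$ is connected, which also completes the density claim. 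Making the loop-flexibility precise and uniform in families while keeping all itineraries inside $\Ideal_Y$ is where essentially all of the difficulty resides.
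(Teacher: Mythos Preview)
Your density argument is correct and essentially matches the paper's. Your surjectivity sketch is close but the grafting step is unnecessary and ill-defined: after the add-loop construction every curve already begins with a non-convex closed arc, and the paper simply homotopes this initial arc (using Proposition~\ref{prop:connected}) so that its itinerary starts with $aaaa$; this forces the whole family into $\cY_n$. Note also that your invocation of Theorem~\ref{theo:simplyconnected} for $\cD_n^Y(q)$ is circular: that theorem is about $\cL_n$, not about the subset $\cY_n$.

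The genuine gap is in injectivity. You correctly locate the difficulty in a ``loop-flexibility'' statement, but you neither state it precisely nor prove it, and your formulation drifts: curves in $\cY_n$ need not carry a transposition letter, let alone a closed loop --- the defining condition is only that some letter lies in $Y=S_{n+1}\smallsetminus S_{\PA}$, so the sentence ``carries a transposition crossing, hence a loop'' is false on both counts. The paper's route is more concrete and avoids your obstruction-theoretic lifting entirely. The key technical result is Lemma~\ref{lemma:Ynk}: the subcomplex $\cD_n[\Ideal_Y]$ is \emph{loose}, meaning every compact family $f:K\to\cY_n$ is homotopic \emph{inside $\cY_n$} to $aaaaf$. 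This is established through a chain of propositions: first $\Ideal_{[0]}$ is weakly loose (Proposition~\ref{prop:ideal0isloose}, a long explicit construction with nice loops and careful covers), then $\Ideal_{Y_2}$ is strongly loose (Proposition~\ref{prop:Yn2}), and finally each $\Ideal_{Y_k}$ is loose by induction via collapses of pairs $(\sigma,\sigma')$, using the $\blacktriangleleft$ combinatorics of Lemma~\ref{lemma:vader} and Proposition~\ref{prop:blacktriangle}. Given looseness, injectivity is a two-line trick: if $\alpha_0:\Ss^k\to\cY_n(1;q)$ is null in $\Omega\Spin_{n+1}$, then $\alpha_0\simeq aaaaaaaa\,\alpha_0$ in $\cY_n$; the add-loop lemma gives a null homotopy $\beta_2:\DD^{k+1}\to\cL_n(1;q)$ of $aaaa\alpha_0$; and $aaaa\beta_2$ is a null homotopy of $aaaaaaaa\,\alpha_0$ that automatically lands in $\cY_n$, since prepending $aaaa$ forces a letter of $Y$ into every itinerary. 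The substance you defer to ``convex integration localized inside the loop'' is precisely this looseness chain, and it is not a routine h-principle argument.
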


The case $n = 2$ of the above theorem
is the hardest result in \cite{Saldanha3}.
The set $\cY_2$ is there called the set of \textit{complicated} curves and 
the complement $\cL_2 \smallsetminus \cY_2$ is the closed subset
of \textit{multiconvex} curves.
We may therefore say that the basic structure
of the arguments in \cite{Saldanha3},
in the case $n=2$, seems to survive in the cases $n>2$,
except that the complement $\cL_n \smallsetminus \cY_n$
does not admit a correspondingly direct description.
We could also say that we have to study 
what is the correct generalization of the concept of a multiconvex curve
to the cases $n > 2$.
We expect to address these issues in future work.

The case $n = 3$ deserves special attention;
in Section \ref{sect:finalremarks} we state and prove
an alternative to Theorem \ref{theo:Y} for this special case.
We also state in Theorem \ref{theo:L3}
a full description of the homotopy type of $\cL_3$:
this is the main result in \cite{Alves-Goulart-Saldanha}.

The following is an easy consequence of Theorem \ref{theo:Y}.
Notice that, for large $n$, it gives us the homotopy type
for most of the connected components of $\cL_n$.


\begin{coro}
\label{coro:notcenter}
If $q \in \Quat_{n+1}$ does not belong to its center
$Z(\Quat_{n+1}) \subset \Quat_{n+1}$ then
the inclusion $\cL_n(1;q) \subset \Omega\Spin_{n+1}(1;q)$
is a homotopy equivalence.
\end{coro}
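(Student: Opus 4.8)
The plan is to establish the stronger statement that $\cY_n(1;q) = \cL_n(1;q)$ as soon as $q \notin Z(\Quat_{n+1})$; the corollary is then immediate, since by Theorem~\ref{theo:Y} and the homeomorphism $\Omega\Spin_{n+1}(1;q) \cong \Omega\Spin_{n+1}$ the inclusion $\cL_n(1;q) \subset \Omega\Spin_{n+1}(1;q)$ is, in that case, literally the same map as the homotopy equivalence $\cY_n(1;q) \subset \Omega\Spin_{n+1}$.

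First I would reduce the equality $\cY_n(1;q) = \cL_n(1;q)$ to a statement about the endpoints of the strata $\cL_n[w]$. Since $\cY_n = \cL_n[\Ideal_Y]$ and the subsets $\cL_n[w]$ ($w \in \Word_n$) partition $\cL_n$, a curve $\Gamma$ fails to lie in $\cY_n$ exactly when $w := \iti(\Gamma) \notin \Ideal_Y$, that is, every letter $\sigma_i$ of $w$ is parity alternating. For such a $w$, the equivalence recalled above ($\sigma$ parity alternating $\iff$ $\hat\sigma \in Z(\Quat_{n+1})$) gives $\hat\sigma_i \in Z(\Quat_{n+1})$ for each $i$, hence $\hat w = \hat\sigma_1 \cdots \hat\sigma_\ell \in Z(\Quat_{n+1})$ as well (with $\hat w = 1$ when $w$ is the empty word). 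Because $\cL_n[w] \subset \cL_n(1;\acute\eta\,\hat w\,\acute\eta)$, it suffices to check that $\acute\eta\,\hat w\,\acute\eta \in Z(\Quat_{n+1})$ whenever $\hat w \in Z(\Quat_{n+1})$: granting this, every curve outside $\cY_n$ has its endpoint in $Z(\Quat_{n+1})$, so for $q \notin Z(\Quat_{n+1})$ the only strata $\cL_n[w]$ meeting $\cL_n(1;q)$ have $w \in \Ideal_Y$, whence $\cL_n(1;q) \subseteq \cY_n$ and therefore $\cY_n(1;q) = \cY_n \cap \cL_n(1;q) = \cL_n(1;q)$.

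For the remaining algebraic point I would write $\acute\eta\,\hat w\,\acute\eta = (\acute\eta\,\hat w\,\acute\eta^{-1})\,\acute\eta^{2}$. By the exact sequence~\eqref{equation:Quat} the subgroup $\Quat_{n+1}$ is normal in $\widetilde\B_{n+1}^{+}$, so conjugation by $\acute\eta$ carries $\Quat_{n+1}$ — and hence its center — into itself, giving $\acute\eta\,\hat w\,\acute\eta^{-1} \in Z(\Quat_{n+1})$. On the other hand $\acute\eta = \longacute(\eta)$ projects to $\eta \in S_{n+1}$, which has order two, so $\acute\eta^{2}$ lies in the kernel $\Quat_{n+1}$ of the homomorphism $\widetilde\B_{n+1}^{+}\to S_{n+1}$; the fact I would read off from the explicit description of $\longacute$ in Section~\ref{sect:permutations} (see also \cite{Goulart-Saldanha0}) is that in fact $\acute\eta^{2} \in Z(\Quat_{n+1})$ — it is the element of $\Quat_{n+1}$ realized by a full convex turn, equivalently the common endpoint of the convex component of $\cL_n$. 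Thus $\acute\eta\,\hat w\,\acute\eta$ is a product of two central elements, hence central, which closes the argument.

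The only genuinely delicate ingredient is the identity $\acute\eta^{2} \in Z(\Quat_{n+1})$; everything else is bookkeeping with itineraries and the exact sequence~\eqref{equation:Quat}, so I expect pinning down that identity from the material of Section~\ref{sect:permutations} to be the main point. As a sanity check, the same fact explains why $\cL_n(1;q)$ is connected when $q \notin Z(\Quat_{n+1})$: the convex (contractible) component sits inside $\cL_n(1;\acute\eta^{2})$, which is excluded, so $\cL_n(1;q)$ is then exactly the non convex connected component appearing in Theorem~\ref{theo:Y}.
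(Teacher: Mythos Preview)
Your proof is correct and follows the same approach as the paper: the corollary is deduced from Theorem~\ref{theo:Y} by showing that $\cY_n(1;q) = \cL_n(1;q)$ whenever $q \notin Z(\Quat_{n+1})$, which in turn reduces to the fact that $\hat w \in Z(\Quat_{n+1})$ implies $\acute\eta\,\hat w\,\acute\eta \in Z(\Quat_{n+1})$. The paper records this last fact directly in Remark~\ref{rem:ZQuat} (``$\acute\eta$ commutes with every element of $Z(\Quat_{n+1})$; in particular, $q \in Z(\Quat_{n+1})$ if and only if $\acute\eta q\acute\eta \in Z(\Quat_{n+1})$''), so you can simply cite it rather than rederiving it via normality and the identity $\acute\eta^{2} = \hat\eta \in Z(\Quat_{n+1})$.
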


We expect to discuss higher dimensional skeletons
and glueing maps in future work.
Some conjectures are listed in Section \ref{sect:finalremarks}:
in particular, we believe that if $q \in Z(\Quat_{n+1})$
then $\cL_n(1;q)$ is \textit{not} homotopy equivalent
to $\Omega\Spin_{n+1}$.

\bigbreak

Our problem is related to the study of 
linear ordinary differential operators.
This point of view was the original motivation of V. Arnold, B. Khesin, 
V. Ovsienko, B. Shapiro and M. Shapiro for considering this class of questions 
in the early nineties 
\cite{Khesin-Ovsienko, Khesin-Shapiro1, Khesin-Shapiro2, Shapiro-Shapiro3}.
The second author was first led to consider  
this subject while studying the critical 
sets of nonlinear differential operators with periodic 
coefficients, in a series of works with D. Burghelea 
and C. Tomei 
\cite{Burghelea-Saldanha-Tomei1,Burghelea-Saldanha-Tomei2,Burghelea-Saldanha-Tomei3,Saldanha-Tomei}. 

\smallskip

In Section~\ref{sect:permutations} we see several results
concerning the combinatorics of the symmetric group $S_{n+1}$:
these results will of course be useful later.
In Section~\ref{sect:bruhat} we review classical results
about Bruhat cells, some recent results from our previous papers
(\cite{Goulart-Saldanha0, Goulart-Saldanha1})
and a few other results which will be needed later.
In Section~\ref{sect:partial} we present the two partial orders
$\preceq$ and $\sqsubseteq$ in $\Word_n$:
we see some examples and conjectures.
In Section~\ref{sect:lowersets} we continue to study
$\Word_n$ as a poset:
we are now interested in lower and upper sets.
At this point, in Section~\ref{sect:valid},
we are ready to state and prove Theorem~\ref{theo:newCW},
a stronger and more detailed version of Theorem~\ref{theo:shortCW},
and to construct the CW complex $\cD_n$.
After we define some auxiliary concepts and state Lemma~\ref{lemma:goodstep} 
(a technical result, essentially the inductive step
in the proof of Theorem~\ref{theo:newCW}),
most of the section is dedicated to the proof of Lemma~\ref{lemma:goodstep}.
In Section~\ref{sect:Dn1} we study the $1$-skeleton
of our newly obtained CW complex $\cD_n$:
as an exercise, we give a new proof of the well known
classification of the connected components of $\cL_n$.
In Section~\ref{sect:Dn2} we study the $2$-skeleton of $\cD_n$:
this is the highest dimension to be completely described.
Some partial results concerning higher dimensions
are discussed in Section~\ref{sect:boundary}:
this is where the combinatorial results
from Section~\ref{sect:permutations} are applied.
The concept of \textit{loose} maps has been discussed in previous papers:
it is revised and adapted in Section~\ref{sect:loosetight}.
In Section~\ref{sect:looseideals} this concept is adapted
to ideals (subsets of the poset $\Word_n$):
Proposition~\ref{prop:ideal0isloose} is a pivotal result
with a long and technical proof.
In Section~\ref{sect:simplyconnected}
we finally prove Theorem~\ref{theo:simplyconnected}.
In Section~\ref{sect:Yn2} we prepare for the proof of Theorem~\ref{theo:Y}
by stating and proving Proposition~\ref{prop:Yn2},
which in a sense is an improvement of 
Proposition~\ref{prop:ideal0isloose}.
Theorem~\ref{theo:Y} is proved in Section~\ref{sect:Y}.
Section~\ref{sect:finalremarks}, the final remarks,
includes the statement of Theorem~\ref{theo:L3},
the main result in \cite{Alves-Goulart-Saldanha}:
this gives the homotopy type of $\cL_3$.
Theorem~\ref{theo:tildeY},
a retouched version of Theorem~\ref{theo:Y} for the case $n=3$,
aims specifically at the proof of Theorem~\ref{theo:L3}.
We also state as conjectures other results which
we hope to prove in future work.

\bigbreak

We would like to thank:  
Em\'ilia Alves, 
Boris Khesin, 
Ricardo Leite, 
Carlos Gustavo Moreira, 
Paul Schweitzer, 
Boris Shapiro, 
Michael Shapiro, 
Carlos Tomei, 
David Torres, 
Cong Zhou 
and 
Pedro Z\"{u}lkhe 
for helpful conversations; 
the University of Toronto and the University of Stockholm 
for the hospitality during our visits. 
Both authors thank CAPES, CNPq and FAPERJ (Brazil) for financial support.
More specifically, the first author benefited from
CAPES-PDSE grant 99999.014505/2013-04 
during his Ph. D. and also 
CAPES-PNPD post-doc grant 88882.315311/2019-01.

\section{Permutations}
\label{sect:permutations}

Consider the symmetric group $S_{n+1}$
(acting on $\nmaisum = \{1, 2, \ldots, n+1 \}$)
as the Coxeter-Weyl group of type $A_n$, i.e.,
use the $n$ generators
$a = a_1 = (12)$, $b = a_2 = (23)$, \dots, $a_n = (n,n+1)$.
For $\sigma \in S_{n+1}$ and $k \in \nmaisum$
we use the notation $k^\sigma$ (rather than $\sigma(k)$)
so that $(k^{\sigma_1})^{\sigma_2} = k^{(\sigma_1\sigma_2)}$.
Given $\sigma \in S_{n+1}$, let $P_\sigma$ be the $(n+1) \times (n+1)$
permutation matrix satisfying $e_i^\top P_\sigma = e_{i^\sigma}^\top$;
notice that $P_{\sigma_0} P_{\sigma_1} = P_{\sigma_0\sigma_1}$.
For $\sigma \in S_{n+1}$, let $\inv(\sigma)$ be the length of $\sigma$
with the generators $a_i$, $1 \le i \le n$.

For $\sigma_0, \sigma_1 \in S_{n+1}$, write $\sigma_0 \equiv \sigma_1 \pmod 2$
if $i^{\sigma_0} \equiv i^{\sigma_1} \pmod 2$ for all $i \in \nmaisum$.
A permutation $\sigma \in S_{n+1}$ is \emph{parity preserving}
if $i^\sigma \equiv i \pmod 2$ for all $i \in \nmaisum$, i.e., 
if $\sigma \equiv e \pmod 2$.
A permutation $\sigma \in S_{n+1}$ is \emph{parity alternating}
if $i^\sigma \not\equiv (i+1)^\sigma \pmod 2$ for all $i \in \nmesmo$.
The sets $S_{\PPre}$ and $S_{\PA}$ of parity preserving and parity alternating
permutations, respectively, are both subgroups:
$S_{\PPre} \le S_{\PA} \le S_{n+1}$.
If $n$ is even, $S_{\PPre} = S_{\PA}$;
if $n$ is odd, $S_{\PPre}$ is a subgroup of index $2$ of $S_{\PA}$.
We have $\sigma_0 \equiv \sigma_1 \pmod 2$ if and only if
$\sigma_1^{-1} \sigma_0 \in S_{\PPre}$, or, equivalently,
if $\sigma_0 S_{\PPre} = \sigma_1 S_{\PPre}$.

\begin{example}
\label{example:smallPA}
For $n = 2$ we have
$S_{\PA} = S_{\PPre} = \{e, \eta = aba\}$.
For $n = 3$ we have $S_{\PPre} = \{e, aba, bcb, bacb\}$
and $S_{\PA} = S_{\PPre} \cup \{ac, abc, cba, \eta = abacba\}$.
For $n = 4$ we have $S_{\PA} = S_{\PPre}$ and
\[ S_{\PPre}  = \{ e, aba, bacb, bcb, cbdc, cdc, 
abacbdc, abacdc, abcdcba, cdcaba, cbdcaba, \eta \}, \]
with $\eta = abacbadcba$.
\end{example}

The group $S_{\PPre}$ is isomorphic to
$S_{\lfloor\frac{n+1}{2}\rfloor} \times S_{\lceil\frac{n+1}{2}\rceil}$.
For odd $n$,
the top element $\eta$ and $\sigma = a_1 a_3 \cdots a_{n-2} a_n$
are both elements of $S_{\PA} \smallsetminus S_{\PPre}$:
$\sigma$ is the element in this set with minimal number of inversions,
equal to $(n+1)/2$.

The maps
$\longacute, \longgrave: S_{n+1} \to \widetilde\B_{n+1}^{+}$
and $\longhat:  S_{n+1} \to \Quat_{n+1}$
are discussed in \cite{Goulart-Saldanha0}
(these are not homomorphisms!).
We also write
$\acute\sigma = \longacute(\sigma)$,
$\grave\sigma = \longgrave(\sigma)$,
$\hat\sigma = \longhat(\sigma)$.
If $\Pi: \widetilde\B_{n+1}^{+} \to S_{n+1}$ is as 
in the exact sequence \eqref{equation:Quat} above
we have $\Pi(\acute\sigma) = \Pi(\grave\sigma) = \sigma$.
In order to define $\acute a_i \in \widetilde\B_{n+1}^{+}$,
consider the two preimages under the surjective homomorphism
$\widetilde\B_{n+1}^{+} \to \B_{n+1}$
of the signed permutation matrix $P$ with entries
$P_{i+1,i} = 1$,
$P_{i,i+1} = -1$ and $P_{j,j} = 1$ for $j \notin \{i,i+1\}$:
the element $\acute a_i$ is the preimage closest to $1 \in \Spin_{n+1}$.
We have $\grave a_i = (\acute a_i)^{-1}$.
For a reduced word $\sigma = a_{i_1}\cdots a_{i_\ell}$,
set 
\begin{equation}
\label{equation:acutegrave}
\acute\sigma = \acute a_{i_1}\cdots \acute a_{i_\ell}, \qquad
\grave\sigma = \grave a_{i_1}\cdots \grave a_{i_\ell}, \qquad
\hat\sigma = \acute\sigma(\grave\sigma)^{-1}.  
\end{equation}
For all $\sigma \in S_{n+1}$ we have
$\Pi^{-1}[\{\sigma\}] = \Quat_{n+1} \acute \sigma = \acute\sigma \Quat_{n+1}$.


\begin{lemma}
\label{lemma:PAhat}
A permutation $\sigma \in S_{n+1}$ is parity alternating
if and only if $\hat\sigma \in Z(\Quat_{n+1})$.
Also, $\sigma \in S_{\PPre}$ if and only if $\hat\sigma = \pm 1$.
\end{lemma}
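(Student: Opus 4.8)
The plan is to exploit the explicit structure of $\Quat_{n+1}$ and of the map $\longhat$. Recall that $\Quat_{n+1} = \Pi^{-1}[\Diag^+_{n+1}]$, so elements of $\Quat_{n+1}$ project to diagonal sign matrices $\diag(\varepsilon_1,\dots,\varepsilon_{n+1})$ with $\prod_i \varepsilon_i = 1$. A clean way to organize the computation is to work inside the Clifford algebra picture of $\Spin_{n+1}$: the preimage in $\Spin_{n+1}$ of the diagonal sign matrix that flips coordinates in a subset $T \subseteq \nmaisum$ (with $|T|$ even) is $\pm\, (e_{j_1} e_{j_2} \cdots e_{j_{2k}})$ for $T = \{j_1 < \cdots < j_{2k}\}$, where $e_1,\dots,e_{n+1}$ are the generating vectors with $e_j^2 = -1$ and $e_i e_j = -e_j e_i$ for $i \ne j$. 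Two such monomials $e_T$ and $e_{T'}$ commute or anticommute in $\Quat_{n+1}$, and one computes directly that $e_T e_{T'} = (-1)^{|T|\,|T'| + |T \cap T'|} e_{T'} e_T$; since $|T|,|T'|$ are even this sign is $(-1)^{|T\cap T'|}$. Hence $e_T$ lies in $Z(\Quat_{n+1})$ if and only if $|T \cap T'|$ is even for every even-size $T' \subseteq \nmaisum$, which (testing on $T' = \{i,i+1\}$ and using that arbitrary even sets are generated by such pairs) happens precisely when $T$ contains no two consecutive indices and no two "consecutive modulo the parity pattern" — more precisely, when $T$ is a union of full parity classes, i.e. $T \in \{\varnothing,\ \{1,3,5,\dots\},\ \{2,4,6,\dots\},\ \nmaisum\}$ (the last two only available when the corresponding sets have even size).

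Next I would identify $\hat\sigma$ explicitly. From \eqref{equation:acutegrave}, $\hat\sigma = \acute\sigma(\grave\sigma)^{-1} = \acute\sigma\,\acute\sigma^{\,?}$, and since $\grave a_i = (\acute a_i)^{-1}$ and $\Pi(\acute a_i)$ is the rotation in the $(i,i+1)$-plane, one checks that $\Pi(\hat\sigma)$ is the diagonal sign matrix $\diag(\varepsilon_1,\dots,\varepsilon_{n+1})$ recording which coordinates get an overall sign flip when one compares the "all $\acute{}$" lift with the "all $\grave{}$" lift of a reduced word for $\sigma$. The key combinatorial fact, which should already be available from \cite{Goulart-Saldanha0}, is that this sign vector is determined by $\sigma$ alone (independent of the reduced word) and equals $\varepsilon_j = (-1)^{\#\{i < j \,:\, i^{\sigma^{-1}} > j^{\sigma^{-1}}\}}$ or an equivalent descent-counting expression; what matters is the induced subset $T_\sigma = \{j : \varepsilon_j = -1\}$. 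A short induction on $\inv(\sigma)$, adding one generator $a_i$ at a time and tracking how $T_\sigma$ changes (it is modified near positions $i, i+1$ according to the relative order of the values there), shows that $T_\sigma$ consists of exactly those positions $j$ for which the number of "order violations to the left" has odd parity.

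With these two ingredients the lemma reduces to a parity bookkeeping statement: $\hat\sigma \in Z(\Quat_{n+1})$ iff $T_\sigma$ is a union of parity classes of $\nmaisum$, and this should be shown equivalent to the condition $i^\sigma \not\equiv (i+1)^\sigma \pmod 2$ for all $i \in \nmesmo$, i.e. to $\sigma \in S_{\PA}$. For the second assertion, $\hat\sigma = \pm 1$ means $T_\sigma = \varnothing$ or $T_\sigma = \nmaisum$, equivalently $\Pi(\hat\sigma) = \pm I$, equivalently all the coordinate sign-flips agree; tracing this back through the descent count gives $i^\sigma \equiv i \pmod 2$ for all $i$, i.e. $\sigma \in S_{\PPre}$. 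Conversely $\sigma \in S_{\PPre}$ forces $\Pi(\hat\sigma) = I$, hence $\hat\sigma = \pm 1$ since $\Quat_{n+1} \to \Diag^+_{n+1}$ has kernel $\{\pm 1\}$.

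The main obstacle I anticipate is the second combinatorial step: pinning down precisely the subset $T_\sigma \subseteq \nmaisum$ such that $\Pi(\hat\sigma)$ flips exactly the coordinates in $T_\sigma$, and verifying carefully that "$T_\sigma$ is a union of parity classes" matches the parity-alternating condition on consecutive values of $\sigma$. The Clifford-algebra commutator computation and the center computation are routine once set up, and the kernel statement $\ker(\Quat_{n+1} \to \Diag^+_{n+1}) = \{\pm 1\}$ is immediate from the definitions; but the translation between the descent/inversion data of $\sigma$ and the sign vector $\varepsilon$ requires care with conventions (left vs.\ right action, $\sigma$ vs.\ $\sigma^{-1}$), and this is where I would spend most of the effort, likely by an explicit induction on reduced words as sketched above, with the base case $\sigma = a_i$ (where $T_{a_i} = \{i, i+1\}$, which is never a union of parity classes, consistent with $a_i \notin S_{\PA}$ for $n > 1$ — note $a_i$ does swap even and odd at the boundary position but not internally... so in fact one must double-check small cases against Example \ref{example:smallPA}).
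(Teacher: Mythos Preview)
Your proposal has the right overall architecture --- compute $\Pi(\hat\sigma)$ as a diagonal sign matrix, identify which sign patterns lie in the center, and match against the parity conditions on $\sigma$ --- but two of the three steps go wrong.

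First, your center computation is incorrect. From $e_T e_{T'} = (-1)^{|T\cap T'|} e_{T'} e_T$ (your formula, with $|T|,|T'|$ even), testing on $T' = \{i,i+1\}$ gives $[i\in T] = [i+1\in T]$ for every $i$, which chains to $T = \varnothing$ or $T = \nmaisum$, \emph{not} ``$T$ is a union of parity classes.'' (Concretely, $e_1 e_3$ anticommutes with $e_1 e_2$.) This matches Remark~\ref{rem:ZQuat}: $Z(\Quat_{n+1}) = \{\pm 1\}$ for $n$ even and $\{\pm 1, \pm e_{\nmaisum}\}$ for $n$ odd. Second, your descent-counting formula for $T_\sigma$ is wrong already for $\sigma = a_1$: it gives $\varepsilon_1 = +1$, whereas $\Pi(\hat a_1)$ flips coordinates $1$ and $2$. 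The correct description, and the one that makes the lemma immediate, is $T_\sigma = \{j : j^\sigma \not\equiv j \pmod 2\}$; then $T_\sigma \in \{\varnothing,\nmaisum\}$ is visibly equivalent to $\sigma \in S_{\PA}$, and $T_\sigma = \varnothing$ to $\sigma \in S_{\PPre}$. (Your two errors happen to cancel, since $T_\sigma$ for a permutation can never be a single parity class --- but that is an accident, not an argument.)

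The paper avoids all of this by quoting the formula $\hat\sigma = \pm\,\hat a_1^{\mult_1(\sigma)} \cdots \hat a_n^{\mult_n(\sigma)}$ from \cite{Goulart-Saldanha0} and the explicit description of $Z(\Quat_{n+1})$ in Remark~\ref{rem:ZQuat}. Since $\mult_k(\sigma) - \mult_{k-1}(\sigma) = k^\sigma - k$, the parity of each $\mult_k(\sigma)$ is determined by the parity pattern of $k\mapsto k^\sigma$, and one reads off directly that $\sigma \in S_{\PPre}$ iff all $\mult_k(\sigma)$ are even (iff $\hat\sigma = \pm 1$), and $\sigma \in S_{\PA}\smallsetminus S_{\PPre}$ iff $\mult_k(\sigma)\equiv k\pmod 2$ for all $k$ (iff $\hat\sigma = \pm \hat a_1\hat a_3\cdots\hat a_n$). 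This is a three-line proof; your Clifford-algebra route can be made to work once the two errors are fixed, but the multiplicity formula is the tool designed for exactly this computation.
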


\begin{rem}
\label{rem:ZQuat}
It is easy to verify that $Z(\Quat_{n+1})$ (the center of the group)
equals $\{\pm 1\}$ for $n$ even and
$\{\pm 1, \pm (\hat a_1 \hat a_3 \cdots \hat a_n) \}$ for $n$ odd.
For even $n$ we have $\eta \in S_{\PPre}$;
for odd $n$ we have $\eta \in S_{\PA} \smallsetminus S_{\PPre}$.
Remark 3.7 in \cite{Goulart-Saldanha0} confirms that
$\hat\eta = \pm 1$ for even $n$
and $\hat\eta = \pm (\hat a_1 \hat a_3 \cdots \hat a_n)$ for odd $n$.

A computation verifies that $\acute\eta$ commutes
with every element of $Z(\Quat_{n+1})$.
In particular, $q \in Z(\Quat_{n+1})$ if and only if
$\acute\eta q\acute\eta \in Z(\Quat_{n+1})$.
\end{rem}

The \textit{multiplicity} of $\sigma \in S_{n+1}$
is a vector $\mult(\sigma) \in \NN^n$
(where $\NN = \{0,1,2,\ldots\}$)
with coordinates 
\begin{equation}
\label{equation:multiplicity}
\mult_j(\sigma) = (1^\sigma + \cdots + j^\sigma) - (1 + \cdots + j),
\qquad 1 \le j \le n; 
\end{equation}
see \cite{Goulart-Saldanha0} for the reason of the name.

\begin{proof}[Proof of Lemma \ref{lemma:PAhat}]
Clearly, $\sigma \in S_{\PPre}$ if and only if 
$\mult_k(\sigma)$ is even for all $k$.
Similarly, $\sigma \in S_{\PA} \smallsetminus S_{\PPre}$ if and only if
$\mult_k(\sigma) \equiv k \pmod 2$ for all $k$.
We know from Lemma 3.3 in \cite{Goulart-Saldanha0} that
$\hat\sigma =
\pm \hat a_1^{\mult_1(\sigma)} \cdots \hat a_n^{\mult_n(\sigma)}$.
Thus, $\sigma \in S_{\PPre}$ if and only if $\hat\sigma = \pm 1$
and, for odd $n$, $\sigma \in S_{\PA} \smallsetminus S_{\PPre}$
if and only if
$\hat\sigma = \pm (\hat a_1 \hat a_3 \cdots \hat a_n)$ for odd $n$.
The result now follows from Remark \ref{rem:ZQuat}.
\end{proof}


Write $\sigma_0 \vartriangleleft \sigma_1$
if and only if $\sigma_0 < \sigma_1$ (in the strong Bruhat order)
and $\inv(\sigma_1) = \inv(\sigma_0) + 1$.
If $\sigma_0 \vartriangleleft \sigma_1$
then there exist unique $i_0 < i_1$ and $j_0 < j_1$ such that
$\sigma_1 = (i_0i_1) \sigma_0 = \sigma_0 (j_0j_1)$.
We then have $i_k^{\sigma_0} = j_k$, $i_k^{\sigma_1} = j_{1-k}$
(for $k \in \{0,1\}$);
also, if $i \in (i_0,i_1) \cap \ZZ$ then
$i^{\sigma_k} \notin (j_0,j_1) \cap \ZZ$
(where again $k \in \{0,1\}$).
Also, if $\sigma_0 \vartriangleleft \sigma_1$
then there exists a reduced word for $\sigma_1$
such that a reduced word for $\sigma_0$
is obtained by deleting a single letter:
$\sigma_0 = \sigma_a\sigma_b$, $\sigma_1 = \sigma_a a_i \sigma_b$,
$\inv(\sigma_0) = \inv(\sigma_a)+\inv(\sigma_b)$.
Write $\sigma_0 \blacktriangleleft \sigma_1$
if  $\sigma_0 \vartriangleleft \sigma_1$
and $\sigma_0 \equiv \sigma_1 \pmod 2$.
If  $\sigma_0 \vartriangleleft \sigma_1$
and $\sigma_1 = (i_0i_1) \sigma_0 = \sigma_0 (j_0j_1)$
then $\sigma_0 \blacktriangleleft \sigma_1$
if and only if $j_0 \equiv j_1 \pmod 2$.

\begin{rem}
\label{rem:blacktriangle}
It follows directly from Lemma 3.5 in \cite{Goulart-Saldanha0} that
$\sigma_0 \blacktriangleleft \sigma_1$
implies $\hat\sigma_0 = \hat\sigma_1$;
also, 
if $\sigma_0 \vartriangleleft \sigma_1$ and $\hat\sigma_0 = \hat\sigma_1$
then $\sigma_0 \blacktriangleleft \sigma_1$.
\end{rem}

\begin{lemma}
\label{lemma:2mult}
If $\sigma_0, \sigma_1 \in S_{n+1}$ satisfy
\[ \sigma_0 \vartriangleleft \sigma_1, \qquad
2\mult(\sigma_0) \le \mult(\sigma_1) \]
then we have one of the cases:
\[ \sigma_0 = e \vartriangleleft \sigma_1 = a_i, \quad
\sigma_0 = a_i \vartriangleleft \sigma_1 = \sigma_0 a_{i \pm 1},
\quad
\sigma_0 = a_i a_{i+2} \vartriangleleft \sigma_1 = \sigma_0 a_{i+1}. \]
In particular, if $\sigma_0 \blacktriangleleft \sigma_1$ then
$2 \mult(\sigma_0) \not\le \mult(\sigma_1)$.
\end{lemma}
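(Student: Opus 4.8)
The plan is to show that $2\mult(\sigma_0)\le\mult(\sigma_1)$ forces $\sigma_0$ to be a very short permutation, and then to finish with a finite check. First I would record how a covering step affects multiplicities. Let $i_0<i_1$ and $j_0<j_1$ be the integers attached to $\sigma_0\vartriangleleft\sigma_1$ in the description recalled above, so that $i_0^{\sigma_0}=j_0$, $i_1^{\sigma_0}=j_1$, $i_0^{\sigma_1}=j_1$, $i_1^{\sigma_1}=j_0$, and $k^{\sigma_1}=k^{\sigma_0}$ for $k\notin\{i_0,i_1\}$. Since each summand $k^{\sigma_1}-k^{\sigma_0}$ vanishes unless $k\in\{i_0,i_1\}$, a direct computation gives
\[
\mult_j(\sigma_1)-\mult_j(\sigma_0)=
\begin{cases} j_1-j_0 & \text{if } i_0\le j<i_1,\\ 0 & \text{otherwise.}\end{cases}
\]
As $\mult_j(\sigma)\ge 0$ for every permutation (indeed $1^\sigma+\dots+j^\sigma\ge 1+\dots+j$), the hypothesis is then equivalent to the pair of conditions $\mult_j(\sigma_0)=0$ for $j\notin[i_0,i_1)$ and $\mult_j(\sigma_0)\le j_1-j_0$ for $i_0\le j<i_1$.

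Next I would extract structure from the vanishing of $\mult_j(\sigma_0)$. Since $\mult_j(\sigma_0)=0$ is equivalent to $\{1^{\sigma_0},\dots,j^{\sigma_0}\}=\{1,\dots,j\}$, vanishing for all $j\le i_0-1$ forces $\sigma_0$ to fix $1,\dots,i_0-1$ pointwise, and vanishing for all $i_1\le j\le n$ forces $\sigma_0$ to fix $i_1+1,\dots,n+1$ pointwise; hence $\sigma_0$ permutes the block $\{i_0,\dots,i_1\}$ and in particular $i_0\le j_0<j_1\le i_1$. Now I would invoke the covering condition ``$i^{\sigma_0}\notin(j_0,j_1)$ for $i_0<i<i_1$'': the interior block positions take exactly the values $\{i_0,\dots,i_1\}\smallsetminus\{j_0,j_1\}$, a set that contains every integer strictly between $j_0$ and $j_1$; since it must avoid those integers, the interval $(j_0,j_1)$ is free of integers, i.e. $j_1=j_0+1$. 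Finally, applying $\mult_j(\sigma_0)\le j_1-j_0=1$ at $j=i_0$ and $j=i_1-1$, and using once more that $\sigma_0$ fixes everything below $i_0$ and above $i_1$, yields $j_0-i_0\le 1$ and $i_1-j_1\le 1$, so that $i_1-i_0=(i_1-j_1)+1+(j_0-i_0)\le 3$.

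It then remains to enumerate, for block length $m:=i_1-i_0\in\{1,2,3\}$, the permutations of $\{i_0,\dots,i_1\}$ sending $i_0$ to $j_0$ and $i_1$ to $j_0+1$ (with $j_0<j_0+1$ in the block) and satisfying $\mult_j(\sigma_0)\le 1$. For $m=1$ this forces $\sigma_0=e$ and $\sigma_1=a_{i_0}$; for $m=2$ the two admissible values of $j_0$ give $(\sigma_0,\sigma_1)=(a_{i_0},a_{i_0}a_{i_0+1})$ or $(a_{i_0+1},a_{i_0+1}a_{i_0})$; for $m=3$ the two bounds pin down $j_0$ and then all the interior values, giving $\sigma_0=a_{i_0}a_{i_0+2}$ and $\sigma_1=\sigma_0 a_{i_0+1}$. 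These are exactly the three listed cases. For the ``in particular'' clause, recall from the discussion preceding the lemma that $\sigma_0\blacktriangleleft\sigma_1$ holds precisely when $j_0\equiv j_1\pmod 2$; since every surviving case has $j_1=j_0+1$, the relation $\sigma_0\blacktriangleleft\sigma_1$ is incompatible with $2\mult(\sigma_0)\le\mult(\sigma_1)$.

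I expect the conceptual core to be the second paragraph: the multiplicity identity and the deduction that $\sigma_0$ is supported on a block with $j_1=j_0+1$ and $i_1-i_0\le 3$. Once that bound is in hand the rest is a routine finite check; the only point requiring a little care is verifying the enumeration is exhaustive — in particular that no $3$- or $4$-cycle on the block survives, which becomes automatic as soon as one imposes both $j_0<j_1$ and $\mult_j(\sigma_0)\le 1$.
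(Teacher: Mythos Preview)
Your proof is correct and follows essentially the same route as the paper's: both derive the identity $\mult_k(\sigma_1)-\mult_k(\sigma_0)=(j_1-j_0)[i_0\le k<i_1]$, conclude that $\sigma_0$ is supported on the block $\{i_0,\dots,i_1\}$, use the covering condition to force $j_1=j_0+1$, and finish with a short case check. The only cosmetic difference is the endgame: the paper notes that $\mult_k(\sigma_0)\le 1$ for all $k$ forces $\sigma_0$ to be a product of commuting generators and then inspects which such products admit the required $(i_0,i_1,j_0,j_1)$, whereas you extract the sharper bound $i_1-i_0\le 3$ directly from $\mult_{i_0}$ and $\mult_{i_1-1}$ and enumerate the block permutations --- both reductions lead to the same tiny list.
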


\begin{proof}
If $\sigma_0 \vartriangleleft \sigma_1 =
(i_0 i_1) \sigma_0 = \sigma_0 (j_0 j_1)$ then
\[ \mult_k(\sigma_1) = \mult_k(\sigma_0) + (j_1 - j_0) [i_0 \le k < i_1], \]
as is shown in \cite{Goulart-Saldanha0}, Lemma 2.4.
Thus, $2\mult(\sigma_0) \le \mult(\sigma_1)$ implies
$\mult_k(\sigma_0) = 0$ for $k < i_0$ or $k > i_1$.
Thus $i_0 < i < i_1$ implies $i_0 \le i^{\sigma_0} \le i_1$.
We must then have $j_1 - j_0 = 1$.
Indeed, if $j_1 - j_0 > 1$ take $j$ with $j_0 < j < j_1$
and $i$ with $i^{\sigma_0} = j$:
we have $i_0 < i < i_1$, contradicting $\sigma_0 \vartriangleleft \sigma_1$.
We thus have $\mult_k(\sigma_0) \le 1$ for $i_0 \le k < i_1$.
This implies that $\sigma_0$ is a product of commuting generators
$a_i$, $i_0 \le i < i_1$;
a case by case analysis completes the proof.
\end{proof}

Define the subset $Y \subset S_{n+1}$ by
\begin{equation}
\label{equation:defY}
Y = S_{n+1} \smallsetminus S_{\PA} = 
\{ \sigma \in S_{n+1} \;|\; \hat\sigma \notin Z(\Quat_{n+1}) \}
\end{equation}
(the equivalence follows from Lemma \ref{lemma:PAhat}).
For $\sigma \in Y$,
let $k$ be the smallest integer with
$k^\sigma \equiv (k+1)^\sigma \pmod 2$.
Let $\sigma' = a_k \sigma$.
Notice that $\sigma' \equiv \sigma \pmod 2$
and that $\sigma'' = \sigma$ so that we have
involutions in $Y$
and in each lateral class $\sigma S_{\PPre} \subset S_{n+1}$,
$\sigma \in Y$.
We call $\sigma \in Y$
\textit{low} if $\sigma \blacktriangleleft \sigma'$ 
and \textit{high} otherwise.
Thus, $\sigma \in Y$ is high if and only if
$\sigma' \blacktriangleleft \sigma$.

\begin{lemma}
\label{lemma:vader}
Let $\sigma_0, \sigma_1 \in Y$
with $\sigma_0$ high, $\sigma_1 \blacktriangleleft \sigma_0$
and $\sigma_1 \ne \sigma'_0$:
then $\sigma_1$ is high.
\end{lemma}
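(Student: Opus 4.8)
The plan is to reduce everything to a short case analysis driven by the two positions at which $\sigma_0$ and $\sigma_1$ disagree.

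First I would record the consequences of the hypothesis $\sigma_1 \blacktriangleleft \sigma_0$. Since $\sigma_1 \equiv \sigma_0 \pmod 2$ we have $i^{\sigma_1} \equiv i^{\sigma_0} \pmod 2$ for every $i$, so the set $\{k : k^{\sigma} \equiv (k+1)^{\sigma} \pmod 2\}$ is the same for $\sigma = \sigma_0$ and for $\sigma = \sigma_1$; in particular the distinguished index used to define $\sigma_0'$ and $\sigma_1'$ is one and the same integer $k$ for both permutations. Hence ``$\sigma_0$ is high'' means $k^{\sigma_0} > (k+1)^{\sigma_0}$, and the goal is precisely to show $k^{\sigma_1} > (k+1)^{\sigma_1}$. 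Next, from $\sigma_1 \vartriangleleft \sigma_0$ I would invoke the structural description recalled just before Remark~\ref{rem:blacktriangle}: there are positions $i_0 < i_1$ and values $j_0 < j_1$ with $\sigma_0 = (i_0 i_1)\sigma_1 = \sigma_1(j_0 j_1)$, so that $\sigma_1$ has value $j_0$ at position $i_0$ and value $j_1$ at position $i_1$ while $\sigma_0$ has these two values interchanged; $\sigma_0$ and $\sigma_1$ agree at every other position; any position strictly between $i_0$ and $i_1$ carries a value that is not strictly between $j_0$ and $j_1$; and $\sigma_1 \blacktriangleleft \sigma_0$ forces $j_0 \equiv j_1 \pmod 2$.

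Then I would split on the cardinality of $\{k,k+1\} \cap \{i_0,i_1\}$. If it is $0$, then $k^{\sigma_1} = k^{\sigma_0}$ and $(k+1)^{\sigma_1} = (k+1)^{\sigma_0}$ and the conclusion is immediate. If it is $2$, then $\{i_0,i_1\} = \{k,k+1\}$, so $\sigma_0 = (k,k+1)\sigma_1 = a_k\sigma_1$, hence $\sigma_1 = a_k\sigma_0 = \sigma_0'$, contradicting the hypothesis $\sigma_1 \ne \sigma_0'$ --- so this case does not occur, and this is exactly where that hypothesis is used. The remaining possibility is that the cardinality is $1$, which breaks into four subcases according to which of $i_0,i_1$ equals which of $k,k+1$. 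In each subcase exactly one of the positions $k,k+1$ lies outside $\{i_0,i_1\}$, so its value is unchanged from $\sigma_0$ to $\sigma_1$; moreover that position either lies outside the interval $(i_0,i_1)$, or lies inside it, in which case its (unchanged) value moreover avoids $(j_0,j_1)$. Feeding this together with $k^{\sigma_0} > (k+1)^{\sigma_0}$, with $j_0 < j_1$, and with the fact that distinct positions of a permutation carry distinct values yields $k^{\sigma_1} > (k+1)^{\sigma_1}$ by a two-line chain of inequalities in each subcase. For instance, if $i_0 = k$ and $i_1 > k+1$ then $(k+1)^{\sigma_1} = (k+1)^{\sigma_0} < k^{\sigma_0} = j_1$ and $(k+1)^{\sigma_0} \notin (j_0,j_1)$ force $(k+1)^{\sigma_1} \le j_0 = k^{\sigma_1}$, and distinctness upgrades this to $(k+1)^{\sigma_1} < k^{\sigma_1}$; the other three subcases are entirely analogous.

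I do not expect any genuine obstacle here; the only thing demanding care is the bookkeeping of the case split --- specifically, correctly identifying the subcase $\{i_0,i_1\} = \{k,k+1\}$ with the excluded situation $\sigma_1 = \sigma_0'$, and correctly using the betweenness property $i \in (i_0,i_1) \Rightarrow i^{\sigma} \notin (j_0,j_1)$ in the two subcases where $k$ or $k+1$ falls strictly between $i_0$ and $i_1$.
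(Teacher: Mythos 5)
Your argument is correct and follows essentially the same route as the paper's: both isolate the shared distinguished index $k$ (you make explicit that $\sigma_0 \equiv \sigma_1 \pmod 2$ forces the defining index to coincide, which the paper uses tacitly when writing $\sigma_1' = a_k\sigma_1$), both parametrize the cover relation by the transposition pair $(i_0,i_1)$, $(j_0,j_1)$, and both dispose of the cases by the betweenness property for values at positions strictly between $i_0$ and $i_1$. Your organization of the case split by $\lvert\{k,k+1\}\cap\{i_0,i_1\}\rvert$ is a cleaner packaging of the same cases the paper enumerates one at a time, and your identification of the cardinality-two case with the excluded alternative $\sigma_1=\sigma_0'$ is exactly what the paper does.
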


\begin{proof}
We have $\sigma_0 \equiv \sigma_1 \pmod 2$.
Let $k$ be the smallest integer with
$k^{\sigma_0} \equiv (k+1)^{\sigma_0} \pmod 2$:
we have $\sigma'_0 = a_k \sigma_0 \blacktriangleleft \sigma_0$
and $\sigma'_1 = a_k \sigma_1$.
We need to prove that $k^{\sigma_1} > (k+1)^{\sigma_1}$.
Write $\sigma_1 = (i_0i_1) \sigma_0$, $i_0 < i_1$.
If $i_0 = k$ and $i_1 = k+1$ then $\sigma_1 = \sigma'_0$,
contradicting the statement.
If $\{i_0,i_1\}$ is disjoint from $\{k,k+1\}$ then
$k^{\sigma_1} = k^{\sigma_0} > (k+1)^{\sigma_0} = (k+1)^{\sigma_1}$
and therefore $\sigma_1$ is high.
If $i_0 = k$ and $i_1 > k+1$ we must have
$(k+1)^{\sigma_0} < i_1^{\sigma_0} < k^{\sigma_0}$
and therefore again $\sigma_1$ is high.
If $i_0 = k+1$ we must have
$i_1^{\sigma_0} < (k+1)^{\sigma_0} < k^{\sigma_0}$
and therefore also in this case $\sigma_1$ is high.
The two remaining cases are: $i_1 = k$ and $i_1 = k+1$ with $i_0 < k$;
both are similar to the cases above and imply $\sigma_1$ high.
\end{proof}

\begin{rem}
\label{rem:blackpartition}
A \textit{$\blacktriangleleft$-matching} of a set $X \subset S_{n+1}$
is a partition of $X$ into subsets of cardinality $2$ of the form
$\{\sigma_{-}, \sigma_{+}\}$ with
$\sigma_{-} \blacktriangleleft \sigma_{+}$.
For instance, for $n = 2$ and $X = S_3 \smallsetminus \{e,\eta = aba\}$
the following is a $\blacktriangleleft$-matching:
\[ \{ \{ a, ba\}, \{b, ab\} \}. \]
Indeed, we have $a \blacktriangleleft ba$ and 
$b \blacktriangleleft ab$.

Partition $S_{n+1}$ into lateral classes $\sigma S_{\PPre} \subset S_{n+1}$.
The involution $\sigma \leftrightarrow \sigma'$ above defines
a {$\blacktriangleleft$-matching} in each class
\emph{except} $S_{\PPre}$ and,
for $n$ odd, $S_{\PA} \smallsetminus S_{\PPre} = \eta S_{\PPre}$.
See the case $n = 2$ above.

For $n = 3$, $S_4$ is partitioned into $6$ classes of cardinality $4$ each.
For instance, $b S_{\PPre} = \{b,ab,cb,acb\}$ is partitioned as
$\{\{b,ab\},\{cb,acb\}\}$.
The classes $S_{\PPre} = \{e, aba, bcb, bacb\}$ and
$\eta S_{\PPre} = \{ ac, abc, cba, \eta \}$
admit no $\blacktriangleleft$-matching.

For $n = 4$, we have $S_{\PA} = S_{\PPre}$ and $|S_{\PPre}| = 12$;
the list of elements is given both in Example \ref{example:smallPA}
and in Figure \ref{fig:blackaba}.
The group $S_5$ is then partitioned into $10$ classes of cardinality $12$.
The class $S_{\PPre}$
admits no $\blacktriangleleft$-matching;
see Figure \ref{fig:blackaba}.

\begin{figure}[ht]
\def\svgwidth{14cm}
\centerline{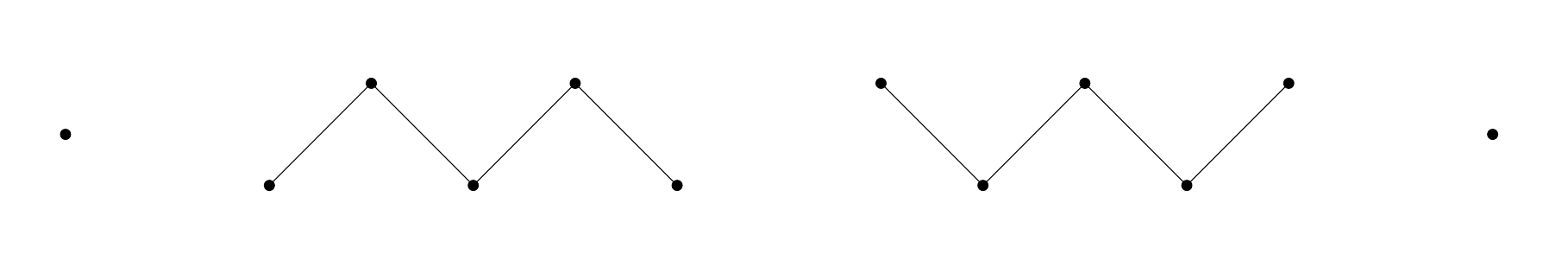}
\caption{The $\blacktriangleleft$-pairs
in $S_{\PPre} \subset S_5$.}
\label{fig:blackaba}
\end{figure}

For any value of $n$,
there is no permutation $\sigma \in S_{n+1}$ for which
$e \blacktriangleleft \sigma$ or
$\sigma \blacktriangleleft e$.
Similarly, there is no permutation $\sigma \in S_{n+1}$ for which
$\eta \blacktriangleleft \sigma$ or
$\sigma \blacktriangleleft \eta$.
Thus, there is no {$\blacktriangleleft$-matching}
of either $S_{\PPre}$ or of $\eta S_{\PPre}$.

Removing $e$ and $\eta$ does not help much.
Figure \ref{fig:blackaba} shows the 
$\blacktriangleleft$-connected components of $aba$ and 
of $abcdcba \in S_{\PPre}$ for $n = 4$.
The figure for the connected component of $aba$
is similar for other values of $n$.
The number of elements is odd and
trying to construct a {$\blacktriangleleft$-matching}
always leaves out an element.
\end{rem}

\begin{rem}
\label{rem:Yk}
Let $Y \subset S_{n+1}$ be as in Equation \eqref{equation:defY}.
For $k \ge 2$,
let $Y_k \subseteq Y$ be the set of permutations $\sigma \in Y$
which are either
low with $\inv(\sigma) < k$ or
high with $\inv(\sigma) \le k$.
For instance, for $n \ge 2$: 
\[ Y_2 = \{ a_1, a'_1 = a_2a_1, a_2, a'_2 = a_1a_2, \ldots,
a_{n-1}, a'_{n-1} = a_{n-2}a_{n-1}, a_n, a'_n = a_{n-1}a_n \}. \]
The correspondence $\sigma \leftrightarrow \sigma'$ 
defines an involution in $Y_k$ and
a $\blacktriangleleft$-matching of $Y_k$.
It follows from Lemma \ref{lemma:vader} that
if $\sigma_0 \in Y_k$ and $\sigma_1 \blacktriangleleft \sigma_0$
then $\sigma_1 \in Y_k$;
if we also have $\sigma_1 \ne \sigma'_0$ then $\sigma_1 \in Y_{k-1}$.
\end{rem}

\section{Bruhat cells}
\label{sect:bruhat}

In this section we revise some notation and results
and prove some results concerning Bruhat cells
and related constructions.

Two invertible matrices $A_0, A_1$ are (unsigned) \textit{Bruhat equivalent}
if there exist $U_0, U_1 \in \Up_{n+1}$ with $A_0 = U_0 A_1 U_1$
(recall that $\Up_{n+1}$ is the group of
invertible real upper triangular matrices).
Any invertible matrix $A$ is Bruhat equivalent
to a unique permutation matrix $P_{\rho}$, $\rho \in S_{n+1}$.
We then say that $A$ is Bruhat equivalent to $\rho$.
Given $A \in \GL_{n+1}$ and a pair $(i,j)$, let
\begin{equation}
\label{equation:SW}
\SW(A,i,j) = \SubMatrix(A,i \ldots n+1, 1 \ldots j),
\end{equation}
a $(n+2-i) \times j$ submatrix of $A$.
As is well known,
two matrices $A_0$ and $A_1$ are Bruhat equivalent if
for every pair $(i,j)$ 
the ranks of $\SW(A_0,i,j)$ and of $\SW(A_1,i,j)$ are equal.
Let $(A_k)_{k \in \NN}$ be a sequence of matrices in $\GL_{n+1}$
converging to $A_{\infty} \in \GL_{n+1}$;
if, for all $k \in \NN$, $A_k$ is Bruhat equivalent to $\rho_0$
and $A_\infty$ is Bruhat equivalent to $\rho_1$
then $\rho_1 \le \rho_0$ in the strong Bruhat order.
Indeed, this condition is often used to define the Bruhat order.

Given a signed permutation matrix $Q_0 \in \B_{n+1}^{+}$
corresponding to the permutation $\rho \in S_{n+1}$,
let $\sigma = \eta\rho$ and
consider the affine space of matrices
\[ \bM_{Q_0} = Q_0 \Lo_{n+1}^1 \cap \Lo_{n+1}^1 Q_0 
= \Up_{\sigma} Q_0
= Q_0 \Lo_{\sigma^{-1}} \subset \GL_{n+1}. \]
The subgroups $\Up_{\sigma} \subseteq \Up_{n+1}^1$
and $\Lo_{\sigma} \subseteq \Lo_{n+1}^1$ (for $\sigma \in S_{n+1}$)
are implicitly defined by this equation
and are discussed in Section 2 of \cite{Goulart-Saldanha0}.
We have $\dim(\bM_{Q_0}) = \dim(\Lo_{\sigma^{-1}}) = \inv(\sigma)
= \inv(\eta) - \inv(\rho)$.
In order to construct a parametrization of $\bM_{Q_0}$,
start with $Q_0$ and run through the zero entries.
An entry $(i,j)$ satisfies both $j < i^\rho$ and $j^{\rho^{-1}} < i$
if and only if it is both below a non zero entry of $Q_0$
and to the left of another non zero entry of $Q_0$.
In this case, replace it by a free real variable $x_k$.
This space of matrices is constructed
in the proof of Theorem 2 in \cite{Goulart-Saldanha0}
(where its generic element is called $M$)
and discussed in Section 7 in \cite{Goulart-Saldanha1}
(where its generic element is called $\tilde M$).
We have $Q_0 \Lo_{n+1}^1 = \Up_{\rho} \bM_{Q_0}$:
every element $A \in Q_0 \Lo_{n+1}^1$ can be uniquely written
as a product $A = UM$ with $U \in \Up_{\rho}$ and $M \in \bM_{Q_0}$,
and $U \in \Up_{\rho}$ and $M \in \bM_{Q_0}$ imply $UM \in Q_0 \Lo_{n+1}^1$.
Clearly,  $M$ is Bruhat equivalent to $A = UM$.

\begin{example}
\label{example:Q1M1}
Consider $n = 5$ and the matrix
\[
Q_1 = \begin{pmatrix}
 \cdot & \cdot & -1 & \cdot & \cdot & \cdot \\
 \cdot & 1 & \cdot & \cdot & \cdot & \cdot \\
 \cdot & \cdot & \cdot & \cdot & \cdot & 1 \\
 1 & \cdot & \cdot & \cdot & \cdot & \cdot \\
 \cdot & \cdot & \cdot & \cdot & 1 & \cdot \\
 \cdot & \cdot & \cdot & 1 & \cdot & \cdot 
\end{pmatrix} \in \B_{n+1}^{+} \]
so that the corresponding permutation is $\rho_1 = [326154]$
and $\sigma_1 = \eta\rho_1 = [451623]$.
The affine space $\bM_{Q_1}$ is the set of matrices
$\tilde M_1$ of the form
\[
\tilde M_1 = \begin{pmatrix}
 \cdot & \cdot & -1 & \cdot & \cdot & \cdot \\
 \cdot & 1 & \cdot & \cdot & \cdot & \cdot \\
 \cdot & x_1 & x_2 & \cdot & \cdot & 1 \\
1 & \cdot & \cdot & \cdot & \cdot & \cdot \\
x_3 & x_4 & x_5 & \cdot & 1 & \cdot \\
x_6 & x_7 & x_8 & 1 & \cdot & \cdot 
\end{pmatrix}, \qquad
x_1, \ldots, x_8 \in \RR. 
\]
Notice that $\inv(\rho_1) = 7$
and $\dim(\bM_{Q_1}) = \inv(\sigma_1) = 8$.
\end{example}

\begin{lemma}
\label{lemma:Mrho}
Consider $Q_1 \in \B_{n+1}^{+}$ corresponding to $\rho_1 \in S_{n+1}$
and the affine space $\bM_{Q_1}$.
\begin{enumerate}
\item{If $M \in \bM_{Q_1}$ is equivalent to $\rho \in S_{n+1}$
then $\rho_1 \le \rho$.
The only such matrix $M$ equivalent to $\rho_1$ is $M = Q_1$.}
\item{Consider $\rho_0 \in S_{n+1}$, $\rho_1 \vartriangleleft \rho_0$,
$\rho_0 = (i_0 i_1) \rho_1 = \rho_1 (j_0 j_1)$, $i_0 < i_1$, $j_0 < j_1$.
Take $X = e_{i_1} e_{j_0}^\top$ be the matrix with a single non zero
entry equal to $1$ in position $(i_1,j_0)$.
Then $Q_1 + tX \in \bM_{Q_1}$ for all $t \in \RR$.
Moreover, $M \in \bM_{Q_1}$ is equivalent to $\rho_0$
if and only if $M$ is of the form $M = Q_1 + tX$
for $t \in \RR \smallsetminus \{0\}$.}
\end{enumerate}
\end{lemma}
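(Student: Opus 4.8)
The plan is to analyze the structure of matrices $M \in \bM_{Q_1}$ via the ranks of their southwest submatrices $\SW(M,i,j)$, exactly as in the characterization of Bruhat equivalence recalled just before Example \ref{example:Q1M1}. Recall that the free variables $x_k$ occupy precisely the positions $(i,j)$ that lie simultaneously below a pivot of $Q_1$ (i.e. $j < i^{\rho_1}$... wait, below: there is a nonzero entry of $Q_1$ in row $> i$ is not it; rather the pivot in column $j$ sits in a row below $i$) and to the left of a pivot of $Q_1$. For part (1), I would argue that $\SW(M,i,j)$ always contains the submatrix $\SW(Q_1,i,j)$ as a subset of its pivot pattern together with entries in the ``free'' region, so $\operatorname{rank}\SW(M,i,j) \ge \operatorname{rank}\SW(Q_1,i,j)$ for every $(i,j)$; this is the southwest-rank criterion for $\rho_1 \le \rho$, giving the first claim. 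For the uniqueness of $Q_1$ among matrices equivalent to $\rho_1$: if $M$ is equivalent to $\rho_1$ then every inequality above is an equality, so each $\SW(M,i,j)$ has the same rank as $\SW(Q_1,i,j)$; a short induction on the columns of the free region, reading off that adding any nonzero $x_k$ strictly below-and-left of a pivot raises one of these ranks, forces every $x_k = 0$, so $M = Q_1$.

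For part (2), first observe that the position $(i_1,j_0)$ is indeed one of the free positions of $\bM_{Q_1}$: since $\rho_1 \vartriangleleft \rho_0 = (i_0i_1)\rho_1 = \rho_1(j_0j_1)$ with $i_0<i_1$, $j_0<j_1$, we have $i_0^{\rho_1}=j_1$ and $i_1^{\rho_1}=j_0$ by the description of covering relations recalled in Section \ref{sect:permutations} (the paragraph on $\vartriangleleft$), so the pivot of $Q_1$ in column $j_0$ is in row $i_1$ itself; hmm, that would mean $(i_1,j_0)$ is a pivot, not a free entry. Let me recheck: with $\rho_0 = (i_0i_1)\rho_1$ we have $i_k^{\rho_1} = j_k$ and $i_k^{\rho_0} = j_{1-k}$, so $i_0^{\rho_1} = j_0$ and $i_1^{\rho_1} = j_1$. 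Thus the pivot of $Q_1$ in column $j_0$ sits in row $i_0 < i_1$ (so $(i_1,j_0)$ is below a pivot) and the pivot of $Q_1$ in row $i_1$ sits in column $j_1 > j_0$ (so $(i_1,j_0)$ is left of a pivot). Hence $(i_1,j_0)$ is a free position and $Q_1 + tX \in \bM_{Q_1}$ for all $t$, which is the first assertion of (2).

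For the equivalence class of $Q_1 + tX$ with $t \ne 0$: I would compute the southwest ranks. The only submatrices $\SW(Q_1+tX,i,j)$ whose rank can differ from $\SW(Q_1,i,j)$ are those with $i \le i_1$ and $j \ge j_0$, i.e. those containing the entry $(i_1,j_0)$. Using that the ``betweenness'' conclusion of the covering relation ($i_0 < i < i_1 \Rightarrow i^{\rho_1} \notin (j_0,j_1)$, and symmetrically) isolates the new entry from interfering pivots, one checks that adding $tX$ raises the rank by exactly $1$ on precisely the submatrices $\SW(\cdot,i,j)$ with $i_0 < i \le i_1$ and $j_0 \le j < j_1$, and leaves all other ranks unchanged; these are exactly the submatrices whose rank jumps when passing from $\rho_1$ to $\rho_0 = (i_0i_1)\rho_1$, so $Q_1+tX$ is Bruhat equivalent to $\rho_0$. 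Conversely, if $M \in \bM_{Q_1}$ is equivalent to $\rho_0$, then by part (1) all its southwest ranks equal those of $\rho_0$, hence agree with those of $Q_1$ except for the unit jump just described; running the same column-by-column induction as in part (1) on the free variables other than the one at $(i_1,j_0)$ forces all of them to vanish, while the $(i_1,j_0)$-variable must be nonzero to produce the jump, so $M = Q_1 + tX$ with $t \ne 0$.

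The main obstacle I anticipate is the bookkeeping in the induction showing that a nonzero free variable necessarily changes some southwest rank, i.e. that the free entries are ``independent'' from the point of view of the rank conditions; this is where the precise combinatorial description of the free region (each free column having its pivots arranged so that a nonzero entry cannot be cancelled by row/column operations within $\Up$) must be used carefully. One clean way to organize it is to process the free positions in order of increasing column and, within a column, increasing row, and at each step exhibit an explicit submatrix $\SW(M,i,j)$ (spanned by the pivots strictly southwest together with the current entry) whose rank detects that variable; the already-treated variables sit outside this submatrix by the choice of ordering, so they do not interfere. I expect this to be essentially the argument used in the proof of Theorem 2 of \cite{Goulart-Saldanha0}, which constructs $\bM_{Q_0}$ in the first place, so I would cite that construction and only spell out the incremental rank computation needed here.
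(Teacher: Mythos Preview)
Your proposal is essentially correct and, for the uniqueness claim in (1) and the converse direction in (2), follows the same line as the paper: both arguments pick a ``first'' free entry where $M$ differs from the target and show that the corresponding southwest submatrix $\SW(M,i,j)$ then has the wrong rank. The paper organizes this by choosing the offending position $(i,j)$ with minimal $j-i$ (so that $\SW(M,i,j)$ and $\SW(Q_1,i,j)$ agree except at the northeast corner), then uses that the first row of $\SW(Q_1,i,j)$ is zero (because $j<i^{\rho_1}$) while the last column is zero (because $j^{\rho_1^{-1}}<i$), which is a slicker packaging of the induction you describe. For part (2) the paper carries out the case analysis you anticipate, splitting according to the position of $(i,j)$ relative to the rectangle $[i_0,i_1]\times[j_0,j_1]$; your sketch of this is accurate but you should expect several subcases, not a single clean step.

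The one genuine methodological difference is your argument for $\rho_1\le\rho$. You prove the rank inequality $\operatorname{rank}\SW(M,i,j)\ge\operatorname{rank}\SW(Q_1,i,j)$ directly, by observing that the pivot rows of $\SW(Q_1,i,j)$ remain linearly independent in $\SW(M,i,j)$ (each has its rightmost nonzero entry at a distinct pivot column, unaffected by the free entries, which all sit further left). The paper instead uses a degeneration: it conjugates $M$ by suitable diagonal matrices $D_0(\lambda), D_1(\lambda)$ to obtain a path $M_\lambda\in\bM_{Q_1}$, Bruhat equivalent to $M$ for $\lambda>0$, with $M_\lambda\to Q_1$ as $\lambda\searrow 0$, and invokes the closure characterization of the Bruhat order. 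Your argument is more elementary and self-contained; the paper's has the virtue of exhibiting $\bM_{Q_1}\cap\Bru_\rho$ as a cone over $Q_1$, which is conceptually useful elsewhere. Either route is fine here.
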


\begin{example}
\label{example:Q1rho0}
Consider $Q_1$ and $\rho_1$ and in Example \ref{example:Q1M1}.
Take $\rho_0 = [356124]$
so that $\rho_1 \vartriangleleft \rho_0$,
$i_0 = 2$, $i_1 = 5$, $j_0 = 2$ and $j_1 = 5$.
The matrices in $\bM_{Q_1}$ which are equivalent to $\rho_0$
are precisely
\[
\tilde M_1 = Q_1 + x_4 X = \begin{pmatrix}
 \cdot & \cdot & -1 & \cdot & \cdot & \cdot \\
 \cdot & 1 & \cdot & \cdot & \cdot & \cdot \\
 \cdot & \cdot & \cdot & \cdot & \cdot & 1 \\
1 & \cdot & \cdot & \cdot & \cdot & \cdot \\
 \cdot & x_4 & \cdot & \cdot & 1 & \cdot \\
 \cdot & \cdot & \cdot & 1 & \cdot & \cdot 
\end{pmatrix}, \qquad
x_4 \in \RR \smallsetminus \{0\}.  \]
Notice that for $x_4 = 0$ we have $\tilde M_1 = Q_1$,
which is equivalent to $\rho_1$.
\end{example}

\begin{coro}
\label{coro:Mrho}
Let $\rho_0$, $\rho_1$ and $Q_1$ be as in Lemma \ref{lemma:Mrho}.
The matrices $A \in Q_1\Lo_{n+1}^{1}$ which are Bruhat equivalent
to $\rho_1$ are precisely $A \in \Up_{\rho_1} Q_1 = Q_1 \Lo_{\rho_1^{-1}}$.
The matrices $A \in Q_1\Lo_{n+1}^{1}$ which are Bruhat equivalent
to $\rho_0$ are precisely those of the form
$U (Q_1 + tX)$ for $U \in \Up_{\rho_1}$
and $t \in \RR \smallsetminus \{0\}$
(and $X$ as in Lemma \ref{lemma:Mrho}).
\end{coro}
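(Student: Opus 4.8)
The plan is to read the corollary off from Lemma~\ref{lemma:Mrho} together with the unique factorization $Q_1\Lo_{n+1}^1 = \Up_{\rho_1}\bM_{Q_1}$ recorded just before that lemma. Fix $A \in Q_1\Lo_{n+1}^1$ and write $A = UM$ uniquely with $U \in \Up_{\rho_1}$ and $M \in \bM_{Q_1}$. Since $U \in \Up_{n+1}$, the matrices $A$ and $M = U^{-1}A$ are Bruhat equivalent; as every invertible matrix is Bruhat equivalent to a unique permutation matrix, for any $\rho \in S_{n+1}$ we have that $A$ is Bruhat equivalent to $\rho$ if and only if $M$ is.

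For the first assertion I would combine this with item~1 of Lemma~\ref{lemma:Mrho}: $A$ is Bruhat equivalent to $\rho_1$ iff $M$ is, iff $M = Q_1$. Hence the set of such $A$ is exactly $\Up_{\rho_1}Q_1$. The remaining equality $\Up_{\rho_1}Q_1 = Q_1\Lo_{\rho_1^{-1}}$ is a structural identity for the subgroups $\Up_{\rho}$ and $\Lo_{\rho}$ — it says that conjugation by the signed permutation matrix $Q_1$ carries $\Up_{\rho_1}$ onto $\Lo_{\rho_1^{-1}}$ — and is part of the setup of Section~2 of \cite{Goulart-Saldanha0}; it can also be checked directly by tracking the admissible off-diagonal positions under the permutation of rows and columns effected by $Q_1$.

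For the second assertion the argument is the same with item~2 of Lemma~\ref{lemma:Mrho} in place of item~1: $A = UM$ is Bruhat equivalent to $\rho_0$ iff $M$ is, and, with $X = e_{i_1}e_{j_0}^\top$ as there, this holds iff $M = Q_1 + tX$ for some $t \in \RR\smallsetminus\{0\}$. Thus the matrices in $Q_1\Lo_{n+1}^1$ that are Bruhat equivalent to $\rho_0$ are precisely those of the form $U(Q_1 + tX)$ with $U \in \Up_{\rho_1}$ and $t \ne 0$; conversely, each such matrix lies in $\Up_{\rho_1}\bM_{Q_1} = Q_1\Lo_{n+1}^1$ and has $\bM_{Q_1}$-factor $Q_1 + tX$, hence is Bruhat equivalent to $\rho_0$.

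I do not anticipate a genuine obstacle: all of the geometric content is already carried by Lemma~\ref{lemma:Mrho}, and what remains is bookkeeping with the factorization $A = UM$ and the elementary identity $\Up_{\rho_1}Q_1 = Q_1\Lo_{\rho_1^{-1}}$. The one point deserving a line of care is that the factorization is \emph{unique}, so that $(U,t) \mapsto U(Q_1+tX)$ is an honest bijective parametrization rather than merely a surjection; this is immediate from the uniqueness of $U \in \Up_{\rho_1}$ and $M \in \bM_{Q_1}$.
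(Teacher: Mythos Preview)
Your proposal is correct and takes essentially the same approach as the paper: the paper's proof is the single line ``This follows from Lemma~\ref{lemma:Mrho} and $Q_1 \Lo_{n+1}^{1} = \Up_{\rho_1} \bM_{Q_1}$,'' and you have simply spelled out the details of that deduction (the observation that $A$ and its $\bM_{Q_1}$-factor are Bruhat equivalent, together with the identity $\Up_{\rho_1}Q_1 = Q_1\Lo_{\rho_1^{-1}}$).
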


\begin{proof}
This follows from Lemma \ref{lemma:Mrho}
and $Q_1 \Lo_{n+1}^{1} = \Up_{\rho_1} \bM_{Q_1}$.
\end{proof}

\begin{proof}[Proof of Lemma \ref{lemma:Mrho}]
Clearly $Q_1$ is equivalent to $\rho_1$ and
$Q_1 + tX$ is equivalent to $\rho_0$ if $t \ne 0$.
Set 
\[ D_0(\lambda) = \diag(
\lambda^{1^{\rho_1}},\ldots,\lambda^{i^{\rho_1}},\ldots), \qquad
D_1(\lambda) = \diag(\lambda^{-1}, \ldots, \lambda^{-i},\ldots). \]
Given $M_1 \in \bM_{Q_1}$, set $M_\lambda = D_0(\lambda) M_1 D_1(\lambda)$.
For $\lambda \in (0,1)$ we have that $M_\lambda \in \bM_{Q_1}$
is Bruhat equivalent to $M_1$.
Also, 
\[ \lim_{\lambda \searrow 0} M_\lambda = Q_1; \]
from the characterization of the strong Bruhat order
above we have  $\rho_1 \le \rho$.


Assume that $M \in \bM_{Q_1}$ is Bruhat equivalent to $Q_1$.
If $M \ne Q_1$, take $(i,j)$ such that $M_{i,j} \ne (Q_1)_{i,j}$
with minimal $j-i$, that is,
the position furthest to the southwest direction.
The submatrices
$S_Q = \SW(Q_1,i,j)$ and $S_M = \SW(M,i,j)$
(with the notation of Equation \eqref{equation:SW})
have the same rank
and are equal except for the northeast entry.
We must have $j < i^{\rho_1}$ and therefore
the first row of $S_Q$ is the zero vector.
The first row of $S_M$ 
is therefore of the form $(0, \ldots, 0, c)$, $c \ne 0$.
By equality of rank,
it has to be a linear combination of other rows of $S_Q$.
But we must have $j^{\rho_1^{-1}} < i$,
so that the last column of $S_Q$ is the zero vector,
a contradiction.

Assume that $M \in \bM_{Q_1}$ is Bruhat equivalent to $\rho_0$.
If $M$ is not of the form $Q_1 + tX$,
take $(i,j)$ to be a position which violates that form.
We cover the entries in a convenient order:
assume $(i,j)$ to be the first violation in this order.
The cases $i > i_1$ and $j < j_0$ are just as in the
$\rho_1$ case, discussed in the previous paragraph.
The case $(i,j) = (i_1,j_0)$ is easily ruled out.
The case $i = i_1$, $j_0 < j < j_1$ is divided in two subcases:
if $j^{\rho^{-1}} > i_1$ then the position is zero
and therefore $M_{i,j} = (Q_1)_{i,j} = 0$, so there is no violation.
Otherwise, notice that the ranks of
$\SW(Q_1,i_0,j-1)$ and of $\SW(Q_1,i_0,j)$ are equal
and that therefore the $j$-th column of $\SW(M,i_0,j)$
is a linear combination of previous columns.
Notice also that for $j' \le j$, $j' \ne j_0$,
we have $M_{i_0,j'} = 0$:
for $j' < j_0$ this follows from the previous case;
for $j' > j_0$ this follows from $M \in \bM_{Q_1}$.
In particular, $M_{i_0,j} = 0$.
Furthermore, for $j' < j$, $j' \ne j_0$,
we have $M_{i_1,j'} = 0$, by the minimality hypothesis.
Thus, the linear combination of columns $1$ to $j-1$
of $\SW(M,i_0,j)$ which produces column $j$
has zero coefficient for $j_0$,
implying $M_{i_1,j} = 0$, as desired.
The case $j = j_0$, $i_0 < i < i_1$ is similar.
The cases $i = i_1$, $j \ge j_1$ and $j = j_0$, $i \le i_0$ are trivial.
The cases $i_0 < i < i_1$, $j_0 < j < j_1$
are handled similarly to the $\rho_1$ case, but with $\rho_0$ instead.
The cases $i = i_0$, $j_0 < j \le j_1$ and
$j = j_1$, $i_0 \le i < i_1$ are trivial.
The case $j > j_1$, $i_0 < i < i_1$ is also similar,
but for clarity we describe it.
The rank of $\SW(M,i,j)$ equals that of $\SW(M,i,j-1)$
and therefore the $j$-th column of $\SW(M,i,j)$
is a linear combination of the first $j-1$ columns.
But we have $M_{i,j'} = 0$ for all $j' < j$,
implying $M_{i,j} = 0$.
The case $i < i_0$, $j_0 < j < j_1$ is similar.
The final case is $i < i_0$, $j > j_1$:
this is again similar to the $\rho_1$ case,
completing the proof.
\end{proof}

Below we will make heavy use of notation
and results from \cite{Goulart-Saldanha0};
we briefly recall some basic facts.
Given $\sigma \in S_{n+1}$,
let $\Bru_\sigma \subset \SO_{n+1}$
be the set of matrices $Q \in \SO_{n+1}$
which are Bruhat equivalent to $\sigma$.
For $\Pi: \Spin_{n+1} \to \SO_{n+1}$,
the inverse image $\Pi^{-1}[\Bru_\sigma]$
is also called $\Bru_\sigma \subset \Spin_{n+1}$.
We have $\Bru_\sigma \subseteq \overline{\Bru_\rho}$
if and only if $\sigma \le \rho$ (in the strong Bruhat order).
The connected components of $\Bru_\sigma \subset \Spin_{n+1}$
are all contractible submanifolds of dimension $\inv(\sigma)$.

Recall from Equation \eqref{equation:acutegrave} that
if $\sigma \in S_{n+1}$ then $\acute\sigma \in \tilde \B_{n+1}^{+}$;
also, $\Quat_{n+1} \subset  \widetilde\B_{n+1}^{+}$
is a normal subgroup of order $2^{n+1}$,
defined in Equation \eqref{equation:Quat}.
Each element $q \acute\sigma$, $q \in \Quat_{n+1}$,
belongs to a different connected
component of $\Bru_\sigma$, called $\Bru_{q \acute\sigma}$ so that
\[ \Bru_\sigma = \bigsqcup_{q \in \Quat_{n+1}} \Bru_{q \acute\sigma},
\qquad q\acute\sigma \in \Bru_{q \acute\sigma},
\]
as in Equation \eqref{equation:Bruz}.
The following lemma discusses some special cases of
the relation $\Bru_{z_1} \subseteq \overline{\Bru_{z_0}}$,
$z_0, z_1 \in \widetilde\B_{n+1}^{+}$.

\begin{lemma}
\label{lemma:trianglemanifold}
Consider $\rho_1 \vartriangleleft \rho_0$, 
$\rho_1 = \rho_a\rho_b$, $\rho_0 = \rho_a a_i \rho_b$,
$\inv(\rho_1) = \inv(\rho_a)+\inv(\rho_b)$.
Consider $z_1 = q \acute\rho_1 = q \acute\rho_a \acute\rho_b \in 
\widetilde\B_{n+1}^{+}$, $q \in \Quat_{n+1}$. 
Set $z_0^{+} = q \acute\rho_a \acute a_i \acute\rho_b$,
$z_0^{-} = q \acute\rho_a \grave a_i \acute\rho_b$.
Then
there exists a tubular neighborhood $\;\cU_{z_1}$ of $\Bru_{z_1}$
such that 
\[ \cU_{z_1} \cap \Bru_{\rho_0} =
\cU_{z_1} \cap (\Bru_{z_0^{+}} \cup \Bru_{z_0^{-}}). \]
Moreover,
$N = \Bru_{z_0^{+}} \cup \Bru_{z_1} \cup \Bru_{z_0^{-}}
\subset \Spin_{n+1}$ is a smooth submanifold of dimension
$\inv(\rho_0)$.
\end{lemma}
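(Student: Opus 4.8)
The plan is to reduce the statement to a single computation in an affine chart, exploiting Corollary~\ref{coro:Mrho} and the matrix space $\bM_{Q_1}$.

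First I would fix $Q_1 = \Pi(z_1) \in \B_{n+1}^{+}$ (a signed permutation matrix with underlying permutation $\rho_1$) and build a chart around $z_1$. The map $A \mapsto [A]$ carries $Q_1\Lo_{n+1}^{1}$ diffeomorphically onto an open affine chart of $\Flag_{n+1}$ containing $[Q_1]$; since that chart is contractible, the cover $\Pi_{\Flag}$ is trivial over it, and the sheet through $z_1$ gives an embedding $\psi \colon Q_1\Lo_{n+1}^{1} \hookrightarrow \Spin_{n+1}$ with $\psi(Q_1) = z_1$ such that $\psi(A) \in \Bru_{\sigma}$ iff $A$ is Bruhat equivalent to $\sigma$. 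Write $\rho_0 = (i_0i_1)\rho_1 = \rho_1(j_0j_1)$ and $X = e_{i_1}e_{j_0}^{\top}$ as in Lemma~\ref{lemma:Mrho}. Inside $Q_1\Lo_{n+1}^{1} = \Up_{\rho_1}\bM_{Q_1}$, Corollary~\ref{coro:Mrho} says the locus Bruhat equivalent to $\rho_1$ is $\Up_{\rho_1}Q_1$ and the locus Bruhat equivalent to $\rho_0$ is $\{\, U(Q_1 + tX) : U \in \Up_{\rho_1},\ t \ne 0 \,\}$. Transporting by $\psi$: $\Bru_{z_1}$ meets the chart in $\psi[\Up_{\rho_1}Q_1]$ (connected and through $z_1$, so inside the single component $\Bru_{z_1}$), and $\Bru_{\rho_0}$ meets it in the two connected pieces $C^{+} = \psi[\{\, U(Q_1 + tX) : U \in \Up_{\rho_1},\ t > 0 \,\}]$ and $C^{-}$ (the same with $t < 0$). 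Because $(U,M) \mapsto UM$ is a diffeomorphism $\Up_{\rho_1} \times \bM_{Q_1} \to Q_1\Lo_{n+1}^{1}$ and $t \mapsto Q_1 + tX$ is an affine line in $\bM_{Q_1}$ (Lemma~\ref{lemma:Mrho}(2)), the union $\Bru_{z_1} \cup C^{+} \cup C^{-}$ is, inside the chart, $\psi[\Up_{\rho_1} \cdot (Q_1 + \RR X)]$, an embedded submanifold of dimension $\inv(\rho_1) + 1 = \inv(\rho_0)$. This already proves the submanifold assertion near $z_1$ and shows that at most two components of $\Bru_{\rho_0}$ reach $z_1$.

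Next I would identify $C^{\pm}$ with $\Bru_{z_0^{\pm}}$ and promote the local picture from $z_1$ to all of $\Bru_{z_1}$. In the base case $\rho_a = \rho_b = e$ we have $z_1 = q \in \Quat_{n+1}$, $\rho_0 = a_i$, $X = e_{i+1}e_{i}^{\top}$; after left translation by $q$ one may take $q = 1$, and then the reparametrization $t = \tan\theta$ shows that the ray $\{\, I + tX : t \ge 0 \,\}$ has the same image in $\Flag_{n+1}$ as the path of rotations by $\theta \in [0,\pi/2]$ in the plane $\langle e_i, e_{i+1}\rangle$; by the very definition of $\acute a_i$ the lift of this path starting at $1$ ends at $\acute a_i$, so $C^{+} \subseteq \Bru_{\acute a_i}$, and symmetrically $C^{-} \subseteq \Bru_{\grave a_i}$ with $\grave a_i = (\acute a_i)^{-1}$; restoring $q$ gives $\Bru_{z_0^{+}}$ and $\Bru_{z_0^{-}}$. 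For arbitrary $\rho_a, \rho_b$ I would reduce to this: since $\inv(\rho_1) = \inv(\rho_a) + \inv(\rho_b)$ and $\inv(\rho_0) = \inv(\rho_a) + 1 + \inv(\rho_b)$, multiplicativity of spin Bruhat cells gives the uniquely factoring products $\Bru_{z_1} = \Bru_{q\acute\rho_a}\cdot\Bru_{\acute\rho_b}$ and $\Bru_{\rho_0} = \Bru_{q\acute\rho_a}\cdot\Bru_{a_i}\cdot\Bru_{\acute\rho_b}$. Writing $z = g_a g_b \in \Bru_{z_1}$, the nearby points of $\Bru_{\rho_0}$ are exactly the products $g'_a\, \xi\, g'_b$ with $(g'_a,\xi,g'_b)$ near $(g_a,1,g_b)$, so $\xi$ ranges over a neighbourhood of $1$ in $\Bru_{a_i}$; by the base case such $\xi \ne 1$ lies in $\Bru_{\acute a_i} \cup \Bru_{\grave a_i}$, hence these nearby points lie in $\Bru_{q\acute\rho_a}\Bru_{\acute a_i}\Bru_{\acute\rho_b} \cup \Bru_{q\acute\rho_a}\Bru_{\grave a_i}\Bru_{\acute\rho_b} = \Bru_{z_0^{+}} \cup \Bru_{z_0^{-}}$. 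Taking $z = z_1$ pins down $C^{\pm} \subseteq \Bru_{z_0^{\pm}}$; taking a general $z$, together with the chart of the first step transported along the factorization, shows $N$ is a smooth embedded submanifold of dimension $\inv(\rho_0)$ near every point of $\Bru_{z_1}$.

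It remains to assemble. Choose $\cU_{z_1}$ to be a tubular neighbourhood of $\Bru_{z_1}$ contained in the union of the neighbourhoods just produced; then $\cU_{z_1} \cap \Bru_{\rho_0} \subseteq \Bru_{z_0^{+}} \cup \Bru_{z_0^{-}}$, while $\cU_{z_1} \cap (\Bru_{z_0^{+}} \cup \Bru_{z_0^{-}}) \subseteq \cU_{z_1} \cap \Bru_{\rho_0}$ trivially and $\Bru_{z_1} \subseteq \overline{\Bru_{z_0^{\pm}}}$ because $\rho_1 \vartriangleleft \rho_0$; so the displayed equality holds. For the global submanifold claim, near points of $\Bru_{z_1}$ we have the models above, and near an interior point of $\Bru_{z_0^{+}}$ (resp.\ $\Bru_{z_0^{-}}$) the set $N$ coincides locally with that single cell, since $\overline{\Bru_{z_0^{-}}}$ and $\overline{\Bru_{z_1}}$ are disjoint from the open submanifold $\Bru_{z_0^{+}}$; hence $N$ is a smooth embedded submanifold of dimension $\inv(\rho_0)$. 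The hard part, I expect, is not any single step but the passage from the special point $z_1$ — which lies over the signed permutation matrix $Q_1$, where the model $\bM_{Q_1}$ applies verbatim — to an arbitrary point of $\Bru_{z_1}$: this is exactly where one must lean on the multiplicativity and unique factorization of spin Bruhat cells from \cite{Goulart-Saldanha0}, and check that $\Bru_{q\acute\rho_a} \times \Bru_{a_i} \times \Bru_{\acute\rho_b} \to \Spin_{n+1}$ is a local diffeomorphism onto an open subset of $\Bru_{\rho_0}$, so that ``close in $\Bru_{\rho_0}$'' genuinely means ``close factors''.
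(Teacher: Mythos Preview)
Your proposal is correct and uses the same two ingredients as the paper: the affine chart $Q_1\Lo_{n+1}^{1}$ together with Corollary~\ref{coro:Mrho} to see that $\Bru_{\rho_0}$ meets the chart in exactly two connected pieces $C^{\pm}$, and then the multiplicativity of signed Bruhat cells (Theorem~1 of \cite{Goulart-Saldanha0}) to identify those pieces with $\Bru_{z_0^{\pm}}$. The paper organizes the second step more economically: instead of a base case plus reduction, it writes down a single smooth map $\Phi\colon \Bru_{q\acute\rho_a}\times(-\pi,\pi)\times\Bru_{\acute\rho_b}\to\Spin_{n+1}$, $\Phi(z_a,\theta,z_b)=z_a\alpha_i(\theta)z_b$, and quotes Theorem~1 of \cite{Goulart-Saldanha0} to conclude that the three restrictions (to $\theta<0$, $\theta=0$, $\theta>0$) are diffeomorphisms onto $\Bru_{z_0^{-}}$, $\Bru_{z_1}$, $\Bru_{z_0^{+}}$; this parametrizes all of $N$ at once and gives the smooth-submanifold claim globally. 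One simplification you can make: the chart you build in your first paragraph is exactly the open set $\cU_{z_1}=z_1\grave\eta\Bru_{\acute\eta}$ of Remark~\ref{remark:Uz1}, which is already a tubular neighbourhood of the \emph{entire} cell $\Bru_{z_1}$, not just of the point $z_1$; so the ``passage from the special point $z_1$ to an arbitrary point of $\Bru_{z_1}$'' that you flag as the hard part is automatic, and your first chart already proves $N\cap\cU_{z_1}=\psi[\Up_{\rho_1}(Q_1+\RR X)]$ is smooth near every point of $\Bru_{z_1}$.
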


\begin{rem}
\label{remark:Uz1}
The open subsets $\;\cU_z \subset \Spin_{n+1}$,
$z \in \widetilde\B_{n+1}^{+}$, 
have been discussed in Section 4 of \cite{Goulart-Saldanha0};
Theorem 2 of \cite{Goulart-Saldanha0}
gives a system of coordinates for $\;\cU_z$. 
The set $\Bru_{\acute\eta} \subset \Spin_{n+1}$ is an open neighborhood
of $\acute\eta \in \Spin_{n+1}$.
For $z \in \widetilde\B_{n+1}^{+}$,
the set $\cU_{z} = z\grave\eta\Bru_{\acute\eta} \subset \Spin_{n+1}$
is an open neighborhood of $z$ and an open tubular neighborhood
of $\Bru_z \subset \Spin_{n+1}$.
\end{rem}

\begin{proof}[Proof of Lemma \ref{lemma:trianglemanifold}]
Set $Q_1 = \Pi(z_1)$ (where $\Pi: \Spin_{n+1} \to \SO_{n+1}$)
and consider the affine space $Q_1\Lo_{n+1}^1 \subset \GL_{n+1}$.
Define $\bQ: Q_1\Lo_{n+1}^1 \to \cU_{z_1}$ by
$\bQ(A) = z$ if and only if there exists $R \in \Up_{n+1}^{+}$
with $A = \Pi(z) R$.
The map $\bQ$ is a diffeomorphism.
Furthermore, if $A$ is Bruhat equivalent to $\rho$
then $\bQ(A) \in \Bru_{\rho}$.
It therefore follows from Corollary \ref{coro:Mrho} that 
\[ \cU_{z_1} \cap \Bru_{\rho_1} = \bQ[\Up_{\rho_1}Q_1], \qquad
\cU_{z_1} \cap \Bru_{\rho_0} =
\bQ[\Up_{\rho_1}(Q_1 + (\RR \smallsetminus \{0\})X)]. \]
In particular,
$\cU_{z_1} \cap \Bru_{\rho_1}$ is connected and
$\cU_{z_1} \cap \Bru_{\rho_0}$ has at most two connected components.
We present another system of coordinates which identifies
the two connected components of 
$\,\cU_{z_1} \cap \Bru_{\rho_0}$ as being contained in $\Bru_{z_0^\pm}$.

Let $D =  \Bru_{q\acute\rho_a} \times (-\pi,\pi) \times \Bru_{\acute\rho_b}$.
Consider the smooth map $\Phi: D \to \Spin_{n+1}$,
$\Phi(z_a,\theta,z_b) = z_a \alpha_i(\theta) z_b$.
Consider the following subsets of the domain $D$:
$D_{-} = {\Bru_{q\acute\rho_a} \times (-\pi,0)
\times \Bru_{\acute\rho_b}}$,
$D_{0} = {\Bru_{q\acute\rho_a} \times \{0\}   
\times \Bru_{\acute\rho_b}}$,
$D_{+} = {\Bru_{q\acute\rho_a} \times (0,\pi)
\times \Bru_{\acute\rho_b}}$.
It follows from Theorem 1 in \cite{Goulart-Saldanha0}
that the following restrictions are diffeomorphisms:
$\Phi|_{D_{-}}: D_{-} \to \Bru_{z_0^{-}}$,
$\Phi|_{D_{0}}: D_{0} \to \Bru_{z_1}$,
$\Phi|_{D_{+}}: D_{+} \to \Bru_{z_0^{+}}$,
completing the proof.
\end{proof}

\section{Partial orders}
\label{sect:partial}

Recall that $\Word_n$ is the set of finite words
in the alphabet $S_{n+1} \smallsetminus \{e\}$.
The set $\Word_n$ plays a very important part in our discussion.
In this section we discuss a few partial orders in $\Word_n$.

A partial order $\preceq$ on $\Word_n$
is defined and discussed in \cite{Goulart-Saldanha1};
we recall the definition.
For $\sigma \in S_{n+1} \smallsetminus \{e\}$,
take $\sigma = \eta\rho$ and consider
$z_0 = q_a \acute\rho = q_c \grave\rho
\in \acute\rho\Quat_{n+1} \subset \widetilde\B_{n+1}^{+}$,
so that $q_a, q_c \in \Quat_{n+1}$.
Define the open neighborhood $\cU_{z_0} \subset \Spin_{n+1}$
as in Remark \ref{remark:Uz1}.
Define $z_{\pm 1} \in \acute\eta \Quat_{n+1}$
by $z_{+1} = q_a \acute\eta$, $z_{-1} = q_c \grave\eta$.
Consider a short convex curve
$\Gamma_1: [-1,+1] \to \cU_{z_0} \subset \Spin_{n+1}$ with
\[ \Gamma_1(0) = z_0, \quad
t \in [-1,0) \to \Gamma_1(t) \in \Bru_{z_{-1}}, \quad
t \in (0,+1] \to \Gamma_1(t) \in \Bru_{z_{+1}}. \]
Notice that the itinerary of $\Gamma_1$ is $\sigma$.
We have $w \preceq \sigma$ if and only if there exists
a convex curve $\Gamma_0: [-1,+1] \to \cU_{z_0} \subset \Spin_{n+1}$
with $\Gamma_0(-1) = \Gamma_1(-1)$, $\Gamma_0(+1) = \Gamma_1(+1)$
and itinerary $w$.
It is not hard to verify that the above definition
does not depend on the choice of $\Gamma_1$.
For $w_1 = \sigma_1\cdots\sigma_\ell$,
write $w_0 \preceq w_1$ if there exist words
$w_{0,1}, \ldots, w_{0,\ell}$ with
$w_0 = w_{0,1}\cdots w_{0,\ell}$ and
$w_{0,i} \preceq \sigma_i$ (for all $i$).
We prove in \cite{Goulart-Saldanha1} that
\begin{equation}
\label{equation:preceq}
\cL_n[w_1] \cap \overline{\cL_n[w_0]} \ne \varnothing 
\quad\implies\quad
w_0 \preceq w_1. 
\end{equation}
This implication also holds for topologies
which we are not considering in the present paper, such as $H^1$.
Some basic properties of the partial order $\preceq$
are proved in Theorem 3 of \cite{Goulart-Saldanha1}. 

Unfortunately,
the partial order $\preceq$ is complicated:
there are basic open questions concerning it,
including a conjecture by Shapiro and Shapiro
\cite{Shapiro-Shapiro, Shapiro-Shapiro3}.
This conjecture is essentially equivalent
to Conjecture 2.2 of \cite{Goulart-Saldanha1}.
In order to state it,
we need to introduce the concept of multiplicity of a word.
Recall that the \textit{multiplicity} of $\sigma \in S_{n+1}$
is a vector $\mult(\sigma) \in \NN^n$,
defined in Equation \eqref{equation:multiplicity};
for $w = \sigma_1\cdots\sigma_\ell = (\sigma_1, \ldots, \sigma_\ell)$,
define
$\mult(w) = \mult(\sigma_1) + \cdots + \mult(\sigma_\ell)$.
Conjecture 2.2 of \cite{Goulart-Saldanha1} asks:
does $w \preceq \sigma$ imply $\mult(w) \le \mult(\sigma)$?
Theorem 4 in \cite{Goulart-Saldanha1} proves a related result.
See also \cite{Saldanha-Shapiro-Shapiro}
and \cite{Saldanha-Shapiro-Shapiro1} for other new related results.
The original conjecture, however, remains open as of this writing.

In the present paper, we prefer therefore to work
with a more manageable partial order $\sqsubseteq$ (also on $\Word_n$).
For $w \in \Word_n$ and $\sigma \in S_{n+1}$,
we define
\[ (w \sqsubseteq \sigma) \quad\iff\quad
((w \ne ()) \land (\mult(w) \le \mult(\sigma)) \land (\hat w = \hat\sigma)); \]
here $()$ denotes the empty word.
For words $w_0$ and $w_1 = (\sigma_1, \ldots, \sigma_\ell) \in \Word_n$,
we have $w_0 \sqsubseteq w_1$ if and only if there exists non empty words
$w_{01}, \ldots, w_{0\ell}$ such that
$w_0$ equals the concatenation $w_{01}\cdots w_{0\ell}$
and $w_{0j} \sqsubseteq \sigma_j$ for every $j$, $1 \le j \le \ell$.
It follows from Theorems 3 and 4 from \cite{Goulart-Saldanha1}
that, if we work in $H^r$ and $r > ((n+1)/2)^2$ 
we have an implication similar to the one in 
Equation \eqref{equation:preceq}:
\begin{equation}
\label{equation:sqsubseteq}
\cL_n[w_1] \cap \overline{\cL_n[w_0]} \ne \varnothing 
\quad\implies\quad
w_0 \sqsubseteq w_1. 
\end{equation}
Notice that given $w_0 \in \Word_n$
both sets 
\[ \{ w \in \Word_n, w \sqsubseteq w_0\}, \qquad
\{ w \in \Word_n, w_0 \sqsubseteq w\} \]
are finite.
Indeed, $w_0 \sqsubseteq w_1$ implies $\mult(w_0) \le \mult(w_1)$
and $\ell(w_0) \ge \ell(w_1)$
(where $\ell(w)$ is the length of the word $w$).
A similar result for $\preceq$ follows from
\cite{Saldanha-Shapiro-Shapiro1}.

It is natural to ask at this point how the partial orders
$\preceq$ and $\sqsubseteq$ are related.
We first consider the implication
$(w_0 \preceq w_1) \to (w_0 \sqsubseteq w_1)$.
We know that $w \preceq \sigma$ implies
$w \ne ()$ and $\hat w = \hat\sigma$.
We do not know, however, whether
$w \preceq \sigma$ implies $\mult(w) \le \mult(\sigma)$:
this is essentially Conjecture 1.2 in \cite{Goulart-Saldanha1}.
It is relatedly also an open problem whether the implication
$(w_0 \preceq w_1) \to (w_0 \sqsubseteq w_1)$ holds.
The next example shows that the converse implication is false.

\begin{example}
\label{example:partialorders}
Let $n = 4$, $\sigma_0 = aba$ and $\sigma_1 = \eta = abacbadcba$.
It follows from Example 3.7 in \cite{Goulart-Saldanha0}
that $\hat\sigma_0 = \hat\sigma_1 = -1$
and a simple computation then verifies that
$(\sigma_0) \sqsubseteq (\sigma_1)$.
We now prove, however, that $\sigma_0 \not\preceq \sigma_1$
and therefore, from Equation \eqref{equation:preceq},
\[ \cL_4[(\sigma_1)] \cap \overline{\cL_4[(\sigma_0)]} = \varnothing. \]
As in \cite{Goulart-Saldanha1}, we work in the nilpotent group $\Lo_5^1$
and its subsets $\Pos_\eta, \Neg_\eta \subset \Lo_5^1$.
The only point in $\overline{\Pos_\eta} \cap \overline{\Neg_\eta}$
is the identity.
Consider a locally convex curve $\Gamma_L: [-1,1] \to \Lo_{5}^{1}$
with $\Gamma_L(-1) \in \Neg_\eta$ and $\Gamma_L(1) \in \Pos_\eta$.
Set $\Gamma(t) = q \bQ(\Gamma_L(t))$, $q \in \Quat_{n+1}$.
If $\Gamma_L(t) = I$ for some $t$ then $\iti(\Gamma) = \eta$.
Otherwise, $\Gamma_L$ must cross $\partial\Neg_\eta$ and $\partial\Pos_\eta$
at two distinct times $t_{-} < t_{+}$, $t_{-}, t_{+} \in \sing(\Gamma)$:
thus $\iti(\Gamma) \ne \sigma_0$.

In the following examples we also have
$(\sigma_0) \sqsubseteq (\sigma_1)$ and 
$(\sigma_0) \not\preceq (\sigma_1)$:
\begin{enumerate}
\item{$n = 6$, $\sigma_0 = a_1a_2a_1a_4a_5a_4$, $\sigma_1 = \eta$;}
\item{$n = 7$, $\sigma_0 = a_1a_3a_5a_7$, $\sigma_1 = \eta$;}
\item{$n = 8$, $\sigma_0 = a_1a_2a_1$,
$\sigma_1 = a_1a_2a_1a_4a_5a_4a_7a_8a_7$.}
\end{enumerate}
In all cases the verification of the $\sqsubseteq$ condition
is an easy computation.
For the first two items, the proof of the $\preceq$ claim
is similar to the one given above;
the third item requires a modification of the argument
(which we neither present nor need in this paper).
\end{example}

\begin{rem}
\label{remark:preceqdim}
Neither partial order $\preceq$ nor $\sqsubseteq$ respects dimension of words
(defined in Equation~\eqref{equation:dim}).
For instance, we have
$[ac]b[ac] \preceq [acb]$,
$[ac]b[ac] \sqsubseteq [acb]$
and $\dim([ac]b[ac]) = \dim([acb]) = 2$:
this is the example discussed in detail in Section~9 
in \cite{Goulart-Saldanha1}.
\end{rem}

\section{Lower and upper sets}
\label{sect:lowersets}

A subset $\Ideal \subseteq \Word_n$ is a \emph{lower set}
(for $\sqsubseteq$) if,
for all $w_1 \in \Ideal$ and $w_0 \in \Word_n$,
$w_0 \sqsubseteq w_1$ implies $w_0 \in \Ideal$.
In particular, $\varnothing$ and $\Word_n$ are lower sets.
If $\Ideal$ is a lower set then
it follows from Equation \eqref{equation:sqsubseteq} that
\[ \cL_n[\Ideal] = \bigsqcup_{w \in \Ideal} \cL_n[w] \subseteq \cL_n \]
is an open subset.
Given $w_0 \in \Word_n$,
let $\Ideal(w_0) = \{w \in \Word_n, w \sqsubseteq w_0\}$ and
$\Ideal^\ast(w_0) = \Ideal(w_0) \smallsetminus \{w_0\}$:
both are lower sets.
The map $\phi: \DD^k \to \cL_n$ transversal section to $\cL_n[\sigma]$
constructed in Lemma 7.1 of \cite{Goulart-Saldanha1} 
is of the form  $\phi: \DD^k \to \cL_n[\Ideal(\sigma)] \subset \cL_n$.
Figure \ref{fig:subaba-cw} is a diagram of 
\[ \Ideal([aba]) =
\{ aa, abab, bb, baba, [ba]a, a[ba], [ab]b, b[ab], [aba] \}; \]
compare with Figure \ref{fig:aba-cw}.

\begin{figure}[ht]
\def\svgwidth{10cm}
\centerline{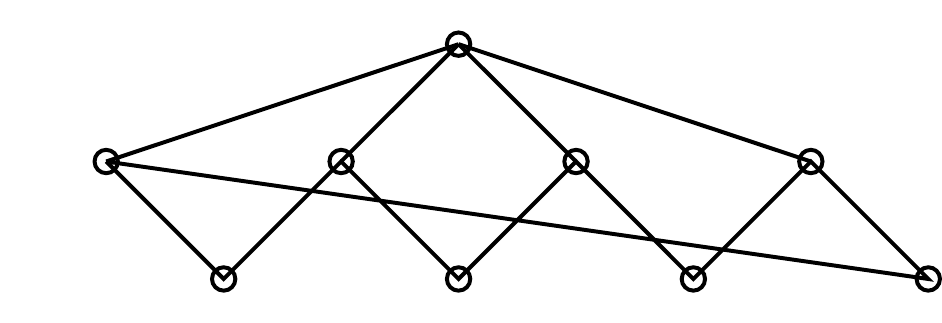}
\caption{The lower set $\Ideal([aba])$.}
\label{fig:subaba-cw}
\end{figure}

As another example of a lower set,
let $\Ideal_{(\omega)}$ be the set of words of dimension $0$,
i.e., words whose letters are generators $a_k$.
(The subscript is an ordinal,
a relic of notation used in \cite{Goulart-Saldanha}, starting in Section 14.
The notation shall not be defined or needed in its general form.)
The set $\cL_n[\Ideal_{(\omega)}]$
is a disjoint union of contractible open sets $\cL_n[w]$,
$w \in \Ideal_{(\omega)}$.

Similarly, $\Filter \subseteq \Word_n$ is an \emph{upper set}
(for $\sqsubseteq$) if,
for all $w_0 \in \Filter$ and $w_1 \in \Word_n$,
$w_0 \sqsubseteq w_1$ implies $w_1 \in \Filter$.
If $\Filter$ is an upper set then
$$ \cL_n[\Filter] = \bigsqcup_{w \in \Filter} \cL_n[w] \subseteq \cL_n $$
is a closed subset
(again from Equation \eqref{equation:sqsubseteq}).

A few examples are in order.

\begin{example}
\label{example:IY}
The set $\Ideal_{Y} \subset \Word_n$ of words containing
\textit{at least} one letter belonging to
$Y = S_{n+1} \smallsetminus S_{\PA}$ is also a lower set.
Indeed, consider $\sigma_0 \in Y$ and a word $w_1 \sqsubseteq \sigma_0$,
$w_1 = \sigma_{1,1}\cdots\sigma_{1,\ell}$.
From Lemma \ref{lemma:PAhat}, $\hat\sigma_0 \notin Z(\Quat_{n+1})$.
We thus have $\hat\sigma_{1,1}\cdots\hat\sigma_{1,\ell} =
\hat\sigma_0 \notin Z(\Quat_{n+1})$
and therefore $\hat\sigma_{1,k}  \notin Z(\Quat_{n+1})$ for some $k$:
again from Lemma \ref{lemma:PAhat},
we have $\sigma_{1,k} \in Y$, as desired.
The open subset $\cY_n = \cL_n[\Ideal_Y] \subset \cL_n$
is studied in Theorem \ref{theo:Y}.
\end{example}

\begin{example}
\label{example:IY2}
Recall from Remark \ref{rem:Yk} that for $n \ge 2$
we have 
\[ Y_2 = \{ a_1, a'_1 = a_2a_1, a_2, a'_2 = a_1a_2, \ldots,
a_{n-1}, a'_{n-1} = a_{n-2}a_{n-1}, a_n, a'_n = a_{n-1}a_n \}. \]
The set $\Ideal_{Y_2} \subset \Word_n$ of words containing
\textit{at least} one letter belonging to $Y_2$ is also a lower set.
Indeed, if $\sigma \in Y_2$, $\inv(\sigma) > 1$ and $w \sqsubseteq \sigma$
then either $w = \sigma$ or $w$ is a word of dimension $0$
and length $1$ or $3$.
The same conclusion holds assuming instead $w \preceq \sigma$,
implying the $\Ideal_{Y_2} \subset \Word_n$
is also a lower subset for $\preceq$.
It follows that $\cL_n[\Ideal_{Y_2}] \subseteq \cL_n$ is an open subset.
The lower set $\Ideal_{Y_2} \subset \Ideal_Y$ is considered again
in Proposition~\ref{prop:Yn2}.
\end{example}

\begin{example}
\label{example:I0}
Let $\Ideal_{[0]} \subset \Word_n$
be the set of words containing
\textit{at least} one letter of dimension $0$.
As in the previous examples, $\Ideal_{[0]}$ is a lower set
for both $\sqsubseteq$ and $\preceq$.
The lower set $\Ideal_{[0]} \subset \Ideal_{Y_2}$
is considered again
in Proposition~\ref{prop:ideal0isloose}.
\end{example}

\begin{example}
\label{example:uppersets}
The following are examples of upper sets:
\begin{gather*}
\Filter_{2,2} = \{[aba]\} \subset \Word_2, \qquad
\Filter_{2,3} = \{[aba], [bacb], [bcb] \} \subset \Word_3, \\
\{ [abc], [ac], [cba] \} \subset \Word_3, \qquad
\Filter_{\star,3} = \{[cba]\} \subset \Word_3, \\
\Filter_{4,4} = \{[34521], [32541], [52341], [52143], [54123] \}
\subset \Word_4.
\end{gather*}
The example $\Filter_{2,2}$ is easy:
there is no letter in $S_3$
of greater dimension than $[aba]$.
For $\Filter_{2,3}$,
verify that there are no other letters
$\sigma \in S_4$ with $\hat\sigma = \longhat([aba]) = -1$
(notice also that $[aba] \sqsubseteq [bacb]$, $[bcb] \sqsubseteq [bacb]$).
Also, the only letters $\sigma \in S_4$ with
$\hat\sigma = \hat a\hat c$ are $[abc]$, $[ac]$ and $[cba]$
(notice that $[ac] \sqsubseteq [abc]$, $[ac] \sqsubseteq [cba]$).
The only letters $\sigma \in S_5 \smallsetminus \{e\}$ with $\hat\sigma = 1$
are the elements of $\Filter_{4,4}$.
\end{example}

\begin{example}
\label{example:tokuppersets}
Consider $n = 4$ and the set
\[ \Filter_{2,4} = \{[aba], [bacb], [bcb], [cbdc], [cdc] \} \subset \Word_4. \]
Notice that these are the letters in Figure \ref{fig:blackaba}.
The set $\Filter_{2,4}$ is not an upper set for $\sqsubseteq$:
we have $\sigma \sqsubseteq \eta$ for all $\sigma \in \Filter_{2,4}$.
It is, however, a double $(\sqsubseteq,\preceq)$ upper set,
meaning that
\[ \forall w_0 \in \Filter_{2,4}, \forall w_1 \in \Word_4,
((w_0 \sqsubseteq w_1) \land (w_0 \preceq w_1))
\implies w_1 \in \Filter_{2,4}: \]
this follows from Example \ref{example:partialorders}.
This is sufficient to prove 
(with Equations \eqref{equation:preceq} and \eqref{equation:sqsubseteq})
that the subset
$\cL_4[\Filter_{2,4}] \subset \cL_4$ is closed.
\end{example}

\section{Construction of $\cD_n$ and
Theorems \ref{theo:shortCW} and \ref{theo:newCW}}
\label{sect:valid}

We now describe the construction of the CW complex $\cD_n$.
In a nutshell, we proceed as follows.
For each word $w$, there is a cell $c_w$ of dimension $\dim(w)$.
The CW complex $\cD_n$ can be constructed
in increasing order (with respect to $\sqsubseteq$ in $\Word_n$).
For each new word $w$, the glueing map $g_w$ for the new cell $c_w$ 
takes $\Ss^{d-1}$ (for $d = \dim(w)$)
to previously constructed cells $c_{\tilde w}$, $\tilde w \sqsubseteq w$.
The glueing map is determined
by studying a transversal section to $\cL_n[w] \subset \cL_n$,
as constructed in Section 7 of \cite{Goulart-Saldanha1}.
Notice that the image of such a transversal section
is contained in the union of the strata
$\cL_n[\tilde w]$, $\tilde w \sqsubseteq w$.

More precisely, 
consider a lower set $\Ideal \subseteq \Word_n$
(for the partial order $\sqsubseteq$).  Let 
\[ \cL_n[\Ideal] = \bigcup_{w \in \Ideal} \cL_n[w] \subseteq \cL_n. \]
The set $\cL_n[\Ideal] \subset \cL_n$ is an open subset
and therefore a Hilbert manifold
(recall that we work in the Hilbert space
$H^r$ of curves).
For each lower set we have a CW complex $\cD_n[\Ideal]$
and a homotopy equivalence $\cD_n[\Ideal] \to \cL_n[\Ideal]$.
Furthermore, we may assume the homotopy equivalence 
to be an inclusion $\cD_n[\Ideal] \subset \cL_n[\Ideal]$.
Indeed, the induction step is as follows.
Let $\tilde\Ideal \subset \Ideal \subseteq \Word_n$
be lower sets with $\Ideal = \tilde\Ideal \cup \{w_0\}$.
Assume that a CW complex $\cD_n[\tilde\Ideal] \subset \cL_n[\tilde\Ideal]$
has been previously constructed and that the inclusion
is a homotopy equivalence.
We show in Lemma \ref{lemma:goodstep}
how to add a cell $c_{w_0}$ to $\cD_n[\tilde\Ideal]$
to obtain the desired CW complex $\cD_n[\Ideal] \subset \cL_n[\Ideal]$,
the inclusion also being a homotopy equivalence.
It then suffices to take the union of a chain of CW complexes
to obtain $\cD_n$ (see Lemma \ref{lemma:goodchain}).

We sum up our construction in a theorem.

\begin{theo}
\label{theo:newCW}
There exists a CW complex $\cD_n$
and a continuous map $i: \cD_n \to \cL_n$
with the following properties:
\begin{enumerate}
\item{For each word $w \in \Word_n$ there exists a cell $c_w$
of $\cD_n$ of dimension $\dim(w)$.}
\item{For every lower set $\Ideal \subseteq \Word_n$
(with respect to the partial order $\sqsubseteq$)
the union of the cells $c_w$, $w \in \Ideal$,
is a subcomplex $\cD_n[\Ideal] \subseteq \cD_n$.}
\item{For every lower set $\Ideal \subseteq \Word_n$
the union of the strata $\cL_n[w]$, $w \in \Ideal$,
is an open subset $\cL_n[\Ideal] \subseteq \cL_n$.}
\item{For every lower set $\Ideal \subseteq \Word_n$
the restriction $i|_{\cD_n[\Ideal]}: \cD_n[\Ideal] \to \cL_n[\Ideal]$
is a homotopy equivalence.}
\end{enumerate}
\end{theo}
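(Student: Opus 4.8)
The plan is to build $\cD_n$ one cell at a time, following the partial order $\sqsubseteq$, and to read off each attaching map from a transversal section of the corresponding stratum. First I would fix an enumeration $\Word_n = \{w_1, w_2, \ldots\}$ with the property that $w_j \sqsubseteq w_k$ implies $j \le k$; this exists because every principal down-set $\{w : w \sqsubseteq w_0\}$ is finite (Section~\ref{sect:partial}) and $\sqsubseteq$ is well-founded, so the enumeration can be produced greedily. Writing $\Ideal_m = \{w_1, \ldots, w_m\}$, which is a lower set, I would construct by induction on $m$ a CW complex $\cD_n[\Ideal_m]$, obtained from $\cD_n[\Ideal_{m-1}]$ by attaching a single cell $c_{w_m}$ of dimension $d_m = \dim(w_m)$ along a map $g_{w_m}\colon \Ss^{d_m-1}\to\cD_n[\Ideal_{m-1}]$ whose image lies in $\bigcup\{c_{\tilde w} : \tilde w\sqsubseteq w_m,\ \tilde w\ne w_m\}$, together with a homotopy equivalence $i_m\colon\cD_n[\Ideal_m]\to\cL_n[\Ideal_m]$ extending $i_{m-1}$ (one can in fact keep these as inclusions). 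Properties \emph{1}--\emph{4} for an arbitrary lower set $\Ideal$ then follow formally, since $\Ideal=\bigcup_m(\Ideal\cap\Ideal_m)$ and the corresponding subcomplexes and open strata exhaust $\cD_n[\Ideal]$ and $\cL_n[\Ideal]$; openness of $\cL_n[\Ideal]$ is Equation~\eqref{equation:sqsubseteq}. Assembling the chain is Lemma~\ref{lemma:goodchain}, the inductive step is Lemma~\ref{lemma:goodstep}.

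For the inductive step, write $w_0 = w_m$, $\tilde\Ideal = \Ideal_{m-1}$, $\Ideal = \Ideal_m$ and $d = \dim(w_0)$. I would first note that $\cL_n[w_0]$ is closed in $\cL_n[\Ideal]$: if $w_1\in\Ideal$ and $\cL_n[w_1]\cap\overline{\cL_n[w_0]}\ne\varnothing$ then $w_0\sqsubseteq w_1$ by Equation~\eqref{equation:sqsubseteq}, and since $\tilde\Ideal$ is a lower set not containing $w_0$ this forces $w_1=w_0$. By Theorem~2 of \cite{Goulart-Saldanha1}, $\cL_n[w_0]$ is a contractible embedded $C^{r-1}$ submanifold of codimension $d$ of the Hilbert manifold $\cL_n[\Ideal]$. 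Being contractible and of finite codimension, it has trivial normal bundle and admits a tubular neighborhood $V\cong\cL_n[w_0]\times\RR^d$; in particular $V$ is contractible and $V\smallsetminus\cL_n[w_0]\simeq\cL_n[w_0]\times\Ss^{d-1}\simeq\Ss^{d-1}$. Now $\cL_n[\Ideal]=\cL_n[\tilde\Ideal]\cup V$ is an open cover with $\cL_n[\tilde\Ideal]\cap V = V\smallsetminus\cL_n[w_0]$; all four spaces have the homotopy type of CW complexes (open subsets of Hilbert manifolds are separable metric ANRs), so the Mayer--Vietoris homotopy pushout square presents $\cL_n[\Ideal]$ as the mapping cone of the inclusion $j\colon V\smallsetminus\cL_n[w_0]\hookrightarrow\cL_n[\tilde\Ideal]$, and since $V$ is contractible this is precisely $\cL_n[\tilde\Ideal]$ with one $d$-cell attached along $j|_{\Ss^{d-1}}$.

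It then remains to pin down $j|_{\Ss^{d-1}}$ up to homotopy, and here I would invoke the transversal section to $\cL_n[w_0]$ constructed in Section~7 of \cite{Goulart-Saldanha1}: a map $\phi\colon\DD^d\to\cL_n[\Ideal(w_0)]\subseteq\cL_n$, transversal to $\cL_n[w_0]$ with $\phi^{-1}[\cL_n[w_0]]=\{0\}$. Uniqueness of tubular neighborhoods up to ambient isotopy identifies $\phi$ with a fibre of $V$, so $j|_{\Ss^{d-1}}$ is homotopic in $\cL_n[\tilde\Ideal]$ to $\phi|_{\Ss^{d-1}}$, whose image lies in $\cL_n[\Ideal^\ast(w_0)]\subseteq\cL_n[\tilde\Ideal]$ (note $\Ideal^\ast(w_0)\subseteq\tilde\Ideal$, since every $\tilde w\sqsubseteq w_0$ with $\tilde w\ne w_0$ precedes $w_0$ in the enumeration). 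Choosing a homotopy inverse $h\colon\cL_n[\tilde\Ideal]\to\cD_n[\tilde\Ideal]$ of $i_{m-1}$, one sets $g_{w_0}=h\circ\phi|_{\Ss^{d-1}}$, whose image lies in the cells $c_{\tilde w}$, $\tilde w\sqsubseteq w_0$, $\tilde w\ne w_0$, and $\cD_n[\Ideal]:=\cD_n[\tilde\Ideal]\cup_{g_{w_0}}c_{w_0}$. Attaching a cell along homotopic maps yields homotopy equivalent spaces, and a standard application of the homotopy extension property upgrades this to a homotopy equivalence $i_m$ extending $i_{m-1}$, completing the induction. Finally, the union of the chain $\cD_n[\Ideal_1]\subseteq\cD_n[\Ideal_2]\subseteq\cdots$ is a CW complex $\cD_n$ and the $i_m$ patch to $i\colon\cD_n\to\cL_n$; since $\cL_n=\bigcup_m\cL_n[\Ideal_m]$ is an increasing union of open sets, every compact set lies in some $\cL_n[\Ideal_m]$, so $i$ induces isomorphisms on all homotopy groups, and Whitehead's theorem makes it a homotopy equivalence. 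Because both $\{\tilde w:\tilde w\sqsubseteq w_0\}$ and $\{w_1:w_0\sqsubseteq w_1\}$ are finite (Section~\ref{sect:partial}), each cell of $\cD_n$ lies in a finite subcomplex and in the closure of only finitely many cells, so $\cD_n$ is locally finite.

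The main obstacle is the step isolated in the second and third paragraphs, i.e.\ Lemma~\ref{lemma:goodstep}: proving that passing from $\cL_n[\tilde\Ideal]$ to $\cL_n[\Ideal]$ is, up to homotopy, exactly the attachment of one $d$-cell along the transversal sphere. This requires the existence and isotopy-uniqueness of tubular neighborhoods of finite-codimension closed submanifolds of Hilbert manifolds, the CW homotopy type of all spaces involved, and — the genuinely geometric ingredient — that the transversal section of Section~7 of \cite{Goulart-Saldanha1} really does realise a fibre of such a tubular neighborhood, equivalently that the itinerary stratification is regular enough near $\cL_n[w_0]$ for this local model to remain valid globally. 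This is the analogue of Lemma~14.1 of \cite{Alves-Saldanha2}, and as in that paper I expect its proof (with the attendant auxiliary definitions) to occupy the bulk of the section; by contrast, the enumeration, the chain union, and the bookkeeping that makes $\cD_n$ locally finite are formal.
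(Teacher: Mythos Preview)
Your overall architecture matches the paper's: build $\cD_n$ one cell at a time along a $\sqsubseteq$-compatible enumeration, read the attaching map off a transversal section, and use Lemma~\ref{lemma:goodchain} plus Lemma~\ref{lemma:goodstep}. Your inductive step via a Mayer--Vietoris homotopy pushout is a cleaner packaging of what the paper does hands-on (the paper proves weak homotopy equivalence by direct surgery on maps from compact manifolds, using transversality and contractibility of $\cL_n[w_0]$).

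There is, however, a real gap in your inductive hypothesis. You only assume that $i_{m-1}\colon\cD_n[\Ideal_{m-1}]\to\cL_n[\Ideal_{m-1}]$ is a homotopy equivalence, and then assert that $g_{w_0}=h\circ\phi|_{\Ss^{d-1}}$ has image in $\bigcup_{\tilde w\sqsubset w_0}c_{\tilde w}$. But a homotopy inverse $h$ of $i_{m-1}$ has no reason to carry $\cL_n[\Ideal^\ast(w_0)]$ into $\cD_n[\Ideal^\ast(w_0)]$: it is only an inverse at the level of $\Ideal_{m-1}$, and $\Ideal^\ast(w_0)$ is typically a proper lower subset. Without the attaching map landing in $\cD_n[\Ideal^\ast(w_0)]$, Property~2 fails for the lower set $\Ideal(w_0)$. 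The same defect undermines your claim that Property~4 for an arbitrary $\Ideal$ ``follows formally'' from the exhaustion $\Ideal=\bigcup_m(\Ideal\cap\Ideal_m)$: you would need each $\cD_n[\Ideal\cap\Ideal_m]\to\cL_n[\Ideal\cap\Ideal_m]$ to be a homotopy equivalence, and your induction never establishes this.

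The fix is exactly the paper's notion of \emph{good}: strengthen the inductive hypothesis to require that, for \emph{every} lower subset $\Ideal'\subseteq\Ideal_{m-1}$, the restriction $\cD_n[\Ideal']\to\cL_n[\Ideal']$ is a (weak) homotopy equivalence. With this in hand you can choose $g_{w_0}$ inside $\cD_n[\Ideal^\ast(w_0)]$ (since $c_{\Ideal^\ast(w_0)}$ is a weak equivalence and $\phi|_{\Ss^{d-1}}$ lands in $\cL_n[\Ideal^\ast(w_0)]$), and then your pushout argument, rerun for each $\Ideal'\subseteq\Ideal_m$ containing $w_0$, propagates goodness. This is precisely what Lemma~\ref{lemma:goodstep} states (``Moreover, all such valid complexes are good'') and what the paper's proof verifies, remarking that the argument for general $\hat\Ideal\subseteq\Ideal$ is similar to that for $\hat\Ideal=\Ideal$.
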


Clearly, Theorem \ref{theo:shortCW} follows from Theorem \ref{theo:newCW}.
We remind the reader that a similar but simpler construction
(for a finite stratification of another space)
is presented in \cite{Alves-Saldanha2}
(the proof of that Theorem 2 is particularly relevant).

We state a few definitions aimed at the proof of Theorem \ref{theo:newCW}.
Here $\DD^k$ is the closed disk of dimension $k$.
Let $\Ideal \subseteq \Word_n$ be a lower set (for $\sqsubseteq$).
A \emph{valid (CW) complex} $\cD_n[\Ideal]$ (for $\Ideal$)
has the following ingredients and properties:
\begin{enumerate}[label=(\roman*)]
\item{The CW complex $\cD_n[\Ideal]$ has
one cell of dimension $\dim(w)$ for each $w \in \Ideal$.
For every $w \in \Ideal$, the continuous glueing map
$g_w: \partial\DD^{\dim(w)} \to \cD_n[\Ideal]$
has image contained in the restriction
$\cD_n[\Ideal_w^\ast] \subset \cD_n[\Ideal]$
(which is also a valid CW complex).
Let $i_w: \DD^{\dim(w)} \to \cD_n[\Ideal]$ be the inclusion
(with quotient at the boundary).}
\item{For each $w \in \Ideal$,
we have a continuous map $c_w: \DD^{\dim(w)} \to \cL_n[\Ideal_w]$
(which will also be called the $w$ cell).}
\item{The maps $c_w$ are compatible:
if $i_{w_0}(p_0) = i_{w_1}(p_1)$ then
$c_{w_0}(p_0) = c_{w_1}(p_1)$.
In particular, if $w_1 \sqsubseteq w_0$,
$p_0 \in \partial\DD^{\dim(w_0)}$,  $p_1 \in \DD^{\dim(w_1)}$ and
$g_{w_0}(p_0) = i_{w_1}(p_1)$
then $c_{w_0}(p_0) = c_{w_1}(p_1)$.
The union of the cells $c_w$ naturally
defines a continuous map $c_\Ideal: \cD_n[\Ideal] \to \cL_n[\Ideal]$.}
\item{For any $w \in \Ideal$,
the cell $c_w$ is a topological embedding in the interior of $\DD^{\dim(w)}$,
smooth in the ball of radius $\frac12$,
and intersects $\cL_n[w]$ transversally at $c_w(0) \in \cL_n[w]$.}
\item{If $s \in \DD^{\dim(w)}$, $s \ne 0$,
then $c_w(s) \in \cL_n[\Ideal^\ast(w)]$
(and therefore $c_w(s) \notin \cL_n[w]$).}
\end{enumerate}


We abuse notation and identify a valid complex
with the corresponding family of cells and write
$\cD_n[\Ideal] = (c_w)_{w \in \Ideal}$
(notice that this family yields all the necessary information).
Given a valid complex $\cD_n[\Ideal] = (c_w)_{w \in \Ideal}$
and a lower set $\tilde\Ideal \subseteq \Ideal$,
the subcomplex $(c_w)_{w \in \tilde\Ideal}$
is also valid (for $\tilde\Ideal$).
We call this subcomplex $\cD_n[\tilde\Ideal]$
the \emph{restriction} of $\cD_n[\Ideal]$ to $\tilde\Ideal$.

\begin{rem}
\label{rem:orientation}
It follows from (iv) and (v) that the map $c_w$ intersects
a tubular neighborhood $\hat\cA_w \supset \cL_n[w]$
transversally around $c_w(0) \in \cL_n[w]$.
Such a tubular neighborhood $\hat\cA_w$ is 
constructed in the proof of Theorem 2
in \cite{Goulart-Saldanha1}, Section 6.
A map $\hat F_w: \hat\cA_w \to \DD^{\dim(w)}$
with $\hat F_w^{-1}[\{0\}] = \cL_n[w]$ is also constructed.
The map $\hat F_w \circ c_w$ is then a homeomorphism from
an open neighborhood of $0 \in \DD^{\dim(w)}$
to an open neighborhood of $0 \in \DD^{\dim(w)}$.
The map $\hat F_w$ also yields a transversal orientation to
$\cL_n[w] \subset \cL_n$
(the preimage orientation for $\hat F_w$;
see \cite{Goulart-Saldanha1} after Lemma 7.1).
We assume that the map $c_w$ respects this orientation,
that is, that the composition $\hat F_w \circ c_w$
respects orientation.
Lower dimensional cases shall be explicitly presented in examples.
\end{rem}




A valid complex $\cD_n[\Ideal]$ is \emph{good} if for every lower set
$\tilde\Ideal \subseteq \Ideal$ the map
$c_{\tilde\Ideal}: \cD_n[\tilde\Ideal] \to \cL_n[\tilde\Ideal]$
is a weak homotopy equivalence.
Clearly, a restriction of a good complex is also a good complex.

Let $J$ be a totally ordered set.
Let $(\Ideal_j)_{j \in J}$ be a family of lower sets such that
$j_0 < j_1$ implies $\Ideal_{j_0} \subseteq \Ideal_{j_1}$.
Let $\Ideal = \bigcup_{j \in J} \Ideal_j$,
which is therefore also a lower set.
For each $j \in J$, let $\cD_n[\Ideal_j]$ be a valid complex.
Assume that if $j_0 < j_1$ then
$\cD_n[\Ideal_{j_0}]$ is the restriction
of $\cD_n[\Ideal_{j_1}]$ to $\Ideal_{j_0}$.
Then the union $\cD_n[\Ideal]$ of all the valid complexes 
$\cD_n[\Ideal_j]$ (for all $j \in J$) is a valid complex for $\Ideal$.
A family $(\cD_n[\Ideal_j])_{j \in J}$ of valid complexes is a \emph{chain}
if it satisfies the conditions above.


\begin{lemma}
\label{lemma:goodchain}
Let $(\cD[\Ideal_j])_{j \in J}$ be a chain of good complexes.
Let $\Ideal = \bigcup_{j \in J} \Ideal_j$ and
$\cD_n[\Ideal]$ be the union of the complexes $\cD_n[\Ideal_j]$.
Then $\cD_n[\Ideal]$ is a good complex.
\end{lemma}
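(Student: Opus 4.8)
The plan is to prove that the union $\cD_n[\Ideal]$ is good by showing that for every lower set $\tilde\Ideal \subseteq \Ideal$ the map $c_{\tilde\Ideal}: \cD_n[\tilde\Ideal] \to \cL_n[\tilde\Ideal]$ is a weak homotopy equivalence. The key observation is that both sides are built as increasing unions (direct limits) along the chain: setting $\tilde\Ideal_j = \tilde\Ideal \cap \Ideal_j$, we have $\tilde\Ideal = \bigcup_{j \in J} \tilde\Ideal_j$, each $\tilde\Ideal_j$ is a lower set contained in $\Ideal_j$, and $\cD_n[\tilde\Ideal]$ is the union of the restrictions $\cD_n[\tilde\Ideal_j]$, which form a chain. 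Since $\cD_n[\Ideal_j]$ is good, each restriction $\cD_n[\tilde\Ideal_j] \to \cL_n[\tilde\Ideal_j]$ is a weak homotopy equivalence. So it suffices to prove that a direct limit (over a totally ordered index set, along inclusions) of weak homotopy equivalences is a weak homotopy equivalence, at the level of the total spaces.

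First I would reduce from the totally ordered set $J$ to a cofinal sequence or, more robustly, argue directly with $J$ arbitrary. On the CW side, $\cD_n[\tilde\Ideal] = \bigcup_j \cD_n[\tilde\Ideal_j]$ is a CW complex with the weak (direct limit) topology, since each inclusion $\cD_n[\tilde\Ideal_{j_0}] \hookrightarrow \cD_n[\tilde\Ideal_{j_1}]$ is a subcomplex inclusion; in particular it is an inclusion of a closed subcomplex, hence a cofibration, and any compact subset of $\cD_n[\tilde\Ideal]$ lies in some $\cD_n[\tilde\Ideal_j]$ (as $\tilde\Ideal$ has only finitely many cells of each dimension meeting that compact set, and each cell lies in some $\cD_n[\tilde\Ideal_j]$; by total order of $J$ finitely many such indices have an upper bound). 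On the $\cL_n$ side, $\cL_n[\tilde\Ideal] = \bigcup_j \cL_n[\tilde\Ideal_j]$ is an increasing union of open subsets of the Hilbert manifold $\cL_n$, hence an open subset, and this union is a direct limit in the appropriate sense: again any compact subset (in particular the image of a map from a sphere or disk) lies in some $\cL_n[\tilde\Ideal_j]$, since these compacta are covered by finitely many of the open sets $\cL_n[\tilde\Ideal_j]$ and the cover is totally ordered.

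With these two direct-limit descriptions in hand, the result follows from a standard fact: if $X = \bigcup_j X_j$ and $Y = \bigcup_j Y_j$ are directed unions indexed by the same totally ordered set, with $f: X \to Y$ restricting to $f_j: X_j \to Y_j$, and if each compact subset of $X$ (resp.\ $Y$) lies in some $X_j$ (resp.\ $Y_j$) and each $f_j$ is a weak homotopy equivalence, then $f$ is a weak homotopy equivalence. The proof of this fact is the usual compactness argument for checking bijectivity of $\pi_k$: a class in $\pi_k(Y)$ is represented by a map $\Ss^k \to Y$ with compact image, which factors through some $Y_j$; lift it to $X_j$ up to homotopy using surjectivity of $\pi_k(f_j)$; for injectivity, a null-homotopy in $Y$ has compact image landing in some $Y_{j'}$ with $j' \ge j$, pull it back via injectivity of $\pi_k(f_{j'})$, and use that $\pi_k(X_j) \to \pi_k(X)$ is compatible with $\pi_k(X_{j'}) \to \pi_k(X)$. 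One also needs $\pi_k(X) = \varinjlim \pi_k(X_j)$ and likewise for $Y$; this is exactly where the ``compact sets factor through some term'' property is used, together with the fact that $\Ss^k$ and $\Ss^k \times [0,1]$ are compact.

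The main obstacle I anticipate is technical rather than conceptual: verifying that the union $\cL_n[\tilde\Ideal] = \bigcup_j \cL_n[\tilde\Ideal_j]$ genuinely behaves as a direct limit for the purpose of the $\pi_k$ computation, i.e.\ that every compact subset is contained in a single $\cL_n[\tilde\Ideal_j]$. Since $J$ may be an arbitrary totally ordered set (not a sequence), one must be slightly careful: a compact set is covered by finitely many of the open sets $\cL_n[\tilde\Ideal_j]$, and by total order one of these finitely many indices dominates the rest, so the compact set lies in that single term. On the CW side the analogous statement is cleaner because it follows from the defining property of the weak topology on a CW complex together with the total order. Once these direct-limit facts are pinned down, the weak homotopy equivalence statement is immediate, and the lemma is proved.
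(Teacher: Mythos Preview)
Your proposal is correct and follows essentially the same approach as the paper: both reduce the weak-homotopy-equivalence check to a compactness argument, pushing maps from spheres and disks into a single $\cL_n[\Ideal_j]$ (or $\cD_n[\Ideal_j]$) where the goodness of $\cD_n[\Ideal_j]$ applies. Your treatment is in fact slightly more careful than the paper's, which handles only $\tilde\Ideal = \Ideal$ explicitly and glosses over the need to simultaneously push the compact image of $\alpha$ into some $\cD_n[\Ideal_{j'}]$ on the CW side; your reduction via $\tilde\Ideal_j = \tilde\Ideal \cap \Ideal_j$ and explicit discussion of the direct-limit behavior on both sides make these points cleanly.
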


\begin{proof}
Let $\alpha: \Ss^k \to \cD_n[\Ideal]$ be a continuous map
which is homotopically trivial in $\cL_n[\Ideal]$,
i.e., there exists a map
$A_0: \DD^{k+1} \to \cL_n[\Ideal]$,
$A_0|_{\Ss^k} = c_{\Ideal} \circ \alpha$.
By compactness, there exists $j \in J$ such that the image of $A_0$
is contained in $\cL_n[\Ideal_j]$ so that we may write
$A_0: \DD^{k+1} \to \cL_n[\Ideal_j]$.
Since $\cD_n[\Ideal_j]$ is good,
there exists 
$A_1: \DD^{k+1} \to \cD_n[\Ideal_j]$
such that $A_1|_{\Ss^k} = \alpha$.

Let $\alpha_0: \Ss^k \to \cL_n[\Ideal]$ be a continuous map.
By compactness, there exists $j \in J$ such that the image
of $\alpha_0$ is contained in $\cL_n[\Ideal_j]$.
Since $\cD_n[\Ideal_j]$ is good,
there exists $\alpha_1: \Ss^k \to \cD_n[\Ideal_j] \subset \cD_n[\Ideal]$
such that $c_{\Ideal_j} \circ \alpha_1$
is homotopic to $\alpha_0$ in $\cL_n[\Ideal_j]$
(and therefore also in $\cL_n[\Ideal]$).

This completes the proof that $c_{\Ideal}: \cD_n[\Ideal] \to \cL_n[\Ideal]$
is a weak homotopy equivalence.
The proof for $\tilde\Ideal \subseteq \Ideal$ is similar.
\end{proof}

\begin{lemma}
\label{lemma:goodstep}
Let $\tilde\Ideal \subset \Ideal \subseteq \Word_n$ be lower sets
with $\Ideal \smallsetminus \tilde\Ideal = \{w_0\}$.
Let $\cD_n[\tilde\Ideal]$ be a good complex (for $\tilde\Ideal$).
Then this complex can be extended to a valid complex
$\cD_n[\Ideal]$ (for $\Ideal$).
Moreover, all such valid complexes are good.
\end{lemma}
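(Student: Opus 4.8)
I would prove the two assertions of the lemma separately: first the existence of a valid extension $\cD_n[\Ideal]$, then the goodness of every such extension, the latter carrying the real content. Write $k=\dim(w_0)$, which by Theorem~2 of \cite{Goulart-Saldanha1} equals the codimension of $\cL_n[w_0]\subset\cL_n$. A preliminary observation is that $w_0$ is $\sqsubseteq$-maximal in $\Ideal$: if $w_0\sqsubseteq w$ with $w\in\Ideal\smallsetminus\{w_0\}=\tilde\Ideal$, then $w_0\in\tilde\Ideal$ because $\tilde\Ideal$ is a lower set, a contradiction. Hence $\Ideal^\ast(w_0)\subseteq\tilde\Ideal$, so $\cD_n[\Ideal^\ast(w_0)]$ is a restriction of the good complex $\cD_n[\tilde\Ideal]$ and is itself good.

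\textbf{Existence.} I would take the transversal section $\phi\colon\DD^k\to\cL_n[\Ideal(w_0)]$ to $\cL_n[w_0]$ furnished by Lemma~7.1 of \cite{Goulart-Saldanha1} --- a topological embedding on the interior, smooth on the ball of radius $\tfrac12$, meeting $\cL_n[w_0]$ transversally at $\phi(0)$ compatibly with the preimage orientation of Remark~\ref{rem:orientation}, and with $\phi(s)\in\cL_n[\Ideal^\ast(w_0)]$ for $s\neq0$. Since $\cD_n[\Ideal^\ast(w_0)]$ is good, $c_{\Ideal^\ast(w_0)}$ is a weak homotopy equivalence, so $\phi|_{\partial\DD^k}$ is homotopic inside $\cL_n[\Ideal^\ast(w_0)]$ to $c_{\Ideal^\ast(w_0)}\circ g_{w_0}$ for some continuous glueing map $g_{w_0}\colon\partial\DD^k\to\cD_n[\Ideal^\ast(w_0)]$; call the homotopy $H$. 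I would then set $\cD_n[\Ideal]=\cD_n[\tilde\Ideal]\cup_{g_{w_0}}\DD^k$ and define the cell $c_{w_0}$ to agree, after a radial reparametrization, with $\phi$ on the ball of radius $\tfrac23$ and to interpolate by $H$ on the collar $\{\tfrac23\le|s|\le1\}$, so that $c_{w_0}|_{\partial\DD^k}=c_{\tilde\Ideal}\circ g_{w_0}$; together with the cells of $\cD_n[\tilde\Ideal]$ this gives $c_\Ideal$. Properties (i)--(iii) then hold by construction, (v) because $\phi$ (away from $0$) and $H$ are valued in $\cL_n[\Ideal^\ast(w_0)]$, and (iv) is inherited from $\phi$ on the inner ball since the modification is supported on the collar, just as in \cite{Alves-Saldanha2}.

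\textbf{Goodness.} Given an arbitrary valid extension $\cD_n[\Ideal]$ and a lower set $K\subseteq\Ideal$, I must show $c_K\colon\cD_n[K]\to\cL_n[K]$ is a weak homotopy equivalence. If $w_0\notin K$ this follows from the goodness of $\cD_n[\tilde\Ideal]$, since $K\subseteq\tilde\Ideal$. If $w_0\in K$, then $w_0$ is maximal in $K$, so $K'=K\smallsetminus\{w_0\}$ is a lower set with $\Ideal^\ast(w_0)\subseteq K'\subseteq\tilde\Ideal$, and $\cD_n[K]=\cD_n[K']\cup_{g_{w_0}}\DD^k$ is the homotopy pushout of $\cD_n[K']\xleftarrow{g_{w_0}}\Ss^{k-1}\hookrightarrow\DD^k$. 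On the analytic side, equation~\eqref{equation:sqsubseteq} together with the maximality of $w_0$ gives $\overline{\cL_n[w_0]}\cap\cL_n[K]=\cL_n[w_0]$, so $\cL_n[w_0]$ is a closed contractible codimension-$k$ submanifold of $\cL_n[K]$; I would shrink the tubular neighbourhood $\hat\cA_{w_0}$ and the map $\hat F_{w_0}$ of Remark~\ref{rem:orientation} so that $\{\cL_n[K'],\hat\cA_{w_0}\}$ is an open cover of $\cL_n[K]$ with intersection $\hat\cA_{w_0}\smallsetminus\cL_n[w_0]$. As $\cL_n[w_0]$ is contractible its normal bundle is trivial, so $\hat\cA_{w_0}\simeq\DD^k$ and $\hat\cA_{w_0}\smallsetminus\cL_n[w_0]\simeq\Ss^{k-1}$, exhibiting $\cL_n[K]$ as the homotopy pushout of $\cL_n[K']\xleftarrow{\beta_0}\Ss^{k-1}\hookrightarrow\DD^k$, where $\beta_0$ is the inclusion of a normal linking sphere of $\cL_n[w_0]$. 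The crux is to show $\beta_0\simeq c_{\tilde\Ideal}\circ g_{w_0}$ in $\cL_n[K']$: the transversality and orientation clause (iv) forces $c_{w_0}$ restricted to a small sphere about $0$ to be a degree-one map into $\hat\cA_{w_0}\smallsetminus\cL_n[w_0]$, hence homotopic there to $\beta_0$, while clause (v) lets one slide that sphere out to $\partial\DD^k$ within $\cL_n[\Ideal^\ast(w_0)]\subseteq\cL_n[K']$, where $c_{w_0}$ restricts to $c_{\tilde\Ideal}\circ g_{w_0}$ by the compatibility (iii). This produces a map of homotopy-cocartesian squares whose comparison maps on $\Ss^{k-1}$, on $\DD^k$, and on $\cD_n[K']\to\cL_n[K']$ (the last by goodness of $\cD_n[\tilde\Ideal]$) are all weak equivalences, so by the gluing lemma for homotopy pushouts $c_K$ is a weak equivalence.

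\textbf{Main obstacle.} The hard part will be establishing the homotopy $\beta_0\simeq c_{\tilde\Ideal}\circ g_{w_0}$ in the goodness argument, as it forces one to combine all of: the transversality and orientation clause (iv), the ``boundary lands in strictly lower strata'' clause (v), the contractibility of $\cL_n[w_0]$ from \cite{Goulart-Saldanha1}, and the explicit tubular-neighbourhood data $\hat\cA_{w_0},\hat F_{w_0}$. Once it is in place, the gluing lemma for homotopy pushouts does the rest. In the existence part the only subtlety --- preserving (iv)--(v) while modifying $\phi$ near $\partial\DD^k$ --- is routine, parallel to \cite{Alves-Saldanha2}.
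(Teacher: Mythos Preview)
Your existence argument is essentially the same as the paper's: construct the transversal section $\phi$, use goodness of $\cD_n[\Ideal^\ast(w_0)]$ to lift $\phi|_{\partial\DD^k}$ up to homotopy, and glue via the collar. No real difference there.

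Your goodness argument, however, is genuinely different from the paper's. The paper proceeds by a direct ``tubes'' argument in the spirit of classical CW-complex proofs: given a map $\alpha_0\colon M\to\cL_n[\Ideal]$ from a compact manifold, deform it to be transversal to $\cL_n[w_0]$, then use the contractibility of $\cL_n[w_0]$ and the explicit product structure $(\Pi,\hat F_{w_0})\colon\hat\cA_{w_0}\to\cL_n[w_0]\times\BB^d$ to push the transversal intersections so that, in a thin tubular neighbourhood, $\alpha$ factors through the image of the single cell $c_{w_0}$; excise the resulting tubes, apply goodness of $\cD_n[\tilde\Ideal]$ to the complement $M_1$, and fill the tubes back in. A dual argument handles maps from manifolds with boundary. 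You instead package the same ingredients (the tubular neighbourhood, contractibility of $\cL_n[w_0]$, the transversality/orientation clause (iv), and clause (v)) into a comparison of two homotopy pushout squares and invoke the gluing lemma.

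Both are correct and rely on exactly the same geometric input; the trade-off is that your argument is cleaner and more structural, while the paper's argument is more elementary and self-contained, avoiding any appeal to homotopy-colimit machinery and staying closer to the style of the companion paper \cite{Alves-Saldanha2} (Theorem~2 and Lemma~14.1 there). One point you should make explicit in your version is that the tubular neighbourhood can indeed be shrunk so that $\hat\cA_{w_0}\subseteq\cL_n[\Ideal(w_0)]\subseteq\cL_n[K]$: this follows since $\cL_n[\Ideal(w_0)]$ is open and contains $\cL_n[w_0]$, and the product structure $(\Pi,\hat F_{w_0})$ lets you shrink fibrewise. The paper uses this implicitly in its tubes argument but your pushout presentation needs it stated to justify that the open cover $\{\cL_n[K'],\hat\cA_{w_0}\}$ really is a cover of $\cL_n[K]$ with the claimed homotopy types.
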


The proof of this lemma is the longest and most technical part
of the present section: we postpone it for a while.
Before proving Lemma \ref{lemma:goodstep} we present 
the main conclusion for this section.

\begin{lemma}
\label{lemma:goodcomplex}
For any lower set $\Ideal \subseteq \Word_n$
there exists a valid complex $\cD_n[\Ideal] \subset \cL_n[\Ideal]$.
All such valid complexes are good.
\end{lemma}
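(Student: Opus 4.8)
The plan is to obtain $\cD_n[\Ideal]$ by a (countable) induction that adds one cell at a time, invoking Lemma~\ref{lemma:goodstep} for the inductive step and Lemma~\ref{lemma:goodchain} to pass to unions. The only genuinely new ingredient is a bookkeeping observation: since $\Word_n$ is countable (finite words over the finite alphabet $S_{n+1}\smallsetminus\{e\}$) and, for every $w_0\in\Word_n$, the down-set $\Ideal^\ast(w_0)=\{w\in\Word_n : w\sqsubseteq w_0,\ w\ne w_0\}$ is finite, any lower set $\Ideal\subseteq\Word_n$ (for $\sqsubseteq$) can be enumerated as $w^{(1)},w^{(2)},\ldots$ — a finite list if $\Ideal$ is finite — so that every initial segment $\Ideal_j=\{w^{(1)},\ldots,w^{(j)}\}$ is again a lower set. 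One produces such an enumeration by a greedy ``topological sort'': fix any master enumeration of $\Ideal$; having listed a finite lower subset $L\subseteq\Ideal$, pick the first master-element $u\notin L$ and append the finitely many words of $\Ideal(u)\smallsetminus L$ one at a time, each time choosing a $\sqsubseteq$-minimal one among those not yet appended. Transitivity of $\sqsubseteq$ guarantees each stage remains a lower subset, and every element of $\Ideal$ is eventually listed. Thus $\Ideal_0=\varnothing$, $\Ideal_j\smallsetminus\Ideal_{j-1}=\{w^{(j)}\}$ with $w^{(j)}$ $\sqsubseteq$-maximal in $\Ideal_j$, and $\bigcup_j\Ideal_j=\Ideal$.

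For existence I would run the induction along this chain. The empty complex is vacuously a good valid complex for $\Ideal_0=\varnothing$. Given a good complex $\cD_n[\Ideal_{j-1}]$, Lemma~\ref{lemma:goodstep} (with $\tilde\Ideal=\Ideal_{j-1}$ and $w_0=w^{(j)}$) extends it to a valid complex $\cD_n[\Ideal_j]$, and also asserts that any such valid extension is good; fix one. If $\Ideal$ is finite this terminates with the desired $\cD_n[\Ideal]\subset\cL_n[\Ideal]$. If $\Ideal$ is infinite, the family $(\cD_n[\Ideal_j])_{j\ge 0}$ is a chain of good complexes (each is by construction the restriction of the next), so Lemma~\ref{lemma:goodchain} shows that the union $\cD_n[\Ideal]=\bigcup_j\cD_n[\Ideal_j]=(c_w)_{w\in\Ideal}$ is a good complex; since it is assembled from the maps $c_w:\DD^{\dim(w)}\to\cL_n[\Ideal_w]$ it is in particular a valid complex inside $\cL_n[\Ideal]$.

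For the second assertion — that \emph{every} valid complex $\cD_n[\Ideal]$ is good — I would reuse the same chain. Given an arbitrary valid complex $\cD_n[\Ideal]=(c_w)_{w\in\Ideal}$, each restriction $\cD_n[\Ideal_j]=(c_w)_{w\in\Ideal_j}$ is valid. When $\Ideal$ is finite, an induction on $j$, using the ``moreover'' clause of Lemma~\ref{lemma:goodstep} (every valid extension of a good complex is good), shows that each $\cD_n[\Ideal_j]$ — and in particular $\cD_n[\Ideal]$ itself — is good. When $\Ideal$ is infinite, each finite $\Ideal_j$ is covered by the finite case, so $(\cD_n[\Ideal_j])_j$ is again a chain of good complexes whose union is the given $\cD_n[\Ideal]$, and Lemma~\ref{lemma:goodchain} concludes.

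Since all the topology has been isolated into Lemmas~\ref{lemma:goodstep} and~\ref{lemma:goodchain}, the main (and rather mild) obstacle here is purely organizational: arranging the enumeration so that each initial segment is a lower set — which is exactly where finiteness of the down-sets $\Ideal^\ast(w)$ is used — and making sure the ``all such valid complexes are good'' clause is threaded through the induction, not merely verified for the complex one happens to build. The one point worth double-checking is that the union of the chain of restricted complexes coincides, as a family of cells $(c_w)_{w\in\Ideal}$, with the original complex; this is immediate from the definition of a valid complex as such a family.
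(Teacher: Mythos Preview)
Your proof is correct and follows essentially the same route as the paper's: both build $\cD_n[\Ideal]$ along a chain of lower sets, invoking Lemma~\ref{lemma:goodstep} at each successor step and Lemma~\ref{lemma:goodchain} for the union, and both handle the ``all valid complexes are good'' clause by restricting to the chain and reusing the same two lemmas. The only difference is organizational: the paper extends $\sqsubseteq$ to an arbitrary well-ordering and runs a transfinite induction (with Lemma~\ref{lemma:goodchain} handling limit ordinals), whereas you exploit countability of $\Word_n$ and finiteness of the down-sets $\Ideal(w)$ to produce an $\omega$-indexed chain, so ordinary induction suffices and the only ``limit'' is the final union.
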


\begin{proof}
The partial order $\sqsubseteq$ is well founded and
can therefore be extended to a (total) well-ordering.
Let $\gamma^{+}$ be the set of all initial segments of $\Ideal$
under this well-ordering.
This defines a chain of lower sets
$(\Ideal_{(\beta)})_{\beta \in \gamma^{+}}$
with a top element $\Ideal_{(\gamma)} = \Ideal$.
From now on in this proof we identify 
such $\beta$, $\gamma$ and $\gamma^{+}$ with ordinal numbers;
in particular, $\gamma^{+} = \gamma + 1$.

We prove by transfinite induction on $\alpha \le \gamma$
that there exists a chain of good complexes
$(\cD_n[\Ideal_{(\beta)}])_{\beta < \alpha}$.
For $\alpha = 0$ there is nothing to do.
For $\alpha = \tilde\alpha + 1$ (a successor ordinal),
apply Lemma \ref{lemma:goodstep}.
For $\alpha$ a limit ordinal apply Lemma \ref{lemma:goodchain}.

The fact that all valid complexes are good is likewise
proved by transfinite induction.
We again use Lemma  \ref{lemma:goodstep} for successor ordinals
and Lemma \ref{lemma:goodchain} for limit ordinals.
\end{proof}

Theorems~\ref{theo:shortCW} and~\ref{theo:newCW} are now easy.

\begin{proof}[Proof of Theorems \ref{theo:shortCW} and \ref{theo:newCW}]
All claims in the statement of Theorem \ref{theo:newCW}
follow from Lemma \ref{lemma:goodcomplex},
together with the definitions of valid and good complexes.
Theorem~\ref{theo:shortCW} is a direct consequence of Theorem~\ref{theo:newCW}.
\end{proof}




\begin{example}
\label{example:goodcomplex}
Following the above construction for $\Ideal_{[aba]}$
we have the cell shown in Figure \ref{fig:ababcb};
the transversal map $\tilde c$ in the proof above
is shown in Figure \ref{fig:aba-cw}.
Notice that $c_{[bcb]}$ is very similar.
The immediate predecessors of $[aba]$ are
$[ba]a$, $b[ab]$, $[ab]b$ and $a[ba]$ (see Figure \ref{fig:subaba-cw})
and we have
\[ \partial[aba] = [ba]a + b[ab] - [ab]b - a[ba], \quad
\partial[bcb] = [cb]b + c[bc] - [bc]c - b[cb]. \]
Here we use homological notation.
The cells are oriented as in Remark \ref{rem:orientation}.
\end{example}

\begin{figure}[ht]
\def\svgwidth{12cm}
\centerline{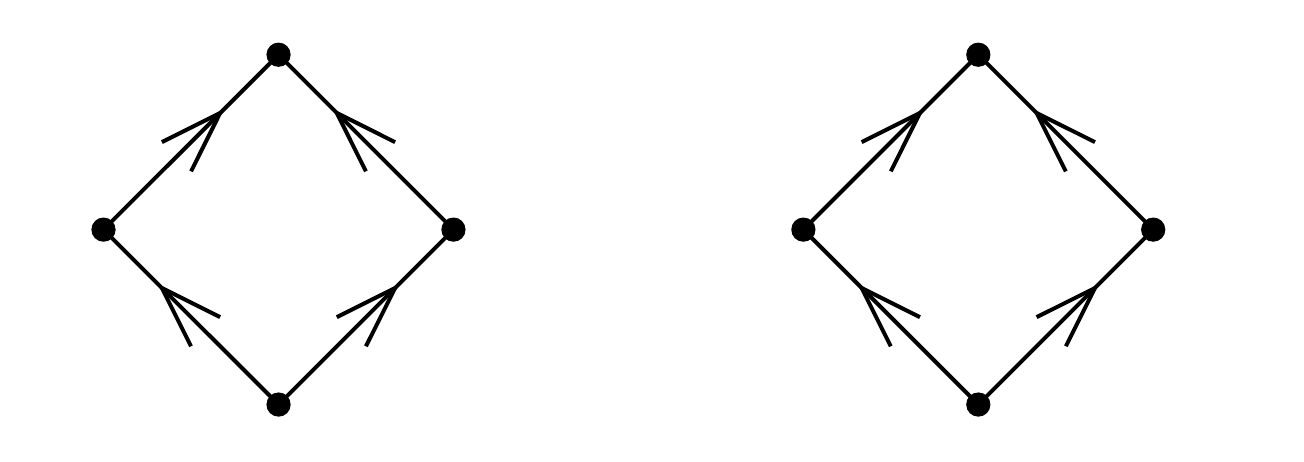}
\caption{The cells $c_{[aba]}$ and $c_{[bcb]}$.}
\label{fig:ababcb}
\end{figure}

\begin{rem}
\label{rem:goodcomplex}
As we shall see,
for cells of very low dimension the complex
can be taken to be polyhedric;
it is not clear whether this is true in general.
We shall not attept to clarify this and related issues here.
\end{rem}

\begin{proof}[Proof of Lemma \ref{lemma:goodstep}]
Assume $\cD_n[\tilde\Ideal]$ given (and good):
we construct the cell $c_{w_0}$ so that $\cD_n[\Ideal]$ is also good.
Let $k = \dim(w_0)$.
For a small ball $B \subset \RR^{k}$ around the origin,
construct as in Section 7 
of \cite{Goulart-Saldanha1} (after Lemma 7.1) a smooth map
$\phi: B \to \cL_n[\Ideal] \subseteq \cL_n$
with $\phi(0) = \Gamma_0 \in \cL_n[w_0]$
and
transversal to $\cL_n[w_0] \subset \cL_n$ at this point.
By taking $r > 0$ sufficiently small,
the image under $\phi$ of a ball $B(2r) \subset B$
of radius $2r$ around the origin satisfies the following condition:
$s \in B(2r)$ and $s \ne 0$ imply
$\phi(s) \in \cL_n[\Ideal^\ast_{w_0}] \subseteq \cL_n[\tilde\Ideal]$.

Let $S(r) \subset B(2r)$ be the sphere of radius $r$ around the origin
so that $\phi|_{S(r)}: S(r) \to \cL_n[\Ideal^\ast_{w_0}]$.
Since $\cD_n[\tilde\Ideal]$ is good,
the map $c_{\Ideal^\ast_{w_0}}:
\cD_n[\Ideal^\ast_{w_0}] \to \cL_n[\Ideal^\ast_{w_0}]$
is a weak homotopy equivalence.
There exists therefore a map
$g_{w_0}: \Ss^{k-1} \to \cD_n[\Ideal^\ast_{w_0}]$
such that $c_{\Ideal^\ast_{w_0}} \circ g_{w_0}$
is homotopic (in $\cL_n[\Ideal^\ast_{w_0}]$) to $\phi|_{S(r)}$.
In other words, there exists $c_{w_0}: \DD^k \to \cL_n[\Ideal_{w_0}]$
coinciding with $\phi$ in $B(r)$,
assuming values in $\cL_n[\Ideal^\ast_{w_0}]$ outside $0$
and with boundary $c_{\Ideal^\ast_{w_0}} \circ g_{w_0}$.
This completes the construction of a valid complex $\cD_n[\Ideal]$
and the proof of the first claim.


We now prove the second claim.
More precisely, let $c_{w_0}$ be such that extending
the good complex $\cD_n[\tilde\Ideal]$ by $c_{w_0}$
defines a valid complex $\cD_n[\Ideal]$ (for $\Ideal$):
we prove that this is also a good complex.
In other words, we prove 
that the map $c_{\hat\Ideal}: \cD_n[\hat\Ideal] \to \cL_n[\hat\Ideal]$
is a weak homotopy equivalence for any lower set
$\hat\Ideal \subseteq \Ideal$.
Again, we present the proof for $\hat\Ideal = \Ideal$;
the general case is similar.
The construction is similar to that of certain classical results
involving CW complexes;
here $\cL_n[w_0] \subset \cL_n[\Ideal]$
is a submanifold of class $C^{r-1}$.
Recall that $\cL_n[w_0] \subseteq \cL_n[\Ideal]$
is a contractible closed subset (of $\cL_n[\Ideal]$) and
a (globally) collared
submanifold of codimension $d = \dim(w_0)$.
Indeed, in the proof of Theorem 2 
in \cite{Goulart-Saldanha1}
(Section~6, near Remark~6.8)
we constructed a tubular neighborhood of $\hat\cA_{w_0} \supset \cL_n[w_0]$,
a projection $\Pi: \hat\cA_{w_0} \to \cL_n[w_0]$
and a map $\hat F = \hat F_{w_0}:  \hat\cA_{w_0} \to \BB^d$
(where $\BB^d$ is the unit open ball)
such that $(\Pi,\hat F): \hat\cA_{w_0} \to (\cL_n[w_0], \BB^d)$
is a homeomorphism.
By construction,
the map $c_{w_0}$ intersects $\hat\cA_{w_0} \supset \cL_n[w_0]$
transversally so that
$c_{w_0}^{-1}[\hat\cA_{w_0}]$ is an open neighborhood of $0 \in \BB^d$.
The map $\hat F \circ c_{w_0}$ (where defined) is a homeomorphism
from a neighborhood of $0$ to a neighborhood of $0$;
indeed, in the example constructed in the first part of the proof,
it is the multiplication by a positive constant.
We may assume that $c_{w_0}$ is such that
$\Pi(c_{w_0}(p)) = \Gamma_0$ for $p$ in an open neighborhood
$B_0 \subset \BB^d$, $0 \in B_0$
(this is actually true for the maps constructed in \cite{Goulart-Saldanha1}).
Let $\bump: \BB^d \to [0,1]$ be a bump function
with support contained in the neighborhood $B_0$ above
and constant equal to $1$ in a smaller open ball
$B_1 \subset B_0$, $0 \in B_1$.

Consider a compact manifold $M_0$ of dimension $k$
and continuous map $\alpha_0: M_0 \to \cL_n[\Ideal]$.
We prove that there exists
$\alpha_1: M_0 \to \cD_n[\Ideal]$
such that
$\alpha_0$ is homotopic to $c_{\Ideal} \circ \alpha_1$. 
First deform $\alpha_0$ to obtain a map
$\alpha_{\frac{1}{3}}$ which intersects
$\cL_n[w_0]$ transversally.
By using the contractibility of $\cL_n[w_0]$,
we may deform $\alpha_{\frac{1}{3}}$ to obtain a map 
$\alpha_{\frac{2}{3}}$ which intersects
a thinner tubular neighborhood contained in $\hat\cA_{w_0}$
only along the image of $c_{w_0}$.
More precisely, take $\Gamma_0 = c_{w_0}(0) \in \cL_n[w_0]$ as a base point;
let $H: [0,1] \times \cL_n[w_0] \to \cL_n[w_0]$
be a homotopy such that,
for all $\Gamma \in \cL_n[w_0]$,
$H(0,\Gamma) = \Gamma$ and
$H(1,\Gamma) = \Gamma_0$.
For $s \in [\frac13,\frac23]$, we define $\alpha_s$
to coincide with $\alpha_{\frac13}$ outside $\hat\cA_{w_0}$;
if $\alpha_{\frac13}(p) \in \hat\cA_{w_0}$ define
$\alpha_s(p) \in \hat\cA_{w_0}$ to satisfy
$ \hat F_{w_0}(\alpha_s(p)) = \hat F_{w_0}(\alpha_{\frac13}(p)) $
and 
\[ \Pi(\alpha_s(p)) = 
H((3s-1)\bump(\hat F_{w_0}(\alpha_{\frac13}(p))),\Pi(\alpha_{\frac13}(p))). \]
Thus, the set of points $p \in M_0$ such that
$\alpha_{\frac23}(p) \in \hat\cA_{w_0}$ and
$\hat F_{w_0}(\alpha_{\frac23}(p)) \in B_1$
consists of open tubes taken by $\alpha_{\frac23}$
to the image of the cell $c_{w_0}$.
By removing these open tubes we obtain $M_1 \subseteq M_0$,
a manifold with boundary, also of dimension $k$.
Let $\cD_n^\ast[\Ideal] =
\cD_n[\Ideal] \smallsetminus i_{w_0}[B_1]$ so that
both the map $c_{\Ideal}|_{\cD_n^\ast[\Ideal]}:
\cD_n^\ast[\Ideal] \to \cL_n[\tilde\Ideal]$
and the inclusion $\cD_n[\tilde\Ideal] \subset \cD_n^\ast[\Ideal]$
are weak homotopy equivalences
(since $\cD_n[\tilde\Ideal]$ is assumed to be good).
The restriction
$\alpha_{\frac{2}{3}}|_{M_1}: M_1 \to \cL_n[\tilde\Ideal]$
has boundary
$\alpha_{\frac23}|_{\partial M_1} =
c_{\Ideal} \circ \beta_1$
for $\beta_1: \partial M_1 \to \cD_n^\ast[\Ideal]$.
There exists
$\beta: M_1 \to \cD_n^\ast[\Ideal]$
with $\beta_1 = \beta|_{\partial M_1}$
and such that
$c_{\Ideal} \circ \beta: M_1 \to \cL_n[\tilde\Ideal]$
is homotopic to 
$\alpha_{\frac{2}{3}}|_{M_1}: M_1 \to \cL_n[\tilde\Ideal]$
(in $\cL_n[\tilde\Ideal]$).
Define $\alpha_1$ to coincide with $\beta$ in $M_1$
and such that $c_{w_0} \circ \alpha_1$
coincides with $\alpha_{\frac{2}{3}}$
in the tubes $M_0 \smallsetminus M_1$.
This is our desired map.

Conversely, consider a compact manifold with boundary $M$
and its boundary $\partial M = N$.
Consider a maps $\alpha_0: M \to \cL_n[\Ideal]$
and $\beta_0: N \to \cD_n[\Ideal]$
such that $c_{\Ideal} \circ \beta_0 = \alpha_0|_{N}$.
We prove that there exists a map $\alpha_1: M \to \cD_n[\Ideal]$
with $\alpha_1|_{N} = \beta_0$.
Furthermore, $\alpha_0$ and
$c_{\Ideal} \circ \alpha_1$ are homotopic in $\cL_n[\Ideal]$,
with the homotopy constant in the boundary.
Again, we may assume without loss of generality that
$\alpha_0$ is transversal to $\hat\cA_{w_0}$.
As in the previous paragraph,
use the contractibility of $\cL_n[w_0]$ to deform
$\alpha_0$ in $\hat\cA_{w_0}$ towards $c_{w_0}$,
thus defining $\alpha_{\frac12}$;
notice that this keeps the boundary fixed, as required.
Again remove tubes around points taken to $c_{w_0}$,
thus defining $M_1 \subset M$, also a manifold with boundary.
By hypothesys, $\alpha_{\frac12}$ can be deformed in $M_1$
to obtain $\alpha_1: M_1 \to \cD_n[\tilde\Ideal]$;
more precisely, $\alpha_{\frac12}: M_1 \to \cL_n[\tilde\Ideal]$
is homotopic to $c_{\tilde\Ideal}\circ\alpha_1$.
As in the previous paragraph we fill in the tubes to define
$\alpha_1: M \to \cD_n[\Ideal]$, the desired map.
\end{proof}

\section{Examples of lower sets}
\label{sect:Dn1}


In this section we consider a few simple examples
of lower sets and valid complexes.
Recall that $\Ideal_{(\omega)} \subset \Word_n$ is
the lower set of words of dimension $0$.
Two other examples will be $\Ideal_{(\omega 2)}$,
the lower set of words of dimension at most $1$
and $\Ideal_{(\omega^2)}$,
the lower set of words with all letters of dimension at most $1$.
We clearly have
$\Ideal_{(\omega)} \subset \Ideal_{(\omega 2)} \subset \Ideal_{(\omega^2)}$:
we shall construct the corresponding valid complexes
$\cD_n[\Ideal_{(\omega)}] \subset
\cD_n[\Ideal_{(\omega 2)}] \subset \cD_n[\Ideal_{(\omega^2)}]$.

For each $w \in \Ideal_{(\omega)}$,
let $c_w \in \cL_n[w]$ be an arbitrary curve:
we think of $c_w$ as a vertex of the complex $\cD_n$.
In other words, the $0$-skeleton of $\cD_n$ is the infinite countable
set of vertices $\cD_n[\Ideal_{(\omega)}] = \{c_w, w \in \Ideal_{(\omega)} \}$;
the inclusion $\cD_n[\Ideal_{(\omega)}] \subset \cL_n[\Ideal_{(\omega)}]$
is a homotopy equivalence.

A word $w \in \Ideal_{(\omega 2)}$
of dimension $1$ is of the form $w = w_0 [a_k a_l] w_1$
where $w_0, w_1 \in \Ideal_{(\omega)} \subset \Word_n$
are (possibly empty) words of dimension $0$
and $k \ne l$ so that $a_ka_l \in S_{n+1}$ is an element of dimension $1$
(i.e., $\inv(a_ka_l) = 2$).
There are three cases:
case (i) is $l = k-1$,
case (ii) is $l = k+1$ and
case (iii) is $l > k+1$
(if $l < k-1$ we write $a_la_k$ instead;
notice that in this case the permutations $a_k$ and $a_l$ commute).
Recall from Theorem~2
in \cite{Goulart-Saldanha1} that
in each case the stratum $\cL_n[w]$ is a hypersurface
with an open stratum on either side.
From Remark \ref{rem:orientation},
there is a transversal orientation to $\cL_n[w]$.
Call the two open strata $\cL_n[w^{+}]$ and $\cL_n[w^{-}]$
in such a way that the transversal orientation is
from $\cL_n[w^{-}]$ to $\cL_n[w^{+}]$.
The words $w^{+}, w^{-} \in \Word_n$ have dimension $0$;
their values according to case are:
\begin{enumerate}[label=(\roman*)]
\item{$w^{-} = w_0 a_l w_1$, $w^{+} = w_0 a_k a_l a_k w_1$;}
\item{$w^{-} = w_0 a_k a_l a_k w_1$, $w^{+} = w_0 a_l w_1$;}
\item{$w^{-} = w_0 a_k a_l w_1$, $w^{+} = w_0 a_l a_k w_1$.}
\end{enumerate}
From computing multiplicities in each case,
it follows easily that
the only two words $\tilde w \in \Word_n$ such that $\tilde w \sqsubseteq w$
are $\tilde w = w^{\pm}$.
These words also satisfy $w^{\pm} \preceq w$,
and are the only two words apart from $w$ to do so.
In order to construct the $1$-skeleton $\cD_n[\Ideal_{(\omega 2)}]$ of $\cD_n$,
we add an oriented edge $c_w$ from $c_{w^{-}}$ to $c_{w^{+}}$.
The edge may be assumed to be contained in
$\cL_n[w^{-}] \cup \cL_n[w] \cup \cL_n[w^{+}]$
and to cross $\cL_n[w]$
transversally and precisely once;
edges are also assumed to be simple, and disjoint except at endpoints.
Again, the inclusion
$\cD_n[\Ideal_{(\omega 2)}] \subset \cL_n[\Ideal_{(\omega 2)}]$
is a homotopy equivalence.

The four sides of Figure \ref{fig:aba-cw} above provide examples
of edges of cases (i) and (ii).
Figure \ref{fig:abbaac} shows the edges of $\cD_n$.
Here we omit the initial and final words $w_0$ and $w_1$
(since they are not involved anyway).
We also prefer to present examples
(instead of spelling out conditions as above;
a more formal discussion is given
in Section~7
from \cite{Goulart-Saldanha1} 
and in Section \ref{sect:valid} above).
Thus, cases (i), (ii) and (iii) are represented by
$[ba]$, $[ab]$ and $[ac]$, respectively.

\begin{figure}[ht]
\def\svgwidth{10cm}
\centerline{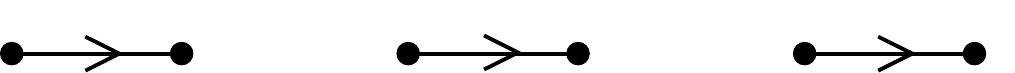}
\caption{Examples of edges of $\cD_n$.}
\label{fig:abbaac}
\end{figure}

In a homological language, we write
\[ \partial[ba] = bab - a, \quad \partial[ab] = b - aba, \quad
\partial[ac] = ca - ac. \]
Notice that in all cases $\hat w = \hat w^{+} = \hat w^{-} \in \Quat_{n+1}$,
consistently with Equation \ref{equation:sqsubseteq}.

As a simple application of our methods,
we compute the connected components of $\cD_n$
(i.e., we are reproving in our notation
the main result of \cite{Shapiro-Shapiro}).
We need only consider words of dimension $0$ (the vertices of $\cD_n$),
i.e., words in the generators $a_k$, $1 \le k \le n$.
Write $w_0 \sim w_1$ if $c_{w_0}$ and $c_{w_1}$
are in the same connected component;
in particular, 
if $w_0 \sim w_1$ then $\hat w_0 = \hat w_1$.
Figure \ref{fig:abbaac} illustrates that
\[ a \sim bab, \qquad aba \sim b, \qquad ac \sim ca. \]

For $()$ the empty word, the vertex $c_{()}$ is not attached
to any edge and thus forms a connected component in the complex $\cD_n$
(see also Remark \ref{rem:connected} below).
Notice that $\hat a \hat a \hat a \hat a = \longhat( {()} ) = 1$
but $aaaa \not\sim ()$.

\begin{prop}
\label{prop:connected}
Consider two non-empty words $w_0, w_1 \in \cD_n$ of dimension $0$.
Then $w_0 \sim w_1$ if and only if $\hat w_0 = \hat w_1 \in \Quat_{n+1}$.
\end{prop}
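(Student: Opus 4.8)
The plan is to prove the two implications of Proposition~\ref{prop:connected} separately. The implication $w_0 \sim w_1 \Rightarrow \hat w_0 = \hat w_1$ is the easy one and is essentially already recorded in the text: the $1$-skeleton of $\cD_n$ computes $\pi_0$, and by the displayed boundary formulas ($\partial[ba] = bab - a$, $\partial[ab] = b - aba$, $\partial[ac] = ca - ac$, and cf.\ Remark~\ref{rem:blacktriangle}) every edge joins two vertices $c_{w^-},c_{w^+}$ with $\hat{w^-}=\hat{w^+}$; hence $\hat{\cdot}$ is constant on each connected component of $\cD_n$. The work is in the converse.

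For the converse I would introduce a \emph{normal form}. Recall from \cite{Goulart-Saldanha0} that $\Quat_{n+1}$ is generated by $\hat a_1,\dots,\hat a_n$, that $\hat a_i^{\,2} = -1$ for every $i$, and that $\Quat_{n+1}/\{\pm 1\}\cong\Diag_{n+1}^{+}\cong(\ZZ/2)^n$ with the images of the $\hat a_i$ forming a basis. For $S=\{i_1<\dots<i_k\}\subseteq\{1,\dots,n\}$ set $p_S=\hat a_{i_1}\cdots\hat a_{i_k}$ (with $p_\varnothing=1$); then $\Quat_{n+1}$ is the disjoint union of $\{p_S\}$ and $\{-p_S\}$, a set of $2^{n+1}$ elements. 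Declare the following $2^{n+1}$ dimension-$0$ words \emph{normal}: the strictly increasing words $a_{i_1}\cdots a_{i_k}$ of length $k\ge 1$ (value $p_S$); the words $a_1a_1\,a_{i_1}\cdots a_{i_k}$ with $i_1<\dots<i_k$, $k\ge 0$ (value $-p_S$); and the single word $a_1a_1a_1a_1$ (value $1$). One checks directly that $\hat{\cdot}$ restricts to a bijection from normal words onto $\Quat_{n+1}$. Consequently the proposition is reduced to the single claim: \emph{every non-empty dimension-$0$ word is $\sim$-equivalent to a normal word} (necessarily the one with the same value of $\hat{\cdot}$), since then $\hat w_0 = \hat w_1$ forces $w_0$ and $w_1$ to have the same normal form $\nu$, whence $w_0\sim\nu\sim w_1$.

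To prove that claim I would first record a few \emph{derived moves}, each a short computation from the two basic moves $a_ka_ja_k\leftrightarrow a_j$ (for $|k-j|=1$) and $a_ka_j\leftrightarrow a_ja_k$ (for $|k-j|\ge 2$): namely $a_ia_i\sim a_1a_1$ for all $i$; $a_1a_1$ is $\sim$-central, i.e.\ $a_1a_1\,u\sim u\,a_1a_1$ for every word $u$; and $a_ia_ia_j\sim a_ja_ja_j$, $a_ia_{i-1}\sim a_1a_1\,a_{i-1}a_i$ whenever the indices are Coxeter-adjacent. The crucial ingredient is a \emph{Master Lemma}: $a_1a_1a_1a_1\,u\sim u$ for every \emph{non-empty} word $u$. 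By $\sim$-centrality of $a_1a_1$ this reduces to $u$ a single generator $a_i$, and then (using $a_1a_1\sim a_ia_i$, so $a_1a_1a_1a_1\sim a_i^{\,4}$) to $a_i^{\,5}\sim a_i$; the latter follows from the explicit chain $a_i^{\,5}\sim a_i^{\,3}a_j^{\,2}\sim a_ia_j^{\,4}\sim a_ia_ja_ia_ia_j\sim a_ja_ia_j\sim a_i$, where $a_j$ is any Coxeter-neighbour of $a_i$ (these exist since $n\ge 2$). With these tools in hand, I would establish the claim by a terminating rewriting: while the word has an adjacent descending pair, straighten it (by $a_ka_j\leftrightarrow a_ja_k$ if the gap is $\ge 2$, by $a_ia_{i-1}\sim a_1a_1\,a_{i-1}a_i$ if the gap is $1$), and while it has an adjacent repeated letter $a_ia_i$ with $i\ge 2$, replace it by $a_1a_1$ and slide that to the front. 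Using the pair $(\#\text{inversions of the underlying sequence},\ \#\text{letters }>1)$ ordered lexicographically as a strictly decreasing measure, this terminates at a word $a_1^{\,t}a_{j_1}\cdots a_{j_k}$ with $1<j_1<\dots<j_k$. Finally the Master Lemma absorbs blocks $a_1a_1a_1a_1$ from the front so long as something remains, reducing $t$ modulo $4$; a quick inspection of the residual cases $t\in\{0,1,2,3\}$ (noting $a_1^{\,3}=a_1a_1\cdot a_1$ is already normal) shows the outcome is a normal word, and one tracks its $\hat{\cdot}$-value to identify which.

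The main obstacle is the Master Lemma: it encodes the fact that no non-empty word collapses below its $\Quat_{n+1}$-value even though each $\hat a_i$ has finite order — one cannot simply cancel $a_1a_1a_1a_1$ — and the braid-type chain $a_i^{\,5}\sim a_i$ is the single genuinely non-obvious step. A secondary nuisance is the choice of a well-founded termination measure for the rewriting, since $\sim$ does not preserve word length (so the straightening steps must be bundled into compound moves before the measure behaves monotonically). Finally, for $n=1$ there are no dimension-$1$ letters, hence $\cD_1$ has no edges, $\hat{\cdot}$ is injective on words, and the proposition holds trivially.
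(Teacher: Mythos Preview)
Your proposal is correct and essentially identical to the paper's argument: both introduce the same normal forms (the paper calls them \emph{basic words}), derive the same auxiliary moves ($a_ia_i\sim a_1a_1$, centrality of $a_1a_1$, $a_{k+1}a_k\sim a_1a_1\,a_ka_{k+1}$, and the key $a_i^{\,5}\sim a_i$), and reduce every non-empty word to its normal form. Your lexicographic termination measure is a bit more explicit than the paper's informal description, but the substance is the same.

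One small slip in your final aside: for $n=1$ the map $\hat{\cdot}$ is \emph{not} injective on non-empty words (since $\hat a_1$ has order $4$, the words $a_1$ and $a_1^{\,5}$ have the same value), and since $\cD_1$ has no edges the proposition in fact fails there. This does not affect your main argument, which correctly requires a Coxeter neighbour and hence $n\ge 2$; the paper tacitly assumes $n\ge 2$ throughout.
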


\goodbreak

\begin{rem}
\label{rem:connected}
For $q \in \Quat_{n+1}$,
the set $\Ideal_q = \{ w \in \Word_n \;|\; \hat w = q \}$
is both a lower set and an upper set.
Theorem 2 in  \cite{Goulart-Saldanha1} 
tells us that $\cD_n[\Ideal_q] \subset \cL_n(1;\acute\eta q\acute\eta)$.
For $q = 1$, we can further split $\Ideal_q$ into
$\Ideal_{q,\conv} = \{()\}$ and
$\Ideal_{q,\nconv} = \Ideal_q \smallsetminus \Ideal_{q,\conv}$.
It follows from Proposition~\ref{prop:connected}
that the connected components of $\cD_n$ are 
$\cD_n[\Ideal_q]$ ($q \in \Quat_{n+1}$, $q \ne 1$),
$\cD_n[\Ideal_{1,\conv}]$ and $\cD_n[\Ideal_{1,\nconv}]$.
Thus, from Theorem \ref{theo:newCW},
$\cL_n(1;q)$ is connected for $q \ne \hat\eta$.
The set $\cL_n(1;\hat\eta)$ has two connected components:
one of them is $\cL_{n,\conv}(1;\hat\eta) = \cL_n[()]$,
which is contractible;
its complement $\cL_n(1;\hat\eta) \smallsetminus \cL_n[()] =
\cL_n[\Ideal_{1,\nconv}]$ is also connected.

These results are of course well known 
but also follow from our methods.
\end{rem}


\begin{proof}[Proof of Proposition \ref{prop:connected}]
We have already seen that $w_0 \sim w_1$ implies $\hat w_0 = \hat w_1$.
We prove the other implication for non-empty words. 
A \emph{basic word} is either:
\begin{enumerate}[label=(\roman*)]
\item{ a non-empty word $a_{k_1}a_{k_2}\cdots a_{k_l}$
with $k_1 < k_2 < \cdots < k_l$; }
\item{ of the form $aaa_{k_1}a_{k_2}\cdots a_{k_l}$
with $k_1 < k_2 < \cdots < k_l$ (here the words $aa$ and $aaa$ are allowed:
they correspond to $l = 0$ and $l = 1$, $k_1 = 1$); }
\item{ $aaaa$.}
\end{enumerate}
Clearly, for each $z \in \Quat_{n+1}$ there exists a unique basic word $w$
with $z = \hat w$.
Using the edges in Figure \ref{fig:abbaac}, first notice that
\[ aa \sim abab \sim bb \sim bcbc \sim cc \sim \cdots \sim a_ka_k; \]
also 
\[ a_{k+1}a_k \sim a_{k+1}a_{k+1}a_ka_{k+1} \sim aaa_ka_{k+1}; \]
furthermore, $baa \sim babab \sim aab$ and, for $k > 2$,
$a_k aa \sim aa a_k$.
Also,
\[ a \sim bab \sim ababababa \sim aaaaa. \]
Thus $aa$ commutes with all generators $a_k$
and can be brought to the beginning of the word;
other generators either commute ($a_ka_l \sim a_la_k$ if $|k-l| \ne 1$)
or satisfy $a_{k+1}a_k \sim aaa_ka_{k+1}$
(corresponding to the fact that $\hat a_k$ and $\hat a_{k+1}$
anticommute in $\Quat_{n+1}$, and that $\hat a\hat a = -1$).
Thus, for an arbitrary non-empty word $w$,
generators can be arranged in increasing order of index,
at the price of creating copies of $aa$
which are taken to the beginning of the word.
Duplicate generators can also be transformed into further copies of $aa$.
Finally, if there are more than $4$ copies of $a$,
they can be removed $4$ by $4$ thus arriving at a basic word.
\end{proof}

\begin{example}
\label{example:abacadaba}
Take $n = 4$ and $w = abacadaba \in \Word_4$.
Applying the procedure in the proof of Proposition \ref{prop:connected}
we have
\begin{gather*}
abacadaba \sim abacadb \sim abacabd \sim abaacbd
\sim aaabcbd \sim aaacd,
\end{gather*}
and $w_1 = aaacd$ is a basic word,
with $\hat w = \hat w_1 = -\hat a\hat c\hat d$.
\end{example}

\begin{example}
\label{example:earlyaiH}
It follows from Proposition \ref{prop:connected}
that for every non-empty word $w \in \Word_n$
of dimension $0$ there exists a path $w^H$
in the $1$-skeleton of $\cD_n$ joining $w$ and $aaaaw$.
(Or more precisely, the vertices $c_w$ and $c_{aaaaw}$;
we omit the $c$'s for conciseness.)
We proceed to construct explicit paths
which shall be extensively used later.



The simplest example is:
\[ a^{H} = (a \xleftarrow{[ba]} bab
\xleftarrow{[ab]ab} \cdot \xrightarrow{aba[ba]b} \cdot \xrightarrow{ababab[ab]}
ababababa
\xleftarrow{a[ba]ababa} \cdot \xleftarrow{aaa[ba]a} aaaaa), \]
which already appeared implicitly
in the proof of Proposition \ref{prop:connected}.
If the word starts with $a$, we fall back on this example:
\[ (aw)^{H} = a^{H}w = (aw \rightarrow babw
\leftarrow\cdot\rightarrow\cdot\leftarrow ababababaw
\leftarrow\cdot\leftarrow aaaaaw). \]
Next, define
\begin{align*}
b^{H} &= (b\xleftarrow{[ab]} aba \xrightarrow{a^{H}ba} aaaaaba
\xrightarrow{aaaa[ab]} aaaab) \\
&= (b\leftarrow aba \rightarrow babba
\leftarrow\cdot\rightarrow\cdot\leftarrow abababababa
\leftarrow\cdot\leftarrow aaaaaba \rightarrow aaaab)
\end{align*}
and $(bw)^{H} = b^{H}w$.
Define recursively
\[ a_{k+1}^{H} = (a_{k+1}\xleftarrow{[a_ka_{k+1}]} a_ka_{k+1}a_k
\xrightarrow{a_k^{H}a_{k+1}a_k} aaaaa_ka_{k+1}a_k
\xrightarrow{aaaa[a_ka_{k+1}]} aaaaa_{k+1}). \]
Notice that for every $k$, the path $a_k^H$ begins as
\[ a_k^H = a_k \xleftarrow{a'_k} a^\star_k \cdots . \]
The permutation $a'_k$ is defined in
Section \ref{sect:permutations} (above Lemma \ref{lemma:vader});
the words $a_k^\star$ are given by
\[ a' = [ba], \quad a^\star = bab, \quad
a'_{k+1} = [a_ka_{k+1}], \quad a^\star_{k+1} = a_ka_{k+1}a_k. \] 
For longer words, set $(a_kw)^{H} = a_k^{H}w$.
\end{example}



We now construct the complex $\cD_n[\Ideal_{(\omega^2)}]$.
Notice that $w \in \Ideal_{(\omega^2)}$ if and only if
$w$ is of the form $w = w_0 \sigma_1 w_1 \cdots \sigma_l w_l$
where $\dim(w_j) = 0$ and $\dim(\sigma_j) = 1$
(some of the $w_j$ may equal the empty word).
Set $c_w$ to be a \emph{product cell} of dimension $l$,
i.e., the $l$-th dimensional cube
\[ c_w = c_{w_0} \times c_{\sigma_1} \times c_{w_1} \times \cdots
\times c_{\sigma_l} \times c_{w_l}. \]
In homological notation we have
\begin{align*}
\partial(w_0 \sigma_1 w_1 \cdots \sigma_l w_l) &= 
w_0(\partial \sigma_1)w_1 \cdots \sigma_l w_l 
- w_0 \sigma_1 w_1(\partial \sigma_2)\cdots \sigma_l w_l
+ \cdots \\
&\phantom{=} \cdots
+ (-1)^{l+1} w_0 \sigma_1 w_1 \cdots \partial(\sigma_l) w_l. 
\end{align*}
See Figure \ref{fig:baabacbac} for the following examples
of the previous formula:
\begin{gather*}
\partial([ba][ab]) = bab[ab] - [ba]b - a[ab] + [ba]aba, \\
\partial([ac]b[ac]) = cab[ac] - [ac]bca - acb[ac] + [ac]bac.
\end{gather*}

\begin{figure}[ht]
\def\svgwidth{10cm}
\centerline{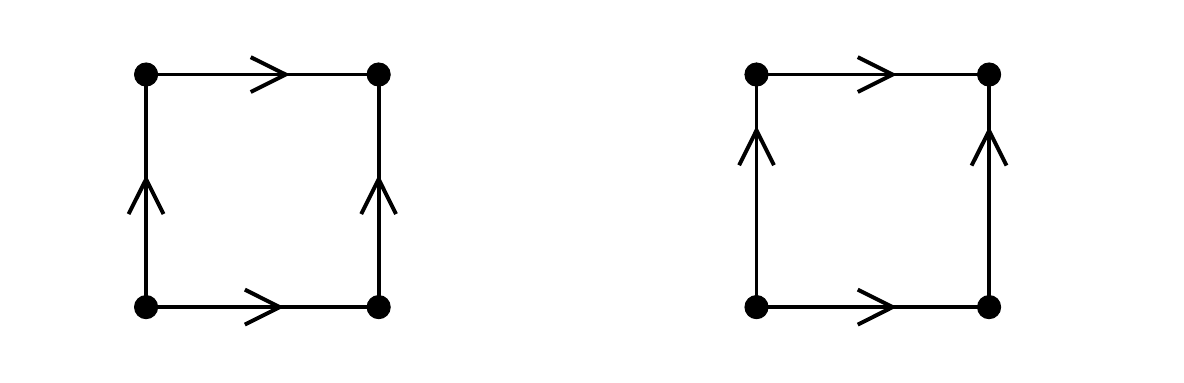}
\caption{The $2$-cells $[ba][ab]$ and $[ac]b[ac]$. }
\label{fig:baabacbac}
\end{figure}

\begin{rem}
\label{rem:productcells}
The construction of product cells works in greater generality.
If $w \in \Word_n$ is a word
containing more than one letter of positive dimension,
define $c_w$ as a \emph{product cell}.
As before, write $w = w_0 \sigma_1 w_1 \cdots \sigma_l w_l$
where $\dim(w_j) = 0$ and $\dim(\sigma_j) > 0$
(some of the $w_j$ may equal the empty word).
Set 
\[ c_w = c_{w_0} \times c_{\sigma_1} \times c_{w_1} \times \cdots
\times c_{\sigma_l} \times c_{w_l}. \]
Here we assume the cells $c_{\sigma_j}$ to have been previously constructed.
\end{rem}

\begin{rem}
\label{rem:ababcb}
The reader will agree that an important part
in the study of the CW complex $\cD_n$
is the construction of the boundary maps
for permutations $\sigma \in S_{n+1}$,
or, equivalently, of the cells $c_\sigma$.
The basic blocks in this construction are permutations $\sigma$
for which $1^\sigma > 1$.
For instance, the construction of the boundary map for $\sigma = bcb$
is easy 
if we already discussed $\tilde\sigma = aba$.

Indeed, let $\sigma \in S_{n+1}$ be a letter of dimension $k$
such that $1^\sigma  = 1$.
Equivalently, a reduced word for $\sigma$ does not use the generator $a_1$.
Write a reduced word
$\sigma = a_{n_1}\cdots a_{n_{k+1}}$ and $s = -1 +\min n_j > 0$.
Set $\tilde\sigma = a_{n_1 - s}\cdots a_{n_{k+1}- s} \in S_{n-s+1}$:
the cell $c_{\tilde\sigma}$ is assumed to be already constructed.
Define $c_{\sigma}$ from $c_{\tilde\sigma}$
by adding $s$ to the index of every generator of every letter.
Notice that this fits with our construction of the $1$-skeleton;
see also Figure  \ref{fig:ababcb} for $c_{[bcb]}$.
\end{rem}

\section{Cells of dimension $2$}
\label{sect:Dn2}

Given $n$ and Remarks \ref{rem:productcells} and \ref{rem:ababcb},
there is a finite (and short) list of possibilities
of letters of dimension $2$.
In $S_3$, the only letter of dimension $2$ is $[aba]$:
Figure \ref{fig:subaba-cw} shows the elements of $\Word_2$ (or $\Word_n$)
below $[aba]$, i.e., the smallest lower set containing $[aba]$
(in this case the partial orders $\preceq$ and $\sqsubseteq$ agree).
See also Figures \ref{fig:aba-cw} and \ref{fig:ababcb} for 
a transversal surface to the submanifold $\cL_2[[aba]]$
and for the cell $c_{[aba]}$.

In $S_4$ we also have $[bcb]$
(which, from Remark \ref{rem:ababcb},
is similar to $[aba]$, as in Figure \ref{fig:ababcb}).
The example $[acb]$ deserves special attention:
it is discussed in detail in Section 9 of \cite{Goulart-Saldanha1},
valid cells are given in Example \ref{example:acb}
and in Figure \ref{fig:newacb} below.
The three remaining cells are
and $[abc]$, $[bac]$ and $[cba]$,
for which transversal sections and valid cells are shown
in Figure \ref{fig:cD3};
in a homological notation:
\begin{align*}
\partial[abc] &= abc[ab]+a[bc]+[ac]-[bc]a-[ab]cba+ab[ac]ba, \\
\partial[bac] &= [ac]b-[bc]ab-bc[ba]-b[ac]-ba[bc]-[ba]cb, \\
\partial[cba] &= [ac] + c[ba]+cba[cb]+cb[ac]bc-[cb]abc-[ba]c.
\end{align*}

\begin{figure}[ht]
\def\svgwidth{\linewidth}
\centerline{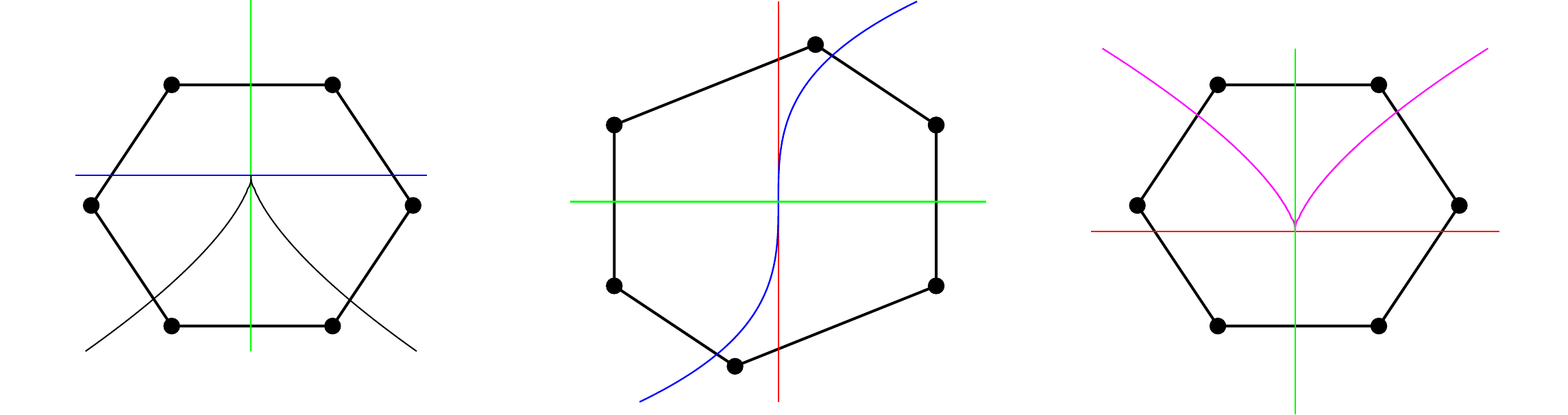}
\caption{The cells $c_{[abc]}$, $c_{[bac]}$ and $c_{[cba]}$.}
\label{fig:cD3}
\end{figure}
\bigskip

In each figure, we consider the family of paths constructed
in Section~7
of \cite{Goulart-Saldanha1}
so that the functions $m_j$ are polynomials
in the real parameters $x_1$ and $x_2$ and in the variable $t$.
The semialgebraic curves in the figure
indicate regions for which the itinerary has positive dimension
and therefore separate open regions
for which the itinerary has dimension $0$.

Consider $\sigma \in S_{n+1} \smallsetminus \{e\}$, $\dim(\sigma) = k$,
and the map $\phi: \DD^k \to \cL_n$ transversal to $\cL_n[\sigma]$
constructed in Lemma~7.1 
of \cite{Goulart-Saldanha1} 
(and discussed above in Section \ref{sect:valid}).
By construction of $\phi$,
if $x \in \DD^k \smallsetminus \{0\}$ and $\iti(\phi(x)) = w$
there exists a continuous map $h: [0,1] \to \DD^k$ such that
$h(0) = 0$, $h(1) = x$ and $\iti(\phi(h(s))) = w$ for all \( s \in (0,1] \).
In particular, we have both $w \sqsubseteq \sigma$ and $w \tok \sigma$.
As we shall see in Example \ref{example:acb}, the reciprocal does not hold.

\begin{example}
\label{example:abc}
Take $n = 3$ and $\sigma = abc$:
we discuss the first diagram of Figure \ref{fig:cD3}.
Following the construction in \cite{Goulart-Saldanha1},
we have
\[ M = \begin{pmatrix}
-\frac{t^2}{2} & -t & -1 & 0 \\
t & 1 & 0 & 0 \\
1 & 0 & 0 & 0 \\
\frac{t^3}{6} + x_2 t + x_1 & \frac{t^2}{2} + x_2 & t & 1
\end{pmatrix}. \]
and therefore
\[ m_1(t) = \frac{t^3}{6} + x_2 t + x_1, \qquad
m_2(t) = \frac{t^2}{2}+ x_2, \qquad
m_3(t) = -t. \]
The vertical line, corresponding to $[ac]$, has equation
\[ \resultant(m_1,m_3;t) = x_1 = 0. \]
More precisely, the positive semiaxis corresponds to itinerary $[ac]$
and the negative semiaxis corresponds to itinerary $ab[ac]ba$.
The horizontal line, corresponding to $[bc]$, has equation
\[ \discriminant(m_2;t) = -2 x_2 = 0: \]
the positive semiaxis corresponds to itinerary $a[bc]$
and the negative semiaxis to $[bc]a$.
Finally, the cusp-like curve in the figure, corresponding to $[ab]$,
has equation 
\[ \resultant(m_1,m_2;t) = \frac{x_2^3}{9} + \frac{x_1^2}{8} = 0; \]
in the third quadrant the itinerary is $[ab]cba$;
in the fourth quadrant it is $abc[ab]$.
\end{example}

\begin{example}
\label{example:acb}
Take $n = 3$ and $\sigma = acb$.
The transversal section constructed
in Example~7.3 
of \cite{Goulart-Saldanha1} implies that
\[ [ac]b[ac] \sqsubseteq [acb], \qquad [ac]b[ac] \tok [acb]. \]
Notice that $\dim([ac]b[ac]) = \dim([acb]) = 2$.
By transitivity we have
\[ acbac, cabca, [ac]bac, [ac]bca, acb[ac], cab[ac] \sqsubseteq [acb] \]
(and similarly for $\tok$)
but none of the itineraries on the left hand side appear
in the transversal section constructed 
in Example~7.3 
of \cite{Goulart-Saldanha1}.

It is easy, on the other hand, to perturb the map $\phi$
to obtain other maps transversal to $\cL_n[[ac]b[ac]]$.
Take
\[ M = \begin{pmatrix}
-t & -1 & 0 & 0 \\
\frac{t^3}{6} + x t & \frac{t^2}{2} + x & t & 0 \\
ut + 1 & u & 0 & 0 \\
\frac{t^2}{2} + y & t & 1 & 0
\end{pmatrix} \]
where $u$ is to be thought of as a fixed real number
of small absolute value: say, $|u| < 1/4$.
Figure \ref{fig:newacb} shows the resulting sections.
The construction in Example~7.3 
of \cite{Goulart-Saldanha1} 
corresponds to $u = 0$.
See Example~7.4 
of \cite{Goulart-Saldanha1}
for a slightly different construction;
see also Section~9 
of \cite{Goulart-Saldanha1}.

\begin{figure}[ht]
\def\svgwidth{125mm}
\centerline{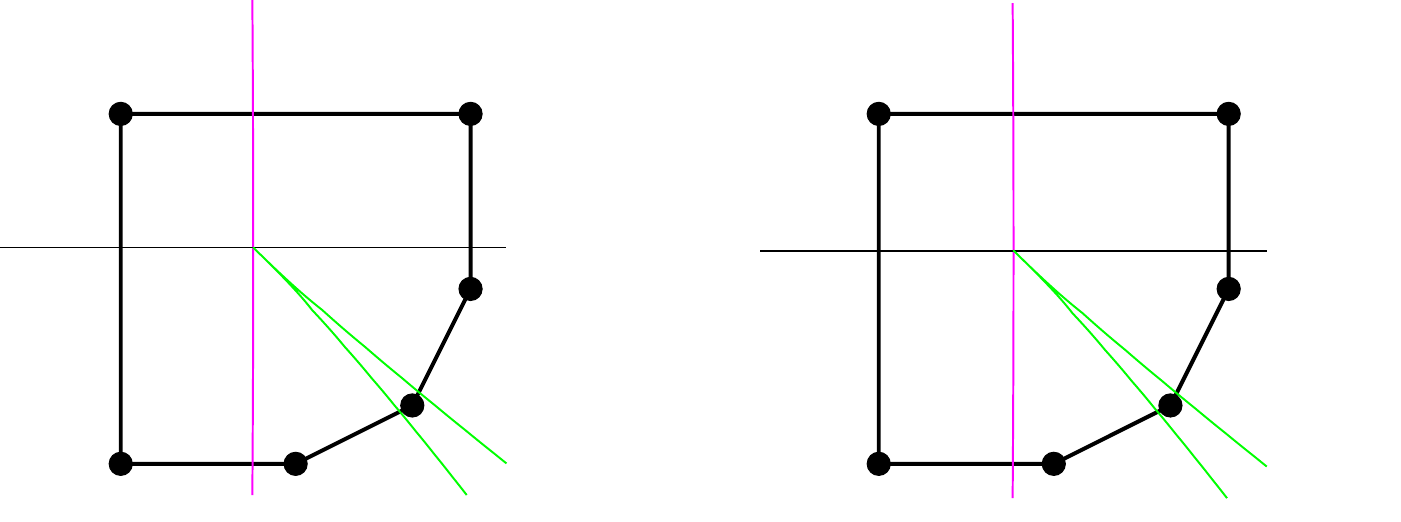}
\caption{Two other transversal sections to $\cL_3[[acb]]$
($u < 0$ and $u > 0$).}
\label{fig:newacb}
\end{figure}
\bigskip

Notice that the two diagrams differ combinatorially.
For $u < 0$, the itinerary $acbac$ appears and $cabca$ does not;
for $u > 0$ it is the other way round.
Provided $\Ideal \subset \Word_3$ contains
the $2$-dimensional word $[ac]b[ac]$,
the two boundary maps (from $\Ss^1$ to the $1$-skeleton)
are homotopic in $\cD_3[\Ideal]$,
thus guaranteeing that $\cD_3[\Ideal \cup \{[acb]\}]$ is well defined,
consistently with the results from Section \ref{sect:valid}.
\end{example}

\begin{figure}[ht]
\def\svgwidth{10cm}
\centerline{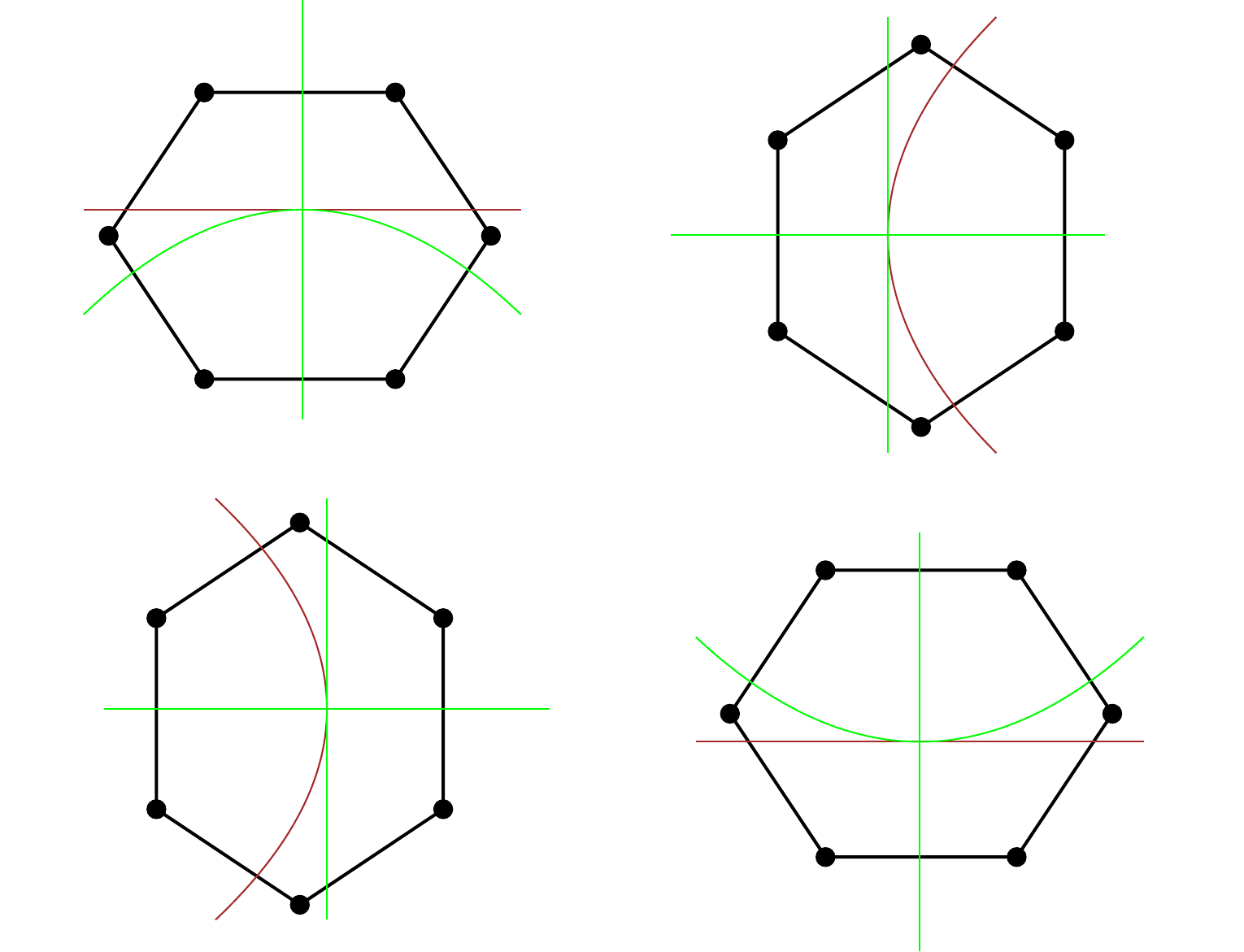}
\caption{The cells $c_{[abd]}$, $c_{[acd]}$, $c_{[adc]}$ and $c_{[bad]}$}
\label{fig:cD4}
\end{figure}

In $\cD_4$ there exist faces (i.e., cells of dimension $2$)
equivalent to those above in the sense of Remark \ref{rem:ababcb}
(such as $[bcd]$, which is equivalent to $[abc]$)
but also a few genuinely new ones:
$[abd]$, $[acd]$, $[adc]$ and $[bad]$,
all shown in Figure \ref{fig:cD4}; more generally, we have
\begin{align*}
\partial[aba_k] &=
ab[aa_k] + a[ba_k]a + [aa_k]ba + a_k[ab] - [ba_k] - [ab]a_k, \quad
k \ge 4; \\
\partial[aa_ka_{k+1}] &=
a[a_ka_{k+1}] + [aa_{k+1}] - [a_ka_{k+1}]a \\
& \qquad - a_ka_{k+1}[aa_k] - a_k[aa_{k+1}]a_k - [aa_k]a_{k+1}a_k, \quad
k \ge 3; \\
\partial[aa_{k+1}a_k] &=
a[a_{k+1}a_k] + [aa_{k+1}]a_ka_{k+1} + a_{k+1}[aa_k]a_{k+1} \\
& \qquad + a_{k+1}a_k[aa_{k+1}] - [a_{k+1}a_k]a - [aa_k], \quad
k \ge 3; \\
\partial[baa_k] &=
[aa_k] + a_k[ba] - [ba_k]ab - b[aa_k]b - ba[ba_k] - [ba]a_k, \quad
k \ge 4.
\end{align*}
The only genuinely new face in $\cD_5$ is $[a_1a_3a_5]$,
shown in Figure \ref{fig:cD5}; we have
\[ \partial[aa_ka_l] =
a[a_ka_l] + [aa_l]a_k + a_l[aa_k] - [a_ka_l]a - a_k[aa_l] - [aa_k]a_l, \quad
3 \le k < l-1. \]

\begin{figure}[ht]
\def\svgwidth{6cm}
\centerline{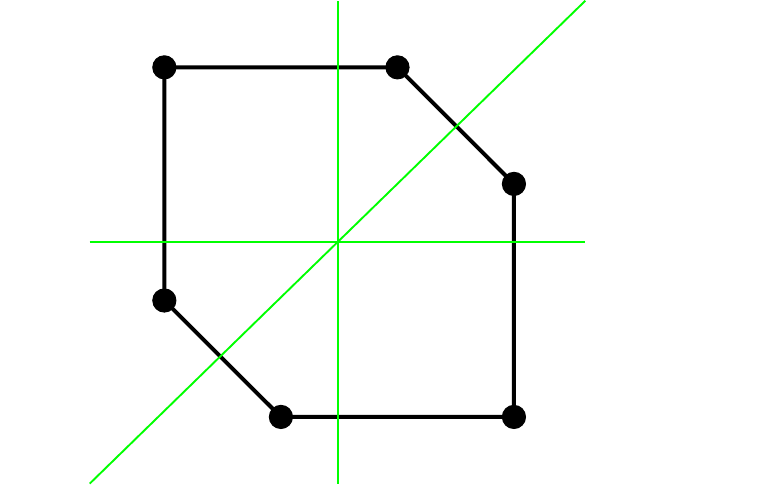}
\caption{The cell $c_{[a_1a_3a_5]}$.}
\label{fig:cD5}
\end{figure}

\section{Boundaries}
\label{sect:boundary}

The previous sections contain several examples
of boundary maps of cells $c_\sigma$, $\sigma \in S_{n+1}$.
We do not know a simple and general
description of such boundary maps in higher dimension.
In this section we prove some special cases.

Recall from Section \ref{sect:permutations}
that $\sigma_0 \blacktriangleleft \sigma_1$
if  $\sigma_0 \vartriangleleft \sigma_1$
and $\sigma_0 \equiv \sigma_1 \pmod 2$,
i.e., if $j_0 \equiv j_1 \pmod 2$.
The following proposition is the main result of this section.

\begin{prop}
\label{prop:blacktriangle}
Let $\sigma_0, \sigma_1 \in S_{n+1} \smallsetminus \{e\}$,
$w_0, w_1 \in \Word_n$.
If $\sigma_0 \blacktriangleleft \sigma_1$ then
\[ \cL_n[w_0\sigma_1 w_1] \subset \overline{\cL_n[w_0\sigma_0 w_1]}. \]
A valid boundary map for the cell $c_{w_0\sigma_1 w_1}$
can be taken so as to
include exactly one copy of $c_{w_0\sigma_0 w_1}$.
In particular,
$w_0\sigma_0 w_1 \preceq w_0\sigma_1 w_1$
and 
$w_0\sigma_0 w_1 \sqsubseteq w_0\sigma_1 w_1$.
\end{prop}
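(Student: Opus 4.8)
\textbf{The plan is to reduce to the case $w_0 = w_1 = ()$} and handle the general case by the product-cell construction of Remark~\ref{rem:productcells}. The key geometric input is Lemma~\ref{lemma:trianglemanifold}: writing $\sigma_1 = \rho_a a_i \rho_b$ with $\sigma_0 = \rho_a \rho_b$ a reduced factorization, and picking a lift $z_1 = \acute\sigma_0 = \acute\rho_a\acute\rho_b$ (composed with the appropriate element of $\Quat_{n+1}$ so the endpoints match the defining data of $\sqsubseteq$ and the partial order $\preceq$), Lemma~\ref{lemma:trianglemanifold} gives a tubular neighborhood $\cU_{z_1}$ of $\Bru_{z_1}$ in which $\Bru_{\sigma_1}$ has exactly two connected components, $\Bru_{z_0^{+}}$ and $\Bru_{z_0^{-}}$, and in which $N = \Bru_{z_0^{+}} \cup \Bru_{z_1} \cup \Bru_{z_0^{-}}$ is a smooth submanifold of dimension $\inv(\sigma_1) = \inv(\sigma_0)+1$. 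This is the local picture that makes $\cL_n[\sigma_1]$ lie in the closure of $\cL_n[\sigma_0]$: the hypothesis $\sigma_0 \blacktriangleleft \sigma_1$ (i.e.\ $j_0 \equiv j_1 \pmod 2$) guarantees $\hat\sigma_0 = \hat\sigma_1$ by Remark~\ref{rem:blacktriangle}, so both $z_0^{+}$ and $z_0^{-}$ — which differ by the sign/$\Quat$ ambiguity $\acute a_i$ versus $\grave a_i$ — actually project to curves in the \emph{same} component $\cL_n(1;\acute\eta\hat\sigma_1\acute\eta)$, which is the component containing $\cL_n[\sigma_0]$.

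\textbf{Next I would translate this into a statement about curves.} A locally convex curve $\Gamma$ with itinerary $\sigma_0$ crosses $\Sing_{n+1}$ once, at a point of $\Bru_{\eta\sigma_0}$; a small convex perturbation supported near that crossing time either pushes the crossing off (giving itinerary $()$, i.e.\ the curve becomes convex locally — but globally this is not the open stratum we want) or, more precisely, a perturbation \emph{within} the submanifold $N$ of Lemma~\ref{lemma:trianglemanifold} moves the single crossing of $\Bru_{\eta\sigma_0}$ into a single crossing of $\Bru_{\eta\sigma_1}$, producing a curve with itinerary $w_0\sigma_1 w_1$. Running this in reverse: starting from a curve with itinerary $\sigma_1$, there is a one-parameter family through $N$ connecting it to a curve with itinerary $\sigma_0$ (the crossing passing through the thin stratum $\Bru_{\eta\sigma_1} \subset \overline{\Bru_{\eta\sigma_0}}$, using $\sigma_0 \vartriangleleft \sigma_1 \Rightarrow \sigma_0 < \sigma_1 \Rightarrow \Bru_{\sigma_1} \subseteq \overline{\Bru_{\sigma_0}}$). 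Hence $\cL_n[\sigma_1] \subset \overline{\cL_n[\sigma_0]}$, and inserting fixed convex arcs realizing $w_0$ and $w_1$ before and after gives $\cL_n[w_0\sigma_1 w_1] \subset \overline{\cL_n[w_0\sigma_0 w_1]}$.

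\textbf{For the boundary-map assertion} I would use the transversal section of Lemma~7.1 of \cite{Goulart-Saldanha1}: near a curve $\Gamma_0 \in \cL_n[\sigma_1]$ the transversal disk $\phi\colon \DD^{\dim(\sigma_1)}\to\cL_n$ meets, by the two-component local picture above, the strata $\cL_n[z_0^{+}]$ and $\cL_n[z_0^{-}]$ — but these are the \emph{same} word $\sigma_0$ on the two sides of the codimension-one piece of $N$, so the word $\sigma_0$ is hit along a hypersurface through $0$ that separates the disk into exactly two halves. In the language of Section~\ref{sect:valid}, when building the cell $c_{w_0\sigma_1 w_1}$ the glueing map restricted to the relevant part of $\partial\DD^{\dim(w_0\sigma_1 w_1)}$ passes once through $\interior(c_{w_0\sigma_0 w_1})$, so a valid boundary map includes exactly one copy of $c_{w_0\sigma_0 w_1}$ (sign determined by the orientation conventions of Remark~\ref{rem:orientation}, which I would record but not belabor). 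The final two conclusions, $w_0\sigma_0 w_1 \preceq w_0\sigma_1 w_1$ and $w_0\sigma_0 w_1 \sqsubseteq w_0\sigma_1 w_1$, then follow: $\preceq$ from Equation~\eqref{equation:preceq} together with $\cL_n[w_0\sigma_1 w_1]\cap\overline{\cL_n[w_0\sigma_0 w_1]}\ne\varnothing$, and $\sqsubseteq$ either from Equation~\eqref{equation:sqsubseteq} or directly by computing multiplicities ($\mult(\sigma_0)\le\mult(\sigma_1)$ holds for any $\sigma_0\vartriangleleft\sigma_1$ by Lemma 2.4 of \cite{Goulart-Saldanha0}) together with $\hat\sigma_0=\hat\sigma_1$.

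\textbf{The main obstacle} I anticipate is not the existence statement $\cL_n[\sigma_1]\subset\overline{\cL_n[\sigma_0]}$ — Lemma~\ref{lemma:trianglemanifold} essentially hands that over — but rather making precise the claim that the transversal section meets $\cL_n[\sigma_0]$ in \emph{exactly one} sheet (a single copy, with a definite sign), i.e.\ that no other cells $c_{\tilde w}$ with $\tilde w \sqsubseteq w_0\sigma_1 w_1$ and $\dim(\tilde w) = \dim(w_0\sigma_1 w_1) - 1$ intrude in a way that cannot be cancelled. Here one must use that $\sigma_0 \blacktriangleleft \sigma_1$ is the \emph{only} immediate-predecessor relation compatible with $\hat{}$ (Remark~\ref{rem:blacktriangle}) feeding into this codimension-one part of the boundary, so that the local model $N$ of Lemma~\ref{lemma:trianglemanifold} genuinely captures the whole codimension-one boundary behavior near that stratum; combined with the freedom to choose the transversal section and to homotope the glueing map within $\cD_n[\Ideal_w^\ast]$ (as in Example~\ref{example:acb}), this pins down the multiplicity-one statement.
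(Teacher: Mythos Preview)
Your overall strategy matches the paper's: reduce to $w_0 = w_1 = ()$ via product cells (Remark~\ref{rem:productcells}), use Lemma~\ref{lemma:trianglemanifold} for the local Bruhat picture, and study the transversal section. You also correctly locate the difficulty in the ``exactly one copy'' assertion. However, your proposed resolution of that difficulty is not the right one, and this is a genuine gap.

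The issue is not whether some \emph{other} $\tilde\sigma$ with $\tilde\sigma \blacktriangleleft \sigma_1$ could intrude (which is what your uniqueness-of-$\hat{}$-compatible-predecessor argument addresses). The issue is: why can't a single curve $\Gamma_s$ in the transversal family hit the manifold $N = \Bru_{z_0^{+}} \cup \Bru_{z_1} \cup \Bru_{z_0^{-}}$ \emph{more than once}? If it could, the locus $\{s : \Gamma_s \text{ meets } N\}$ would have several branches through $0$, and the boundary would contain several copies of cells with a $\sigma_0$ letter. Nothing in the two-component description of $N$ (which is a statement about the target $\Spin_{n+1}$) prevents multiple intersections along a single locally convex arc. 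The paper settles this with Lemma~\ref{lemma:2mult}: since $\sigma_0 \blacktriangleleft \sigma_1$ implies $2\mult(\sigma_0) \not\le \mult(\sigma_1)$, Theorem~4 of \cite{Goulart-Saldanha1} forces each $\Gamma_s$ to meet $N$ at most once. That single arc of $s$-values then meets the boundary sphere in exactly the two points described by Lemma~\ref{lemma:triangleleft}, of which only one has itinerary equal to $\sigma_0$ (the $w_0^{+} = w_1^{+} = ()$ case); the other has itinerary $w_0^{-}\sigma_0 w_1^{-}$ and lands in a different cell. This is what pins the count to one.

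A secondary point: your setup of Lemma~\ref{lemma:trianglemanifold} has the roles of $\sigma_i$ and $\rho_i = \eta\sigma_i$ confused. The itinerary letter is $\sigma_j$, but the Bruhat cell crossed is $\Bru_{\eta\sigma_j} = \Bru_{\rho_j}$; since $\sigma_0 \vartriangleleft \sigma_1$ we have $\rho_1 \vartriangleleft \rho_0$, so in the lemma $z_1$ should lift $\rho_1 = \eta\sigma_1$, not $\sigma_0$. This is fixable but worth getting straight before invoking the lemma.
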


The proof of this proposition is postponed to the end of this section
so that we can present examples and a preliminary,
more technical lemma.
It follows easily from Remark \ref{rem:blacktriangle}
that if $\sigma_0 \vartriangleleft \sigma_1$
and $\sigma_0 \not\blacktriangleleft \sigma_1$ then
$\sigma_0 \not\sqsubseteq \sigma_1$.

\begin{example}
\label{example:blacktriangle}
In these examples we take $w_0$ and $w_1$ both empty.
The case $\sigma_0 = a \blacktriangleleft \sigma_1 = ba$ is
illustrated in Figure \ref{fig:abbaac},
as is the case $\sigma_0 = b \blacktriangleleft \sigma_1 = ab$.
The case $\sigma_0 = ac \blacktriangleleft \sigma_1 = abc$
is illustrated in Figure \ref{fig:cD3}
(the top side is $[ac]$).
The case $\sigma_0 = ac \blacktriangleleft \sigma_1 = cba$
is also illustrated in Figure \ref{fig:cD3}
(the bottom side is $[ac]$).
The cases $ab \blacktriangleleft acb$ and $cb \blacktriangleleft acb$
are both illustrated in Figure \ref{fig:newacb}.
Figure \ref{fig:cD4} illustrates the cases
$bd \blacktriangleleft abd$,
$ad \blacktriangleleft acd$,
$ac \blacktriangleleft adc$ and
$ad \blacktriangleleft bad$.

Notice that there is no $\sigma_0 \in S_4$
with $\sigma_0 \blacktriangleleft \sigma_1 = bac$
and consistently no side of $\partial\sigma_1$
is a word of length one;
see Figure \ref{fig:cD3}.
Also, $\sigma_1 \in Y = S_{n+1} \smallsetminus S_{\PA}$
is low, with $\sigma_1 \blacktriangleleft \sigma_1' = abac$.

Consider also $\sigma_1 = a_1a_3a_5 \in S_6$;
no side of the boundary of $c_{\sigma_1}$
is a word of length $1$, see Figure \ref{fig:cD5}.
In this case we have $\sigma_1 \in S_{\PA}$
and $\sigma_1'$ is undefined.
There is no $\sigma_0 \in S_6$ with $\sigma_0 \blacktriangleleft \sigma_1$;
we have, however, $\sigma_1   \blacktriangleleft a_1a_2a_3a_5$.
\end{example}

\begin{lemma}
\label{lemma:triangleleft}
Let $\sigma_0 \vartriangleleft \sigma_1 \in S_{n+1}$
with $\sigma_1 = (i_0i_1) \sigma_0 = \sigma_0 (j_0j_1)$.
For $\Gamma \in \cL_n[\sigma_1]$ 
there exist $0$-dimensional words
$w_0^{+}, w_0^{-}, w_1^{+}, w_1^{-} \in \Word_n$
such that
\[ \Gamma \in
\overline{\cL_n[w_0^{+} \sigma_0 w_1^{+}]} \cap
\overline{\cL_n[w_0^{-} \sigma_0 w_1^{-}]}. \]
In particular, we have
\[
w_0^{+} \sigma_0 w_1^{+} \sqsubseteq \sigma_1,
\qquad
w_0^{+} \sigma_0 w_1^{+} \tok \sigma_1,
\qquad
w_0^{-} \sigma_0 w_1^{-} \sqsubseteq \sigma_1.
\qquad
w_0^{-} \sigma_0 w_1^{-} \tok \sigma_1.
\]
If $\sigma_0 \blacktriangleleft \sigma_1$ then
$w_0^{+}$ and $w_1^{+}$ are both empty and
\[ \mult_k(w_0^{-}) = \mult_k(w_1^{-}) = [i_0 \le k < i_1]. \]
If $\sigma_0 \not\blacktriangleleft \sigma_1$ then
\begin{gather*}
\mult_k(w_0^{-}) = \mult_k(w_1^{+}), \qquad
\mult_k(w_0^{+}) = \mult_k(w_1^{-}), \\
\mult_k(w_0^{-}) + \mult_k(w_0^{+}) = [i_0 \le k < i_1].
\end{gather*}
\end{lemma}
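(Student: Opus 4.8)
The plan is to work, as throughout the paper, in the affine model near a Bruhat cell. First I would fix $\Gamma \in \cL_n[\sigma_1]$ and let $\tau$ be the unique singular time with $\Gamma(\tau) \in \Bru_{\eta\sigma_1}$. Choose $z_1 \in \widetilde\B_{n+1}^{+}$ with $\Gamma(\tau) \in \Bru_{z_1}$, and set $Q_1 = \Pi(z_1)$, so that $Q_1$ is a signed permutation matrix whose unsigned permutation is $\rho_1 = \eta\sigma_1$. Using the neighbourhood $\cU_{z_1} = z_1\grave\eta\Bru_{\acute\eta}$ of Remark \ref{remark:Uz1} and the diffeomorphism $\bQ: Q_1\Lo_{n+1}^1 \to \cU_{z_1}$ from the proof of Lemma \ref{lemma:trianglemanifold}, I would pull the curve $\Gamma$ back to a locally convex curve in $Q_1\Lo_{n+1}^1$ passing through $Q_1$ at time $\tau$. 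A small perturbation of this curve inside $Q_1\Lo_{n+1}^1$, pushing the value at $\tau$ off $\bM_{Q_1}$'s stratum corresponding to $\rho_1$ and onto the stratum corresponding to $\rho_0 = \eta\sigma_0$ (recall $\rho_1 \vartriangleleft \rho_0$ since $\sigma_0 \vartriangleleft \sigma_1$), produces curves in $\cL_n$ with a singular time near $\tau$ at which the curve lies in $\Bru_{\eta\sigma_0}$; the new curve acquires extra singular times on either side of $\tau$, whose permutations assemble into the $0$-dimensional words $w_0^{\pm}$ (before $\tau$) and $w_1^{\pm}$ (after $\tau$). Concretely, by Corollary \ref{coro:Mrho} the matrices in $Q_1\Lo_{n+1}^1$ Bruhat-equivalent to $\rho_0$ are exactly $U(Q_1 + tX)$ with $U \in \Up_{\rho_1}$, $t \neq 0$, $X = e_{i_1}e_{j_0}^\top$; the two signs of $t$ give the two families $(w_0^{+},w_1^{+})$ and $(w_0^{-},w_1^{-})$, and for each sign the curve exits and re-enters $\Bru_{\acute\eta}$ along codimension-one walls whose itineraries are generators, so the words have dimension $0$. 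The closure statement $\Gamma \in \overline{\cL_n[w_0^{\pm}\sigma_0 w_1^{\pm}]}$ follows because the perturbed curves can be taken to converge to $\Gamma$ in $H^r$; the $\sqsubseteq$ and $\tok$ consequences then follow from Equation \eqref{equation:sqsubseteq} and Equation \eqref{equation:preceq} respectively.

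Next I would compute the multiplicities. The key input is the multiplicity formula from \cite{Goulart-Saldanha0}, Lemma 2.4 (quoted in the proof of Lemma \ref{lemma:2mult}): $\mult_k(\rho_0) = \mult_k(\rho_1) + (j_1 - j_0)[i_0 \le k < i_1]$, and correspondingly, since $\sigma_r = \eta\rho_r$, the vector $\mult(\sigma_0) - \mult(\sigma_1)$ is supported on the interval $[i_0,i_1)$. Because the total itinerary of the perturbed curve must be $\sqsubseteq \sigma_1$, and the portion of the itinerary near $\tau$ is precisely the single letter $\sigma_0$, the multiplicity "deficit" $\mult(\sigma_1) - \mult(\sigma_0)$ has to be distributed among the extra letters, split as $\mult(w_0^{\pm}) + \mult(w_1^{\pm}) + \mult(\sigma_0) = \mult(\sigma_1)$ in the degenerate (blacktriangle) case and more subtly otherwise. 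I would pin down the exact split by a direct local analysis of how the curve crosses the Bruhat walls: the passage from $\Bru_{\acute\eta}$ down to $\Bru_{\eta\sigma_0}$ and back records, in the notation of Lemma \ref{lemma:trianglemanifold}, factors $\acute a_i$ or $\grave a_i$, and the parity condition $j_0 \equiv j_1 \pmod 2$ (i.e. $\sigma_0 \blacktriangleleft \sigma_1$) is exactly what forces the two one-sided contributions on the $+$ side to vanish (Remark \ref{rem:blacktriangle}, $\hat\sigma_0 = \hat\sigma_1$), leaving $w_0^{+} = w_1^{+} = ()$ and a single simple wall-crossing on the $-$ side with $\mult_k(w_0^{-}) = \mult_k(w_1^{-}) = [i_0 \le k < i_1]$. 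In the non-blacktriangle case $\hat\sigma_0 \neq \hat\sigma_1$, so neither side can be trivial; the symmetry $\mult_k(w_0^{-}) = \mult_k(w_1^{+})$ and $\mult_k(w_0^{+}) = \mult_k(w_1^{-})$ reflects a time-reversal symmetry of the local picture (reversing $t \mapsto -t$ in the perturbation swaps the roles of "before" and "after" and the roles of $+$ and $-$), and summing the two walls gives $\mult_k(w_0^{-}) + \mult_k(w_0^{+}) = [i_0 \le k < i_1]$, matching the total deficit since $j_1 - j_0 = 1$ whenever the deficit is $0/1$-valued.

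The main obstacle I anticipate is making the local wall-crossing analysis rigorous and precise enough to read off the multiplicities exactly, rather than just up to the constraint $\sqsubseteq \sigma_1$. In particular, the claim $j_1 - j_0 = 1$ is not automatic from $\sigma_0 \vartriangleleft \sigma_1$ alone — it would need the extra structure coming from the fact that the perturbation is through a locally convex curve with a prescribed single degenerate contact — so I would either invoke the relevant lemmas on itineraries from \cite{Goulart-Saldanha1} (the transversal-section construction of Section 7 there, which is exactly the tool used in Section \ref{sect:valid}) or argue directly that a locally convex arc passing once through $\Bru_{\eta\sigma_1}$ and generically perturbed must cross $\Bru_{\eta\sigma_0}$ with the two flanking wall-crossings being single generators. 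The parity bookkeeping — tracking signs $\acute{}$ versus $\grave{}$ and applying Lemma 3.5 of \cite{Goulart-Saldanha0} as in Remark \ref{rem:blacktriangle} — is the delicate computational core, but it is the same kind of argument already used to prove Lemma \ref{lemma:trianglemanifold}, so I would model the write-up on that proof.
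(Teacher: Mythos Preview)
Your approach is genuinely different from the paper's, and the gap you yourself identify is real and not easily patched along the lines you sketch.

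The paper does not work with the given $\Gamma$ at all and does not perturb in the affine model $Q_1\Lo_{n+1}^1$ via Corollary~\ref{coro:Mrho}. Instead it writes down an explicit polynomial one-parameter family $M_s:\RR\to\GL_{n+1}$ from scratch: the first column has entries $t^{i^{\sigma_1}-1}$ in the appropriate rows, with the single row $n+2-i_0$ replaced by $t^{j_1-1}+st^{j_0-1}$, and the remaining columns are obtained by differentiation. The southwest $k\times k$ minors are then computed \emph{exactly} using Lemma~7.2 of \cite{Goulart-Saldanha0} (a Vandermonde-type identity), yielding for $i_0\le k<i_1$
\[
\tilde m_{s;k}(t)=C_{0,k}\,t^{d_{0,k}}\bigl(\tilde C_k\,t^{\,j_1-j_0}+s\bigr),
\]
where the constants $\tilde C_k$ all share the same sign. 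From this formula the extra simple zeroes, their signs, and hence the words $w_0^\pm,w_1^\pm$ and their multiplicities are read off directly, and the dichotomy $\sigma_0\blacktriangleleft\sigma_1$ versus $\sigma_0\not\blacktriangleleft\sigma_1$ appears transparently as the parity of $j_1-j_0$ in the factor $\tilde C_k t^{j_1-j_0}+s$.

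Your multiplicity bookkeeping contains a genuine error that would block the computation even if the wall-crossing analysis were carried out carefully. You write that ``the multiplicity deficit $\mult(\sigma_1)-\mult(\sigma_0)$ has to be distributed among the extra letters'' so that $\mult(w_0^\pm)+\mult(\sigma_0)+\mult(w_1^\pm)=\mult(\sigma_1)$. This is false in general: the $\sqsubseteq$ relation only gives $\mult(w_0^\pm\sigma_0 w_1^\pm)\le\mult(\sigma_1)$, and the inequality is typically strict. For instance, in the $\blacktriangleleft$ case with $j_1-j_0\ge 4$ even, the explicit formula above shows $\tilde C_k t^{j_1-j_0}+s$ has at most two real roots, so $\mult_k(w_0^-)+\mult_k(w_1^-)=2<j_1-j_0$. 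Relatedly, your remark that one would need to establish $j_1-j_0=1$ is a misreading: no such restriction holds or is needed, and the lemma's formulas $\mult_k(w_0^\pm)+\mult_k(w_1^\mp)=[i_0\le k<i_1]$ hold for all odd $j_1-j_0$ precisely because an odd-degree real polynomial in $t$ has exactly one real root regardless of its degree. Without the explicit minor computation there is no mechanism in your outline that yields these exact values.
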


\begin{rem}
\label{rem:triangleleft}
Notice that we are not claiming that the two words
$w_0^{\pm}\sigma_0 w_1^{\pm}$
are the \textit{only} words of the form
$\tilde w_0\sigma_0 \tilde w_1$
for which we have
$\Gamma \in \overline{\cL_n[\tilde w_0\sigma_0 \tilde w_1]}$,
$\tilde w_0\sigma_0 \tilde w_1 \sqsubseteq \sigma_1$ or
$\tilde w_0\sigma_0 \tilde w_1 \preceq \sigma_1$.
In full generality such a claim is not correct.
Indeed, for $\sigma_1 = [acb]$ and $\sigma_0 = [ac]$
we have
\[ acb[ac], cab[ac], [ac]bac, [ac]bca, [ac]b[ac] \sqsubseteq [acb] \]
and similarly for $\preceq$.
\end{rem}




The following proof makes systematic use of both
Theorem 4 and Lemma 7.2 in \cite{Goulart-Saldanha0}.
We recall Theorem 4: consider a smooth locally convex curve 
$\Gamma: (-\epsilon,\epsilon) \to \SO_{n+1}$
with $\Gamma(0) \in \Bru_{\eta\sigma}$.
Let $m_j(t)$ be the determinant of the southwest
$j\times j$ minor of $\Gamma(t)$.
Then $t = 0$ is a zero of multiplicity $\mult_j(\sigma)$ of $m_j$.




\begin{proof}[Proof of Lemma \ref{lemma:triangleleft}:]
Consider $\sigma_0 \vartriangleleft \sigma_1$ and
let $i_\ast$, $j_\ast$ be as in the statement.
For $s \in \RR$ (small), take $M_s: \RR \to \GL_{n+1}$ 
with nonzero entries
\[ (M_s)_{n+2-i,1} = \begin{cases}
t^{(j_1-1)} + s t^{(j_0-1)}, & i = i_0; \\
t^{(i^{\sigma_1}-1)}, & i \ne i_0;
\end{cases} \qquad
(M_s)_{i,j+1} = \frac{d}{dt} (M_s)_{i,j}. \]
This matrix $M_0$ has the form considered in
Lemma 7.2 in \cite{Goulart-Saldanha0}
so its determinant is constant (in $t$), with the sign given by the parity
of $\eta\sigma_1$ ($\eta$ is the top permutation,
the longest element of $S_{n+1}$ in the generators $a_j$).
The matrix $M_s$ is obtained from $M_0$ by row operations so it has
the same determinant.
If necessary, change the sign of the first row of $M_s$
so that $M_s: \RR \to \GL^{+}_{n+1}$.
Perform a QR factorization $M_s(t) = \Gamma_s(t) R_s(t)$
to obtain $\Gamma_s: \RR \to \SO_{n+1}$:
the curve $\Gamma_s$ is locally convex.

Let $(M_s(t))_k$ and $(\Gamma_s(t))_k$ be the $k\times k$
southwest minors of $M_s(t)$ and $\Gamma_s(t)$, respectively.
Consider the smooth functions
\[ \tilde m_{s;k}(t) = \det((M_s(t))_k), \quad
m_{\gamma_s;k}(t) = \det((\Gamma_s(t))_k); \]
the QR factorization shows that one is a positive multiple of the other.
In order to study the multiplicities of zeroes we may as well
use $\tilde m_{s;k}(t)$.

Let
\[ C_{j,k} = \prod_{i_a < i_b \le k} (i_a^{\sigma_j} - i_b^{\sigma_j}), \qquad
\tilde C_k = \frac{C_{1,k}}{C_{0,k}}, \qquad
d_{j,k} = - \frac{k(k-1)}{2} + \sum_{i \le k} i^{\sigma_j}. \]
Notice that for $k < i_0$ and $k \ge i_1$ we have $d_{1,k} = d_{0,k}$
but for $i_0 \le k < i_1$ we have $d_{1,k} = d_{0,k} + (j_1 - j_0)$.
Also, for $k < i_0$ we have $\tilde C_k = 1$;
for $i_0 \le k < i_1$, $\tilde C_k$ has the same sign as $\tilde C_{i_0}$
(this is where we use the condition $\sigma_0 \vartriangleleft \sigma_1$). 

For $k < i_0$
Lemma 7.2 in \cite{Goulart-Saldanha0}
gives
$\tilde m_{s;k}(t) = C_{1,k} t^{d_{1,k}}$.
For $k \ge i_1$, $(M_s(t))_k$ is obtained from $(M_0(t))_k$
by row operations and
Lemma 7.2 in \cite{Goulart-Saldanha0}
again implies
$\tilde m_{s;k}(t) = C_{1,k} t^{d_{1,k}}$.
Finally, for $i_0 \le k < i_1$, we use linearity of the determinant
on row $n+2-i_0$;
more precisely, let $\hat M$ be defined by
\[ (\hat M)_{n+2-i,1} = t^{(i^{\sigma_0})},
\qquad
(\hat M)_{i,j+1} = \frac{d}{dt} (\hat M)_{i,j}, \]
with southwest $k\times k$ minor $(\hat M)_k$
and let $\hat m_{k}(t) = \det((\hat M(t))_k)$.
By linearity and
Lemma 7.2 in \cite{Goulart-Saldanha0}
we have
\begin{align*}
\tilde m_{s;k}(t) &= \det((M_s(t))_k) =
\det((M_0(t))_k) + s \det((\hat M(t))_k) \\
&= \tilde m_{0;k}(t) + s \hat m_{k}(t) =
C_{1,k} t^{d_{1,k}} + C_{0,k} s t^{d_{0,k}} =
C_{0,k} t^{d_{0,k}} ( \tilde C_k t^{(j_1 - j_0)} + s ).
\end{align*}
The multiplicities imply that $\sigma(\gamma_0,0) = \sigma_1$
and $\sigma(\gamma_s,0) = \sigma_0$ for $s \ne 0$
(here $\sigma(\gamma,t) \in S_{n+1}$ denotes the singularity type
of $\gamma$ at time $t$, 
as in the definition of itinerary).
If $j_1-j_0$ is odd there are extra roots with $t \ne 0$;
if $j_1-j_0$ is even and $s$ has the same sign as $\tilde C_{i_0}$
then there are no other real roots.
This completes the proof.
\end{proof}

\begin{proof}[Proof of Proposition \ref{prop:blacktriangle}]
We first consider the case when both words $w_0$ and $w_1$ are empty.
Set $\rho_i = \eta\sigma_i$, $z_1 = \acute\rho_1$
and $d_1 = \inv(\sigma_1) - 1$.
In order to construct a section,
we start with a locally convex curve
$\Gamma: (-\epsilon,\epsilon) \to \Spin_{n+1}$
with $\Gamma(0) = z_1$.
We then translate the curve $\Gamma$ to define a family $\Gamma_s$ 
of curves, $s$ in a small ball around $0 \in \RR^{d_1}$.
From Lemma \ref{lemma:2mult},
if $\sigma_0 \blacktriangleleft \sigma_1$ then
$2 \mult(\sigma_0) \not\le \mult(\sigma_1)$.
From Theorem 4 in \cite{Goulart-Saldanha1},
each curve $\Gamma_s$ intersects the manifold $N$
(as in Lemma \ref{lemma:trianglemanifold}) at most once.
This implies that there exists a curve of values of $s$
through $0$ of curves passing through a (single) point of $N$.
Thus, for a nice ball,
the boundary map contains two points with itinerary including $\sigma_0$:
these are the two points identified by Lemma \ref{lemma:triangleleft}.

The general case (arbitrary $w_0$ and $w_1$)
is handled as in Remark \ref{rem:productcells}.
\end{proof}

\section{Loose and tight maps}
\label{sect:loosetight}

In this section $\ast$ denotes the concatenation of curves:
if $\gamma_0 \in \cL_n(1)$ and $\gamma_1 \in \cL_n(z)$ then
$\gamma_0 \ast \gamma_1 \in \cL_n(z)$ is given by
\[ (\gamma_0 \ast \gamma_1)(t) = 
\begin{cases} \gamma_0(2t), & t \le 1/2; \\
\gamma_1(2t-1), & t \ge 1/2. \end{cases} \]
Clearly, the curve $\gamma_0 \ast \gamma_1$ 
can fail to be smooth at $t = 1/2$:
as discussed in previous occasions (in several papers),
apply a smoothening procedure.
For $\gamma_0 \in \cL_n(1)$ and  $\gamma_1 \in \cL_n$,
the itinerary of $\gamma_0 \ast \gamma_1$ is
$w_{\gamma_0 \ast \gamma_1} = w_{\gamma_0} \eta w_{\gamma_1}$
(where $\eta \in S_{n+1}$ is the top permutation).

There exists a related construction in the CW complex $\cD_n$,
a continuous product $m: \cD_n \times \cD_n \to \cD_n$.
Given two words $w_0, w_1 \in \Word_n$,
we saw in Remark \ref{rem:productcells}
that there exists a bijection from
$c_{w_0} \times c_{w_1}$ to $c_{w_0w_1}$.
The product is defined by taking this bijection
for every pair of cells, so that $m[c_{w_0} \times c_{w_1}] = c_{w_0w_1}$.
Thus, for instance, if is a word $w_0$ of dimension $0$,
the map $m_{w_0}: \cD_n \to \cD_n$
defined by $m_{w_0}(w_1) = m(w_0,w_1)$
is a homeomorphism taking cells to cells
from $\cD_n$ to the open subset $\cD_n[\Ideal_{w_0}]$,
where $\Ideal_{w_0} \subseteq \Word_n$ is the lower set of words $w$
of the form $w = w_0 \tilde w$, $\tilde w \in \Word_n$.
For brevity, for $w_0 \in \Word_n$, $\dim(w_0) = 0$, and $f: K \to \cD_n$
we write $w_0 f: K \to \cD_n$, $(w_0 f)(p) = m(w_0,f(p))$.
Similarly, for $p_0 \in \cD_n$ we write $p_0 f: K \to \cD_n$
for $(p_0 f)(p) = m(p_0,f(p))$.

\smallskip

Let $\gamma_0 \in \cL_n(1)$ be a fixed non-convex closed curve.
Let $M$ be a compact manifold and consider a map $\phi: M \to \cL_n(z)$:
$\phi$ is \emph{loose} if $\phi$ is homotopic to 
\begin{equation}
\label{equation:defloose}
\begin{aligned}
\gamma_0 \ast \phi: M &\to \cL_n(z) \\
s &\mapsto \gamma_0 \ast \phi(s);
\end{aligned}
\end{equation}
otherwise, $\phi$ is \emph{tight}.
These definitions generalize those in \cite{Saldanha3};
there, $n = 2$ and $\gamma_0$ is a fixed curve.
It is easy to see, however, that the definition
does not depend on the choice of $\gamma_0$.
Indeed, if $\gamma_1 \in \cL_n(1)$ is another non-convex closed curve
then $\gamma_0$ and $\gamma_1$ are homotopic
(from Proposition \ref{prop:connected}).
Also, $\gamma_0$ is homotopic to $\gamma_0^2 = \gamma_0 \ast \gamma_0$
and to $\gamma_0^N = \gamma_0 \ast \gamma_0^{N-1} =
\gamma_0 \ast \cdots \ast \gamma_0$.

The following lemma gives a characterization of loose maps $f: K \to \cD_n$.

\begin{lemma}
\label{lemma:aaaaloose}
A map $f_0: K \to \cD_n$ is loose if and only if
$f_0$ is homotopic to the map $f_1 = aaaaf_0$.
\end{lemma}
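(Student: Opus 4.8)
The plan is to transport looseness across the homotopy equivalence $i\colon\cD_n\to\cL_n$ and to realize the operation ``prepend $\gamma_0$ and cross through $1$'' as a product in $\cD_n$. Since looseness does not depend on the chosen non-convex closed curve $\gamma_0\in\cL_n(1;1)$, I would first fix a word $w_0\in\Word_n$ of dimension $0$ with $\hat w_0=\acute\eta^{-2}$ (e.g.\ the basic word, as in the proof of Proposition~\ref{prop:connected}); then $w_0\neq()$, and I take $\gamma_0$ to be the curve represented by the vertex $c_{w_0}$, that is, $\gamma_0=i(c_{w_0})$. By Remark~\ref{rem:connected}, $\gamma_0\in\cL_n[w_0]\subset\cL_n(1;\acute\eta\hat w_0\acute\eta)=\cL_n(1;1)$, so $\gamma_0$ is indeed a non-convex closed curve and $\gamma_0\ast(-)$ is defined. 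Here one uses that $\hat\eta=\acute\eta^{2}$: since $\eta=\eta^{-1}$, reversing a reduced word for $\eta$ gives another reduced word for $\eta$, so $\grave\eta=\acute\eta^{-1}$ and hence $\hat\eta=\acute\eta(\grave\eta)^{-1}=\acute\eta^{2}$; consequently the word $w_0\eta$ has $\widehat{w_0\eta}=\hat w_0\,\hat\eta=\acute\eta^{-2}\acute\eta^{2}=1=\widehat{aaaa}$. Unwinding the definition, $f_0$ is loose if and only if $i f_0\simeq\gamma_0\ast(i f_0)$ in $\cL_n$.

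The key step is to recognize $c\mapsto\gamma_0\ast c$ on $\cL_n$ as a product on $\cD_n$. Let $p$ be the barycentre of the cell $c_{w_0\eta}$ and define $L\colon\cD_n\to\cD_n$ by $L(x)=m(p,x)$. Since $w_0$ has dimension $0$ and $\eta$ is a single letter, the product-cell decomposition gives $c_{w_0\eta w}=c_{w_0}\times c_\eta\times c_w=c_{w_0\eta}\times c_w$ (Remark~\ref{rem:productcells}), so $L$ carries $c_w$ onto the slice $\{p\}\times c_w\subset c_{w_0\eta w}$. I claim that $i\circ L\simeq(\gamma_0\ast(-))\circ i$ as maps $\cD_n\to\cL_n$. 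This should be read off from the construction of $\cD_n$ and of $i$: on a product cell $i$ is obtained by concatenating the curves of the factors, the letter $\eta$ records precisely a crossing of $\Bru_e\ni1$, and with the choices above the $c_{w_0}$-factor contributes $\gamma_0$ while the $c_\eta$-factor supplies the passage through $1$ that the concatenation $\ast$ inserts at $t=1/2$, so that after the usual smoothing $i(L(x))=\gamma_0\ast i(x)$. I expect this compatibility to be the main obstacle: not conceptually deep, but it forces one to unwind the product-cell clauses in the definition of a valid complex and to check that the two curves agree on the nose or up to an evident reparametrization.

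Finally I would compare $L$ with $m(c_{aaaa},-)$ and assemble the argument. Because $\widehat{w_0\eta}=1=\widehat{aaaa}$ and both $w_0\eta$ and $aaaa$ are non-empty, the points $p\in c_{w_0\eta}$ and $c_{aaaa}$ lie in the same connected component of $\cD_n$ (Proposition~\ref{prop:connected} and Remark~\ref{rem:connected}); choosing a path $\delta$ from $p$ to $c_{aaaa}$, the map $(t,x)\mapsto m(\delta(t),x)$ is a homotopy from $L$ to $m(c_{aaaa},-)$, so $L(f_0)\simeq aaaa\,f_0$. Since $i$ is a homotopy equivalence it reflects homotopy of maps into $\cD_n$ (compose with a homotopy inverse), so
\[
f_0\text{ loose}\iff i f_0\simeq\gamma_0\ast(i f_0)\iff i f_0\simeq i(L(f_0))\iff f_0\simeq L(f_0)\iff f_0\simeq aaaa\,f_0,
\]
which is the claim. (The case $f_0\equiv c_{()}$ is handled automatically, since then $aaaa\,c_{()}=c_{aaaa}$ lies in a different component of $\cD_n$ and both sides of the equivalence fail, matching $\gamma_0\ast(\text{convex curve})$ being non-convex.)
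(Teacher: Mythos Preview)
Your approach is essentially the same as the paper's: realize $\gamma_0\ast(-)$ as left multiplication by a fixed point of $\cD_n$ with $\longhat$ equal to $1$, then slide that point to $c_{aaaa}$ along a path using Proposition~\ref{prop:connected} and the continuity of $m$. The one substantive difference is that the paper perturbs the crossing through $1$ so that the letter $\eta$ is replaced by a dimension-$0$ word $w_1$; it then works with the prefix $w_0w_1$ (a vertex of $\cD_n$) rather than with a point $p$ in the interior of the high-dimensional cell $c_{w_0\eta}$. This is exactly the step you flagged as the main obstacle: once the prefix is a dimension-$0$ word, the statement that $i\circ m(w_0w_1,-)$ agrees (up to homotopy) with $\gamma_0\ast i(-)$ reduces to comparing two curves sharing the same dimension-$0$ initial itinerary, and the paper disposes of it with the remark ``up to details in the construction of $\cD_n$ we might have $f_3=f_4$''. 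Your version requires the same compatibility but for the full product cell $c_{w_0}\times c_\eta\times c_w$, which is a bit heavier to unpack since you must argue that the factor $c_\eta$ really encodes the passage through $\Bru_e$. Either route works at the level of rigor the paper adopts; the paper's perturbation buys a cleaner reduction to the $1$-skeleton.
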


\begin{proof}
Let $w_0 = \iti(\gamma_0)$.
Take a short convex curve $\gamma_1: [-1,1] \to \Spin_{n+1}$
with $\gamma_1(0) = 1$ so that $\iti(\gamma_1) = \eta$.
Slightly perturb $\gamma_1$ with fixed endpoints
so that its itinerary $w_1$ has dimension $0$.
Thus, for $\gamma \in \cL_n$ with $\iti(\gamma) = w$ we have
$\iti(\gamma_0 \ast \gamma) = w_0 \eta w$
but a small perturbation of $\gamma_0 \ast \gamma$
has itinerary $w_0 w_1 w$.

Let $f_2 = \gamma_0 \ast f_0$.
Perturb each curve $f_2$ near the glueing point
to define a homotopic function $f_3: K \to \cL_n$
with $\iti(f_3(p)) = w_0 w_1 \iti(f_0(p))$.
Take $f_4 = w_0 w_1 f_0: K \to \cD_n \subset \cL_n$.
The functions $f_3$ and $f_4$ are homotopic.
Indeed, up to details in the construction of $\cD_n$
we might have $f_3 = f_4$.
Thus, $f_0$ is loose if and only if $f_0$ and $f_4$ are homotopic.

We claim that $f_1$ and $f_4$ are homotopic,
completing the proof.
Indeed, apply Proposition \ref{prop:connected}
to obtain a path $\delta: [0,1] \to \cD_n$
assuming values in the $1$-skeleton of $\cD_n$
with $\delta(0) = aaaa$, $\delta(1) = w_0w_1$.
Define $H: [0,1] \times K \to \cD_n$,
$H(s,p) = \delta(s) f_0(p) = m(\delta(s),f_0(p))$:
$H$ is the desired homotopy between $f_1$ and $f_4$.
\end{proof}

The following result partially justifies the interest
in considering loose maps.
Recall that $\Omega\Spin_{n+1}(1;z)$
is the space of continuous maps $\Gamma: [0,1] \to \Spin_{n+1}$
with $\Gamma(0) = 1$ and $\Gamma(1) = z$,
so that we have a continuous inclusion map
$i: \cL_n(z) \to \Omega\Spin_{n+1}(1;z)$.

\begin{lemma}
\label{lemma:loose}
Let $M$ be a compact manifold and consider maps
$\phi_0, \phi_1: M \to \cL_n(z)$.
If $\phi_0$ and $\phi_1$ are both loose
and $i \circ \phi_0, i \circ \phi_1$ are homotopic
(in the space $\Omega\Spin_{n+1}(1;z)$)
then $\phi_0$ and $\phi_1$ are homotopic
(in $\cL_n(z)$).
\end{lemma}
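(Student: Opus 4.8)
\medskip
\noindent\textbf{Proof plan.}
The plan is to use the looseness hypothesis to deform both $\phi_0$ and $\phi_1$ into the open subset $\cY_n \subset \cL_n$, and then to quote Theorem~\ref{theo:Y}, which says that the inclusion of $\cY_n$ into the relevant loop space is already a homotopy equivalence.

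First I would recall the construction from the first part of the proof of Lemma~\ref{lemma:aaaaloose}: if $\phi: M \to \cL_n(z)$ is loose, then $\phi$ is homotopic in $\cL_n(z)$ to $\gamma_0 \ast \phi$, and, after the usual smoothening and a small perturbation near the glueing point, $\gamma_0 \ast \phi$ becomes a map $\psi: M \to \cL_n(z)$ with $\iti(\psi(s)) = w_0\, w_1\, \iti(\phi(s))$, where $w_0 = \iti(\gamma_0)$ and $w_1$ is a fixed non-empty word of dimension $0$ (a perturbation of the letter $\eta$; it is non-empty because the convex component $\cL_n[()]$ is open and contains no curve of itinerary $\eta$). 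Since $w_1$ contains a generator $a_k$, which is a letter of dimension $0$ lying in $Y = S_{n+1}\smallsetminus S_{\PA}$ for $n \ge 2$, every $\psi(s)$ has itinerary in $\Ideal_{[0]} \subseteq \Ideal_Y$; hence $\psi(M) \subseteq \cL_n[\Ideal_{[0]}] \subseteq \cL_n[\Ideal_Y] = \cY_n$ (Example~\ref{example:I0}). Applying this to $\phi = \phi_0$ and $\phi = \phi_1$ produces maps $\psi_0, \psi_1: M \to \cY_n \cap \cL_n(z)$ with $\psi_j$ homotopic to $\phi_j$ in $\cL_n(z)$ for $j = 0,1$ (the case $n=1$ being trivial).

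Next I would observe that $\psi_0$ and $\psi_1$ are homotopic in $\Omega\Spin_{n+1}(1;z)$: indeed $\psi_j$ and $\phi_j$ are already homotopic in $\cL_n(z) \subset \Omega\Spin_{n+1}(1;z)$, while $\phi_0$ and $\phi_1$ are homotopic there by hypothesis. By Theorem~\ref{theo:Y} (using the homotopy equivalence $\cL_n(1;z) \simeq \cL_n(1;q)$, $q \in \Quat_{n+1}$, to reduce the endpoint $z$ to the form in which the theorem is stated), the inclusion $\cY_n \cap \cL_n(z) \hookrightarrow \Omega\Spin_{n+1}(1;z)$ is a homotopy equivalence; it then follows formally — compose with a homotopy inverse and use that a one-sided homotopy inverse of a homotopy equivalence is a two-sided one — that two maps into $\cY_n \cap \cL_n(z)$ which are homotopic in $\Omega\Spin_{n+1}(1;z)$ are homotopic inside $\cY_n \cap \cL_n(z)$. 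Hence $\psi_0 \simeq \psi_1$ in $\cY_n \cap \cL_n(z) \subseteq \cL_n(z)$, and concatenating homotopies gives $\phi_0 \simeq \psi_0 \simeq \psi_1 \simeq \phi_1$ in $\cL_n(z)$, as desired.

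The only genuinely substantial ingredient is Theorem~\ref{theo:Y}; everything else is bookkeeping, and the main (still routine) points to verify are: that the smoothening and perturbation of $\gamma_0 \ast \phi_j$ depend continuously on $s \in M$, exactly as in the construction already carried out in the proof of Lemma~\ref{lemma:aaaaloose}; the elementary parity computation showing that each generator $a_k$ lies in $Y$ for $n \ge 2$; and the observation that the non-convex locus of $\cL_n(z)$ is a single connected component, which contains the images of the $\phi_j$ (since they are homotopic to the $\psi_j$), so that the application of Theorem~\ref{theo:Y} is legitimate. If preferred, the same argument can be run inside $\cD_n$ instead, replacing the perturbation step by Lemma~\ref{lemma:aaaaloose} directly (a loose $\phi_j$ corresponds to a loose $f_j: M \to \cD_n$, homotopic to $aaaa f_j$, whose image lies in $\cD_n[\Ideal_Y]$) and combining Theorem~\ref{theo:Y} with the homotopy equivalence $\cD_n[\Ideal_Y] \to \cY_n$ of Theorem~\ref{theo:newCW}.
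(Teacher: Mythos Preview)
Your argument is circular. You invoke Theorem~\ref{theo:Y} as the ``only genuinely substantial ingredient,'' but in the paper's logical structure Theorem~\ref{theo:Y} is proved \emph{after} Lemma~\ref{lemma:loose} and depends on it. Concretely, the proof of Theorem~\ref{theo:Y} contains the step ``From the add-loop construction, $aaaa\alpha_0$ is homotopically trivial in $\cL_n(1;q)$'': this is precisely an instance of Lemma~\ref{lemma:loose} (a loose map which is null-homotopic in $\Omega\Spin_{n+1}$ is null-homotopic in $\cL_n$). Furthermore, the chain Theorem~\ref{theo:Y} $\to$ Lemma~\ref{lemma:Ynk} $\to$ Proposition~\ref{prop:Yn2} $\to$ Proposition~\ref{prop:ideal0isloose} $\to$ Lemma~\ref{lemma:66bis} leads back to a proof that explicitly says ``As in the proof of Lemma~\ref{lemma:loose}.'' So you cannot cite Theorem~\ref{theo:Y} here without assuming what you are trying to prove.

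The paper's proof is not bookkeeping on top of a later theorem; it is the foundational ``phone wire'' argument on which those later results rest. The idea is direct: looseness lets you replace $\phi_j$ by $\gamma_0^N \ast \phi_j$, then spread the $N$ copies of $\gamma_0$ along each curve so that every curve in the family looks like a tight coil around a base curve (Figure~\ref{fig:addloop}). Once the curves are in this form, any homotopy of the base curves in $\Omega\Spin_{n+1}$ can be followed while preserving local convexity, because the many small loops supply the needed positivity. This is an elementary h-principle-type argument and is the content of Lemma~\ref{lemma:loose}; your proposal has simply relocated the difficulty rather than addressed it.
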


\begin{proof}
This result is proved in \cite{Saldanha-Shapiro} and,
for $n = 2$, in \cite{Saldanha3}.
Since this is such a crucial result we present here a brief sketch of proof.

\begin{figure}[ht]
\centerline{\includegraphics[width =8cm]{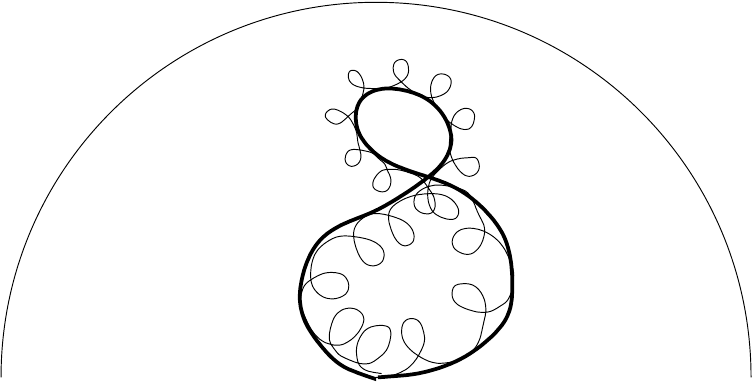}}
\caption{Any base curve becomes locally convex if we add loops.}
\label{fig:addloop}
\end{figure}

If $\phi_0$ is loose then it is homotopic to 
$\gamma_0 \ast \phi_0$ and therefore to
$\gamma_0^N \ast \phi_0$
(for a very large natural number $N$).
The copies of $\gamma_0$ are then spread along $\phi_0$.
At this stage our curve looks like a phone wire
(see Figure \ref{fig:addloop});
the many loops allow us to follow the homotopy in $\Omega\Spin_{n+1}$
and keep our curves locally convex.
\end{proof}

One important construction in \cite{Shapiro-Shapiro} 
and \cite{Saldanha3} is the add-loop procedure, 
which, in certain cases, is used to loosen up 
compact families of nondegenerate curves
through a homotopy in $\cL_n$. 
The resulting families of curly curves are then maleable:
if a homotopy exists in the space of immersions, another
homotopy exists in the space of locally convex curves. 
In \cite{Saldanha3}, for instance, open dense subsets 
$\mathcal{Y}_{\pm}\subset\cL_2(\pm 1)$ are shown to be 
homotopy equivalent to the space of loops $\Omega\Ss^3$.
Thus, certain questions regarding homotopies can be 
transferred to the space of continuous paths in the 
group $\Spin_{n+1}$.
This approach is reminiscent of classical constructions such as 
the proof of Hirsch-Smale Theorem \cite{Hirsch, Smale} and 
Thurston's eversion of the sphere by corrugations 
\cite{Levy-Maxwell-Munzner}.
It can be considered as an elementary instance of the h-principle 
of Eliashberg and Gromov \cite{Eliashberg-Mishachev, Gromov}.

We now restate Lemma 6.6 in \cite{Saldanha3}, with minor changes.
This is in a sense a more general version of Lemma \ref{lemma:loose} above
(with a more complicated statement).
Compared to the situation in \cite{Saldanha3}, 
we are now in arbitrary dimension.
Also, the concept of ``adding two loops'' has to be adapted.
The proof, however, is very much the same;
we give a more succint version here,
the proof given in \cite{Saldanha3} is more careful.

If $\gamma \in \cL_n(1)$ and $z \in \Spin_{n+1}$,
then $z\gamma$ denotes the closed curve
$z\gamma: [0,1] \to \Spin_{n+1}$,
$(z\gamma)(t) = z(\gamma(t))$.

\goodbreak

\begin{lemma}
\label{lemma:66bis}
Let $K$ be a compact manifold
and $f: K \to \cL_n$ a continuous map.
Let $J$ be a positive integer and let $(\gamma_j)_{1 \le j \le J}$
be a family of non-convex closed curves $\gamma_j \in \cL_n(1)$.
Assume that:
\begin{enumerate}
\item{
$t_{1,-}, t_{1,+}, t_{2,-}, \ldots, t_{J,+}: K \to (0,1)$
are continuous functions with
$0 < t_{1,-} < t_{1,+} < t_{2,-} < \cdots < t_{J,+} < 1$; }
\item{ the family of open sets $(U_j)_{j \le J}$ covers $K$,
that is, $K = \bigcup_{1 \le j \le J} U_j$;}
\item{ for $p \in U_j$, ${f(p)}(t_{j,-}(p)) = {f(p)}(t_{j,+}(p))$
and the restriction $f(p)|_{[t_{j,-}(p),t_{j,+}(p)]}$ is a reparametrization
in time of the closed curve $({f(p)}(t_{j,-}(p))) \gamma_j$.}
\end{enumerate}
Then the map $f$ is loose.
\end{lemma}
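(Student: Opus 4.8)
The plan is to reduce Lemma~\ref{lemma:66bis} to Lemma~\ref{lemma:aaaaloose} by showing that inserting the $J$ loops multiplies the itinerary (up to small perturbations) by $w_0 = \iti(\gamma_1)\cdots$ in a way that amounts, at the level of $\cD_n$, to left multiplication by a dimension-$0$ word. First I would set up the local picture: for each $j$, on the open set $U_j$ the restriction of $f(p)$ to $[t_{j,-}(p),t_{j,+}(p)]$ is (a reparametrization of) a translate $z\gamma_j$ of a fixed non-convex closed curve. Since $\gamma_j \in \cL_n(1)$ is non-convex, Proposition~\ref{prop:connected} tells us that all such $\gamma_j$ are homotopic to one another through closed curves in $\cL_n(1)$, and each is homotopic to $\gamma_0$ (the reference loop used to define looseness). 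The goal is to homotope $f$, staying inside $\cL_n$, until it visibly acquires a factor of $\gamma_0$ concatenated at the front.

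The key steps, in order, would be: (1) Using a partition of unity subordinate to $(U_j)_{j\le J}$, organize the $J$ loops so that over \emph{every} point $p \in K$ at least one genuine loop is present and can be ``slid'' to a standard position near the start of the curve; concretely, exploit that $\gamma$ is homotopic to $\gamma\ast\gamma\ast\cdots\ast\gamma$ and to $z\gamma z^{-1}$-type conjugates (via Proposition~\ref{prop:connected}, since conjugation by a path in $\Spin_{n+1}$ does not change the homotopy class of a closed curve in $\cL_n(1)$) to move the loop present on $U_j$ to a fixed location in $[0,\epsilon]$. (2) Having done this locally, patch across the cover $K = \bigcup U_j$: on overlaps $U_j \cap U_{j'}$ both loops are available, so one can interpolate which loop gets slid to the front; this is exactly the place where we need the covering hypothesis rather than a single global loop. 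The output of steps (1)--(2) is a homotopy from $f$ to a map $f'$ of the form $f'(p) = \gamma_0 \ast f''(p)$ for some $f'': K \to \cL_n$ — i.e., $f'$ factors as a concatenation. (3) By definition \eqref{equation:defloose} of looseness applied to $f''$ — or, more cleanly, by noting that $f'$ is \emph{already} in the image of $\gamma_0 \ast (-)$ — conclude that $f'$, hence $f$, is loose. Alternatively, and perhaps more robustly, translate everything to $\cD_n$: after the perturbations of Lemma~\ref{lemma:aaaaloose}'s proof the curve $\gamma_0$ contributes the factor $w_0w_1$ of dimension $0$, so $f$ is homotopic to $w_0 w_1 f_0$ for the corresponding $f_0: K \to \cD_n$, and then Lemma~\ref{lemma:aaaaloose} (together with the path $\delta$ in the $1$-skeleton from $aaaa$ to $w_0w_1$ furnished by Proposition~\ref{prop:connected}) gives looseness directly.

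The main obstacle I expect is step~(2), the patching across the cover: the loops $\gamma_j$ for different $j$ sit at different times $t_{j,\pm}(p)$ and are translates by different group elements, so ``sliding a loop to the front'' is only defined locally, and one must produce a \emph{globally consistent} choice of which loop to use, varying continuously over $K$. This is precisely the technical heart of Lemma~6.6 in \cite{Saldanha3}, and the argument there uses a careful ordering of the times together with the ``phone-wire'' picture of Figure~\ref{fig:addloop}: once many loops are present one has enough slack to follow any homotopy in $\Omega\Spin_{n+1}$ while staying locally convex. Since the paper explicitly says ``the proof, however, is very much the same'' as in \cite{Saldanha3}, I would present a condensed version: fix a partition of unity $(\chi_j)$ subordinate to $(U_j)$, use it to continuously select, at each $p$, an active loop among those available, slide it (through a homotopy in $\cL_n$, legitimate because translates/reparametrizations of a fixed non-convex closed curve form a connected family by Proposition~\ref{prop:connected}) to a normalized position, and then quote Lemma~\ref{lemma:aaaaloose} to finish. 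The two secondary points to be careful about are the smoothening at concatenation points (handled as in previous papers, as the excerpt notes) and checking that adding the loops does not change the endpoint $z$ — which is automatic since each $\gamma_j \in \cL_n(1)$ is a closed curve based at $1$ and is inserted via conjugation by the ambient curve value.
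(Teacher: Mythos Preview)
Your outline diverges from the paper's argument, and the divergence is exactly at the point you flag as the main obstacle.

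Your plan is to \emph{slide} one of the available loops to the front of the curve, patching across the cover with a partition of unity, and then invoke the fact that any map of the form $\gamma_0 \ast f''$ is loose (this last step is fine: since $\gamma_0 \simeq \gamma_0^2$ in $\cL_n(1)$, we get $\gamma_0 \ast f'' \simeq \gamma_0^2 \ast f'' = \gamma_0 \ast (\gamma_0 \ast f'')$). The gap is in step~(2): a partition of unity gives you weights $\chi_j(p) \in [0,1]$, not a continuous selection of a single index~$j$. There is no obvious way to ``partially slide'' loop~$j$ by an amount $\chi_j(p)$ while remaining in $\cL_n$; sliding a closed loop past a segment of a locally convex curve is itself a nontrivial operation, and making it work essentially requires the very phone-wire slack you are trying to produce. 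So your steps~(1)--(3), as written, are circular.

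The paper's proof avoids sliding altogether. It amplifies each loop \emph{in place}: since $\gamma_j \sim \gamma_0^N \ast \gamma_j$ in $\cL_n(1)$, one can homotope $f$ so that on a slightly smaller open set $\tilde U_j \subset U_j$ the arc $f(p)|_{[t_{j,-}(p),t_{j,+}(p)]}$ becomes (a translate of) $\gamma_0^N \ast \gamma_j$, with the transition to $\gamma_j$ happening near $\partial U_j$. This is a purely local modification, so there is no patching problem. One then spreads the many copies of $\gamma_0$ along the whole curve (the phone-wire picture of Figure~\ref{fig:addloop}), and for $N$ large enough this provides the slack to follow in $\cL_n$ the homotopy from $f$ to $\gamma_0 \ast f$ that exists in $\Omega\Spin_{n+1}$, exactly as in the proof of Lemma~\ref{lemma:loose}. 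You do mention this mechanism in your closing remarks, but your steps~(1)--(3) do not implement it; the paper's point is that amplification-then-spreading \emph{replaces} the sliding step rather than justifying it.
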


From Proposition \ref{prop:connected},
each curve $\gamma_j$ is homotopic to $\gamma_0$.
Notice that in Lemma~6.6 in \cite{Saldanha3}
the curves $\gamma_j$, $1 \le j \le J$, and $\gamma_0$
are all equal to $\nu_2$, a circle traversed twice, and
the closed curve $f(p)|_{[t_{j,-}(p),t_{j,+}(p)]}$ 
is a pair of loops.

\begin{proof}[Proof of Lemma \ref{lemma:66bis}]
Recall that $\gamma_j \sim \gamma_0^N \ast \gamma_j$, 
that is, there exists a path in $\cL_n(1)$ from
$\gamma_j$ to $\gamma_0^N \ast \gamma_j$
(here $N$ is a large natural number, to be chosen later).
Apply this path near the boundary of each 
open set $U_j$ so that in a slightly smaller open set
$\tilde U_j$ the closed curve
$f(p)|_{[t_{j,-}(p),t_{j,+}(p)]}$
is now (a reparametrization of)
$({f(p)}(t_{j,-}(p))) (\gamma_0^N \ast \gamma_j)$
We may assume that the open sets $\tilde U_j$
still cover $K$.
For each $p \in K$,
spread the copies of $\gamma_0$
which appear along $f(p)$ in time,
so that now we have ``phone wire'' curves as in 
Figure \ref{fig:addloop}.
As in the proof of Lemma \ref{lemma:loose},
apply the homotopy which is known to exist
in the space $\Omega\Spin_{n+1}$ to the base curves.
For sufficiently large $N$, the ``phone wire''
structure is sufficient to give us local convexity,
thus obtaining the desired homotopy in $\cL_n$.
\end{proof}

\section{Loose ideals}
\label{sect:looseideals}

A non empty lower set $\Ideal \subset \Word_n$
for both $\sqsubseteq$ and $\preceq$ is an \textit{ideal}
if $w_1 \in \Ideal$ and $w_2 \in \Word_n$ imply
$w_1w_2, w_2w_1 \in \Ideal$.
The lower sets $\Ideal_{[0]}$, $\Ideal_{Y_2}$ and $\Ideal_{Y}$
(discussed in
Examples \ref{example:I0}, \ref{example:IY2} and \ref{example:IY})
are ideals.
A lower set $\Ideal$ is a \textit{weakly loose ideal} if
it is an ideal and
every map $f: K \to \cL_n[\Ideal] \subset \cL_n$ is loose.
Notice that we do not claim that the homotopy
between $f$ and $\gamma_0 \ast f$
in Equation \eqref{equation:defloose} has image contained in $\cL_n[\Ideal]$.
A weakly loose ideal $\Ideal$ is \textit{(strongly) loose} if
for every map $f: K \to \cL_n[\Ideal] \subset \cL_n$
there exists a homotopy $H: [0,1] \times K \to \cL_n[\Ideal]$
between $f$ and $\gamma_0 \ast f$.


\smallskip

A \textit{nice loop} is a periodic locally convex but non convex curve 
$\gamma: [0,1] \to \Spin_{n+1}$ with $\gamma(0) = \gamma(1) \in \Bru_{\eta}$
and image contained in $\Bru_{\eta} \cup \bigcup_i \Bru_{\eta a_i}$.
For arbitrary $n$ there exist nice loops:
take an arbitrary periodic locally convex curve $\tilde\gamma \in \cL_n(1)$
and take $\gamma = z\tilde\gamma$ for generic $z \in \Spin_{n+1}$.
The itinerary $\iti(\gamma) \in \Word_n$ of a nice loop $\gamma$
is defined similarly to itineraries of curves in $\cL_n$.
More precisely, $\gamma^{-1}[\bigcup_i \Bru_{\eta a_i}] \subset (0,1)$
is a finite set $\{t_1 < \cdots < t_\ell\}$.
For $j \le \ell$, take $i_{j}$ such that $\gamma(t_j) \in \Bru_{\eta a_{i_j}}$
and set $\iti(\gamma) = a_{i_1}\cdots a_{i_\ell} \in \Word_n$.

A word $w \in \Word_n$ is a \textit{loop word} if there exists
a nice loop $\gamma$ with $\iti(\gamma) = w$.
Notice that if $w$ is a loop word then $\dim(w) = 0$ and $\hat w = 1$.
For $n = 2$, $aaaa$ is a loop word.
Investigating exactly which words are loop words
is an interesting question which we shall not pursue.

\begin{lemma}
\label{lemma:loopword}
Let $w \in \Word_n$ be a loop word.
There exist $\gamma_\star \in \cL_n(\hat\eta)$
with the following properties:
\begin{enumerate}
\item{The itinerary of $\gamma_\star$ is $w = \iti(\gamma_\star)$.}
\item{The arcs $\gamma_\star|_{[0,\frac13]}$
and $\gamma_\star|_{[\frac23,1]}$ are convex.}
\item{We have $\gamma_\star(\frac13) = \gamma_\star(\frac23) = \acute\eta$.}
\end{enumerate}
\end{lemma}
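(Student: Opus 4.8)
The plan is to construct $\gamma_\star$ by concatenating three pieces: a convex arc running from $1$ to $\acute\eta$ on $[0,\frac13]$, a ``middle'' piece on $[\frac13,\frac23]$ carrying the entire itinerary $w$ and connecting $\acute\eta$ to $\acute\eta$, and a convex arc on $[\frac23,1]$ running from $\acute\eta$ to $\hat\eta$. Since $w$ is a loop word, there is a nice loop $\gamma: [0,1] \to \Spin_{n+1}$ with $\gamma(0) = \gamma(1) \in \Bru_\eta$ and $\iti(\gamma) = w$; up to a left-translation by a fixed element of $\Spin_{n+1}$ and a reparametrization of time to the interval $[\frac13,\frac23]$, I may assume $\gamma(\frac13) = \gamma(\frac23) = \acute\eta$. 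The key point is that $\acute\eta \in \Bru_\eta$ (indeed $\Bru_{\acute\eta}$ is the component of $\Bru_\eta$ containing $\acute\eta$), so translating a nice loop so that its basepoint sits at $\acute\eta$ preserves niceness and hence preserves the property that its itinerary is still $w$ (the itinerary of a nice loop is invariant under the translations that keep it nice, as its singular set lies in $\bigcup_i \Bru_{\eta a_i}$, a left-coset-invariant subset).

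**Next I would** build the convex boundary arcs. A short convex arc through $\acute\eta$ is locally convex with itinerary $\eta$; more relevantly, for the first piece I want a convex arc $\Gamma_0: [0,\frac13] \to \Spin_{n+1}$ with $\Gamma_0(0) = 1$, $\Gamma_0(\frac13) = \acute\eta$, and no moments of non-transversality in the open interval. Such an arc exists because the convex curve from $1$ of appropriate length lands (up to reparametrization and after possibly adjusting its length/speed) at $\acute\eta$; this is precisely the standard fact, used throughout the cited papers, that $\acute\eta$ is the endpoint of the reference convex curve. Similarly the third piece $\Gamma_2: [\frac23,1] \to \Spin_{n+1}$ is the convex arc with $\Gamma_2(\frac23) = \acute\eta$ and $\Gamma_2(1) = \hat\eta$; one checks $\hat\eta = \acute\eta(\grave\eta)^{-1}\cdot\grave\eta\cdots$ — more simply, a convex arc starting at $\acute\eta$ of the right length ends at $\acute\eta\,\hat\eta_{\mathrm{ref}}$ for the appropriate element, and a routine computation with Equation~\eqref{equation:acutegrave} identifies this with $\hat\eta$ (equivalently, $\grave\eta\cdot\hat\eta = \acute\eta$, so the convex arc from $\acute\eta$ reaches $\hat\eta$). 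I then concatenate $\Gamma_0$, the translated nice loop on $[\frac13,\frac23]$, and $\Gamma_2$, and apply the standard smoothening at the two junctions $t = \frac13, \frac23$, which does not change the itinerary since both junctions are transversal points (the curve is in $\Bru_\eta$ there).

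**The itinerary bookkeeping** then goes as follows: on $(0,\frac13)$ and $(\frac23,1)$ the arcs are convex, hence contribute nothing to $\sing(\gamma_\star)$; at $t = \frac13$ and $t = \frac23$ the curve passes through $\acute\eta \in \Bru_\eta$, which is transversal, so again nothing; and on $(\frac13,\frac23)$ the moments of non-transversality are exactly those of the nice loop, with the same types, giving $\iti(\gamma_\star) = \iti(\gamma) = w$. This establishes (1), (2), and (3) simultaneously. Finally I should double-check that $\gamma_\star \in \cL_n(\hat\eta)$, i.e.\ $\gamma_\star(0) = 1$ and $\gamma_\star(1) = \hat\eta$: the first holds by construction of $\Gamma_0$, and the second by construction of $\Gamma_2$.

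**The main obstacle** I anticipate is pinning down the endpoint computations precisely: verifying that the convex arc from $1$ can be made to pass through $\acute\eta$ at time $\frac13$ and that the convex arc continuing past $\acute\eta$ reaches exactly $\hat\eta$ at time $1$. This is a matter of reparametrizing the reference convex curve and invoking the identities for $\acute\eta$, $\grave\eta$, $\hat\eta$ from \cite{Goulart-Saldanha0} (and Remark~\ref{rem:ZQuat}, which records $\hat\eta$), together with the fact — used implicitly in the definition of $\preceq$ in Section~\ref{sect:partial}, where a short convex curve through $z_0 = \acute\rho$-type points is constructed — that one has the freedom to prescribe where along a convex curve a given point occurs. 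I expect this to be routine given the cited machinery, so the proof should be short; the only genuine content is the observation that a nice loop, translated to be based at $\acute\eta$, retains its itinerary, which follows directly from the definition of a nice loop and of $\iti$ for nice loops given just above the lemma.
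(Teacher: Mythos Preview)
Your overall strategy—concatenate a convex arc from $1$ to $\acute\eta$, the nice loop based at $\acute\eta$, and a convex arc from $\acute\eta$ to $\hat\eta$—is exactly the paper's approach, and the itinerary bookkeeping in your third paragraph is correct.

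The gap is in your second step, moving the basepoint of the nice loop to $\acute\eta$ by a left translation. The set $\bigcup_i \Bru_{\eta a_i}$ is \emph{not} ``left-coset-invariant'': Bruhat cells are (preimages of) double cosets $\Up_{n+1}\, P_\sigma\, \Up_{n+1}$, and left multiplication by a generic element of $\Spin_{n+1}$ does not preserve them. The unique $z = \acute\eta\,\gamma(0)^{-1}$ that sends the basepoint to $\acute\eta$ need not keep $z\gamma$ nice, and even when it does there is no mechanism forcing the itinerary to stay equal to $w$. The paper handles this in two moves: first translate by an element of $\Quat_{n+1}$ (these \emph{do} permute the signed cells within each unsigned Bruhat cell, hence preserve niceness and the itinerary) to bring the basepoint into the component $\Bru_{\acute\eta}$; then apply a projective transformation (Remark~\ref{rem:projective}) to move the basepoint within $\Bru_{\acute\eta}$ to $\acute\eta$ itself, again preserving the Bruhat stratification and local convexity. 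With this repair your argument goes through.
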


\begin{proof}
Take a nice loop $\tilde\gamma: [\frac13, \frac23] \to \Spin_{n+1}$.
Multiplication by an element of $\Quat_{n+1}$
allows us to assume that $\tilde\gamma(\frac13) \in \Bru_{\acute\eta}$.
By applying a projective transformation (see Remark \ref{rem:projective}),
we may also assume that $\tilde\gamma(\frac13) = \acute\eta$.
Extend the curve $\tilde\gamma$ by two convex arcs
(from $1$ to $\acute\eta$ and from $\acute\eta$ to $\hat\eta$)
to define the desired curve $\gamma_\star$.
\end{proof}

\begin{rem}
\label{rem:projective}
For $\gamma_\star$ as in Lemma \ref{lemma:loopword},
it is the restriction $\tilde\gamma = \gamma_\star|_{[\frac13,\frac23]}$
which is a closed curve, not $\gamma_\star$
(unless $n$ is such that $\hat\eta = 1$).

In the above proof we used projective transformations,
which are discussed in other papers,
particularly in Section 2.1 of \cite{Goulart-Saldanha1}.
We try to avoid introducing extra notation for such transformations
and will use them in other proofs without further comments.
\end{rem}


\begin{prop}
\label{prop:ideal0isloose}
The set $\Ideal_{[0]} \subset \Word_n$ of words 
containing at least one letter of dimension $0$
is a weakly loose ideal.
In particular, if $K$ is a compact manifold
and $f: K \to \cD_n[\Ideal_{[0]}] \subset \cL_n$
is a continuous map then $f$ is loose.
\end{prop}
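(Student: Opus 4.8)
The plan is to reduce the statement to a single, cleanly stated geometric situation: a compact family of locally convex curves, each of which has at least one moment of non-transversality of type $a_k$ (a letter of dimension $0$), so that near such a moment the curve crosses a single codimension-$1$ Bruhat stratum $\Bru_{\eta a_k}$. The key point is that such a crossing can be ``opened up'' into a nice loop, and then Lemma~\ref{lemma:66bis} applies. So first I would fix $f: K \to \cL_n[\Ideal_{[0]}]$ with $K$ compact. For each $p \in K$, the itinerary $\iti(f(p))$ contains some letter $a_k$, occurring at some singular time $\tau(p) \in \sing(f(p))$ with $f(p)(\tau(p)) \in \Bru_{\eta a_k}$. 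By continuity of $\sing$ (Theorem~1 of \cite{Goulart-Saldanha1}) and compactness, I would choose a finite open cover $(U_j)_{1 \le j \le J}$ of $K$ together with continuous choices of the relevant singular time on each $U_j$ and a fixed generator $a_{k_j}$ so that for $p \in U_j$ the curve $f(p)$ passes through $\Bru_{\eta a_{k_j}}$ at that time.

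\textbf{Producing the loops.} The heart of the argument is to show that, after a homotopy in $\cL_n$, on each $\tilde U_j$ (a slight shrinking of $U_j$) the restriction of $f(p)$ to a short time interval $[t_{j,-}(p), t_{j,+}(p)]$ around the chosen singular time is a reparametrized nice loop $(f(p)(t_{j,-}(p)))\gamma_j$ for a fixed non-convex closed curve $\gamma_j \in \cL_n(1)$. Here I would use Lemma~\ref{lemma:loopword} and Lemma~\ref{lemma:loopword}'s proof technique: near a transversal crossing of the single hypersurface $\cL_n[a_{k_j}]$, the local model is completely explicit (one can use the transversal section of Lemma~7.1 of \cite{Goulart-Saldanha1}, whose image lies in $\cL_n[\Ideal(a_{k_j})]$, and $\Ideal(a_{k_j})$ consists only of $a_{k_j}$ itself together with a dimension-$0$ word of length $1$ or $3$ as noted in Example~\ref{example:IY2}). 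The crossing of $\Bru_{\eta a_{k_j}}$ can be continuously deformed, keeping endpoints fixed, into a short convex arc followed by a nice loop followed by a short convex arc — this is essentially the content of the first-skeleton analysis in Section~\ref{sect:Dn1}, where the edge $[a'_k]$ connects $a_k$ to $a^\star_k$ and one sees explicitly (e.g. the path $a^H$ in Example~\ref{example:earlyaiH}) how to convert a short locally convex crossing into a curve containing loops. After performing these local homotopies, supported in disjoint small time-windows and done so that the $\tilde U_j$ still cover $K$, the hypotheses (1)--(3) of Lemma~\ref{lemma:66bis} are met, and that lemma yields that $f$ is loose.

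\textbf{The main obstacle.} The routine part is the covering/compactness bookkeeping and the verification of Lemma~\ref{lemma:66bis}'s hypotheses once the local model is in place. The genuinely delicate step — and where I expect the ``long and technical proof'' advertised before the statement to live — is constructing the local homotopy that turns an arbitrary transversal crossing of $\cL_n[a_{k_j}]$ into an explicit nice-loop pattern, uniformly in $p \in \tilde U_j$, while staying inside $\cL_n$ (local convexity must be preserved throughout) and not disturbing the already-arranged loops on the other $\tilde U_{j'}$. This requires care because the curve $f(p)$ may do complicated things (high-dimensional singularities) elsewhere in time, so one must genuinely localize: the deformation must be supported in a small time-interval around $\tau_j(p)$ and must depend continuously on $p$. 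The tool is again the explicit transversal sections of \cite{Goulart-Saldanha1} together with the add-loop / ``phone wire'' technique recalled in Lemmas~\ref{lemma:loose} and~\ref{lemma:66bis}; the argument is a parametrized version of the $n=2$ construction in \cite{Saldanha3}, adapted so that the single relevant Bruhat cell is $\Bru_{\eta a_{k_j}}$ rather than the top-dimensional-complement structure used there. I would also need to remark that looseness does not require the homotopy to stay inside $\cL_n[\Ideal_{[0]}]$ (only \emph{weak} looseness is claimed), which removes one layer of difficulty; the final sentence about $f: K \to \cD_n[\Ideal_{[0]}]$ then follows immediately since $\cD_n[\Ideal_{[0]}] \subset \cL_n[\Ideal_{[0]}] \subset \cL_n$ up to the homotopy equivalence of Theorem~\ref{theo:newCW}.
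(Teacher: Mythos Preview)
Your overall architecture is the paper's: cover $K$ by $U_j$ on which a specific dimension-$0$ letter $a_{i_j}$ is singled out, use that letter to manufacture a non-convex closed arc in every $f(p)$, and finish with Lemma~\ref{lemma:66bis}. The divergence is in the loop-insertion step. You propose a direct geometric deformation of the short crossing arc near the chosen singular time, appealing to the transversal sections of Lemma~7.1 of \cite{Goulart-Saldanha1} and to the paths $a_k^H$. Neither tool gives you what you need: the transversal section to a stratum of dimension $0$ is a point, and $a_k^H$ is a path of \emph{whole} curves in $\cD_n$, not a rel-endpoints deformation of a short arc that you can splice back into $f(p)$. The paper sidesteps this entirely by working first in $\cD_n$ and exploiting the product structure $m$: writing $f_0(p)=(p_1,a_{i_j},p_2)$ in the product cell, the homotopy $H_1$ of Equation~\eqref{equation:H1} simply slides the middle coordinate along a fixed path $\delta_{i_j}$ from $a_{i_j}$ to $a_{i_j}w_0^N$, yielding $f_1$ cellularly with no endpoint or local-convexity bookkeeping at all. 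Only afterwards (steps $f_1\to f_2\to f_3$) does geometry enter, and there it is easy: the arc in question already has itinerary $w_0$, so one uses contractibility of $\cL_n[w_0]$ and projective transformations to straighten it into an honest translate of a fixed nice loop.

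Two further points you would need to supply and did not. First, Lemma~\ref{lemma:66bis} requires the time windows to be globally ordered, $t_{1,+}<t_{2,-}<\cdots$, and for a given $p$ the windows coming from different $U_j$ may coincide or come in the wrong order; the paper devotes a page (the ``first'' and ``second difficulty'') to fixing this by taking $w_0^{\tilde N}$ with $\tilde N>JN$, refining the cover to sets $U_{j,j'}$, and assigning to each the $j'$-th copy of $w_0$ inside the long block. Second, even granting your local model, ``keeping endpoints fixed'' and ``not disturbing the already-arranged loops on the other $\tilde U_{j'}$'' are in tension with one another and with maintaining local convexity; the paper handles this by doing the $j$-th replacement via a homotopy in the contractible stratum $\cL_n[w_0]$ (properties~(1)--(3) of the $g_{j,s}$), then propagating the endpoint change to the rest of the curve by projective transformations, one $j$ at a time. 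Your sketch would likely converge to the same machinery once made precise, but the invocation of $a_k^H$ as a local deformation is the step that does not go through as written.
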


\begin{rem}
\label{rem:ideal0}
Take $K = \Ss^0 = \{+1, -1\}$ and
$f_0, f_1: K \to \cD_n[\Ideal_{[0]}]$,
$f_1 = aaaa f_0$ so that
\[ f_0(+1) = {a}, \quad f_0(-1) = f_1(+1) = {aaaaa}, \quad
f_1(-1) = {aaaaaaaaa}. \]
Proposition \ref{prop:connected} implies that
$f_0$ and $f_1$ are homotopic in $\cD_n$,
consistently with Proposition \ref{prop:ideal0isloose}
which says that $f_0$ is loose and Lemma \ref{lemma:aaaaloose}
which says that $f_0$ and $f_1$ are therefore homotopic in $\cD_n$.
On the other hand, $f_0$ and $f_1$ are not homotopic in $\cD_n[\Ideal_{[0]}]$
(or in $\cL_n[\Ideal_{[0]}]$):
the cell $c_{a}$ is an isolated vertex in $\cD_n[\Ideal_{[0]}]$.
This proves that $\Ideal_{[0]}$ is not (strongly) loose.
\end{rem}

\begin{proof}[Proof of Proposition \ref{prop:ideal0isloose}]
The set $\Ideal_{[0]}$ is clearly a lower set
for both $\preceq$ and $\sqsubseteq$; it is also an ideal.
All we have to prove is therefore the last claim in the statement:
that the map $f$ is loose.
The idea of the proof is to start with $f_0 = f$ as in the statement
and deform it (i.e., apply a homotopy) 
first in $\cD_n$ and then in $\cL_n$
to obtain a finite sequence of functions $f_i: K \to \cL_n$.
The function $f_3$ satisfies the hypothesis
of Lemma \ref{lemma:66bis} and is therefore loose:
thus, so is the original $f$.
Let $\gamma_\bullet \in \cL_n(1)$ be a periodic non convex curve
(as $\gamma_0$ in Equation \eqref{equation:defloose}).
Let $w_0$ be a loop word,
the itinerary of the nice loop $\gamma_0 = z \gamma_\bullet$,
where $z \in \Spin_{n+1}$ is a generic element.

We first describe the strategy of the proof.
The function $f_1: K \to \cD_n$ has the property
that for every $p \in K$ if $f_1(p) \in c_w$
then $w$ contains a subword $w_0^N$
(where $N$ is a large positive integer).
We then prove that the function $f_1$ satisfies conditions
similar (but not identical) to those in Lemma \ref{lemma:66bis}.
More precisely, there exists a finite open cover 
$(U_j)_{1 \le j \le J}$ of $K$ and continuous functions
$t_{1,-2} < t_{1,+2} < \cdots < t_{J,-2} < t_{J,+2}: K \to (0,1)$
such that if $p \in U_j$ then the arc
$f_1(p)|_{[t_{j,-2}(p), t_{j,+2}(p)]}$ has itinerary $w_0$.
From $f_1$ to $f_2$ the homotopy takes place in $\cL_n$.
The function $f_2: K \to \cL_n$ satisfies yet another variation
of the conditions in Lemma \ref{lemma:66bis}.
More precisely, there exist continuous functions
$t_{j,-}, t_{j,+}: U_j \to (0,1)$ with
$t_{j,-2} < t_{j,-} < t_{j,+} < t_{j,+2}$
such that $p \in U_j$ implies that
$f(p)(t_{j,-}(p)) = f(p)(t_{j,+}(p))$ and the arc
$f(p)|_{[t_{j,-}(p), t_{j,+}(p)]}$
is obtained from $\gamma_0$ by a projective transformation.
Finally, a homotopy takes $f_2$ to $f_3: K \to \cL_n$
satisfying the original conditions:
the arcs $f(p)|_{[t_{j,-}(p), t_{j,+}(p)]}$
are obtained from $\gamma_0$ by multiplication.

\smallskip

For a function $f_0: K \to \cL_n$
we have a finite open cover $(U_j)_{1 \le j \le J}$
of $K$ such that each $U_j$ is connected and
for $p \in U_j$,
the itinerary of $f_0(p)$
is of the form $w_{-}(p) a_{i_j} w_{+}(p)$.
Similarly, a function $f_0: K \to \cD_n$
can be slightly modified by a homotopy so that
we have a finite open cover $(U_j)_{1 \le j \le J}$
of $K$ such that each $U_j$ is connected and
$p \in U_j$ implies that $f(p) \in c_{w(p)}$
where the word $w(p)$
is of the form $w_{-}(p) a_{i_j} w_{+}(p)$.
Moreover, we can take the cover so that there are continuous
functions $f_{\pm,j}: U_j \to \cD_n$ such that, for all $p \in U_j$,
$f(p) = f_{-,j}(p) a_{i_j} f_{+,j}(p)$.
Take a family of compact manifolds with boundary $W_j \subset U_j$
such that the sets $U_j^{(0)} = \interior(W_j)$ cover $K$.
Take tubular neighborhoods of $\partial W_j$, i.e.,
local homeomorphisms $\Phi_j: \partial W_j \times [-1,1] \to U_j$
such that $\Phi_j(p,0) = p$ and $\Phi_j(p,t) \in \interior(W_j)$
for all $p \in \partial W_j$ and $t > 0$.
For $s \in [0,1]$, set
$U_j^{(s)} = \interior(W_j) \smallsetminus \Phi_j[\partial W_j \times [-1,s]]$
so that $s_0 < s_1$ implies $U_j \supset U_j^{(s_0)} \supset U_j^{(s_1)}$.
We also assume that the open sets $(U_j^{(1)})_{1 \le j \le J}$
cover $K$.

From Proposition \ref{prop:connected}, $a_{i} \sim a_{i}w_0^N$,
where $N$ is a large positive integer to be specified later.
In other words, there exists a path $\delta_{i}$
from $[0,1]$ to the $1$-skeleton of $\cD_n$ satisfying:
\begin{equation}
\label{equation:delta}
\delta_{i}: [0,1] \to \cD_n, \qquad
\delta_{i}(0) \in c_{a_{i}} \subset \cD_n, \quad
\delta_{i}(1) \in c_{a_{i}w_0^N}.
\end{equation}
The paths $a_i^H$ constructed in Example \ref{example:earlyaiH}
can be used here.
Let $a'_i$ be, as in Section \ref{sect:permutations},
a letter of dimension $1$ (with abuse, a segment) joining $a_i$
to a word $a_i^\star$ of dimension $0$ and length $3$:
\[
a' = [ba], a^\star = bab, b' = [ab], b^\star = aba, \ldots, 
a'_{i+1} = [a_ia_{i+1}], a_{i+1}^\star = a_ia_{i+1}a_i, \ldots \]
We may assume that 
$\delta_i(t) \in c_{a'_i}$ for $t \in (0,\frac12)$,
$\delta_i(\frac12) \in c_{a^\star_i}$,
$\delta_i(\frac34) \in c_{aaaaa_i}$ and
$\delta_i(1) \in c_{a_{i}w_0^N}$.
Notice that for $t \ge \frac12$ if $\delta_i(t) \in c_w$
then $w$ has a letter of dimension~$0$.

Define a map $f_1: K \to \cD_n$ as follows.
For $p \in U_j^{(1/2)}$,
$f_1(p)$ is obtained from $f_0(p)$ by replacing
$a_{i_j}$ by $a_{i_j}w_0^N$.
More precisely: take $p$ in $K$ and
$w \in \Ideal_{[0]}$ such that $f(p) \in c_w$.
Let $J_p$ be the set of $j$ for which $p \in U_j^{(0)}$.
To each $j$ in $J_p$ corresponds a copy of the letter $a_{i_j}$ in $w$.
Notice that such copies need not be distinct in the word $w$:
for $j_0 \ne j_1 \in J_p$, we may have $i_{j_0} = i_{j_1}$ and
the open sets $U_{j_0}$ and $U_{j_1}$ may both point
to the exact same copy of $a_{i_{j_0}} = a_{i_{j_1}}$ in $w$.
If this happens, we say that $j_0$ and $j_1$ are \textit{equivalent}.
Write 
$w = w_1a_{i_{j_1}}w_2\ldots w_ka_{i_{j_k}}w_{k+1}$
where $j_1, \ldots, j_k$ are representatives
of the equivalent classes in $J_p$.

For each $j \in J_p$, define
$\tilde s_j = 2 \sup \{ s \in [0,1/2] \;|\; p \in U_j^{(s)} \} \in [0,1]$;
notice that for any $p \in K$ there exists $j \in J_p$
such that $\tilde s_j = 1$.
For a representative $j_{\kappa}$ ($1 \le \kappa \le k$),
define $s_{j_{\kappa}}$ to be the maximum of all values
of $\tilde s_j$, $j$ equivalent to $j_{\kappa}$.
Write
$c_w = c_{w_1} \times {a_{i_{j_1}}} \times \cdots
\times {a_{i_{j_k}}} \times c_{w_{k+1}}$
and
\begin{equation}
\label{equation:f0}
f_0(p) = (p_1,a_{i_{j_1}}, \ldots, a_{i_{j_k}}, p_{k+1})
\end{equation}
(so that, for instance, $p_1 \in c_{w_1}$).
(Notice that, as we often do, we use a word of dimension $0$
to denote both a point in $\cD_n$ and the cell whose only
element is said point.)
Define
\begin{equation}
\label{equation:f1}
f_1(p) = 
(p_1,\delta_{i_{j_1}}(s_{j_1}), \ldots,
\delta_{i_{j_k}}(s_{j_k}), p_{k+1}). 
\end{equation}
Similarly, define a homotopy $H_1: K \times [0,1] \to \cD_n$
from $f_0$ to $f_1$ by
\begin{equation}
\label{equation:H1}
H_1(p,s) = 
(p_1,\delta_{i_{j_1}}(\min\{s,s_{j_1}\}), \ldots,
\delta_{i_{j_k}}(\min\{s,s_{j_k}\}), p_{k+1}). 
\end{equation}
This completes the construction of $f_1$.
Notice that $f_1$ assumes values in $\cD_n[\Ideal_{[0]}] \subset \cD_n$
and the homotopy $H_1$ assumes values in $\cD_n$:
nothing guarantees that the image of $H_1$
is contained in $\cD_n[\Ideal_{[0]}]$
(see Remark \ref{rem:ideal0}).
It is not clear at this point that $f_1$ together with the cover $(U_j)$
satisfies the property stated above, 
that is, there are at this point no functions $t_{j,\pm 2}$.
We adress this issue by modifying the cover (but not the function $f_1$).

A first difficulty is that in the intersection $U_{j_1} \cap U_{j_2}$
(with $j_1 \ne j_2$) the same copy of $w_0^N$ 
should not be used by the two open sets.
This is easily addressed by taking $N$ sufficiently large.
More precisely, so that each set $U_j$ has a copy of $w_0^N$
we perform the previous construction not with $N$ but with $\tilde N > JN$.
The set $U_j$ then has the $j$-th copy of $w_0^N$ inside 
the copy of $w_0^{\tilde N}$.
From now on we assume that this first difficulty
has been taken care of.

A second difficulty is that copies of $w_0^N$ should appear
in the itinerary of $f_1(p)$ in the same order as the indices $j$.
Permuting the indices $j$ may not solve the problem,
but taking $N$ large, followed by a refinement of the covering, does.
Indeed, assume $N > J$.
Define continuous functions
$\tilde t_{j,-}, \tilde t_{j,+}: U_j \to [0,1]$ such that,
for $p \in U_j$, $\tilde t_{j,-}(p) < \tilde t_{j,+}(p)$
and the itinerary of the arc
$f_1(p)|_{[\tilde t_{j,-}(p), \tilde t_{j,+}(p)]}$ is $w_0^N$.
Set $\tilde t_j = (\tilde t_{j,-} + \tilde t_{j,+})/2: U_j \to [0,1]$.
The functions $\tilde t_j$ can be defined so as to be extendable to $K$;
furthermore, we may assume that for any $p$
there exists at most one pair $(j_0,j_1)$ with $j_0 < j_1$
and $\tilde t_{j_0}(p) = \tilde t_{j_1}(p)$.
Let
$\tau_j(p) = \card\{j' \;|\; \tilde t_{j'}(p) \le \tilde t_j(p) \} \in [1,J]$.
Let $U_{j,j'} \subseteq U_j$ be an open set such that
$p \in U_j$ and $\tau_j(p) = j'$ imply $p \in U_{j,j'}$.
The sets can be chosen so that if $j_0 \ne j_1$ then
$U_{j_0,j'}$ and $U_{j_1,j'}$ are disjoint.
Assign to $U_{j,j'}$ the $j'$-th copy of $w_0$ in 
$w_0^N = \iti(f_1(p)|_{[\tilde t_{j,-}(p), \tilde t_{j,+}(p)]})$.
More precisely, define functions
$\tilde t_{j,j',-}, \tilde t_{j,j',+}: U_{j,j'} \to [0,1]$
with $t_{j,-} < t_{j,j',-} < t_{j,j',+} < t_{j,+}$
and such that
$\iti(f_1(p)|_{[\tilde t_{j,-}(p), \tilde t_{j,j',-}(p)]}) = w_0^{j'-1}$,
$\iti(f_1(p)|_{[\tilde t_{j,j',-}(p), \tilde t_{j,j',+}(p)]}) = w_0$ and
$\iti(f_1(p)|_{[\tilde t_{j,j',+}(p), \tilde t_{j,+}(p)]}) = w_0^{N-j'}$.
The functions can be chosen so that
$p \in U_{j_0,j_0'} \cap U_{j_1,j_1'}$ and $j_0' < j_1'$ imply
$t_{j_0,j_0',+}(p) < t_{j_1,j_1',-}(p)$.
Relabel the non empty sets $U_{j,j'}$ in increasing order of $j'$,
allowing us to define the functions $t_{j,\pm 2}$.
This completes the discussion of $f_1$.

\smallskip

We now discuss the construction of $f_2$
and of the homotopy from $f_1$ to $f_2$.
For $p \in U_j$, 
the arc $f_1(p)|_{[t_{j,-2}(p),t_{j,+2}(p)]}$ has itinerary $w_0$.
For $f_2$, we want that arc to contain a closed curve,
a copy of $\gamma_0$ (up to projective transformation).
We show how to achieve this by a homotopy,
working for one value of $j$ at a time.
It will be noticed that the construction for $j = j_2$
does not spoil the previously performed construction for $j = j_1 < j_2$.
For this, take families of subsets
$U_{j,0} \subset W_{j,0} \subset U_{j,1} \subset W_{j,1} \subset U_j$
such that $U_{j,\ast}$ are open sets,
$W_{j,\ast}$ are compact manifolds with boundary
and the family $(U_{j,0})_{1 \le j \le J}$ covers $K$.
As usual, after this step we drop the extra index
and write $U_j$ instead of $U_{j,0}$.

Given $j$, assume $f_{1+\frac{j-1}{J}}$ constructed with the property
$f_{1+\frac{j-1}{J}}(p)(t_{j',-1}(p)) = f_{1+\frac{j-1}{J}}(p)(t_{j',+1}(p))$
for $p \in U_{j',0}$ and $j' < j$.
We want to construct a homotopic function $f_{1+\frac{j}{J}}$
with the same property for all $j' \le j$.
Let $q_j \in \Quat_{n+1}$ such that
$f_1(p)(t_{j,\pm 2}(p)) \in \Bru_{q_j \acute\eta}$.
Let $t_{j,\pm \frac12}(p)$ be the times corresponding
to the first and last letter in the arc
$f_1(p)|_{[t_{j,-2}(p), t_{j,+2}(p)]}$;
notice that, from Theorem~1 
in \cite{Goulart-Saldanha1},
these are continuously defined for $p \in U_j$.
Define functions
$t_{j,\pm 1}(p) = (2t_{j,\pm \frac12}(p)+t_{j,\pm 2}(p))/3$
and
$t_{j,\pm \frac32}(p) = (t_{j,\pm \frac12}(p)+2t_{j,\pm 2}(p))/3$
so that $t_{j,-2} < t_{j,-\frac32} < t_{j,-1} < t_{j,-\frac12} <
t_{j,+\frac12} < t_{j,+1} < t_{j,+\frac32} < t_{j,+2}$.
Draw a family of convex arcs from $q_j$ to $f_1(p)(t_{j,-\frac32}(p))$
(with domain $[0,t_{j,-\frac32}(p)]$)
and from $f_1(p)(t_{j,+\frac32}(p))$ to $q_j \hat\eta$.
(with domain $[t_{j,+\frac32}(p),1]$).
Multiplication by $q_j^{-1}$ defines a continuous map
$g_{j,0}: W_{j,1} \to \cL_n[w_0] \subset \cL_n(\hat\eta)$
with $q_j g_{j,0}(p)(t) = f_1(p)(t)$.
Since $\cL_n[w_0]$ is a contractible Hilbert manifold,
there exists a homotopy from $g_{j,0}$ to $g_{j,1}: W_{j,1} \to \cL_n[w_0]$
with the following properties:
\begin{enumerate}
\item{The restrictions $g_{j,0}|_{\partial W_{j,1}}$ and
$g_{j,1}|_{\partial W_{j,1}}$ are equal.}
\item{Up to reparametrization,
the restriction $g_{j,1}|_{W_{j,0}}$ is constant
equal to $\gamma_\star$ (the curve introduced in Lemma \ref{lemma:loopword}).
The reparamatrization takes $t_{j,-1}(p)$ and $t_{j,+1}(p)$ to 
$1/3$ and $2/3$ so that
$g_{j,1}(p)(t_{j,-1}(p)) = g_{j,1}(p)(t_{j,+1}(p)) = \acute\eta$.}
\item{For all $p \in W_{j,1}$ and all $s \in [0,1]$,
we have
$g_{j,s}(p)(t_{j,-2}(p)) \in \Bru_{\acute\eta}$
and
$g_{j,s}(p)(t_{j,+2}(p)) \in \Bru_{\acute\eta}$.}
\end{enumerate}
Here of course $g_{j,s}(p) = H(s,p)$,
where $H$ is the homotopy from $g_{j,0}$ to $g_{j,1}$.
The careful reader will have noticed that
we did not pay attention to differentiability
of curves in $\cL_n$ at glueing points.
Curves can be smoothened at such points,
as has been amply discussed in several papers.

The maps $g_{j,\ast}$ and the homotopy above
guide us to contruct $f_{1+\frac{j}{J}}$
and the homotopy with $f_{1+\frac{j-1}{J}}$.
First, consider the arcs obtained by restricting
$f_{1+\frac{j-1}{J}}(p)$ to
$[0,t_{j,-\frac32}(p)]$.
The corresponding arc in $f_{1+\frac{j-1+s}{J}}(p)$
is obtained by a projective transformation
taking
$f_{1+\frac{j-1}{J}}(p)(t_{j,-\frac32}(p)) \in \Bru_{q_j \acute\eta}$ to
$q_j g_{j,s}(p)(t_{j,-\frac32}(p)) \in \Bru_{q_j \acute\eta}$.
Consider now the restriction of 
$f_{1+\frac{j-1}{J}}(p)$ to
$[t_{j,+\frac32}(p),1]$:
the corresponding arc in $f_{1+\frac{j-1+s}{J}}(p)$
is obtained by a projective transformation
taking $f_{1+\frac{j-1}{J}}(p)(t_{j,+\frac32}(p)) \in \Bru_{q_j \acute\eta}$ to
$q_j g_{j,s}(p)(t_{j,+\frac32}(p)) \in \Bru_{q_j \acute\eta}$.
Finally, the restriction of $f_{1+\frac{j-1+s}{J}}(p)$
to $[t_{j,-\frac32}(p),t_{j,+\frac32}(p)]$
is $q_j g_{j,s}(p)$.
Again, smoothening procedures are implicit;
notice that they have no effect on itineraries
or on the condition $f_2(p)(t_{j,-1}(p)) = f_2(p)(t_{j,+1}(p))$.
This completes step $j$ in the construction of $f_2$
and the discussion of $f_2$.

Notice that the closed loop $f_2(p)|_{[t_{j,-1}(p),t_{j,+1}(p)]}$
is obtained from $\gamma_0$ at step $j$ in the construction of $f_2$,
but suffers projective transformations at later steps.
The group of projective transformations is contractible, however.
In order to pass from $f_2$ to $f_3$
we apply multiplication and projective transformations
to such closed loops,
completing the construction of $f_3$ and the proof.
\end{proof}




\section{Simple connectivity}
\label{sect:simplyconnected}

The aim of this section is to prove Theorem \ref{theo:simplyconnected}.
This is essentially equivalent to the following result.

\begin{prop}
\label{prop:simplyconnected}
Any continuous map $f_0: \Ss^1 \to \cD_n$ is loose.
\end{prop}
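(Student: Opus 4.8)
The plan is to reduce the statement to Proposition~\ref{prop:ideal0isloose}. Note first that looseness is a homotopy invariant: if $f_0\simeq f_1$, applying $\gamma_0\ast(-)$ to the homotopy gives $\gamma_0\ast f_0\simeq\gamma_0\ast f_1$, so $f_0$ is loose if and only if $f_1$ is. Hence it suffices to produce a homotopy, \emph{inside $\cD_n$}, from $f_0$ to a map whose image lies in $\cD_n[\Ideal_{[0]}]$: Proposition~\ref{prop:ideal0isloose} then finishes the argument. By cellular approximation we may start by assuming that $f_0$ takes values in the $1$-skeleton $\cD_n[\Ideal_{(\omega 2)}]$ of $\cD_n$.

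The $1$-cells of $\cD_n$ \emph{not} contained in $\cD_n[\Ideal_{[0]}]$ — call them \emph{bad} — are exactly the edges $c_{[a_ka_l]}$ with $k\neq l$ (the $1$-dimensional words with no letter of dimension $0$); besides these, the only $0$-cell outside $\cD_n[\Ideal_{[0]}]$ is the convex vertex $c_{()}$. Since $c_{()}$ is a connected component of $\cD_n$ (Remark~\ref{rem:connected}), if $f_0$ meets it then $f_0$ is constant at $c_{()}$; this is the (contractible) convex component, which we set aside, so we assume $f_0(\Ss^1)\subseteq\cD_n\smallsetminus\{c_{()}\}$. It remains to remove the finitely many passages of $f_0$ through bad edges.

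The key is a description of the $1$-skeleton near each bad edge, obtained from the case analysis of $(a_ka_l)^{\pm}$ in Section~\ref{sect:Dn1} together with Proposition~\ref{prop:blacktriangle} and the cell computations of Section~\ref{sect:Dn2}. One checks:
\begin{enumerate}[label=(\roman*)]
\item the vertices $c_{a_1}$ and $c_{a_n}$ meet only the bad edges $c_{[a_2a_1]}$, respectively $c_{[a_{n-1}a_n]}$ (they lie on no other edge), so any passage of $f_0$ through one of these is a back-and-forth excursion and may be deleted by a homotopy;
\item for $2\le j\le n-1$ the vertex $c_{a_j}$ meets exactly the two bad edges $c_{[a_{j-1}a_j]}$ and $c_{[a_{j+1}a_j]}$, and these occur as two consecutive arcs of the boundary of the $2$-cell $c_{[a_{j-1}a_{j+1}a_j]}$ (the shift of $c_{[acb]}$, cf.\ Figure~\ref{fig:newacb}), the remaining boundary arcs lying in $\cD_n[\Ideal_{[0]}]$; hence any passage of $f_0$ through $c_{a_j}$ uses these two edges consecutively and, via that $2$-cell, can be rerouted to a path in $\cD_n[\Ideal_{[0]}]$;
\item every remaining bad edge $c_{[a_ka_l]}$ with $l>k+1$ appears bare on the boundary of a $2$-cell $c_{[\sigma]}$ — one may take $\sigma=a_ka_{k+1}a_{k+2}$ if $l=k+2$ and $\sigma=a_{k+1}a_ka_l$ if $l\ge k+3$, since $a_ka_l\blacktriangleleft\sigma$ in each case — whose remaining boundary arcs all lie in $\cD_n[\Ideal_{[0]}]$ (for instance $\partial[abc]$ contains the single bad arc $[ac]$, cf.\ Example~\ref{example:abc}); so a passage through $c_{[a_ka_l]}$ can again be rerouted.
\end{enumerate}
Carrying out these (finitely many) deletions and reroutings produces the desired homotopy in $\cD_n$ from $f_0$ to a map into $\cD_n[\Ideal_{[0]}]$, and we conclude with Proposition~\ref{prop:ideal0isloose}.

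The work concentrates in the second and third items: one must verify that the list of bad edges is as stated and that the $2$-cell attached to a bad edge carries, besides that edge, only edges of $\cD_n[\Ideal_{[0]}]$ (and, in case (ii), exactly one further bad edge, consecutive with the first), so that no ``stray'' bad edge forces an infinite regress. For each bad edge this is a finite computation resting on Proposition~\ref{prop:blacktriangle}, Lemma~\ref{lemma:triangleleft} and Remark~\ref{rem:ababcb}; the cellular reduction and the final appeal to Proposition~\ref{prop:ideal0isloose} are routine.
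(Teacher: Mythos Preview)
Your argument is correct and takes a genuinely different route from the paper's. The paper proceeds via Lemma~\ref{lemma:aaaaloose}: it directly builds a homotopy $f_0\simeq aaaaf_0$ by constructing, for each $1$-cell $c_w$, a filled square $w^H$ between $c_w$ and $c_{aaaaw}$, with the paths $a_k^H$ of Example~\ref{example:earlyaiH} along the sides. These squares are produced case by case (the trivial $[a_ka_{k+1}]^H$; the recursive $[a_ka_l]^H$ of Figures~\ref{fig:acH}--\ref{fig:adH}; the $[a_{k+1}a_k]^H$ of Figure~\ref{fig:cbH}); only the residual region in $[ba]^H$ (Figure~\ref{fig:baH}) is finished off by an appeal to Proposition~\ref{prop:ideal0isloose} and Lemma~\ref{lemma:loose}. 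You instead first push $f_0$ into $\cD_n[\Ideal_{[0]}]$ by deleting the tree-like excursions at $c_{a_1},c_{a_n}$ and rerouting the remaining bad edges across the very same $2$-cells ($[a_{j-1}a_{j+1}a_j]$, $[a_ka_{k+1}a_{k+2}]$, $[a_{k+1}a_ka_l]$), and only then invoke Proposition~\ref{prop:ideal0isloose} once, globally. Your reduction is tidier---no recursion, no explicit square-filling---and the boundary checks you need are exactly the $\partial[\sigma]$ formulas already tabulated in Section~\ref{sect:Dn2}. What the paper's approach buys is a little extra structure: the explicit homotopies $w^H$ visibly stay in $\cY_{n,2}$ (Remark~\ref{rem:Hwarmup}), which feeds directly into Proposition~\ref{prop:Yn2}, and the same pattern of ``attach $\sigma^H$ from $(\sigma')^H$'' is reused in Remark~\ref{rem:sigmaH} for Lemma~\ref{lemma:Ynk}. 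One small wording issue: in your item~(ii) a passage through $c_{a_j}$ may also enter and leave by the \emph{same} bad edge, but that is a back-and-forth and is deleted just as in item~(i); the rerouting via $c_{[a_{j-1}a_{j+1}a_j]}$ is needed only when the two distinct bad edges are used.
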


\begin{proof}
We may assume that $f_0$ assumes values in the $1$-complex of $\cD_n$.
From Lemma \ref{lemma:aaaaloose}
it suffices to prove that $f_0$ is homotopic to $f_1 = aaaa f_0$.
We construct an explicit homotopy between the maps $f_0$ and $f_1$.

Recall that in Example \ref{example:earlyaiH}
(consistently with Proposition \ref{prop:connected})
we constructed for every non-empty word $w \in \Word_n$
of dimension $0$ an explicit path $w^H$
in the $1$-skeleton of $\cD_n$ joining $w$ and $aaaaw$.
We first define:
\[ a^{H} = (a \xleftarrow{[ba]} bab
\xleftarrow{[ab]ab} \cdot \xrightarrow{aba[ba]b} \cdot \xrightarrow{ababab[ab]}
ababababa
\xleftarrow{a[ba]ababa} \cdot \xleftarrow{aaa[ba]a} aaaaa). \]
Define recursively
\[ a_{k+1}^{H} = (a_{k+1}\xleftarrow{[a_ka_{k+1}]} a_ka_{k+1}a_k
\xrightarrow{a_k^{H}a_{k+1}a_k} aaaaa_ka_{k+1}a_k
\xrightarrow{aaaa[a_ka_{k+1}]} aaaaa_{k+1}) \]
and $(a_kw)^{H} = a_k^{H}w$.
We now define $w^H$ for words of dimension $1$.


Let $w$ be a word of dimension $1$ with $\partial w = w_1 - w_0$,
where $w_0$ and $w_1$ are non-empty words of dimension $0$.
In order to complete the construction of the homotopy,
it suffices to construct $w^H: [0,1]^2 \to \cD_n$ 
with
$w^H|_{\{0\}\times [0,1]} = w$,
$w^H|_{\{1\}\times [0,1]} = aaaaw$
$w^H|_{[0,1] \times \{0\}} = w_0^H$,
$w^H|_{[0,1] \times \{1\}} = w_1^H$.
If the first letter of $w$ has dimension $0$ 
we may write $w = a_k \tilde w$ and $w^H = a_k^H \tilde w$.
We are left with the case when the first letter of $w$ has dimension $1$:
if $w = \sigma \tilde w$ we set $w^H = \sigma^H \tilde w$.
We are therefore left with the case $w = \sigma$, $\inv(\sigma) = 2$.

This is done on a case by case basis:
$[a_{k}a_{k+1}]$, $[a_{k+1}a_k]$ and $[a_ka_l]$ for $l > k+1$.

We first observe that the case $[a_{k}a_{k+1}]$ is rather trivial:
the square collapses to the segment in the definition of $a_{k+1}^{H}$.
We define $[ac]^{H}$ in Figure \ref{fig:acH}.

\begin{figure}[ht]
\def\svgwidth{12cm}
\centerline{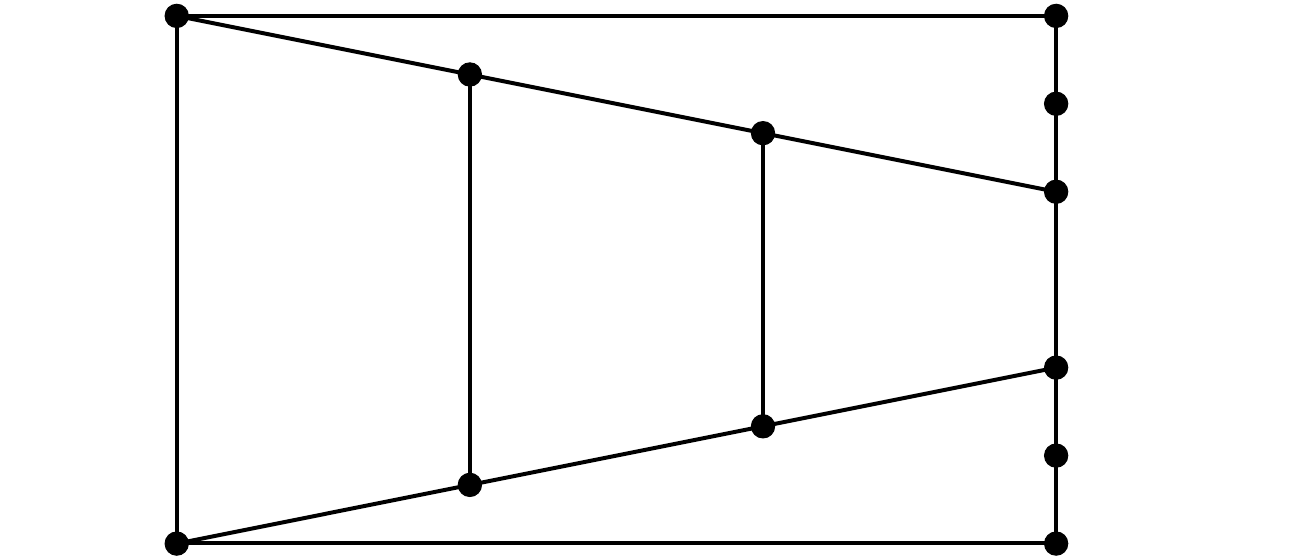}
\caption{The definition of $[ac]^{H}$.}
\label{fig:acH}
\end{figure}

The top and bottom hexagons
(they look like triangles, but they have six vertices each)
are $[abc]$ and $aaaa[abc]$, as indicated.
Each of the three central trapezoidal regions
is actually composed of six product cells
(since $a^{H}$ is not one edge, but six).
The definition of $[a_{k}a_{k+2}]^{H}$ is similar:
just substitute $a_k$, $a_{k+1}$ and $a_{k+2}$ for $a$, $b$ and $c$
in Figure \ref{fig:acH}
(a minor difference is that $a_{k}^{H}$ is actually $4+2k$ edges).
We define $[ad]^{H}$ in Figure \ref{fig:adH}.

\begin{figure}[ht]
\def\svgwidth{12cm}
\centerline{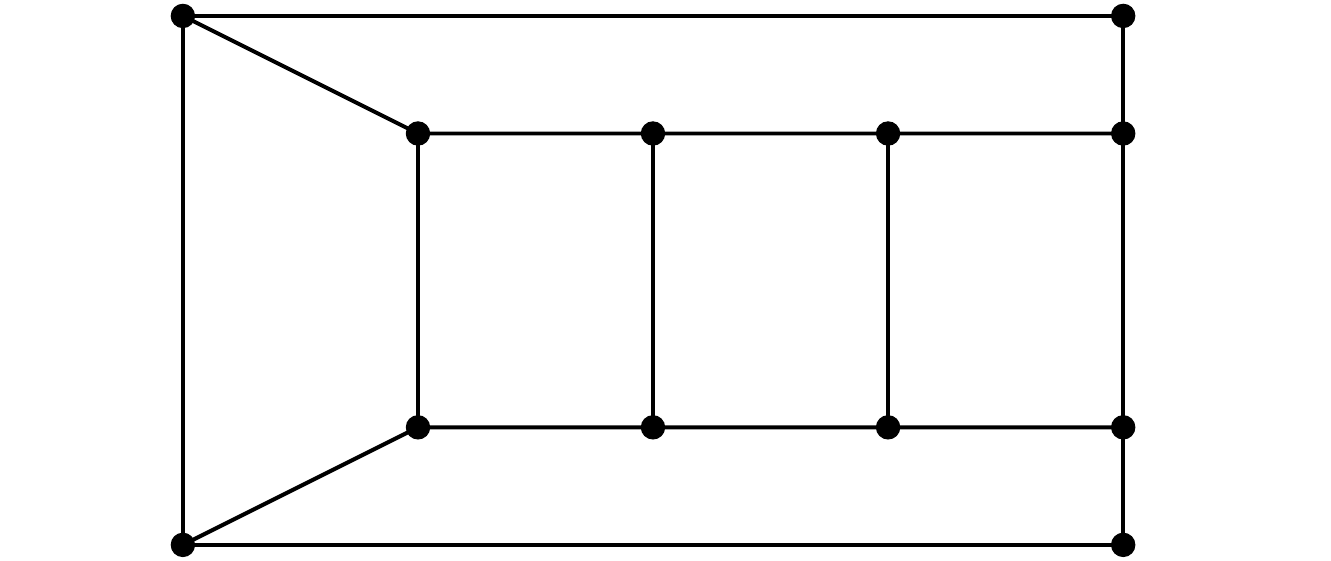}
\caption{The definition of $[ad]^{H}$.}
\label{fig:adH}
\end{figure}

Notice that the second of the four central trapezoidal regions
uses the previous construction of $[ac]^{H}$.
The other three consist of several product cells.
Again, the definition of $[a_{k}a_{k+3}]^{H}$ is similar.
The definition of $[a_{k}a_{k+4}]^{H}$ uses that of 
$[a_{k}a_{k+3}]^{H}$
and so on, recursively.

This takes care of the cases $[a_ka_l]$ for $l > k$;
we are left with the case $[a_{k+1}a_k]$.
We now describe $[cb]^{H}$ in Figure \ref{fig:cbH}.

\begin{figure}[ht]
\def\svgwidth{12cm}
\centerline{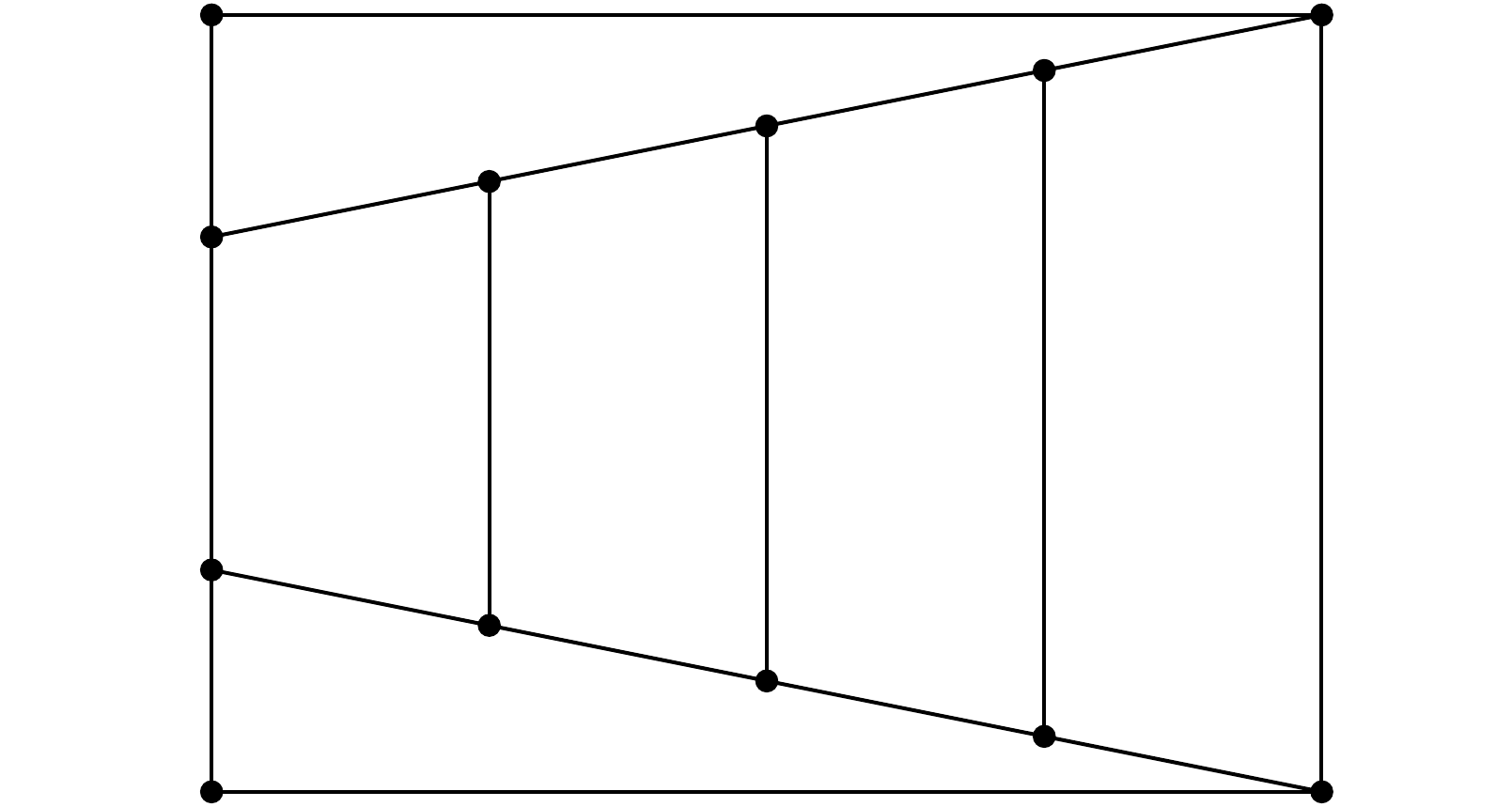}
\caption{The definition of $[cb]^{H}$.}
\label{fig:cbH}
\end{figure}

This definition uses a specific cell for $[acb]$;
a similar diagram works for the other cell.
The definition of $[a_{k+1}a_{k}]^{H}$ for $k > 2$ is similar,
just substitute
$a_{k-1}$, $a_{k}$ and $a_{k+1}$ for $a$, $b$ and $c$
(except for the initial $aaaa$ in the lower part of the diagram).

Finally, in order to define $[ba]^{H}$
we have to fill in the trapezoidal region,
indicated by (?) in Figure \ref{fig:baH};
notice that there is a partial collapse at the top.

\begin{figure}[ht]
\centerline{\includegraphics[width =8cm]{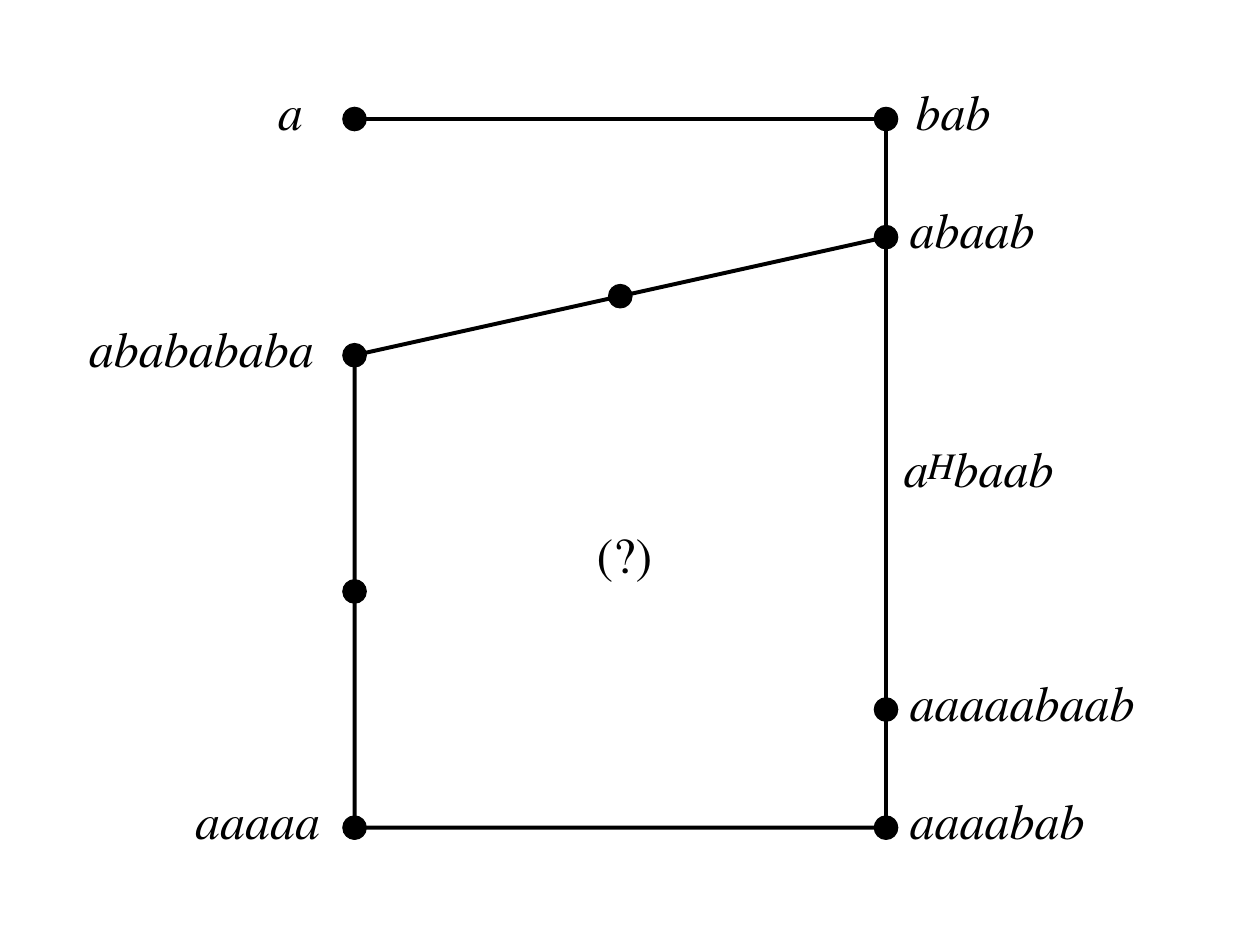}}
\caption{The (partial) definition of $[ba]^{H}$.}
\label{fig:baH}
\end{figure}

This can be done explicitly but is rather messy;
we prefer to use Lemma \ref{lemma:loose} and 
Proposition \ref{prop:ideal0isloose}.
The boundary of the trapezoidal region is a map
from $\Ss^1$ to $\cD_n[\Ideal_{(0)}]$ and therefore loose.
As a map from $\Ss^1$ to $\Omega\Spin_{n+1}$
it is homotopically trivial
since $\Omega\Spin_{n+1}$ is simply connected.
Thus, from Lemma \ref{lemma:loose},
the map is also homotopically trivial in $\cD_n$,
allowing us to fill the region, thus completing the
construction of $[ba]^H$ and the proof of the proposition.
\end{proof}


\bigbreak

\begin{proof}[Proof of Theorem \ref{theo:simplyconnected}]
From Proposition \ref{prop:simplyconnected},
any map from $\Ss^1$ to $\cD_n$ is loose.
As a map from $\Ss^1$ to $\Omega\Spin_{n+1}$
it is homotopically trivial
($\Omega\Spin_{n+1}$ being simply connected).
Thus, from Lemma \ref{lemma:loose},
any map from $\Ss^1$ to $\cD_n$ is homotopically trivial,
completing the proof of the theorem.
\end{proof}

\begin{rem}
\label{rem:baH}
It would be interesting to obtain an explicit solution
to complete Figure~\ref{fig:baH},
preferably with a small number of pieces.
\end{rem}

\section{The subcomplexes $\cY_{n,2} \subseteq \cD_n$}
\label{sect:Yn2}

We briefly recall some notation and results
from Section \ref{sect:permutations}.
We write $Y = S_{n+1} \smallsetminus S_{\PA}$
so that $\sigma \leftrightarrow \sigma'$ is an involution of $Y$:
for $\sigma \in Y$ take $k$ to be the smallest positive integer
with $k^\sigma \equiv (k+1)^\sigma \pmod 2$
and define $\sigma' = a_k \sigma$.
We call $\sigma \in Y$ low if $\sigma \blacktriangleleft \sigma'$
and high if $\sigma' \blacktriangleleft \sigma$.
As in Remark \ref{rem:Yk},
let $Y_k \subseteq Y$ be the set of permutations $\sigma$
which are either low with $\inv(\sigma) < k$
or high with $\inv(\sigma) \le k$.
Let $\Ideal_{Y_k} \subset \Word_n$ be the set of words
with \textit{at least} one letter in $Y_k$.

\begin{rem}
\label{rem:Yklower}
We saw in Examples \ref{example:IY} and \ref{example:IY2} 
that the subsets $\Ideal_Y, \Ideal_{Y_2} \subset \Word_n$
are lower subsets for both $\sqsubseteq$ and $\preceq$.
In this section we consider the lower set $\Ideal_{Y_2}$,
the open subset $\cL_n[\Ideal_{Y_2}] \subset \cL_n$
and the corresponding subcomplex
$\cY_{n,2} = \cD_n[\Ideal_{Y_2}] \subset \cD_n$.

We believe that for all $k > 2$ the subset
$\Ideal_{Y_k} \subseteq \Word_n$ is a lower set under $\preceq$,
but we shall neither prove nor use this claim.
Similarly, we believe (but do not prove)
that $\cL_n[\Ideal_{Y_k}] \subset \cL_n$
is an open subset.
Still, in the next section we will construct and study subcomplexes
$\cY_{n,k}$ corresponding to the sets $\Ideal_{Y_k}$.
Lemma \ref{lemma:YkisCW} below is good enough for our purposes.
\end{rem}

A subcomplex $\cX \subseteq \cD_n$ is \textit{nice} if
for every word $w \in \Word_n$
if $c_w \subseteq \cX$ then $c_{aaaaw} \subseteq \cX$;
the subcomplex $\cY_{n,2}$ is clearly nice.
Notice that if $\cX$ is nice and
$f: K \to \cX$ is a continuous map then
there exists a continuous map $aaaaf: K \to \cX$.
A nice subcomplex $\cX \subseteq \cD_n$ is \textit{loose} if
for every compact manifold $K$ and
every continuous map $f_0: K \to \cX$
there exists a homotopy $H: [0,1] \times K \to \cX$
from $f_0$ to $f_1 = aaaaf_0$.
More explicitly, we have
$H(0,s) = f_0(s)$ and $H(1,s) = aaaaf_0(s)$ for all $s \in K$.
Notice that we require the image of the homotopy $H$
to be included in $\cX$.
Contrast this with the definitions of
a weakly loose ideal and of a (strongly) loose ideal
in Section \ref{sect:looseideals}.
Clearly, if $\Ideal \subset \Word_n$ is a (strongly) loose ideal
then $\cD[\Ideal] \subset \cD_n$ is a loose subcomplex.
It follows from Remark \ref{rem:ideal0}
that the subcomplex $\cD[\Ideal_{[0]}] \subset \cD_n$
is not a loose subcomplex.

\begin{prop}
\label{prop:Yn2}
The ideal $\Ideal_{Y_2} \subset \Word_n$ is (strongly) loose.
\end{prop}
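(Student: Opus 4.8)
The plan is to deduce Proposition \ref{prop:Yn2} from Proposition \ref{prop:ideal0isloose}, exploiting the fact that $\Ideal_{Y_2} = \Ideal_{[0]} \sqcup \{\,w : w \text{ has no dimension-}0 \text{ letter but some letter } a'_m\,\}$, so that $\cY_{n,2}$ is $\cD_n[\Ideal_{[0]}]$ together with exactly the cells $c_{a'_m}$ (and their products) — and these are precisely the cells that heal the ``isolated vertex'' defect pointed out in Remark \ref{rem:ideal0}. Since $\Ideal_{Y_2}$ is already known to be an ideal, it suffices to show: for every compact manifold $K$ and every continuous $f : K \to \cL_n[\Ideal_{Y_2}]$ there is a homotopy in $\cL_n[\Ideal_{Y_2}]$ from $f$ to $\gamma_0 \ast f$ (this also yields weak looseness as a byproduct). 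Using the homotopy equivalence of Theorem \ref{theo:newCW} I would first replace $f$, up to homotopy inside $\cL_n[\Ideal_{Y_2}]$, by $c_{\Ideal_{Y_2}} \circ f_0$ for a cellular map $f_0 : K \to \cY_{n,2}$.

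The first step is to push $f_0$ into $\cD_n[\Ideal_{[0]}]$. Whenever $f_0(p)$ lies in a cell $c_w$ with $w \in \Ideal_{Y_2} \smallsetminus \Ideal_{[0]}$, the word $w$ contains a letter $a'_m = a_{m-1}a_m$; using the edge cell $c_{a'_m}$, whose opposite endpoint is the dimension-$0$ word $a_m^{\star}$, one deforms $f_0$ so as to replace that letter by $a_m^{\star}$. Every cell met during this deformation still carries the letter $a'_m \in Y_2$ (or the new dimension-$0$ letters), so the homotopy stays inside $\cY_{n,2}$; performing it over a suitable open cover of $K$ produces $f_0' : K \to \cD_n[\Ideal_{[0]}]$ homotopic to $f_0$ in $\cY_{n,2}$.

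The second step is to rerun the proof of Proposition \ref{prop:ideal0isloose} on $f_0'$ and check that all the homotopies it constructs ($f_0' \simeq f_1 \simeq f_2 \simeq f_3 \simeq \gamma_0 \ast f_3$) stay in $\cL_n[\Ideal_{Y_2}]$. The only homotopy there which can leave $\cL_n[\Ideal_{[0]}]$ is the first one, $H_1$, built from the paths $\delta_i$ in the $1$-skeleton; but by their explicit description ($\delta_i(0)\in c_{a_i}$, $\delta_i(t)\in c_{a'_i}$ for $t\in(0,\tfrac12)$, $\delta_i(\tfrac12)\in c_{a_i^{\star}}$, and $\delta_i(t)$ in a cell with a dimension-$0$ letter for $t\ge\tfrac12$ — compare the paths $a_i^H$ of Example \ref{example:earlyaiH}) every cell occurring on each $\delta_i$ lies in $\cY_{n,2}$, hence so does every product cell occurring in $H_1$ and in $f_1$. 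All the later homotopies take place in $\cL_n$ while keeping, on each curve, an arc of itinerary $w_0$ (a loop word, of dimension $0$), and the loosening invoked via Lemma \ref{lemma:66bis} keeps the curves ``phone-wired'' by small loops; in both cases the itineraries remain in $\Ideal_{[0]} \subseteq \Ideal_{Y_2}$. Concatenating these with Step 1 — and using that prepending $\gamma_0$ forces any resulting itinerary into $\Ideal_{[0]}$, so that $\gamma_0\ast f_0' \simeq \gamma_0\ast f_0 \simeq \gamma_0\ast f$ in $\cL_n[\Ideal_{Y_2}]$ — gives the desired homotopy $f \simeq \gamma_0\ast f$ in $\cL_n[\Ideal_{Y_2}]$.

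The step I expect to require the most care is the cell-by-cell bookkeeping in Step 2: one must make sure that nothing in the rather elaborate construction of the proof of Proposition \ref{prop:ideal0isloose} ever meets a cell whose label has neither a dimension-$0$ letter nor an $a'_m$-letter — in particular that the full paths $\delta_i$, together with the word rewritings of Proposition \ref{prop:connected} used inside them to reach $c_{a_i w_0^N}$, avoid the dimension-$1$ edges $c_{[a_{k+1}a_k]}$ (for $k\ge 2$) and $c_{[a_k a_l]}$ (for $l>k+1$), which are \emph{not} in $\cY_{n,2}$. This is arranged by always keeping a block of dimension-$0$ letters untouched during those rewritings, so that every edge cell actually used retains a dimension-$0$ letter; verifying this (by the inductive structure of Example \ref{example:earlyaiH}) is routine but is where the argument genuinely differs from the one for $\Ideal_{[0]}$.
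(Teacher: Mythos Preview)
Your approach is correct and essentially the same as the paper's: both rerun the proof of Proposition~\ref{prop:ideal0isloose} while checking that the paths $\delta_i$ (and hence the homotopy $H_1$) stay in $\cY_{n,2}$, which is exactly the content of Remarks~\ref{rem:Hwarmup} and~\ref{rem:HisinY2}, and that the later homotopies preserve a dimension-$0$ letter. The paper avoids your separate Step~1 by observing that when the tracked letter on $U_j$ is $a'_{i_j}$ (or $a_{i_j}^{\star}$), the point $f_0(p)$ is already of the form $\delta_{i_j}(s_{0,j}(p))$ with $s_{0,j}(p)\in[0,\tfrac12]$, so one simply continues along $\delta_{i_j}$ from there (setting $s_j=\max\{s_{0,j},s_j^{\old}\}$ and using a median for the homotopy) rather than first pushing to $a_{i_j}^{\star}$ and then taking a fresh cover.
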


The previous proposition is equivalent to saying that
the subcomplex $\cY_{n,2} \subseteq \cD_n$ is loose.
Its proof relies strongly on
the proof of Proposition \ref{prop:ideal0isloose} above.
A few remarks will hopefully make the proofs easier to follow.


\begin{rem}
\label{rem:Hwarmup}
For every $k \in \nmesmo$,
we constructed in Example \ref{example:earlyaiH}
a path $a_k^H$ in the $1$-skeleton of $\cY_{n,2}$
from $a_k$ to $aaaaa_k$.
These paths were used
in the proofs of Propositions
\ref{prop:ideal0isloose} and \ref{prop:simplyconnected}.

Recall that $a' = [ba]$ and $a'_{k+1} = [a_ka_{k+1}]$:
in all cases, the first edge in $a_k^H$ is therefore $a'_k$.
The second vertex of $a'_k$ is $a_k^\star$:
\[ a^\star = bab, \quad b^\star = aba, \quad a_{k+1}^\star = a_ka_{k+1}a_k. \]
After that, every vertex and edge corresponds to a word
with at least one letter of dimension $0$.
Thus, all vertices and edges in $a_k^H$
are labeled by words which contain at least one letter from $Y_2$:
such words therefore belong to $\Ideal_{Y_2}$.
Thus, all paths are in $\cY_{n,2}$.

In the proof of Proposition \ref{prop:simplyconnected}
we also constructed $(a'_k)^H$,
a map from the square to $\cD_n$
which equals $a'_k$ at the top, $aaaaa'_k$ at the bottom,
$a_k^H$ at one side and $a_{k-1}^Ha_ka_{k-1}$ at the other side.
Since $a_k^H$ is constructed to be the concatenation 
of the other three sides, this map is rather trivial,
a mere stretching of the path $a_k^H$.
Again, all words  belong to $\Ideal_{Y_2}$
and therefore the map assumes values in $\cY_{n,2}$.

We completed this construction by considering $a' = [ba]$, 
with boundary points $a$ and $bab$.
In Figure \ref{fig:baH} we construct
(or at least show how to construct)
a map $(a')^H = [ba]^H$ from the square to $\cD_n$
which equals $a'$ at the top, $aaaaa'$ at the bottom,
$a^H$ at one side and $b^Hab$ at the other side.
The map also assumes values in $\cY_{n,2}$.
This is clear for the explicit part of the figure;
in the remaining part we may also assume that at least
one letter of dimension $0$ is present.
\end{rem}
'
\begin{rem}
\label{rem:HisinY2}
Recall from Remark \ref{rem:ideal0} the maps
$f_0, f_1: \Ss^0 \to \cD_n[\Ideal_{[0]}] \subset \cD_n$ with
$f_0(+1) = a$, $f_0(-1) = aaaaa$, $f_1 = aaaaf_0$.
As we have seen,
the maps $f_0$ and $f_1$ are homotopic in $\cD_n$
but not in $\cD_n[\Ideal_{[0]}]$.
Indeed, the only way to move away from the vertex $a$
is through the edge $[ba]$,
which joins it to the vertex $bab$.
Since $[ba] \in Y_2$,
it is clear that the obvious homotopy from $f_0$ to $f_1$
assumes values in $\cY_{n,2}$,
consistently with Proposition \ref{prop:Yn2}.

More generally, in the proof of Proposition \ref{prop:ideal0isloose}
we show that there exists a homotopy $H$
from a continuous map $f_0: K \to \cD_n[\Ideal_{[0]}]$ to $aaaaf_0$.
The proof shows how to construct $H$,
or at least parts of $H$:
we pass from $f_0$ to $f_1$, then to $f_2$ and $f_3$.
(The functions $f_\ast$
are as in the proof of Proposition \ref{prop:ideal0isloose}.)
It follows from Remark \ref{rem:ideal0} 
that in general
the image of the homotopy $H$ can not possibly be contained
in $\cL_n[\Ideal_{[0]}]$.
Indeed, in the passage from $f_0$ to $f_1$
we typically move from $(p_1,a_i,p_2)$ to $(p_1,a_i^\star,p_2)$
through $c_{w_1} \times c_{a'_i} \times c_{w_2}$.
Unless $w_1$ or $w_2$ contain letters of dimension $0$
(which we have no reason to expect)
then $w_1a'_iw_2 \notin \Ideal_{[0]}$.

On the other hand, from $f_1$ onwards,
the homotopy $H$ assumes values in $\cL_n[\Ideal_{[0]}]$.
Indeed, the curves then contain a large number of loops.

Also, the homotopy from $f_0$ to $f_1$
assumes values in $\cL_n[\Ideal_{Y_2}] \supset \cL_n[\Ideal_{[0]}]$.
Indeed, the paths $\delta_i$ assume values in $\cL_n[\Ideal_{Y_2}]$,
as we saw in Remark \ref{rem:Hwarmup}.

We thus see that, for $f_0$ as above, there exists a homotopy
$H: [0,1] \times K \to \cL_n[\Ideal_{Y_2}]$
from $f_0$ to  $aaaaf_0$.
\end{rem}

\begin{proof}[Proof of Proposition \ref{prop:Yn2}]
The idea is to follow the proof of
Proposition \ref{prop:ideal0isloose},
performing the necessary adaptations and
verifying that the homotopies assume values in $\cY_{n,2}$.
We will notice that part of the proof which requires
significant comments and adaptations is the passage
from $f_0: K \to \cD_n$ to $f_1$.
The functions $f_\ast$ are as in the proof of 
Proposition \ref{prop:ideal0isloose};
thus, curves in the image of $f_1$ have multiple loops.

Recall that we begin the proof of Proposition \ref{prop:ideal0isloose},
by taking a finite cover of $K$ by sets $U_j$.
In that proof we have that $p \in U_j$ implies
$f_0(p) \in c_{w(p)}$
with $w(p)$ of the form $w_{-}(p) a_{i_j} w_{+}(p)$.
In the present proof we instead have that
$p \in U_j$ implies
$f_0(p) \in c_{w(p)}$
with $w(p)$ of one of the three forms:
$w_{-}(p) a_{i_j} w_{+}(p)$,
$w_{-}(p) a'_{i_j} w_{+}(p)$ or
$w_{-}(p) a^{\star}_{i_j} w_{+}(p)$.
Construct maps $\delta_{i}: [0,1] \to \cD_n$
as in Equation \eqref{equation:delta}
(in the proof of Proposition \ref{prop:ideal0isloose})
satisfying 
$\delta_{i}(t) \in c_{a'_{i}}$ for $t \in (0,\frac12)$,
$\delta_{i}(\frac12) \in c_{a^{\star}_{i}}$
and assuming values in $\cY_{n,2}$.
Instead of having $f_0$ as in Equation \eqref{equation:f0},
we now have
\[ f_0(p) =
(p_1,\delta_{i_{j_1}}(s_{0,j_1}), \ldots,
\delta_{i_{j_k}}(s_{0,j_k}), p_{k+1}), \]
with $s_{0,j_\ast} = s_{0,j_\ast}(p) \in [0,\frac12]$.
Recall that in the construction of $f_1$
in proof of Proposition \ref{prop:ideal0isloose},
we define $s_{j_\ast} \in [0,1]$,
continuous functions of $p \in K$.
In the old proof, the functions $s_{j_\ast}$ are bumps,
equal to zero near the boundary of $U_{j_\ast}$;
it is crucial that for every $p$ there exists
at least one $j_\ast$ such that $p \in U_{j_\ast}$
and $s_{j_\ast}(p) = 1$.
Define new function $s_{j_\ast}$ by
$s_{j_\ast}(p) = \max\{s_{0,j_\ast}(p), s^{\old}_{j_\ast}(p)\}$
so that we have $s_{0,j_\ast}(p) \le s_{j_\ast}(p)$ (for all $p$).
The new functions  $s_{j_\ast}$ are also bumps,
equal to $s_{0,j_\ast}$ near the boundary of $U_{j_\ast}$;
the crucial property above still holds.
The function $f_1$ is now defined as in Equation \eqref{equation:f1},
using the new $s_{j_\ast}$:
\[
f_1(p) = 
(p_1,\delta_{i_{j_1}}(s_{j_1}), \ldots,
\delta_{i_{j_k}}(s_{j_k}), p_{k+1}). 
\]
Notice that (for all $p$) $f_1(p)$ has multiple loops,
as desired.
Finally, for the homotopy $H_1$ from $f_0$ to $f_1$,
instead of Equation \eqref{equation:H1}, we now write
\[
H_1(p,s) = 
(p_1,\delta_{i_{j_1}}(\med(s_{0,j_1},s,s_{j_1})), \ldots,
\delta_{i_{j_k}}(\med(s_{0,j_k},s,s_{j_k})), p_{k+1}); 
\]
here $\med$ denotes the median among three real numbers.
Notice that $f_0$, $f_1$ and $H_1$ all
assume values in $\cY_{n,2}$.

As discussed in Remark \ref{rem:HisinY2},
the remainder of the proof (or construction) preserves at least one 
letter of dimension $0$, so that the remaining homotopies
assume values in $\cY_{n,2}$, completing the proof.
\end{proof}


\section{Proof of Theorem \ref{theo:Y}}
\label{sect:Y}

Let $\Ideal \subseteq \Word_n$:
if $\Ideal$ is a lower set (for either $\sqsubseteq$ or $\preceq$)
then $\cD_n[\Ideal] \subset \cD_n$ is a subcomplex
(and therefore a closed subset).
Here, of course, $\cD_n[\Ideal]$ is the subcomplex
with cells $c_w$, $w \in \Ideal$.
On the other hand, at least in principle,
we do not need to prove that $\Ideal$ is a lower set
to deduce that $\cD_n[\Ideal]$ is a subcomplex.
Indeed, we presently define a weaker condition.

For any $w_0 \in \Word_n$, consider the glueing map
$g_{w_0}: \partial\DD^{\dim(w_0)} \to \cD_n$
(as constructed in Section \ref{sect:valid}).
The image of this map is a compact set
$B_{w_0} = g_{w_0}[\partial\DD^{\dim(w_0)}] \subset \cD_n$.
The construction of the CW complex $\cD_n$
can be performed in such a way that if $w_0, w_1$ are words
with $\dim(w_1) = \dim(w_0) - 1$ and $c_{w_1} \not\subseteq B_{w_0}$ 
then the cell $c_{w_1}$ is not necessary in order to glue $c_{w_0}$.
More precisely, we must then have $c_{w_1} \cap B_{w_0} \subseteq B_{w_1}$.
Indeed, if the glueing map $g_{w_0}$ touches $c_{w_1}$
in a non-surjective manner then we may deform $g_{w_0}$
so that $g_{w_0}$ does not touch the interior of $c_{w_1}$.
Notice that such a deformation
does not change the homotopy type of the complex.
From now on we assume that $\cD_n$ satisfies this condition.

The subset $\Ideal \subseteq \Word_n$ is \textit{subcomplex compatible} if
for all $w_0 \in \Ideal$ and
for all $w_1 \in \Word_n$ with $\dim(w_1) = \dim(w_0) - 1$
and $c_{w_1} \subseteq B_{w_0}$ we have $w_1 \in \Ideal$.
If $\Ideal$ is subcomplex compatible then
$\cD_n[\Ideal] \subseteq \cD_n$ is a subcomplex.
Also,
if $\Ideal$ is a lower set (for either $\sqsubseteq$ or $\preceq$)
then $\Ideal$ is subcomplex compatible.

\begin{rem}
\label{rem:subcomplexcompatible}
The concept of subcomplex compatible depends
on arbitrary and unspecified choices in the construction of $\cD_n$.
For instance, the following set is a lower set and therefore
subcomplex compatible:
\[ \Ideal_1 = \{ b, aba, cbc, acbca, acbac, cabac,
[ab], [cb], a[cb]a, c[ab]c, acb[ac], [ac]bac \}. \]
The set $\Ideal_2 = \Ideal_1 \cup \{[acb]\}$ is not a lower set
for either $\sqsubseteq$ or $\preceq$;
is it subcomplex compatible?
It depends on our choice of the cell $c_{[acb]}$
(or of its glueing map).
Two possibilities are shown in Figure \ref{fig:newacb}:
if we choose the left one then yes, $\Ideal_2$ is subcomplex compatible,
if we choose the right one then no.
\end{rem}


\begin{lemma}
\label{lemma:YkisCW}
Consider $k \ge 2$ and, for $\sigma_0 \in Y$,
let $B_{\sigma_0} \subset \cD_n$
be the image of its glueing map.
\begin{enumerate}
\item{If $\sigma_0 \in Y_k$ then
$B_{\sigma_0} \subseteq \cD_n[\Ideal_{Y_k}]$.}
\item{If $\sigma_0 \in Y_{k+1}$ is low and $\inv(\sigma_0) = k$ then
$B_{\sigma_0} \subseteq \cD_n[\Ideal_{Y_k}]$.}
\item{If $\sigma_0$ is high and $\inv(\sigma_0) = k+1$ then
$B_{\sigma_0} \subseteq \cD_n[\Ideal_{Y_k}] \cup c_{\sigma'_0}$.}
\end{enumerate}
In particular, $\Ideal_{Y_k} \subset \Word_n$ is subcomplex compatible.
\end{lemma}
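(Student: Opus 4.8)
\textbf{Proof plan for Lemma \ref{lemma:YkisCW}.}
The plan is to analyze, for each permutation $\sigma_0 \in Y$, exactly which words $w_1$ with $\dim(w_1) = \dim(\sigma_0) - 1$ can label a cell $c_{w_1}$ appearing in the boundary $B_{\sigma_0}$ of the glueing map $g_{\sigma_0}$, and then to check that such $w_1$ lies in $\Ideal_{Y_k}$ (with the stated exceptions). The crucial structural fact, established in Section \ref{sect:valid}, is that the glueing map for a cell $c_{\sigma_0}$ is obtained by studying a transversal section $\phi : \DD^{\dim(\sigma_0)} \to \cL_n[\Ideal(\sigma_0)]$, so that every word $w_1$ touched by the boundary satisfies $w_1 \sqsubseteq \sigma_0$; moreover if $w_1$ labels a codimension-one face actually used in the glueing then $w_1 \tok \sigma_0$ as well. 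So the combinatorial engine is: classify the words $w_1$ with $w_1 \sqsubseteq \sigma_0$, $\dim(w_1) = \inv(\sigma_0) - 2$, and determine which of them must lie in $\Ideal_{Y_k}$.

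First I would reduce to the analysis of a single letter $\sigma_0 \in Y$, since for a general word $w = w_0 \sigma_0 w_1$ the product-cell construction (Remark \ref{rem:productcells}) reduces the boundary to boundaries of the individual letters, and the presence of $\sigma_0 \in Y_k$ (resp.\ $Y_{k+1}$) in the word is what places $w \in \Ideal_{Y_k}$. For the single-letter case, I would split $B_{\sigma_0}$ into two kinds of boundary cells: (a) cells $c_{w_1}$ where $w_1 = w_a \sigma_0 w_b$ with $\sigma_0$ still a letter but $w_a, w_b$ nonempty words of dimension $0$ — but this cannot happen with a decrease of dimension by one unless the other factors already carry dimension, so these are handled by Lemma \ref{lemma:triangleleft}; and (b) cells $c_{w_1}$ with $w_1$ obtained by "splitting" $\sigma_0$. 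The key point is that any $w_1 \sqsubseteq \sigma_0$ with $w_1 \ne (\sigma_0)$ has $\mult(w_1) \le \mult(\sigma_0)$ and length $\ge 2$, and among these one must identify those of dimension exactly $\inv(\sigma_0) - 2$. By Lemma \ref{lemma:2mult}, if $\sigma_1 \blacktriangleleft \sigma_0$ were a letter appearing in the boundary it would violate $2\mult(\sigma_1) \le \mult(\sigma_0)$ unless $\sigma_1$ is one of the very short exceptional forms; combined with Remark \ref{rem:blacktriangle}, the only single-letter face of $c_{\sigma_0}$ is $c_{\sigma_0'}$, and it appears precisely when $\sigma_0' \blacktriangleleft \sigma_0$, i.e.\ when $\sigma_0$ is high.

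With that, the three cases follow:
\begin{itemize}
\item[(1)] If $\sigma_0 \in Y_k$, every word $w_1$ in $B_{\sigma_0}$ either still contains $\sigma_0$ (so $w_1 \in \Ideal_{Y_k}$ trivially), or arises by splitting $\sigma_0$ into a word of strictly smaller dimension; by Lemma \ref{lemma:triangleleft} the split words $w_0^\pm \sigma \tilde w_1^\pm$ contain letters $\sigma$ with $\sigma \blacktriangleleft$-below $\sigma_0$, hence by Lemma \ref{lemma:vader} and Remark \ref{rem:Yk} these letters lie in $Y_k$ (or equal $\sigma_0'$, which is also in $Y_k$ since $\sigma_0 \in Y_k$), so $w_1 \in \Ideal_{Y_k}$. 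The single-letter face $c_{\sigma_0'}$ is in $\cD_n[\Ideal_{Y_k}]$ because $\sigma_0 \in Y_k$ forces $\sigma_0' \in Y_k$ as well (the involution preserves $Y_k$ by Remark \ref{rem:Yk}).
\item[(2)] If $\sigma_0 \in Y_{k+1}$ is low with $\inv(\sigma_0) = k$, then since $\sigma_0$ is low, $\sigma_0 \blacktriangleleft \sigma_0'$, so by the analysis above $c_{\sigma_0'}$ does \emph{not} appear as a face of $c_{\sigma_0}$; every other face word contains a letter in $Y_k$ (apply Lemma \ref{lemma:vader}: a $\blacktriangleleft$-predecessor $\sigma_1 \ne \sigma_0'$ of $\sigma_0$ is high, with $\inv(\sigma_1) < \inv(\sigma_0) = k$, hence $\sigma_1 \in Y_k$), so $B_{\sigma_0} \subseteq \cD_n[\Ideal_{Y_k}]$.
\item[(3)] If $\sigma_0$ is high with $\inv(\sigma_0) = k+1$, then $\sigma_0' \blacktriangleleft \sigma_0$, so $c_{\sigma_0'}$ is (the unique) single-letter face, accounting for the extra term $c_{\sigma_0'}$; all remaining face words $w_1$ contain a letter $\sigma_1$ which is a $\blacktriangleleft$-predecessor of $\sigma_0$ different from $\sigma_0'$, hence high with $\inv(\sigma_1) \le k$ by Lemma \ref{lemma:vader}, so $\sigma_1 \in Y_k$ and $w_1 \in \Ideal_{Y_k}$.
\end{itemize}
The final assertion, that $\Ideal_{Y_k}$ is subcomplex compatible, then follows directly from part (1): if $c_{w_0} \subseteq \cD_n[\Ideal_{Y_k}]$ and $\dim(w_1) = \dim(w_0) - 1$ with $c_{w_1} \subseteq B_{w_0}$, then $w_0 \in \Ideal_{Y_k}$ means $w_0$ has a letter $\sigma_0 \in Y_k$, and the reduction to letters via Remark \ref{rem:productcells} together with part (1) gives $w_1 \in \Ideal_{Y_k}$.

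\textbf{Main obstacle.} The hard part will be the bookkeeping in the split-word case: verifying that when $\sigma_0$ is split into a word $w_1 = w_a \tilde\sigma w_b$ of dimension $\inv(\sigma_0) - 2$, the "middle" letter $\tilde\sigma$ (or, when the split produces several positive-dimension letters, each of them, via a product-cell induction) is genuinely $\blacktriangleleft$-below $\sigma_0$ in the sense needed to invoke Lemma \ref{lemma:vader}. This requires carefully tracking multiplicities through Lemma \ref{lemma:triangleleft} and handling the degenerate cases isolated in Lemma \ref{lemma:2mult} ($\sigma_0 = a_i a_{i+2} \vartriangleleft \sigma_0 a_{i+1}$, etc.), where the face may be a dimension-$0$ word of length $3$ rather than a genuine letter — such words automatically lie in $\Ideal_{Y_k}$ for $k \ge 2$ but must be checked not to be overlooked. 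A secondary subtlety, flagged in Remark \ref{rem:subcomplexcompatible}, is that the conclusion depends on the (partially free) choice of glueing maps; I would note that the construction of $\cD_n$ may be fixed so that the inclusions above hold, which is legitimate since different admissible choices are homotopic in the relevant subcomplex (as in Example \ref{example:acb}).
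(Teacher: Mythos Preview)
Your overall strategy (reduce to a single letter, classify the faces, check they fall in $\Ideal_{Y_k}$) is sound, but you are making the argument much harder than necessary and one of your invocations is wrong.

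The paper's proof of cases (1) and (2) is a one-liner that you miss entirely. Since $\Ideal_Y$ is a lower set (Example~\ref{example:IY}), any word $w_1 = \sigma_{1,1}\cdots\sigma_{1,\ell}$ with $c_{w_1} \subseteq B_{\sigma_0}$ satisfies $w_1 \in \Ideal_Y$, i.e.\ some letter $\sigma_{1,j}$ lies in $Y$. Now if $\inv(\sigma_0) \le k$ (which covers both cases (1) and (2)), then $\dim(w_1) = \inv(\sigma_0) - 2 \le k-2$, so every letter of $w_1$ has $\inv < k$; in particular $\sigma_{1,j} \in Y$ with $\inv(\sigma_{1,j}) < k$ forces $\sigma_{1,j} \in Y_k$, hence $w_1 \in \Ideal_{Y_k}$. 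No $\blacktriangleleft$-structure, no Lemma~\ref{lemma:triangleleft}, no Lemma~\ref{lemma:vader}. Your invocation of Lemma~\ref{lemma:vader} in case (2) is in fact illegitimate: that lemma requires $\sigma_0$ to be \emph{high}, whereas in case (2) $\sigma_0$ is low.

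For case (3) the paper does use Lemma~\ref{lemma:vader}, but only after the same dimension counting reduces to the situation $\ell = 1$ and $\inv(\sigma_{1,1}) = k$. Your approach instead assumes up front that every multi-letter face word of $c_{\sigma_0}$ contains a letter $\sigma$ with $\sigma \blacktriangleleft \sigma_0$. This is not what Lemma~\ref{lemma:triangleleft} says (it \emph{produces} two specific such words, it does not classify all faces), and you do not otherwise justify it. The paper avoids this by noting that if $\ell > 1$ and the letter in $Y$ has $\inv = k$, then the remaining letters have dimension $0$, hence lie in $Y_2 \subseteq Y_k$; only the case $\ell = 1$ needs the $\blacktriangleleft$ analysis, and there $\hat\sigma_{1,1} = \hat w_1 = \hat\sigma_0$ together with $\inv(\sigma_{1,1}) = \inv(\sigma_0) - 1$ gives $\sigma_{1,1} \blacktriangleleft \sigma_0$ (via Remark~\ref{rem:blacktriangle}), whence Lemma~\ref{lemma:vader} applies.

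In short: replace your $\blacktriangleleft$-classification of faces by the single observation that $\Ideal_Y$ is a lower set, and do the dimension bookkeeping. What survives of your machinery is exactly the $\ell = 1$ subcase of (3).
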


\begin{proof}
Consider
$w_1 = \sigma_{1,1}\cdots\sigma_{1,\ell} \in \Word_n$
such that $\dim(w_1) = \dim(\sigma_0) - 1$
and $c_{w_1} \subseteq B_{\sigma_0}$.
We know from Example \ref{example:IY} that $w_1 \in Y$
so that there exists $j$ with $\sigma_{1,j} \in Y$.
If $\inv(\sigma_0) \le k$ then $\dim(w_1) \le k-2$ 
and therefore $\inv(\sigma_{1,j}) < k$
so that $\sigma_{1,j} \in Y_k$
and therefore $w_1 \in \Ideal_{Y_k}$.
If $\sigma_0$ is high and $\inv(\sigma_0) = k+1$ then
we may still have $\inv(\sigma_{1,j}) < k$,
which implies $w_1 \in \Ideal_{Y_k}$.
The other possibility is $\inv(\sigma_{1,j}) = k$
and therefore $\sigma_{1,j} \vartriangleleft \sigma_0$;
it then follows from computing dimensions that
all other letters in $w_1$ have dimension $0$.
If $\ell > 1$ there are
other letters $\sigma_{1,j'}$ with $\inv(\sigma_{1,j'}) = 1$
and therefore again $w_1 \in \Ideal_{Y_k}$.
If $\sigma_{1,j} \not\blacktriangleleft \sigma_0$
we have $\hat\sigma_{1,j} \ne \hat\sigma_0 = \hat w_1$
and therefore $\ell > 1$
(compare with Lemma \ref{lemma:triangleleft}).
If $\sigma_{1,j} \blacktriangleleft \sigma_0$
and $\sigma_{1,j} \ne \sigma'_0$ then (from Lemma \ref{lemma:vader})
we have that $\sigma_{1,j}$ is high 
and therefore $\sigma_{1,j} \in Y_k$
and therefore also in this case $w_1 \in \Ideal_{Y_k}$.
The last possible case is $j = \ell = 1$ and $\sigma_{1,j} = \sigma'_0$.
This completes the proof of the itemized claims.

In order to prove the final claim we need to consider
the more general case $w_0 = w_a \sigma_0 w_b$
and $w_1 = w_a \tilde w_1 w_b$.
But this case is analogous, except that the words $w_a$ and $w_b$
may occasionally help us by containing a letter in $Y_k$.
\end{proof}

We are interested in the subcomplexes
$\cY_{n,k} = \cD_n[\Ideal_{Y_k}]$.
For $m = \inv(\eta)$, we have $Y_m = Y$ and therefore
$\cY_{n,m} = \cD_n[\Ideal_Y]$.
Notice that from Theorem \ref{theo:newCW}
the CW complex $\cY_{n,m}$ is homotopy equivalent
to the Hilbert manifold $\cY_n$.

We are almost ready to prove Theorem \ref{theo:Y}:
let us first state a lemma.

\begin{lemma}
\label{lemma:Ynk}
For all $k \ge 2$,
the subcomplex $\cY_{n,k} \subseteq \cD_n$ is loose.
\end{lemma}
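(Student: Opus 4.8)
The plan is to proceed by induction on $k$. The base case $k = 2$ is exactly Proposition \ref{prop:Yn2}, so fix $k \ge 3$ and assume that $\cY_{n,k-1} \subseteq \cD_n$ is loose. The heart of the argument is the claim $(\ast)$: every continuous map $f_0: K \to \cY_{n,k}$, with $K$ a compact manifold, is homotopic \emph{within $\cY_{n,k}$} to a map whose image lies in $\cY_{n,k-1}$. Granting $(\ast)$, the lemma follows: writing $f_0': K \to \cY_{n,k-1}$ for the resulting map and $H'$ for the homotopy in $\cY_{n,k}$ from $f_0$ to $f_0'$, the inductive hypothesis provides a homotopy $H''$ in $\cY_{n,k-1}$ from $f_0'$ to $aaaaf_0'$; composing $H'$ with the product map $m(aaaa,\cdot)$ of Section \ref{sect:loosetight} gives a homotopy $aaaaH'$ from $aaaaf_0$ to $aaaaf_0'$, whose image again lies in $\cY_{n,k}$ because $\cY_{n,k}$ is nice (if $w \in \Ideal_{Y_k}$ then $aaaaw \in \Ideal_{Y_k}$). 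Concatenating $H'$, $H''$ and the reverse of $aaaaH'$ yields a homotopy in $\cY_{n,k}$ from $f_0$ to $aaaaf_0$, as required.

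To prove $(\ast)$ I would collapse $\cY_{n,k}$ onto $\cY_{n,k-1}$ by a discrete--Morse-type argument. The relative cells $c_w$, $w \in \Ideal_{Y_k} \smallsetminus \Ideal_{Y_{k-1}}$, are precisely those $w$ all of whose letters lie in $S_{\PA} \cup (Y_k \smallsetminus Y_{k-1})$ with at least one letter in $Y_k \smallsetminus Y_{k-1}$; by Remark \ref{rem:Yk}, $Y_k \smallsetminus Y_{k-1}$ is the union of the low permutations with $\inv = k-1$ and the high permutations with $\inv = k$, and the involution $\sigma \leftrightarrow \sigma'$ of $Y$ exchanges these two families. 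For such $w$ let $\sigma$ be its leftmost letter lying in $Y_k \smallsetminus Y_{k-1}$, and match $c_w$ with $c_{\bar w}$, where $\bar w$ replaces that occurrence of $\sigma$ by $\sigma'$; since the letters of $w$ preceding $\sigma$ lie in $S_{\PA}$ and are untouched, and since $\sigma' \in Y_k \smallsetminus Y_{k-1}$ with $(\sigma')' = \sigma$, this is a well-defined perfect matching of the relative cells, and the two cells of a matched pair have dimensions differing by exactly one (their special letters differ by one inversion). Proposition \ref{prop:blacktriangle} (applied to whichever of $\sigma_0 \blacktriangleleft \sigma_1$ holds among $\{\sigma,\sigma'\}$) shows that the glueing map of the higher cell of each pair covers the interior of the lower cell exactly once; together with Lemma \ref{lemma:YkisCW} and the convention fixed at the start of Section \ref{sect:Y} about redundant faces, this supplies the incidence data needed to treat each pair as an elementary collapse.

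The main obstacle is to verify that this matching is \emph{acyclic}. Since the faces of any $c_{w_0}$ are the cells $c_w$ with $w \sqsubseteq w_0$, the word length is non-decreasing along codimension-one faces, hence constant around an alternating cycle $c_{w^{(0)}} \searrow c_{v^{(0)}} \nearrow c_{w^{(1)}} \searrow \cdots \nearrow c_{w^{(0)}}$; this forces each $c_{v^{(i)}}$ to arise from $c_{w^{(i)}}$ by a single $\blacktriangleleft$-step applied to one letter. Using Remark \ref{rem:blacktriangle}, Lemma \ref{lemma:PAhat} and, crucially, Lemma \ref{lemma:vader} (a $\blacktriangleleft$-predecessor of a high permutation other than its $\sigma'$ is again high, with smaller inversion number, hence already in $Y_{k-1}$), one checks that either $v^{(i)}$ contains a letter of $Y_{k-1}$, so $c_{v^{(i)}} \subseteq \cY_{n,k-1}$ is unmatched -- a contradiction -- or the leftmost $(Y_k\smallsetminus Y_{k-1})$-letter of $v^{(i)}$ is high with $\inv = k$, so $c_{v^{(i)}}$ is matched \emph{downward} and admits no arrow $c_{v^{(i)}} \nearrow c_{w^{(i+1)}}$, again a contradiction. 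Acyclicity then yields, via the standard discrete-Morse collapse performed in a Morse-compatible order (and since $f_0(K)$, being compact, meets only finitely many relative cells, only finitely many collapses are needed), the homotopy of $(\ast)$, completing the induction. I expect the acyclicity verification, and pinning down exactly which codimension-one incidences occur in $\cD_n$, to be the only genuinely delicate points; everything else is bookkeeping on top of Propositions \ref{prop:Yn2} and \ref{prop:blacktriangle}.
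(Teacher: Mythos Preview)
Your approach is essentially the paper's: induction on $k$ with base case Proposition~\ref{prop:Yn2}, and the inductive step proved via the claim $(\ast)$ by collapsing matched pairs $(c_{w_-\sigma w_+}, c_{w_-\sigma' w_+})$ for $\sigma \in Y_k\smallsetminus Y_{k-1}$ low, invoking Proposition~\ref{prop:blacktriangle} and Lemma~\ref{lemma:vader}. Your derivation of the lemma from $(\ast)$ is more explicit than the paper's one-line remark, and is correct.

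Two comments. First, a small slip: the relative cells are not those whose letters lie in $S_{\PA}\cup(Y_k\smallsetminus Y_{k-1})$; letters may also lie in $Y\smallsetminus Y_k$. This does not damage your matching (the leftmost letter in $Y_k\smallsetminus Y_{k-1}$ is still well-defined, and applying $\sigma\mapsto\sigma'$ there is still an involution), but it does complicate the acyclicity check, since a down-arrow might modify one of those $Y\smallsetminus Y_k$ letters rather than the special one. Second, the paper simply avoids the global discrete-Morse framework and the acyclicity issue entirely: given $\alpha_0$ with compact source, it performs the collapses one pair at a time, using Lemma~\ref{lemma:YkisCW} (which packages Lemma~\ref{lemma:vader}) to see that after each collapse the newly touched cells lie in $\cY_{n,k-1}$, and compactness to see that finitely many suffice. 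That shortcut is worth knowing; your version works too, but the bookkeeping you flagged as ``delicate'' is genuinely heavier than what the paper needs.
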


\begin{proof}
This proof is by induction on $k$,
the base case $k = 2$ being Proposition \ref{prop:Yn2} above.
Assume therefore that $k \ge 3$ and
that the subcomplex $\cY_{n,k-1} \subseteq \cD_n$ is known to be loose.
Consider a compact manifold $K$
and a continuous map $\alpha_0: K \to \cY_{n,k}$:
we claim that $\alpha_0$ is homotopic in $\cY_{n,k}$
to $\alpha_1: K \to \cY_{n,k-1}$.
Proving the claim completes the proof of the lemma.

Consider $\cY_{n,k}$ as a CW complex.
Let $\sigma \in Y_k \smallsetminus Y_{k-1}$ be a low permutation,
so that $\sigma' \in Y_k \smallsetminus Y_{k-1}$ be a high permutation,
$\inv(\sigma) = k-1$ and $\inv(\sigma') = k$. 
Proposition \ref{prop:blacktriangle} tells us that
there is a valid collapse starting from $\cY_{n,k}$
and removing the cells $c_{\sigma}$ and $c_{\sigma'}$:
\[ \cY_{n,k} \searrow
(\cY_{n,k} \smallsetminus ( c_{\sigma} \cup c_{\sigma'} ) ). \]
(We assume here only minimal knowledge of collapses;
see \cite{Cohen} or \cite{Forman} for much more on the subject.)
In other words, by applying a homotopy with values in $\cY_{n,k}$
to the map $\alpha_0$ we obtain a map which avoids
the cells $c_{\sigma}$ and $c_{\sigma'}$.
From Proposition \ref{prop:blacktriangle},
the new cells possibly touched by the new map are in $\cY_{n,k-1}$.

A similar collapse operation holds for pairs of the form
$c_{w_{-}\sigma w_{+}}$ and $c_{w_{-} \sigma' w_{+}}$.
Since $K$ is compact, a finite number of such collapses
obtains $\alpha_1$ with image contained in $\cY_{n,k-1}$.
\end{proof}

\begin{rem}
\label{rem:sigmaH}
The above proof can be rewritten as defining
maps $\sigma^H$ and $(\sigma')^H$ for
$\sigma \in Y_k \smallsetminus Y_{k-1}$ a low permutation.
For instance, consider $k = 3$, $\sigma = cb = [1342\cdots]$
and $\sigma' = acb = [3142\cdots]$.
The construction of $\sigma^H$ is illustrated in
Figure \ref{fig:cbH}.

More precisely, we take cells $\sigma'$ next to $\sigma$
and $aaaa\sigma'$ next to $aaaa\sigma$.
For another face $w$ of $\sigma'$, we have $w \in \Ideal_{Y_{k-1}}$
so that there is no difficulty in joining $w$ to $aaaaw$:
the map $w^H$ has already been constructed.
As with the pairs $(a_k,a'_k)$ ($k > 1$),
the construction of $\sigma^H$ gives us as a bonus $(\sigma')^H$.
\end{rem}

\begin{proof}[Proof of Theorem \ref{theo:Y}]
For $q \in \Quat_{n+1} \smallsetminus Z(\Quat_{n+1})$
we have $\cY_{n}(1;q) = \cL_{n}(1;q)$.
For $q \in Z(\Quat_{n+1})$,
the proper subset  $\cY_{n}(1;q) \subset \cL_{n}(1;q)$
is clearly open.
Apart from the contractible connected component of convex curves
(with itinerary equal to the empty word),
its closed complement
$\cM_{n}(1;q) = \cL_{n}(1;q) \smallsetminus \cY_{n}(1;q)$
is the union of the strata corresponding the words
formed by parity alternating permutations.
For $n \ne 3$, non trivial parity alternating permutation
has at least $3$ inversions;
for $n = 3$ at least $2$ inversions.
Thus $\cM_n(1;q) \smallsetminus \cL_{n,\conv}(1;q)$
is a closed subset of codimension at least $1$
(at least $2$ for $n \ne 3$).
This implies that $\cY_{n}(1;q)$ is open and dense in
$\cL_n({1;q}) \smallsetminus \cL_{n,\conv}(1;q)$,
as claimed.

We now prove that the inclusion
$\cY_{n}(1;q) \subset \Omega\Spin_{n+1}(1;q)$
is a weak homotopy equivalence.
Indeed, consider a continuous map
$\alpha_0: \Ss^k \to \Omega\Spin_{n+1}(1;q)$.
We already saw that the add-loop construction
obtains a homotopic map
$\alpha_1: \Ss^k \to \cL_n(1;q) \subset \Omega\Spin_{n+1}(1;q)$.
We may assume that after the add-loop construction,
every locally convex curve $\alpha_1(s)$ in the image of $\alpha_1$
begins with a non-convex closed arc.
By homotopy (more precisely, by connectivity of the non-convex
component of $\cL_n(1;1)$, as in Proposition \ref{prop:connected}),
this initial non-convex closed arc can be assumed
to have an itinerary starting with $aaaa$.
This implies that $\alpha_1$ has the form
$\alpha_1: \Ss^k \to \cY_n(1;q) \subset \Omega\Spin_{n+1}(1;q)$.

Similarly, consider $\alpha_0: \Ss^k \to \cY_n(1;q)$.
Assume there exists a $\beta_0: \DD^{k+1} \to \Omega\Spin_{n+1}(1;q)$,
$\beta_0|_{\Ss^k} = \alpha_0$.
By Lemma \ref{lemma:Ynk}, $\alpha_0$ is homotopic in $\cY_n(1;q)$
to $\alpha_1 = aaaaaaaa\alpha_0$.
From the add-loop construction,
$aaaa\alpha_0$ is homotopically trivial in $\cL_n(1;q)$,
that is, there exists
$\beta_2: \DD^{k+1} \to \cL_n(1;q)$ with
$\beta_2|_{\Ss^k} = aaaa\alpha_0$.
Define $\beta_1 = aaaa\beta_2$:
we have $\beta_1: \DD^{k+1} \to \cY_n(1;q)$ with
$\beta_1|_{\Ss^k} = \alpha_1$.
This completes the proof that the inclusion
$\cY_{n}(1;q) \subset \Omega\Spin_{n+1}(1;q)$
is a weak homotopy equivalence.
Since $\cY_{n}(1;q)$ is a Hilbert manifold,
the inclusion is also a homotopy equivalence \cite{Palais}.
\end{proof}

\section{Final Remarks}
\label{sect:finalremarks}

The present paper casts the foundations 
of a combinatorial approach
in the study of spaces of locally convex curves.
We proved a few results as a consequence
but we believe this approach can be used to prove far more.
We now state as a conjecture another result
we hope to prove using these methods in a forthcoming paper.
First, however, we review an older result.

Recall that $\Quat_3=\operatorname{Q}_8
=\{\pm1, \pm\mathbf{i}, \pm\mathbf{j}, \pm\mathbf{k}\}
\subset \Ss^3 \subset \HH$ 
where $\HH$ is the algebra of quaternions.
The main result in \cite{Saldanha3} classifies the spaces $\cL_2(q)$, 
$q\in\Quat_3$, into the following three weak homotopy types. 
For $q\neq\pm 1$, 
consistently with Corollary \ref{coro:notcenter},
we have 
$\cL_2(q)\approx\Omega\Ss^3$.
For $q \in \{\pm 1\}$ we have:
\begin{equation}
\label{equation:L2}
\cL_2(-1)\approx\Omega\Ss^3 \vee \Ss^0 \vee \Ss^4 \vee \Ss^{8} \vee \cdots, \qquad 
\cL_2(1)\approx\Omega\Ss^3 \vee \Ss^2 \vee \Ss^6 \vee \Ss^{10} \vee \cdots .
\end{equation}
For $n = 3$ we have
$\Spin_4 = \Ss^3 \times \Ss^3$ for $\Ss^3 \subset \HH$.
We also have that
$\Quat_4 \subset \operatorname{Q}_8\times\operatorname{Q}_8$
is generated by $(1,-1)$, $(\mathbf{i},\mathbf{i})$
and $(\mathbf{j},\mathbf{j})$.
This point of view is used in \cite{Alves, Alves-Saldanha} 
to obtain partial results concerning the homotopy type of $\cL_3$.

Theorem \ref{theo:Y} admits a minor improvement in the case $n=3$
which should be helpful.  Indeed, let 
\[ \tilde Y = Y \cup \{ ac, abc \} =
S_4 \smallsetminus \{ e, aba, bacb, bcb, cba, abacba = \eta \}. \]
Let $\tilde\cY_3 \subset \cL_3$ be the open set of curves 
whose itineraty admits at least one letter in $\tilde Y$;
we clearly have $\cY_3 \subset \tilde\cY_3 \subset \cL_3$.

\begin{theo}
\label{theo:tildeY}
For each $q \in \Quat_4$, the inclusion 
$\tilde\cY_3 \cap \cL_3(1;q) \subset \Omega\Spin_4(1;q)$
is a weak homotopy equivalence.
\end{theo}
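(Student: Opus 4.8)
The plan is to mimic the proof of Theorem~\ref{theo:Y}, replacing $\cY_n$ by $\tilde\cY_3$ throughout, and to check that every step survives the enlargement of the index set $Y$ to $\tilde Y$. The key structural fact I would establish first is the analogue of Lemma~\ref{lemma:YkisCW}: namely that $\tilde\Ideal_Y \subset \Word_3$, the set of words containing at least one letter in $\tilde Y$, is a lower set for $\sqsubseteq$ (and, insofar as needed, subcomplex compatible), so that $\cD_3[\tilde\Ideal_Y] \subset \cD_3$ is a subcomplex homotopy equivalent to $\tilde\cY_3$. For this I would argue as in Example~\ref{example:IY}: if $\sigma_0 \in \tilde Y$ and $w_1 \sqsubseteq \sigma_0$, then $\hat w_1 = \hat\sigma_0$, and since $\sigma_0 \notin \{e, aba, bacb, bcb, cba, \eta\}$ I must show $\hat\sigma_0$ forces some letter of $w_1$ into $\tilde Y$. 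The extra permutations $ac$ and $abc$ satisfy $\hat{ac} = \hat{abc} = \hat a\hat c$, and the only letters $\sigma \in S_4$ with $\hat\sigma = \hat a\hat c$ are $ac$, $abc$ and $cba$ (Example~\ref{example:uppersets}); since $cba \in \tilde Y$ as well, the multiplicativity argument of Example~\ref{example:IY} goes through. I would also record that $\tilde\Ideal_Y$ is an ideal in the sense of Section~\ref{sect:looseideals}, since concatenation cannot destroy a letter.

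Next I would prove the looseness of the subcomplex $\tilde\cY_3 = \cD_3[\tilde\Ideal_Y]$, i.e., the analogue of Lemma~\ref{lemma:Ynk}. The induction of Lemma~\ref{lemma:Ynk} starts at $\cY_{3,2}$ (Proposition~\ref{prop:Yn2}) and climbs through the $Y_k$; here I would start from the same base and, at each collapse step where a low/high pair $(\sigma,\sigma')$ with $\sigma \blacktriangleleft \sigma'$ is removed via Proposition~\ref{prop:blacktriangle}, check that the newly exposed faces land in the already-loose subcomplex. The only genuinely new content is accommodating $ac$ and $abc$ (and their translates $w_-\{ac\}w_+$, $w_-\{abc\}w_+$) into the chain of collapses. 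Note $ac \blacktriangleleft abc$ and $ac \blacktriangleleft cba$ (see Example~\ref{example:blacktriangle} and Figure~\ref{fig:cD3}), so the pair $(ac,abc)$ — or, if one prefers, $(ac, cba)$ — can be collapsed just as the $Y_k$-pairs are, and the boundary of $c_{abc}$ (resp.\ $c_{cba}$) consists otherwise of words in $\Ideal_{Y_2} \subset \tilde\Ideal_Y$, as one reads off from the homological formulas for $\partial[abc]$ and $\partial[cba]$ in Section~\ref{sect:Dn2}. I would also need maps $\sigma^H$ and $(\sigma')^H$ joining $\sigma$ to $aaaa\sigma$ for $\sigma \in \{ac, abc\}$, exactly in the spirit of Remark~\ref{rem:sigmaH}; since $ac$ is a dimension-$1$ letter with both endpoints $ca, ac$ of dimension $0$, the map $[ac]^H$ is already constructed in Figure~\ref{fig:acH}, and $(abc)^H$ is then obtained as a bonus of the collapse, all with values in $\tilde\cY_3$.

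Finally, with looseness in hand, the proof of the weak homotopy equivalence $\tilde\cY_3 \cap \cL_3(1;q) \subset \Omega\Spin_4(1;q)$ is identical to the corresponding part of the proof of Theorem~\ref{theo:Y}: the add-loop construction of \cite{Saldanha-Shapiro, Shapiro-Shapiro} takes any $\alpha_0: \Ss^k \to \Omega\Spin_4(1;q)$ to a homotopic map into $\cL_3(1;q)$ whose curves begin with a non-convex closed arc, and by Proposition~\ref{prop:connected} that arc may be assumed to have itinerary starting with $aaaa$, hence containing a letter in $Y_2 \subset \tilde Y$; this gives a factorization through $\tilde\cY_3(1;q)$. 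For the other half, given $\alpha_0: \Ss^k \to \tilde\cY_3(1;q)$ bounded by $\beta_0$ in $\Omega\Spin_4(1;q)$, looseness gives a homotopy in $\tilde\cY_3(1;q)$ from $\alpha_0$ to $aaaaaaaa\alpha_0$, and the add-loop construction fills $aaaa\alpha_0$ by some $\beta_2: \DD^{k+1} \to \cL_3(1;q)$, whence $aaaa\beta_2$ fills $\alpha_1 = aaaaaaaa\alpha_0$ inside $\tilde\cY_3(1;q)$. Palais' theorem \cite{Palais} upgrades weak to genuine homotopy equivalence since $\tilde\cY_3$ is a Hilbert manifold. The main obstacle I anticipate is the second paragraph: verifying that inserting the pair $(ac, abc)$ (with $ac$ parity alternating but $\hat{ac} \notin Z(\Quat_4)$) does not break the inductive collapse — in particular, one must confirm there is no awkward interaction with the $[acb]$-cell, whose glueing map is the subtle point already flagged in Remark~\ref{rem:subcomplexcompatible} and Example~\ref{example:acb}; choosing the cell $c_{[acb]}$ compatibly (the left picture in Figure~\ref{fig:newacb}) should suffice.
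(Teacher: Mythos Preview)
Your overall approach matches the paper's: reduce to Theorem~\ref{theo:Y} by performing one extra collapse step for the pair $[ac] \blacktriangleleft [abc]$, noting from the formula for $\partial[abc]$ in Section~\ref{sect:Dn2} that the five remaining faces of $c_{[abc]}$ each contain a generator and hence already lie in $\cY_{3,2}$. Your second and third paragraphs are essentially the paper's proof, only spelled out in greater detail.

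There is, however, a genuine error in your first paragraph. You assert that $cba \in \tilde Y$; this is false, since by definition $\tilde Y = S_4 \smallsetminus \{e, aba, bacb, bcb, cba, \eta\}$ explicitly excludes $cba$. More seriously, the ``multiplicativity argument of Example~\ref{example:IY}'' cannot be invoked as written: that argument rests on $\hat\sigma_0 \notin Z(\Quat_{n+1})$, whereas for $n=3$ we have $\hat a\hat c \in Z(\Quat_4)$ (Remark~\ref{rem:ZQuat}), so a product of hats equal to $\hat a\hat c$ does not force any single factor to equal $\hat a\hat c$. The conclusion that $\tilde\Ideal_Y$ is a lower set is nonetheless correct, but it requires a direct multiplicity check: the letters of $S_4 \smallsetminus (\tilde Y \cup \{e\})$ are $aba, bcb, bacb, cba, \eta$, with $\mult(cba) = (1,2,3)$ and $\mult(\eta) = (3,4,3)$, so no nonempty word in these letters has $\mult \le \mult(abc) = (3,2,1)$ and $\hat w = \hat a\hat c$ (any such word needs a letter from $\{cba,\eta\}$, and already $\mult_3(cba) = 3 > 1$). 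Finally, your closing worry about $c_{[acb]}$ is misplaced: $acb = [3142]$ is not parity alternating, so $acb \in Y$ already and that cell sits inside $\cY_3$ from the start.
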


\begin{proof}
Artificially define $[ac]' = [abc]$:
notice that $[ac] \blacktriangleleft [abc]$, as expected.
Proposition \ref{prop:blacktriangle} holds
for $\sigma_0 = [ac]$ and $\sigma_1 = [abc]$,
as has been observed in Example \ref{example:blacktriangle}
and can be verified from Figure \ref{fig:cD3}:
the cell $c_{[abc]}$ has a side $c_{[ac]}$;
the five other sides all include a letter of dimension $0$
(and are therefore in $\cY_{3,2}$).

The proof of the present result from Theorem \ref{theo:Y} (for $n=3$)
is now similar to a step in the proof of Lemma \ref{lemma:Ynk}:
perform the necessary collapses
for the pair $\sigma = [ac] \blacktriangleleft \sigma' = [abc]$. 
\end{proof}

The following theorem is the main result in \cite{Alves-Goulart-Saldanha}.

\begin{theo}
\label{theo:L3}
We have the following weak homotopy equivalences:
\begin{align*}
\cL_3(1;1) &\approx 
\Omega(\Ss^3 \times \Ss^3) \vee \Ss^4 \vee \Ss^8 \vee \Ss^8
\vee \Ss^{12} \vee \Ss^{12} \vee \Ss^{12} \vee \cdots, \\
\cL_3(1;-1) &\approx 
\Omega(\Ss^3 \times \Ss^3) \vee \Ss^2 \vee \Ss^6 \vee \Ss^6
\vee \Ss^{10} \vee \Ss^{10} \vee \Ss^{10} \vee \cdots, \\
\cL_3(1;-\hat a\hat c) &\approx 
\Omega(\Ss^3 \times \Ss^3) \vee \Ss^0 \vee \Ss^4 \vee \Ss^4
\vee \Ss^{8} \vee \Ss^{8} \vee \Ss^{8} \vee \cdots, \\
\cL_3(1;\hat a\hat c) &\approx 
\Omega(\Ss^3 \times \Ss^3) \vee \Ss^2 \vee \Ss^6 \vee \Ss^6
\vee \Ss^{10} \vee \Ss^{10} \vee \Ss^{10} \vee \cdots.
\end{align*}
The above bouquets include one copy of $\Ss^k$,
two copies of $\Ss^{(k+4)}$, \dots, $j+1$ copies of $\Ss^{(k+4j)}$, \dots,
and so on.
\end{theo}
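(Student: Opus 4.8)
\textbf{Proof proposal for Theorem~\ref{theo:L3}.}

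The plan is to reduce the computation of the homotopy type of each component $\cL_3(1;q)$ to three inputs: (a) Theorem~\ref{theo:tildeY}, which identifies $\tilde\cY_3(1;q)$ with $\Omega\Spin_4(1;q) \simeq \Omega(\Ss^3\times\Ss^3)$ up to weak homotopy equivalence; (b) the fact that the complement $\cM = \cL_3(1;q)\smallsetminus\tilde\cY_3(1;q)$ is a union of finitely many strata $\cL_3[w]$ with $w$ a word in the ``small'' alphabet $S_4\smallsetminus\tilde Y = \{aba, bacb, bcb\}$ (for $\hat w = \pm 1$) or the appropriate analogue, all of which are contractible closed submanifolds of known codimension by Theorem~2 of \cite{Goulart-Saldanha1}; (c) the CW structure $\cD_3$ from Theorem~\ref{theo:newCW}, which lets us read off the attaching data of the cells $c_w$, $w\notin\Ideal_{\tilde Y}$, onto the subcomplex $\tilde\cY_{3}[\Ideal_{\tilde Y}]$. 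First I would pass to $\cD_3$ and set $\cX = \cD_3[\Ideal_{\tilde Y}]$, so that $\cD_3[\Ideal_q] = \cX[\Ideal_q] \cup \bigcup_{w\in\Ideal_q\smallsetminus\Ideal_{\tilde Y}} c_w$ for each relevant $q$; by Theorem~\ref{theo:tildeY} the inclusion $\cX[\Ideal_q]\hookrightarrow\Omega\Spin_4(1;q)$ is a weak homotopy equivalence, and $\Omega\Spin_4(1;q)\simeq\Omega(\Ss^3\times\Ss^3)$.

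The core of the argument is then to understand how the extra cells $c_w$ attach. The relevant words $w$ are exactly those whose letters all lie in $S_4\smallsetminus\tilde Y$; one enumerates them by $\hat w$. For $q=1$ the smallest such word is $aba$ with $\dim(aba)=2$ (giving the $\Ss^2$ summand), then words like $aba\,\eta\,aba$ and its relatives of dimension $6$ or $8$, and so on in arithmetic-progression fashion; the doubling pattern (``$j+1$ copies of $\Ss^{k+4j}$'') should come from counting words of a given dimension with a fixed value of $\longhat$, exactly the kind of combinatorics governed by Proposition~\ref{prop:connected} and the dimension formula~\eqref{equation:dim} together with $\dim(w_0\eta w_1) = \dim(w_0)+\dim(w_1)+\dim(\eta)$ and $\dim(\eta) = \inv(\eta)-1$ (so $\dim(\eta_{S_4})=5$). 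The key homotopy-theoretic step is to show that each cell $c_w$ is attached along a \emph{homotopically trivial} map $\partial\DD^{\dim(w)}\to\cX[\Ideal_w^\ast]$; since $\Ideal_w^\ast\subset\Ideal_{\tilde Y}$ is still contained in (a translate of) a loose subcomplex, the attaching sphere lies in something weakly equivalent to $\Omega(\Ss^3\times\Ss^3)$, and one invokes Lemma~\ref{lemma:loose} (looseness of the relevant ideal, via Lemma~\ref{lemma:Ynk}) to conclude that the attaching map is null-homotopic. This wedge-of-spheres behaviour — a contractible-plus-free situation where every attaching map factors through a loose subcomplex and is therefore trivial in $\Omega\Spin_4$ — is precisely what produces $\Omega(\Ss^3\times\Ss^3)\vee(\text{bouquet of spheres})$. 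Finally, I would check the shifts: the $\Ss^0$ appears for $q=-\hat a\hat c$ because the minimal parity-alternating (but not in $\tilde Y$) word there has length giving dimension $0$, namely a single generator is impossible but $ac$ itself has $\dim=1$… here one must be careful and instead use that $\cM$ for $q=-\hat a\hat c$ contains the stratum of a codimension-$0$ piece, i.e.\ an open region, contributing $\Ss^0$; the remaining shifts $\Ss^2, \Ss^4$ track $\dim$ of the minimal complementary word modulo the uniform $+4$ periodicity.

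The main obstacle I anticipate is precisely the \emph{triviality of the attaching maps}: a priori a cell $c_w$ with $w\notin\Ideal_{\tilde Y}$ could attach nontrivially onto the part of $\cX$ that ``detects'' $\Omega(\Ss^3\times\Ss^3)$, which would kill or merge homology classes rather than producing clean wedge summands. Overcoming this requires showing that the attaching sphere, viewed in $\cL_3$, is a \emph{loose} map (in the sense of Section~\ref{sect:loosetight}) — this is where one needs the strengthening in Theorem~\ref{theo:tildeY} over Theorem~\ref{theo:Y}, and where the add-loop/collapse machinery of Lemma~\ref{lemma:Ynk} and Proposition~\ref{prop:blacktriangle} does the real work: one collapses the pair $([ac],[abc])$ first (that is exactly the content of Theorem~\ref{theo:tildeY}'s proof) and then argues inductively on $\inv(\sigma)$ that every boundary sphere of a complementary cell, after looping, becomes null in $\Omega\Spin_4$, hence (being loose) null in $\cD_3$ modulo the already-built part. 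A secondary but nontrivial obstacle is the bookkeeping that yields the exact multiplicities $1,2,3,\dots$ and the $q$-dependent degree shifts $0,2,-(\text{for }\hat a\hat c),2$; this is a finite but delicate computation in $S_4$ counting, for each $m$, the number of words $w$ with $\dim(w)=m$ and $\hat w = q$ all of whose letters avoid $\tilde Y$, which I would organize via the free-product-like structure $\{\text{reduced words in } aba,\,bcb,\,bacb\}$ and the relation $\dim(\eta)=5$ separating blocks.
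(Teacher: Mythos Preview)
The paper does not prove Theorem~\ref{theo:L3}; it is stated there only as ``the main result in \cite{Alves-Goulart-Saldanha}''. So there is no proof in the paper to compare against, and your proposal is being measured against a result whose proof lies entirely in another paper.

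That said, your outline has a genuine gap that would block the argument even if one tried to carry it out. You assert that each extra cell $c_w$ with $w\notin\Ideal_{\tilde Y}$ has its attaching sphere landing in the loose subcomplex $\cX=\cD_3[\Ideal_{\tilde Y}]$, and then use looseness to trivialize it. This is false: the complementary cells interact with one another. For instance, in $S_4$ we have $aba\blacktriangleleft bacb$ and $bcb\blacktriangleleft bacb$, so by Proposition~\ref{prop:blacktriangle} the boundary of $c_{[bacb]}$ contains the cells $c_{[aba]}$ and $c_{[bcb]}$, neither of which lies in $\Ideal_{\tilde Y}$. The same phenomenon repeats for longer words. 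Thus $\partial c_w$ does not factor through $\cX$, Lemma~\ref{lemma:loose} does not apply directly, and the passage ``attaching map is loose, hence null'' breaks down. You flag this as ``the main obstacle'' but do not resolve it; the collapse of the pair $([ac],[abc])$ in Theorem~\ref{theo:tildeY} does not help here, since $aba$, $bcb$, $bacb$, $cba$, $\eta$ admit no $\blacktriangleleft$-matching among themselves (cf.\ Remark~\ref{rem:blackpartition}).

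A second issue is the direction of the argument. The spheres in the bouquet are not the cells $c_w$ themselves; they arise from \emph{tight} maps $\Ss^d\to\cL_3$ that are essential in $\cL_3$ yet null in $\Omega\Spin_4$. Producing such tight maps and showing they split off as wedge summands is the substantive content, and it cannot be obtained just by observing that certain attaching maps are loose. Your bookkeeping also slips: you place the $\Ss^2$ from $[aba]$ in the $q=1$ component, but $\widehat{aba}=-1$ and the stratum $\cL_3[[aba]]$ lies in $\cL_3(1;\acute\eta(-1)\acute\eta)$, so the correspondence between $\hat w$ and $q$ needs the conjugation by $\acute\eta$ you omitted; likewise the complementary alphabet is $\{aba,bcb,bacb,cba,\eta\}$, not just $\{aba,bcb,bacb\}$.
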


Recall that, in the notation from \cite{Alves-Saldanha},
we have:
\begin{gather*}
\cL\Ss^3((+1,+1)) = \cL_3(1;1), \qquad
\cL\Ss^3((-1,-1)) = \cL_3(1;-1), \\
\cL\Ss^3((+1,-1)) = \cL_3(1;-\hat a\hat c), \qquad
\cL\Ss^3((-1,+1)) = \cL_3(1;\hat a\hat c).
\end{gather*}

It seems to be perhaps within reach but certainly harder
to use this combinatorial approach
to determine the homotopy type of
$\cL_n(1;q)$ for $n > 3$ and $q \in Z(\Quat_{n+1})$.
We hope to be able to prove
at least the following claim,
which should be contrasted with Corollary \ref{coro:notcenter}.

\begin{conj}
\label{conj:Lnlarge}
Consider $n > 3$ and $q \in Z(\Quat_{n+1})$.
Then $\cL_n(1;q)$ is
not homotopically equivalent to $\Omega\Spin_{n+1}$.
Moreover, if there are convex curves in $\cL_n(1;q)$
then the connected component $\cL_{n,\nconv}(1;q) \subset \cL_n(1;q)$
of non-convex curves is also
not homotopically equivalent to $\Omega\Spin_{n+1}$.
\end{conj}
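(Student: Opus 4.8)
The plan is to reduce the statement to the non-vanishing of a single relative homology group and then to extract that non-vanishing from the combinatorics of parity alternating permutations and the relation $\blacktriangleleft$ developed in Sections \ref{sect:permutations} and \ref{sect:boundary}.

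First I would set up the reduction. Fix $q\in Z(\Quat_{n+1})$ and let $q'\in Z(\Quat_{n+1})$ be the (unique) element with $\acute\eta\,q'\,\acute\eta=q$, so that $\cL_n(1;q)$ is homotopy equivalent to $\cD_n[\Ideal_{q'}]$ with $\Ideal_{q'}=\{w\in\Word_n:\hat w=q'\}$ (Remark \ref{rem:connected}). Theorem \ref{theo:Y} provides a factorization $\cY_n(1;q)\hookrightarrow\cL_n(1;q)\hookrightarrow\Omega\Spin_{n+1}(1;q)$ whose composite is a weak homotopy equivalence; hence $H_\ast(\cY_n(1;q))\to H_\ast(\cL_n(1;q))$ is split injective, the long exact sequence of the pair breaks into split short exact sequences, and
\[ H_\ast\bigl(\cL_n(1;q)\bigr)\;\cong\;H_\ast\bigl(\Omega\Spin_{n+1}\bigr)\,\oplus\,H_\ast\bigl(\cL_n(1;q),\cY_n(1;q)\bigr). \]
Consequently a homotopy (even a weak homotopy) equivalence $\cL_n(1;q)\simeq\Omega\Spin_{n+1}$ would force $H_\ast(\cL_n(1;q),\cY_n(1;q))=0$. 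So it suffices to prove $H_\ast(\cL_n(1;q),\cY_n(1;q))\neq0$.

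Second, I would identify this relative homology cellularly. By Lemma \ref{lemma:YkisCW} (taken with $k=\inv(\eta)$, so $Y_k=Y$) the set $\cY_{n,m}=\cD_n[\Ideal_Y]$ is a subcomplex of $\cD_n$, and $\cD_n[\Ideal_{q'}\cap\Ideal_Y]$ is a subcomplex of $\cD_n[\Ideal_{q'}]$. Thus $H_\ast(\cL_n(1;q),\cY_n(1;q))$ is the homology of the quotient CW complex $\overline\cD$, whose cells $c_w$ are indexed by the non-empty words $w$ all of whose letters lie in $S_{\PA}$ and with $\hat w=q'$ (Lemma \ref{lemma:PAhat}). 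No generator $a_i$ is parity alternating, and for $n>3$ every $\sigma\in S_{\PA}\smallsetminus\{e\}$ has $\inv(\sigma)\ge3$; therefore each such word has $\dim(w)=\sum_i(\inv(\sigma_i)-1)\ge2$ and, for each fixed $d$, there are only finitely many of dimension $d$. So $\overline\cD$ is of finite type and $\widetilde H_i(\overline\cD)=0$ for $i\le1$; the task is to show $\overline\cD$ is not acyclic.

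Third, the heart of the matter. Among the cells of $\overline\cD$, the boundary of $c_{w_0\sigma_1w_1}$ can only hit faces $c_{w_0\sigma_0w_1}$ with $\sigma_0\blacktriangleleft\sigma_1$: a codimension-one face with $\sigma_0\vartriangleleft\sigma_1$ but $\sigma_0\not\blacktriangleleft\sigma_1$ has $\hat\sigma_0\neq\hat\sigma_1$ (Remark \ref{rem:blacktriangle}) and hence is not a legal cell of $\overline\cD$, while multi-letter faces are ruled out by dimension in the relevant range. By Proposition \ref{prop:blacktriangle} the $\blacktriangleleft$-face occurs with multiplicity exactly $\pm1$. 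I would then run discrete Morse theory on $\overline\cD$ whose matching is the involution $\sigma\leftrightarrow\sigma'$ of Section \ref{sect:permutations}, applied to a fixed occurrence of the relevant letter: $c_{w_0\sigma w_1}\leftrightarrow c_{w_0\sigma'w_1}$. Proposition \ref{prop:blacktriangle} supplies exactly the ``unique incidence'' needed for this to be a legitimate matching, and Lemmas \ref{lemma:vader}, \ref{lemma:2mult} and \ref{lemma:triangleleft} control how faces interact with the involution, which is what is needed for acyclicity of the resulting gradient. The point is that, by Remark \ref{rem:blackpartition}, the set $\{\sigma\in S_{\PA}:\hat\sigma=q'\}$ has no $\blacktriangleleft$-matching — its $\blacktriangleleft$-components have odd cardinality, as in Figure \ref{fig:blackaba} — so the Morse matching is necessarily imperfect and the critical cells form a non-empty set; analyzing the $\blacktriangleleft$-components as rooted trees (via Lemma \ref{lemma:vader}) one sees that in some degree more cells are ``unmatched'' than can be paired off going down, and the induced Morse differential on the critical cells cannot be everywhere surjective. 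The surviving class gives $H_\ast(\overline\cD)\neq0$; for $n=2,3$ this mechanism is exactly what produces the bouquets of Equation \eqref{equation:L2} and Theorem \ref{theo:L3}. I expect the main obstacle to be precisely this step: checking that the gradient built from $\sigma\leftrightarrow\sigma'$ has no closed orbits, since no general formula for the boundary maps $\partial c_\sigma$ in high dimension is known, and ruling out cancellations in the Morse differential that could kill all critical classes. For the conjecture as stated, though, it is enough to produce non-vanishing in a single degree, so as a fallback one works in the lowest degrees, where $C_2^{\mathrm{rel}}$ is free on the $\inv$-$3$ parity alternating permutations with $\hat\sigma=q'$ and $C_3^{\mathrm{rel}}$ on the $\inv$-$4$ ones, and $\partial_3^{\mathrm{rel}}$ is literally the $\blacktriangleleft$-incidence matrix, a finite explicit object governed by Proposition \ref{prop:blacktriangle} and Lemma \ref{lemma:triangleleft}; the no-perfect-matching fact then forces $\partial_3^{\mathrm{rel}}$ (or the analogous operator a few degrees higher) to have non-trivial cokernel.

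Finally, the ``moreover'' part. There are convex curves in $\cL_n(1;q)$ exactly when the empty word lies in $\Ideal_{q'}$, i.e.\ $q'=1$ and $q=\acute\eta^2$; in that case $\cL_n(1;q)=\cL_{n,\conv}(1;q)\sqcup\cL_{n,\nconv}(1;q)$ with $\cL_{n,\conv}(1;q)$ contractible, and $\cY_n(1;q)\subseteq\cL_{n,\nconv}(1;q)$ since $\Ideal_Y$ never contains the empty word. Every cell of $\overline\cD$ above has positive dimension, hence lies over the non-convex component, so $H_\ast(\cL_{n,\nconv}(1;q),\cY_n(1;q))$ equals the same non-zero group, and the split-exact-sequence argument of the first paragraph (now with $\cY_n(1;q)\subseteq\cL_{n,\nconv}(1;q)\subseteq\Omega\Spin_{n+1}(1;q)$) gives $\cL_{n,\nconv}(1;q)\not\simeq\Omega\Spin_{n+1}$.
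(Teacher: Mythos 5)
This is labeled a \emph{conjecture} in the paper, not a theorem --- the authors explicitly say they ``hope to be able to prove'' it in future work, and no proof is given. So there is nothing in the paper to check your proposal against; you are filling in a gap the authors themselves leave open.

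Your reduction step is sound. Theorem~\ref{theo:Y} does give a factorization $\cY_n(1;q)\hookrightarrow\cL_n(1;q)\hookrightarrow\Omega\Spin_{n+1}(1;q)$ whose composite is a (weak) homotopy equivalence, so $H_\ast(\cY_n(1;q))$ splits off $H_\ast(\cL_n(1;q))$, and since the quotient complex $\overline\cD=\cD_n[\Ideal_{q'}]/\cD_n[\Ideal_{q'}\cap\Ideal_Y]$ is of finite type (each degree is a finitely generated free abelian group), an abstract isomorphism $H_\ast(\cL_n(1;q))\cong H_\ast(\Omega\Spin_{n+1})$ would indeed force $H_\ast(\overline\cD)=0$. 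The cellular identification of $\overline\cD$ --- cells indexed by nonempty words whose letters lie in $S_{\PA}\smallsetminus\{e\}$ with $\hat w=q'$, all of dimension $\ge 2$ for $n>3$ --- is also correct, and the observation in the ``moreover'' part that the empty word contributes only the contractible convex component is right.

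The central step, however, has a genuine gap, and it is one the paper itself flags. You propose to run discrete Morse theory using ``the involution $\sigma\leftrightarrow\sigma'$ of Section~\ref{sect:permutations}.'' But $\sigma'$ is defined there only for $\sigma\in Y=S_{n+1}\smallsetminus S_{\PA}$, since its definition requires the existence of a $k$ with $k^\sigma\equiv(k+1)^\sigma\pmod 2$. Every letter appearing in a cell of $\overline\cD$ is parity alternating, i.e.\ lies in $S_{\PA}$, where $\sigma'$ is not defined at all. So the matching you name cannot even be set up on $\overline\cD$; you would have to construct a fresh $\blacktriangleleft$-matching inside $S_{\PA}$, and Remark~\ref{rem:blackpartition} says precisely that no perfect one exists. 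Even granting some near-perfect matching, the inference ``no $\blacktriangleleft$-matching of the letter set $\Rightarrow$ the $\blacktriangleleft$-incidence boundary in $\overline\cD$ has nontrivial cokernel'' is not valid in general: a bipartite graph without a perfect matching can still have an incidence matrix of full rank over $\ZZ$, and the degrees in which single-letter cells of $\overline\cD$ actually appear depend on the value of $q'$ (for $n=4$ and $q'=1$, for instance, there are \emph{no} single-letter $2$- or $3$-cells since $\hat{aba}=\hat{bcb}=\hat{cdc}=\hat{bacb}=\hat{cbdc}=-1$, so your ``lowest degrees'' fallback vanishes identically there). You honestly flag the absence of a general formula for $\partial c_\sigma$; that, together with the undefined matching and the logical gap from ``odd component'' to ``noncyclic,'' is exactly why this remains an open conjecture in the paper rather than a theorem.
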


\bigbreak

Another interesting aspect of the subject is its relation with 
the theory of differential operators, mentioned in the Introduction. 
A linear differential operator can be canonically associated 
to a nondegenerate curve $\gamma:[0,1]\to\Ss^n$ 
(see \cite{Khesin-Ovsienko}).
There is a Poisson structure in the space of the differential operators, given by the Adler-Gelfand-Dickey bracket 
\cite{Adler, Gelfand-Dickey1, Gelfand-Dickey2}. 
The identification above relates the spaces $\cL_n(1;z)$ with 
symplectic leaves of this strucuture \cite{Khesin-Ovsienko, Khesin-Shapiro1}. 
Notice that the spheres appearing in the bouquets in Equation 
\ref{equation:L2} and Theorem~\ref{theo:L3} are all even-dimensional.
We wonder whether this is fortuitous
or a manifestation of this symplectic structure;
this question is worth clarification.

\bibliography{gs}
\bibliographystyle{plain}

\footnotesize

\noindent
Victor Goulart \\
Departamento de Matem\'atica, UFES, \\
Av. Fernando Ferrari 514; Campus de Goiabeiras, Vit\'oria, ES 29075-910, Brazil. \\
\url{jose.g.nascimento@ufes.br}

\smallskip

\noindent
 Nicolau C. Saldanha \\
Departamento de Matem\'atica, PUC-Rio, \\
R. Marqu\^es de S. Vicente 255, Rio de Janeiro, RJ 22451-900, Brazil.  \\
\url{saldanha@puc-rio.br}

\end{document}